\newcommand\blfootnote[1]{%
\begingroup
\renewcommand\thefootnote{}\footnote{#1}%
\addtocounter{footnote}{-1}%
\endgroup
}
\mathchardef\mhyphen="2D
\theoremstyle{plain}
\newtheorem{theorem}{Theorem}
\newtheorem{lemma}{Lemma}
\newtheorem{proposition}{Proposition}
\newtheorem{remark}{Remark}
\newtheorem{definition}{Definition}
\newcommand{\Span}{\operatorname{Span}}
\newcommand{\ess}{\operatorname{ess}}
\newcommand{\tr}{\operatorname{tr}}
\newcommand{\ind}{\operatorname{Ind}}
\newcommand{\sech}{\operatorname{sech}}
\newcommand{\sgn}{\operatorname{sgn}}
\newcommand{\re}{\operatorname{Re}}
\numberwithin{equation}{section}
\numberwithin{lemma}{section}
\numberwithin{theorem}{section}
\numberwithin{remark}{section}
\numberwithin{claim}{section}
\numberwithin{corollary}{section}
\numberwithin{proposition}{section}
\numberwithin{definition}{section}
\numberwithin{condition}{section}
\numberwithin{figure}{section}
\DeclareFontFamily{OT1}{pzc}{}
\DeclareFontShape{OT1}{pzc}{m}{it}{<-> s * [1.100] pzcmi7t}{}
\DeclareMathAlphabet{\mathpzc}{OT1}{pzc}{m}{it}
\title{Oscillation Theory and Instability  
of Nonlinear Waves} 
\author{Peter Howard}
\affil{Texas A\&M University}
\begin{document}

\maketitle

\blfootnote{MSC 2020 subject classes: 34C10, 34L15}

\begin{abstract} 
In recent work, Baird et al. have introduced a generalized
Maslov index which allows oscillation techniques that have 
previously been restricted to eigenvalue problems with 
underlying Hamiltonian structure to be extended to the 
non-Hamiltonian setting
[T. J. Baird, P. Cornwell, G. Cox, C. Jones, and R. Marangell,
{\it Generalized Maslov indices for non-Hamiltonian systems},
SIAM J. Math. Anal. {\bf 54} (2022) 1623-1668]. 
We show that this approach can be implemented in the 
analysis of spectral instability for nonlinear waves, 
taking as our setting a class of equations previously 
investigated by Pego and Weinstein via the Evans function
[R. L. Pego and M. I. Weinstein,
{\it Eigenvalues, and instabilities of solitary waves},
Phil. Trans. R. Soc. Lond. A {\bf 340} (1992) 47-94].
\end{abstract}

\section{Introduction}
\label{introduction}

For values of $\lambda$ in a real open interval $I \subset \mathbb{R}$, 
we consider first-order ODE systems 
\begin{equation} \label{nonhammy}
    \frac{dy}{dx} = A (x; \lambda) y, 
    \quad x \in \mathbb{R}, \quad y(x; \lambda) \in \mathbb{R}^n, 
    \quad n \in \{2, 3, \dots \},
\end{equation}
for which we make the following assumptions, adapted from 
\cite{PW1992}: 

\medskip
\noindent
{\bf (A)} We assume that for some open set $\Omega \subset \mathbb{C}$ containing $I$,
we have $A \in C (\mathbb{R} \times \Omega, \mathbb{C}^{n \times n})$,
with $A (x; \lambda) \in \mathbb{R}^{n \times n}$ for all 
$(x, \lambda) \in \mathbb{R} \times I$, and that 
the map $\lambda \mapsto A (x; \lambda)$ is analytic on $\Omega$ for every 
$x \in \mathbb{R}$. 

\medskip
\noindent
{\bf (B)} For each $\lambda \in \Omega$, the limits 
\begin{equation*}
    A_{\pm} (\lambda) := \lim_{x \to \pm \infty} A(x; \lambda)
\end{equation*}
exist and are obtained uniformly on compact subsets of $\Omega$. 

\medskip
\noindent
{\bf (C)} For each $\lambda \in \Omega$, each of the matrices 
$A_{\pm} (\lambda)$ has a unique (and so necessarily real for $\lambda \in I$)
eigenvalue of largest real part, which is simple. We denote 
this eigenvalue $\mu_{\pm} (\lambda)$ and denote by 
$\mu_{\pm}^* (\lambda)$ the largest real part of any other 
eigenvalue of $A_{\pm} (\lambda)$, so that 
\begin{equation*}
    \textrm{Re}\, \mu_{\pm} (\lambda)
    > \mu_{\pm}^* (\lambda)
    := \max \{\textrm{Re}\, \nu: \nu \ne \mu_{\pm} (\lambda),\,\, \nu \in \sigma (A_{\pm} (\lambda)) \}.
\end{equation*}
Moreover, $\mu_{\pm} (\lambda)$ is analytic on $\Omega$, and 
there exists an analytic (in $\Omega$) choice of left and right eigenvectors 
of $A_{\pm} (\lambda)$, satisfying
\begin{equation*}
    (A_{\pm} (\lambda) - \mu_{\pm} (\lambda) I) v^{\pm} (\lambda) = 0,
    \quad w^{\pm} (\lambda) (A_{\pm} (\lambda) - \mu_{\pm} (\lambda) I) = 0,
    \quad w^{\pm} (\lambda) v^{\pm} (\lambda) = 1.
\end{equation*}

\medskip
\noindent
{\bf (D)} The function 
\begin{equation*}
    E(x; \lambda) := 
    \begin{cases}
    A(x; \lambda) - A_- (\lambda) & x < 0 \\
    A(x; \lambda) - A_+ (\lambda) & x > 0 
    \end{cases}
\end{equation*}
satisfies the following two conditions: (i) $\int_{\mathbb{R}} |E (x; \lambda)| dx$ 
is finite for all $\lambda \in \Omega$; and (ii) this integral converges 
uniformly on compact subsets of $\Omega$. 

\medskip

As with \cite{PW1992}, our analysis is primarily motivated by consideration
of stability for traveling-wave solutions $\bar{u} (x - st)$ arising 
in the context of single higher order nonlinear evolutionary PDE, 
\begin{equation} \label{higher-order}
    u_t + g(u) + f(u)_x 
    = \sum_{k=2}^n (b_k (u) \partial_x^{k-1} u)_x,
    \quad n \in \{2, 3, \dots \}.
\end{equation}
By replacing $f$ with $f - su$, we can regard such solutions as 
stationary solutions $\bar{u} (x)$ to the same equation, and 
upon linearization (with $u = \bar{u} + v$), we obtain 
\begin{equation} \label{perturbation-equation}
    v_t + a_0 (x) v + (a_1 (x) v)_x 
    = \sum_{k = 2}^{n} (a_k (x) \partial^{k-1}_x v)_x,    
\end{equation}
with 
\begin{equation*}
    a_0 (x) = g'(\bar{u} (x)),
    \quad a_1 (x) = f' (\bar{u} (x)) - \sum_{k=2}^n {b_k}' (\bar{u} (x)) \partial^{k-1}_x \bar{u} (x),
\end{equation*}
and 
\begin{equation*}
a_k (x) = b_k (\bar{u} (x)), 
\quad k = 2, 3, \dots, n.    
\end{equation*}
The associated eigenvalue problem can be expressed as 
\begin{equation*} \label{evp}
    L \phi := - \sum_{k = 2}^{n} (a_k (x) \phi^{(k-1)})' + (a_1 (x) \phi)' + a_0 (x) \phi 
    = \lambda \phi,
\end{equation*}
where our sign convention is taken to be consistent with cases in 
which (\ref{higher-order}) is second order and the negative sign 
corresponds with a positive operator. We now obtain (\ref{nonhammy})
by expressing (\ref{evp}) as a first-order system with 
$y = (y_1 \,\, y_2 \,\, \dots \,\, y_n)^T$, 
$y_k = \phi^{(k-1)}$, $k = 1, 2, \dots, n-1$, 
$y_n = a_n (x) \phi^{(n-1)}$. Specific matrices arising 
in this way will be considered in the applications 
discussed in Section \ref{applications-section}. 

Under Assumption {\bf (C)}, there exists a one-dimensional subspace of 
solutions of (\ref{nonhammy}) that decay at the maximal  
exponential rate $e^{\mu_- (\lambda) x}$
as $x$ tends to $-\infty$, and likewise an $(n-1)$-dimensional subspace 
of solutions to (\ref{nonhammy}) that fail to grow at the maximal rate
$e^{\mu_+ (\lambda) x}$ as $x$ tends to $+ \infty$. 
Following \cite{PW1992}, our goal will be to identify values 
$\lambda \in I$ for which there exists a solution 
$y (\cdot; \lambda) \in C^1 (\mathbb{R}, \mathbb{R}^n)$ 
of (\ref{nonhammy}) that lies in the intersection of these two spaces. 
Although we will refer to such values as eigenvalues throughout 
the analysis, we observe here that they are only ``genuinely"
eigenvalues in the event that $\mu_- (\lambda)$ is strictly positive and is 
the only positive eigenvalue of $A_- (\lambda)$, and additionally 
$\mu_+ (\lambda)$ is non-negative and is the only eigenvalue of 
$A_+ (\lambda)$ with non-negative real part. In the usual way, if 
$\lambda$ is an eigenvalue of (\ref{nonhammy}), we will refer to the dimension of the 
space of all associated solutions as the geometric multiplicity of $\lambda$. 
Our main goal is to show that oscillation 
theory can be used to obtain a lower bound on the number of eigenvalues
$\mathcal{N}_{\#} ([\lambda_1, \lambda_2])$, counted {\it without} multiplicity,
that (\ref{nonhammy}) has on a given interval $[\lambda_1, \lambda_2] \subset I$,
$\lambda_1 < \lambda_2$. 
Under our relatively weak assumptions on the dependence of $A (x; \lambda)$
on $\lambda$, it's possible that 
the eigenvalues of (\ref{nonhammy}), as we've defined them, won't comprise a discrete
set on the interval $[\lambda_1, \lambda_2]$. In this case, our convention will 
be to take $\mathcal{N}_{\#} ([\lambda_1, \lambda_2]) = + \infty$, and to view 
our lower bounds on $\mathcal{N}_{\#} ([\lambda_1, \lambda_2])$ as holding by 
convention. We will see that in many cases the value $\lambda_1$ can be taken 
sufficiently negative so that (\ref{nonhammy}) has no eigenvalues below $\lambda_1$,
and in such cases, we will often write 
$\mathcal{N}_{\#} ([\lambda_1, \lambda_2]) = \mathcal{N}_{\#} ((-\infty, \lambda_2])$
to emphasize that it becomes a count of the number of eigenvalues that 
(\ref{nonhammy}) has at or below $\lambda_2$. For stability analyses, we are most 
interested in taking $\lambda_2 = 0$ (since negative eigenvalues correspond with 
instability by our sign conventions), so the count we will be most interested 
in is $\mathcal{N}_{\#} ((-\infty, 0])$. 

In \cite{PW1992}, the authors address a closely related question via the 
Evans function, namely, determining conditions under which evaluation of 
the Evans function along with appropriate derivatives at $\lambda_2 = 0$
can be combined with asymptotic information as $\lambda$ tends toward
$-\infty$ to indicate the existence of at least one eigenvalue on the  
open half-line $(-\infty, 0)$. 
Our analysis puts the results of \cite{PW1992} in a broader context and adds
an additional geometric criterion for the existence of such eigenvalues.  

Our primary tool for this analysis will be a generalization of the Maslov index
introduced in \cite{BCCJM2022}, and for the purposes of this introduction
we will start with a brief, intuitive discussion of this object 
(see Section \ref{maslov-section} for additional details and reference 
\cite{BCCJM2022} for a full development). Precisely, we focus on the hyperplane
setting discussed in Section 3.2 of \cite{BCCJM2022}. 

To begin, for any $n \in \mathbb{N}$ 
we denote by $Gr_n (\mathbb{R}^{2n})$ the Grassmannian comprising
the collection of all $n$-dimensional subspaces of $\mathbb{R}^{2n}$, and we let 
$\mathpzc{g}$ denote an element of $Gr_n (\mathbb{R}^{2n})$. The
space $\mathpzc{g}$ can be
spanned by a choice of $n$ linearly independent vectors in 
$\mathbb{R}^{2n}$, and we will generally find it convenient to collect
these $n$ vectors as the columns of a $2n \times n$ matrix $\mathbf{G}$, 
which we will refer to as a {\it frame} for $\mathpzc{g}$.  
We specify a metric on $Gr_n (\mathbb{R}^{2n})$ in terms of appropriate orthogonal projections. 
Precisely, let $\mathcal{P}_i$ 
denote the orthogonal projection matrix onto $\mathpzc{g}_i \in Gr_n (\mathbb{R}^{2n})$
for $i = 1,2$. I.e., if $\mathbf{G}_i$ denotes a frame for $\mathpzc{g}_i$,
then $\mathcal{P}_i = \mathbf{G}_i (\mathbf{G}_i^* \mathbf{G}_i)^{-1} \mathbf{G}_i^*$.
We take our metric $d$ on $Gr_n (\mathbb{R}^{2n})$ to be defined 
by 
\begin{equation*}
d (\mathpzc{g}_1, \mathpzc{g}_2) := \|\mathcal{P}_1 - \mathcal{P}_2 \|,
\end{equation*} 
where $\| \cdot \|$ can denote any matrix norm. We will say 
that a path of Grassmannian subspaces 
$\mathpzc{g}: [a, b] \to \Lambda (n)$ is continuous provided it is 
continuous under the metric $d$. 

\begin{remark} \label{frames-remark} Here, and throughout, we will
be as consistent as possible with the following notational 
conventions: we will 
express Grassmannian subspaces with script letters such as $\mathpzc{g}$
or $\mathpzc{h}$,
and we will denote a choice of basis elements for $\mathpzc{g}$ 
or $\mathpzc{h}$ respectively
by $\{g_i\}_{i=1}^n$ or $\{h_i\}_{i=1}^n$. 
We will also collect these basis elements 
into associated frames designated with bold capital 
letters,
\begin{equation*}
    \mathbf{G} = (g_1, \,\, g_2, \,\, \dots, \,\, g_n)
    \quad \textrm{or} \quad
    \mathbf{H} = (h_1, \,\, h_2, \,\, \dots, \,\, h_n)
\end{equation*}
\end{remark}

Given a continuous path of Grassmannian subspaces 
$\mathpzc{g}: [a, b] \to Gr_n (\mathbb{R}^{2n})$ 
and a fixed {\it target} space $\mathpzc{q} \in Gr_n (\mathbb{R}^{2n})$,
the generalized Maslov index of \cite{BCCJM2022} 
(under some additional conditions discussed below)
provides a means of counting intersections
between the subspaces $\mathpzc{g} (t)$ and $\mathpzc{q}$
as $t$ increases from $a$ to $b$, counted
with direction, but not with multiplicity. (By multiplicity, we mean the 
dimension of the intersection; direction will be discussed in 
detail in Section \ref{maslov-section}). In order to understand how this
works, we first recall the notion of a kernel for a skew-symmetric
$n$-linear map $\omega$. 

\begin{definition}
For a skew-symmetric $n$-linear map
$\omega: \mathbb{R}^{2n} \times \dots \times \mathbb{R}^{2n} \to \mathbb{R}$
($\mathbb{R}^{2n}$ appearing $n$ times), we define the kernel, $\ker \omega$,
to be the subset of $\mathbb{R}^{2n}$,
\begin{equation*}
    \ker \omega := \{v \in \mathbb{R}^{2n}: \omega (v, v_1, \dots, v_{n-1}) = 0,
    \quad \forall \, v_1, v_2, \dots, v_{n-1} \in \mathbb{R}^{2n}\}.
\end{equation*}
\end{definition}

Given a target space $\mathpzc{q} \in Gr_n (\mathbb{R}^{2n})$, we first identify 
a skew-symmetric $n$-linear map $\omega_1$ so that $\mathpzc{q} = \ker \omega_1$.
For example, if we let $\{q_i\}_{i=1}^n \subset \mathbb{R}^{2n}$ 
denote a basis for $\mathpzc{q}$, then we can set 
\begin{equation*}
    \omega_1 (g_1, \dots, g_n)
    := \det (g_1 \,\, \dots \,\, g_n \,\, q_1 \dots q_n),
\end{equation*}
or if we interpret the elements $\{g_i\}_{i=1}^n$ and 
$\{q_i\}_{i=1}^n$ as 1-forms, 
\begin{equation} \label{what-we-mean}
    g_1 \wedge \dots \wedge g_n \wedge q_1 \wedge \dots \wedge q_n
    = \omega_1 (g_1, \dots, g_n) e_1 \wedge \dots \wedge e_{2n}.
\end{equation}
For notational convenience, we will often write 
\begin{equation*}
    \omega_1 (g_1, \dots, g_n)
    = g_1 \wedge \dots \wedge g_n \wedge q_1 \wedge \dots \wedge q_n
\end{equation*}
when, strictly speaking, we mean (\ref{what-we-mean}). 

Next, we let $\omega_2$ denote any skew-symmetric $n$-linear map for which 
$\ker \omega_2 \ne \mathpzc{q}$, and we set 
\begin{equation*}
    \mathcal{H}_{\omega_i} 
    := \{\mathpzc{g} \in Gr_n (\mathbb{R}^{2n}): \mathpzc{g} \cap \ker \omega_i \ne \{0\}\},
    \quad i = 1, 2.
\end{equation*}
Then according to Definition 1.3 in \cite{BCCJM2022}, the set 
\begin{equation} \label{hyperplane-ma}
    \mathcal{M} := Gr_n (\mathbb{R}^{2n}) \backslash (\mathcal{H}_{\omega_1} \cap \mathcal{H}_{\omega_2})
\end{equation}
is a {\it hyperplane Maslov-Arnold space}.

\begin{definition} \label{invariance-definition}
We say that the flow $t \mapsto \mathpzc{g} (t)$
is {\it invariant} on $[a, b]$ with respect to $\omega_1$
and $\omega_2$ provided the values
\begin{equation*}
    \omega_1 (g_1 (t), \dots, g_n (t))
    \quad \text{and} \quad
    \omega_2 (g_1 (t), \dots, g_n (t))
\end{equation*}
do not simultaneously vanish at any $t \in [a, b]$ (i.e., 
$\mathpzc{g} (t) \in \mathcal{M}$ for all $t \in [a, b]$). For brevity, 
we say that the triple $(\mathpzc{g} (\cdot), \omega_1, \omega_2)$
is invariant on $[a, b]$. (Here, we note that $\mathpzc{q}$ is not
needed in the triple notation, since $\mathpzc{q}$ is determined by $\omega_1$.) 
Likewise, we say that a map 
$\mathpzc{g}: [a,b] \times [c, d] \to Gr_n (\mathbb{R}^{2n})$
is invariant on $[a,b] \times [c, d]$ with respect to $\omega_1$
and $\omega_2$ provided the values 
\begin{equation} \label{quantities}
    \omega_1 (g_1 (s, t), \dots, g_n (s, t))
    \quad \text{and} \quad
    \omega_2 (g_1 (s, t), \dots, g_n (s, t))
\end{equation}
do not simultaneously vanish at any 
$(s, t) \in [a,b] \times [c, d]$ (i.e., 
$\mathpzc{g} (s, t) \in \mathcal{M}$ for all $(s,t) \in [a, b] \times [c, d]$). 
For brevity, we say that the triple $(\mathpzc{g} (\cdot, \cdot), \omega_1, \omega_2)$
is invariant on $[a, b] \times [c, d]$. Finally, we will say 
that a map $\mathpzc{g}: [a,b] \times [c, d] \to Gr_n (\mathbb{R}^{2n})$
is invariant on the boundary of $[a,b] \times [c, d]$ with respect to $\omega_1$
and $\omega_2$ provided the values in (\ref{quantities}) do not 
simultaneously vanish at any point $(s, t)$ on the boundary of 
$[a,b] \times [c, d]$. 
\end{definition}

\begin{remark} The terminology ``invariant" is taken 
from \cite{BCCJM2022}, where it arises naturally as the 
condition that a path in $P (\bigwedge^n (\mathbb{R}^{2n}))$
(i.e., the projective space of all one-dimensional subspaces
of the wedge space $\bigwedge^n (\mathbb{R}^{2n})$) 
associated to the flow $t \mapsto \mathpzc{g} (t)$ lies 
entirely in the {\it Maslov-Arnold space} introduced 
in \cite{BCCJM2022}. While this notion of the Maslov-Arnold
space is critical to the development of \cite{BCCJM2022},
we will only use it indirectly here, and so will omit a precise
definition. 
\end{remark}

In the event that the flow $t \mapsto \mathpzc{g} (t)$
is invariant on $[a, b]$ with respect to $\omega_1$
and $\omega_2$, the generalized Maslov index of \cite{BCCJM2022} can be computed 
as the winding number in projective space $\mathbb{R}P^1$ of the map 
\begin{equation} \label{extended-maslov}
    t \mapsto [\omega_1 (g_1 (t), \dots, g_n (t)) : \omega_2 (g_1 (t), \dots, g_n (t))]
\end{equation}
through $[0 : 1]$ (with appropriate conventions taken for counting arrivals and 
departures; see Section \ref{maslov-section} below). If the Maslov-Arnold 
space is a hyperplane Maslov-Arnold space as in (\ref{hyperplane-ma}) then 
this value is referred to as the {\it hyperplane index}. Since the currently
analysis will be entirely in the hyperplane setting, we will henceforth
refer to such counts as hyperplane indices. 

Following the convention of \cite{BCCJM2022}, we denote 
the hyperplane index by $\ind(\cdots)$, though our specific notation 
is adapted from \cite{HS2018, HS2022}, leading to
$\ind (\mathpzc{g} (\cdot), \mathpzc{q}; [a, b])$; i.e., 
$\ind (\mathpzc{g} (\cdot), \mathpzc{q}; [a, b])$ is a directed count 
of the number of times the subspace $\mathpzc{g} (t)$ has non-trivial 
intersection with $\mathpzc{q}$, counted without multiplicity, as $t$ 
increases from $a$ to $b$. The value $\ind (\mathpzc{g} (\cdot), \mathpzc{q}; [a, b])$
clearly depends on the choices of $\omega_1$ and $\omega_2$, but we 
will generally suppress this in the notation, taking it to generally 
be the case that these choices are clear from context. For 
$\omega_1$, we will always use (\ref{what-we-mean}), but there 
is considerable flexibility in the choice of $\omega_2$. In cases
in which two possibilites for $\omega_2$ are considered, we will 
denote the alternative choice as $\omega_3$, and in order to distinguish the 
resulting hyperplane indices, we will denote them respectively 
$\ind_{\omega_2} (\mathpzc{g} (\cdot), \mathpzc{q}; [a, b])$
and $\ind_{\omega_3} (\mathpzc{g} (\cdot), \mathpzc{q}; [a, b])$.

For many applications, we would like to compute the hyperplane index
associated with a pair of evolving spaces $\mathpzc{g}, \mathpzc{h}: [a, b] \to Gr_n (\mathbb{R}^{2n})$,
or more generally (as in the current setting) a pair of evolving 
spaces $\mathpzc{g}: [a, b] \to Gr_m (\mathbb{R}^{n})$ and 
$\mathpzc{h}: [a, b] \to Gr_{n-m} (\mathbb{R}^{n})$, where
$m \in \{1, 2, \dots, n-1\}$. Following the approach of Section 3.5
in \cite{Furutani2004} (as developed for the current setting 
in \cite{Howard2022a}), we can proceed by letting $\mathbf{G} (t)$
and $\mathbf{H} (t)$ respectively denote frames for $\mathpzc{g} (t)$
and $\mathpzc{h} (t)$, and specifying an evolving 
subspace $\mathpzc{f}: [a, b] \to Gr_{n} (\mathbb{R}^{2n})$ 
with frame 
\begin{equation} \label{F-defined}
    \mathbf{F} (t)
    := 
    \begin{pmatrix}
        \mathbf{G} (t) & \mathbf{0}_{n \times (n - m)} \\
        \mathbf{0}_{n \times m} & \mathbf{H} (t)
    \end{pmatrix}.
\end{equation}
Subsequently, we fix a target space $\tilde{\Delta} \in Gr_n (\mathbb{R}^{2n})$ 
with frame $\mathbf{\tilde{\Delta}} = \genfrac{(}{)}{0pt}{2}{-I_n}{I_n}$,
and we define the hyperplane index for the 
pair $\mathpzc{g}, \mathpzc{h}: [a, b] \to Gr_n (\mathbb{R}^{2n})$
to be 
\begin{equation} \label{extended-pairs}
    \ind (\mathpzc{g} (\cdot), \mathpzc{h} (\cdot); [a, b])
    := \ind (\mathpzc{f} (\cdot), \tilde{\Delta}; [a, b]),
\end{equation}
where the right-hand side is computed precisely as 
described above (i.e., as in \cite{BCCJM2022}), depending as always
on specified choices of $\omega_1$ and $\omega_2$. 

From (\ref{what-we-mean}), $\omega_1$ is taken to be 
\begin{equation} \label{omega1specified-intro}
    \omega_1 (f_1, f_2, \dots, f_n)
    := f_1 \wedge \dots \wedge f_n 
    \wedge \tilde{\delta}_1 \wedge \dots \wedge \tilde{\delta}_n,
\end{equation}
where the vectors $\{\tilde{\delta}_i\}_{i=1}^n$ comprise the columns
of $\tilde{\mathbf{\Delta}} = \genfrac{(}{)}{0pt}{2}{-I_n}{I_n}$. 
In order to take advantage of the additional flexibility 
with $\omega_2$, we proceed by fixing a 
(judiciously chosen) invertible $n \times n$ matrix $M$
and setting 
\begin{equation} \label{omega2specified-intro}
    \omega_2 (f_1, f_2, \dots, f_n)
    := f_1 \wedge \dots \wedge f_n
    \wedge \tilde{\sigma}_1 \wedge \dots \wedge \tilde{\sigma}_n,
\end{equation}
where the vectors $\{\tilde{\sigma}_i\}_{i=1}^n$ comprise the columns
of $\tilde{\mathbf{\Sigma}} = \genfrac{(}{)}{0pt}{2}{-M}{I_n}$. 

Returning to (\ref{nonhammy}), we will show in Section
\ref{ev-problems-section} that under our Assumptions 
{\bf (A)} through {\bf (D)} there exists a unique 
solution $\eta^- (x; \lambda)$ to (\ref{nonhammy})
so that 
\begin{equation} \label{eta-minus}
    \lim_{x \to -\infty} e^{- \mu_- (\lambda) x}
    \eta^- (x; \lambda) = v^- (\lambda),
\end{equation}
where $\mu_- (\lambda)$ and $v^- (\lambda)$ are as described 
in Assumption {\bf (C)}.
We will let $\mathpzc{g} (x; \lambda) \in Gr_1 (\mathbb{R}^{n})$ 
denote the path of Grassmannian subspaces with 
frame $\eta^- (x; \lambda)$. For the space $\mathpzc{h} (x; \lambda)$,
we begin by observing that under 
our Assumptions {\bf (A)} through {\bf (D)} we can take 
a linearly independent collection of solutions to 
(\ref{nonhammy}) $\{y_i^+ (x; \lambda)\}_{i=1}^n$, selected 
so that 
\begin{equation} \label{v-plus-specified}
    \lim_{x \to + \infty} e^{-\mu_+ (\lambda) x} y_n^+ (x; \lambda)
    = v^+ (\lambda),
\end{equation}
where $\mu_+ (\lambda)$ and $v^+ (\lambda)$ are as described 
in Assumption {\bf (C)}. With $y_n^+ (x; \lambda)$ distinguished in this way,
we take $\mathpzc{h} (x; \lambda) \in Gr_{n-1} (\mathbb{R}^{n})$ to be
the space spanned by the collection $\{y_i^+ (x; \lambda)\}_{i=1}^{n-1}$.
In addition, we characterize this space by designating
the wedge product 
\begin{equation} \label{mathcal-Y}
    \mathcal{Y}^+ (x; \lambda)
    := y_1^+ (x; \lambda) \wedge \dots \wedge y_{n-1}^+ (x; \lambda),
\end{equation}
about which we will establish some general notation in the following 
remark. 

\begin{remark} \label{wedge-not-wedge}
In many cases, we will associate a vector $v \in \mathbb{R}^n$ with a 
corresponding element of $\bigwedge^1 (\mathbb{R}^n)$, 
$v = v_1 e_1 + v_2 e_2 + \dots + v_n e_n$, and 
likewise we will associate a vector $\mathcal{V} \in \mathbb{R}^n$
with a corresponding element of $\bigwedge^{n-1} (\mathbb{R}^n)$,
\begin{equation*}
    \mathcal{V}_1 e_1 \wedge \dots \wedge e_{n-1}
    + \mathcal{V}_2 e_1 \wedge \dots \wedge e_{n-2} \wedge e_n
    + \dots + \mathcal{V}_n e_2 \wedge e_3 \wedge \dots \wedge e_n.
\end{equation*}
Though we will generally use the same notation for both the vector and 
its associated form, we will keep the two cases separate by 
consistently using lower case letters for 1-forms and   
capital calligraphic letters for $(n-1)$-forms. 
Throughout our analysis, we will often use the relation
\begin{equation} \label{the-wedge-product}
    v \wedge \mathcal{V}
    = \Big(\sum_{i=1}^n (-1)^{i+1} v_i \mathcal{V}_{n+1-i}\Big) e_1 \wedge \dots \wedge e_n.
\end{equation}
\end{remark}

With $\mathcal{Y}^+ (x; \lambda)$ defined as in 
(\ref{mathcal-Y}), it's straightforward to 
check that 
\begin{equation} \label{wedge-ode}
    \mathcal{Y}^{+\,\prime} (x; \lambda)
    = \Big((\tr A(x; \lambda))I + \tilde{A} (x; \lambda)\Big) \mathcal{Y}^{+},
\end{equation}
where 
\begin{equation} \label{tildeA}
    \tilde{A} (x; \lambda)
    = \begin{pmatrix}
        -a_{nn} & a_{(n-1)n} & - a_{(n-2)n} & \cdots & (-1)^n a_{1n} \\
        a_{n(n-1)} & - a_{(n-1)(n-1)} & a_{(n-2)(n-1)} & \cdots & (-1)^{n-1} a_{1(n-1)} \\
        -a_{n(n-2)} & a_{(n-1)(n-2)} & -a_{(n-2)(n-2)} & \cdots & (-1)^{n-2} a_{2(n-2)} \\
        \vdots & \vdots & \vdots & \vdots & \vdots \\
        (-1)^n a_{n1} & (-1)^{n-1} a_{(n-1)1} & (-1)^{n-2} a_{(n-2)1} & \cdots & - a_{11}
    \end{pmatrix}.
\end{equation}
More succinctly, the elements $\{\tilde{a}_{ij}\}_{i,j=1}^n$ of $\tilde{A}$ are related to the elements 
$\{a_{ij}\}_{i,j=1}^n$ of $A$ by the relation
\begin{equation} \label{AtildeA-relation}
    \tilde{a}_{ij}
    = (-1)^{i+j+1} a_{(n+1-j)(n+1-i)}.
\end{equation}
(See Section \ref{ev-problems-section} below for a straightforward verification.) We set 
\begin{equation} \label{tilde-mathcal-Y-plus-defined}
    \tilde{\mathcal{Y}}^+ (x; \lambda)
    := e^{- \int_0^x \tr A (\xi; \lambda) d\xi} \mathcal{Y}^+ (x; \lambda)
\end{equation}
so that 
\begin{equation} \label{mathcal-Y-system}
    \tilde{\mathcal{Y}}^{+\,\prime}
    = \tilde{A} (x; \lambda) \tilde{\mathcal{Y}}^+,
\end{equation}
and we will see in Proposition \ref{ode-proposition2} below that 
\begin{equation*}
    \lim_{x \to + \infty} e^{\mu_+ (\lambda) x} \tilde{\mathcal{Y}}^+ (x; \lambda)
    = \tilde{\mathcal{V}}^+ (\lambda),
\end{equation*}
where $\mu_+ (\lambda)$ is as in Assumption {\bf (C)} and $\tilde{\mathcal{V}}^+ (\lambda)$
is a right eigenvector (uniquely defined up to 
a scaling constant) of 
\begin{equation*}
\tilde{A}_+ (\lambda) 
:= \lim_{x \to + \infty} \tilde{A} (x; \lambda),
\end{equation*}
associated to the eigenvalue $- \mu_+ (\lambda)$, which is the left-most 
eigenvalue of $\tilde{A}_+ (\lambda)$. 

In order to work with $\omega_2$ as specified in (\ref{omega2specified-intro}), we 
let $M$ denote the matrix introduced for $\omega_2$ and set 
\begin{equation} \label{mathcal-y-m-specified}
    \mathcal{Y}_M^+ (x; \lambda)
    := (M y_1^+ (x; \lambda)) \wedge \dots \wedge (M y_{n-1}^+ (x; \lambda)).
\end{equation}
Then $\mathcal{Y}_M^+ (x; \lambda)$ satisfies the same relations as 
$\mathcal{Y}^+ (x; \lambda)$, except with $A (x; \lambda)$ replaced
everywhere with 
\begin{equation*}
    \mathcal{A} (x; \lambda)
    := M A(x; \lambda) M^{-1}.
\end{equation*}
In particular, if we set 
\begin{equation*}
    \tilde{\mathcal{Y}}_M^+ (x; \lambda)
    := e^{- \int_0^x \tr \mathcal{A} (\xi; \lambda) d\xi}
    \mathcal{Y}_M^+ (x; \lambda),
\end{equation*}
then 
\begin{equation*}
    \tilde{\mathcal{Y}}_M^{+\,\prime}
    = \tilde{\mathcal{A}} (x; \lambda) \tilde{\mathcal{Y}}_M^+,
\end{equation*}
where $\tilde{\mathcal{A}} (x; \lambda)$ is related to $\mathcal{A} (x; \lambda)$
in the same way that $\tilde{A} (x; \lambda)$ is related to $A (x; \lambda)$
(via (\ref{AtildeA-relation})). According to Proposition \ref{ode-proposition2}
below, we have  
\begin{equation*}
    \lim_{x \to + \infty} e^{\mu_+ (\lambda) x} \tilde{\mathcal{Y}}_M^+ (x; \lambda)
    = \tilde{\mathcal{V}}_M^+ (\lambda),
\end{equation*}
where $\tilde{\mathcal{V}}_M^+ (\lambda)$ a right eigenvector (uniquely defined up to 
a scaling constant) of 
\begin{equation} \label{tilde-mathcal-A-asymptotic}
    \tilde{\mathcal{A}}_+ (\lambda)
    := \lim_{x \to + \infty} \tilde{\mathcal{A}} (x; \lambda),
\end{equation}
associated to the eigenvalue $- \mu_+ (\lambda)$. 

In our general notation, we now take $\mathbf{G} (x; \lambda) = \eta^- (x; \lambda)$
and 
\begin{equation*}
    \mathbf{H} (x; \lambda)
    = \begin{pmatrix}
    y_1^+ (x; \lambda) & y_2^+ (x; \lambda) & \dots & y_{n-1}^+ (x; \lambda)
    \end{pmatrix},
\end{equation*}
and for some value $c > 0$ to be chosen sufficiently large during
the analysis, we set 
\begin{equation*}
    \mathbf{F}^{c} (x; \lambda)
    := \begin{pmatrix}
        \mathbf{G} (x; \lambda) & 0_{n \times (n-1)} \\
        0_{n \times 1} & \mathbf{H} (c; \lambda)
    \end{pmatrix}.
\end{equation*}
With $\omega_1$ and $\omega_2$ as in (\ref{omega1specified-intro})
and (\ref{omega2specified-intro}), we correspondingly set 
\begin{equation} \label{tilde-omega1-defined-c}
    \tilde{\omega}_1^{c} (x; \lambda)
    := \omega_1 (f_1^c (x; \lambda), \dots, f_n^c (x; \lambda))
    = \eta^- (x; \lambda) \wedge \mathcal{Y}^+ (c; \lambda)
\end{equation}
and 
\begin{equation} \label{tilde-omega2-defined-c}
    \tilde{\omega}_2^{c} (x; \lambda)
     := \omega_2 (f_1^c (x; \lambda), \dots, f_n^c (x; \lambda))
    = \eta^- (x; \lambda) \wedge \mathcal{Y}_M^+ (c; \lambda),
\end{equation}
where the vector-functions $\{f_j^c (x; \lambda)\}_{j=1}^n$ comprise the columns of 
$\mathbf{F}^c (x; \lambda)$, and $\mathcal{Y}^+$ and $\mathcal{Y}_M^+$
are respectively defined in (\ref{mathcal-Y}) and (\ref{mathcal-y-m-specified}).
Associated with $\tilde{\omega}_1^{c} (x; \lambda)$ and $\tilde{\omega}_2^{c} (x; \lambda)$,
we define the corresponding normalized functions 
\begin{equation} \label{psi1-2-tilde-c-defined}
    \tilde{\psi}_i^c (x; \lambda)
    = \frac{\tilde{\omega}_i^c (x; \lambda)}{|\eta^- (x; \lambda)||\mathcal{Y}^+ (c; \lambda)|},
    \quad i = 1, 2.
\end{equation}
Here, for brevity, the notation of (\ref{tilde-omega1-defined-c}) and (\ref{tilde-omega2-defined-c})
takes slight liberties with interpretations of the left and right sides. This convention
is discussed in detail in Section \ref{maslov-section}.

We will be interested separately in limits as $c$ tends to 
$+ \infty$ and as $x$ tends to either $- \infty$ or $+\infty$, prompting 
the following notational conventions. First, for the limit as 
$c$ tends to $+ \infty$, we observe that we can write 
\begin{equation} \label{psi1-tilde-c-defined}
    \tilde{\psi}_1^c (x; \lambda)
    = \frac{\eta^- (x; \lambda) \wedge (e^{\mu_+ (\lambda) c}\tilde{\mathcal{Y}}^+ (c; \lambda))}
    {|\eta^- (x; \lambda)||e^{\mu_+ (\lambda) c} \tilde{\mathcal{Y}}^+ (c; \lambda)|},
\end{equation}
from which it's clear that 
\begin{equation} \label{psi1-tilde-plus-defined}
    \tilde{\psi}_1^+ (x; \lambda)
    := \lim_{c \to + \infty} \tilde{\psi}_1^c (x; \lambda)
    = \frac{\tilde{\omega}_1^+ (x; \lambda)}{|\eta^- (x; \lambda)||\tilde{\mathcal{V}}^+ (\lambda)|},
\end{equation}
where we've set 
\begin{equation} \label{omega1-tilde-defined}
    \tilde{\omega}_1^+ (x; \lambda)
    := \eta^- (x; \lambda) \wedge \tilde{\mathcal{V}}^+ (\lambda).
\end{equation}
Likewise, in precisely the same way, we see that 
\begin{equation} \label{psi2-tilde-plus-defined}
    \tilde{\psi}_2^+ (x; \lambda)
    := \lim_{c \to + \infty} \tilde{\psi}_2^c (x; \lambda)
    = \frac{\tilde{\omega}_2^+ (x; \lambda)}{|\eta^- (x; \lambda)||\tilde{\mathcal{V}}^+ (\lambda)|},
\end{equation}
where we've set 
\begin{equation} \label{omega2-tilde-defined}
    \tilde{\omega}_2^+ (x; \lambda)
    := \eta^- (x; \lambda) \wedge \tilde{\mathcal{V}}_M^+ (\lambda).
\end{equation}
Next, for $\tilde{\psi}_1^+ (x; \lambda)$, we can write 
\begin{equation*}
 \tilde{\psi}_1^+ (x; \lambda)
    = \frac{e^{- \mu_- (\lambda) x}\eta^- (x; \lambda) \wedge \tilde{\mathcal{V}}^+ (\lambda)}
    {|e^{- \mu_- (\lambda) x} \eta^- (x; \lambda)||\tilde{\mathcal{V}}^+ (\lambda)|},   
\end{equation*}
from which it's clear that 
\begin{equation} \label{psi1-tilde-plus-minus-defined}
    \tilde{\psi}_1^{+, -} (\lambda)
    := \lim_{x \to - \infty} \tilde{\psi}_1^+ (x; \lambda)
    = \frac{v^- (\lambda) \wedge \tilde{\mathcal{V}}^+ (\lambda)}{|v^- (\lambda)| |\tilde{\mathcal{V}}^+ (\lambda)|},
\end{equation}
and similarly 
\begin{equation} \label{psi2-tilde-plus-minus-defined}
    \tilde{\psi}_2^{+, -} (\lambda)
    := \lim_{x \to - \infty} \tilde{\psi}_2^+ (x; \lambda)
    = \frac{v^- (\lambda) \wedge \tilde{\mathcal{V}}_M^+ (\lambda)}{|v^- (\lambda)| |\tilde{\mathcal{V}}^+ (\lambda)|}.
\end{equation}

Under our most general assumptions, the limits of $\tilde{\psi}_1^+ (x; \lambda)$
and $\tilde{\psi}_2^+ (x; \lambda)$
as $x$ tends to $+ \infty$ don't necessarily exist, but in cases for 
which they do we will designate them respectively as $\tilde{\psi}_1^{+, +} (\lambda)$
and $\tilde{\psi}_2^{+, +} (\lambda)$.

With this notation in place, we are able to state the final set of assumptions
that will be needed for our main theorem. These assumptions, which primarily 
address invariance, are somewhat technical, and so we immediately follow 
the statement by a lengthy remark addressing both how they should 
be interpreted and how they can be verified. 

\medskip
\noindent
{\bf (E)} Suppose the following conditions hold for 
$[\lambda_1, \lambda_2] \subset I$, $\lambda_1 < \lambda_2$: (1)
for $i = 1, 2$, the limits 
\begin{equation*}
    \tilde{\psi}_1^{+, +} (\lambda_i) 
    := \lim_{x \to + \infty} \tilde{\psi}_1^{+} (x; \lambda_i)
    \quad \textrm{and} \quad
     \tilde{\psi}_2^{+, +} (\lambda_i) 
    := \lim_{x \to + \infty} \tilde{\psi}_2^{+} (x; \lambda_i)
\end{equation*}
are well defined; (2) for $i = 1, 2$, the values $\tilde{\psi}_1^+ (x; \lambda_i)$
and $\tilde{\psi}_2^+ (x; \lambda_i)$ do not simultaneously vanish 
at any $x \in \mathbb{R}$, and the limit functions $\tilde{\psi}_1^{+, +} (\lambda_i)$
and $\tilde{\psi}_2^{+, +} (\lambda_i)$ do not simultaneously vanish; 
(3) the values $\tilde{\psi}_1^{+,-} (\lambda)$ 
and $\tilde{\psi}_2^{+,-} (\lambda)$
do not simultaneously vanish at any $\lambda \in [\lambda_1, \lambda_2]$;
and (4) there exists a constant $c_0 > 0$ sufficiently large so that 
for all $c \ge c_0$ and all $\lambda \in [\lambda_1, \lambda_2]$, 
the values $\tilde{\psi}_1^c (c; \lambda)$
and $\tilde{\psi}_2^c (c; \lambda)$ do not simultaneously vanish. 

\begin{remark} \label{assumption-E-remark}
We will show in Section \ref{proof-section}
that under our Assumption {\bf (C)}, {\bf (E)}(1)
holds for any $\lambda_i$ that is not an eigenvalue 
of (\ref{nonhammy}), while if $\lambda_i$ is 
an eigenvalue of (\ref{nonhammy}), the validity 
of {\bf (E)}(1) will be determined by the behavior 
of its associated eigenfunction in the limit as 
$x$ tends toward $+ \infty$. (For the applications
we have in mind, $\lambda_1$ will be taken sufficiently
negative so that it is not an eigenvalue, and 
$\lambda_2$ will be taken to be $0$, which will 
be an eigenvalue.) For {\bf (E)}(2), we can often 
show that $\lambda_1$ can be taken sufficiently 
negative so that $\psi_1^+ (x; \lambda_1) \ne 0$ for 
all $x \in \mathbb{R}$, giving half of the condition.
On the other hand, we will find that 
$\psi_1^+ (x; \lambda_2)$ and $\psi_2^+ (x; \lambda_2)$
can often be evaluated exactly for all $x \in \mathbb{R}$,
allowing us to check the second half. For {\bf (E)}(3),
we will see that in many important applications, 
including the ones we consider in 
Section \ref{applications-section} of the current analysis, 
we can show that $\psi_1^{+,-} (\lambda) \ne 0$
for all $\lambda \in [\lambda_1, \lambda_2]$, which 
is more than we need. 
The most challenging assumption to verify is {\bf (E)}(4), because
it's the nature of the method to find indirect information about
the spectrum of (\ref{nonhammy}) without directly computing 
the values of 
$\psi_1^{+} (x; \lambda)$ and $\psi_2^{+} (x; \lambda)$ as 
$\lambda$ increases from $\lambda_1$ to $\lambda_2$. 
However, in practice we make the following important observation. 
If (\ref{nonhammy}) has no eigenvalues on an interval $[\lambda_1, \lambda_2]$,
then we can choose $c$ sufficiently large so that $\psi_1^{+} (c; \lambda) \ne 0$
for all $\lambda \in [\lambda_1, \lambda_2]$. In this way, we have a 
useful dichotomy: if Assumption {\bf (E)}(4) fails to hold, then we can 
conclude that (\ref{nonhammy}) certainly has at least one eigenvalue 
on the interval $[\lambda_1, \lambda_2]$. I.e., if an eigenvalue
is detected on $[\lambda_1, \lambda_2]$ under Assumption {\bf (E)}(4), 
and the other parts of Assumptions {\bf (E)} are shown to hold, then  
either there is an eigenvalue on $[\lambda_1, \lambda_2]$ because 
Assumptions {\bf (E)} hold, or there is an eigenvalue on $[\lambda_1, \lambda_2]$ 
because Assumption {\bf (E)}(4) fails to hold. In either case 
we can rigorously conclude the existence of an eigenvalue on the 
interval $[\lambda_1, \lambda_2]$. 
\end{remark}

We will see in Section \ref{proof-section} that under 
Assumptions {\bf (A)} through {\bf (E)}, there exists a constant $C > 0$ sufficiently
large so that for all $c \ge C$ the sum of hyperplane indices 
\begin{equation*} \label{sum-of-indices}
\begin{aligned}
\ind& (\mathpzc{g} (-c; \cdot), \mathpzc{h} (c; \cdot); [\lambda_1, \lambda_2])
+ \ind (\mathpzc{g} (\cdot; \lambda_2), \mathpzc{h} (c; \lambda_2); [-c, c]) \\
& \quad - \ind (\mathpzc{g} (c; \cdot), \mathpzc{h} (c; \cdot); [\lambda_1, \lambda_2])
- \ind (\mathpzc{g} (\cdot; \lambda_1), \mathpzc{h} (c; \lambda_1); [-c, c])
\end{aligned}
\end{equation*}
remains constant. We will denote this constant $\mathfrak{m}$ and 
refer to it as the {\it boundary invariant}. 

In what follows, we will define the notation 
\begin{equation} \label{asymptotic-index}
    \ind (\mathpzc{g} (\cdot; \lambda), \mathpzc{h}^+ (\lambda); [-\infty, +\infty])
\end{equation}
to mean the winding number in projective space $\mathbb{R}P^1$ of the map 
\begin{equation*}
    x \mapsto [\tilde{\psi}_1^+ (x; \lambda):\tilde{\psi}_2^+ (x; \lambda)]
\end{equation*}
as $x$ increases from $-\infty$ to $+\infty$, including a possible 
departure associated with the asymptotic limit on the left and a possible 
arrival associated with the asymptotic limit on the right. Likewise, 
we denote by 
\begin{equation} \label{asymptotic-bottom-shelf}
    \ind (\mathpzc{g}^- (\cdot), \mathpzc{h}^+ (\cdot); [\lambda_1, \lambda_2])
\end{equation}
the winding number in projective space $\mathbb{R}P^1$ of the map 
\begin{equation*}
    \lambda \mapsto [\tilde{\psi}_1^{+,-} (\lambda):\tilde{\psi}_2^{+,-} (\lambda)]
\end{equation*}
as $\lambda$ increases from $\lambda_1$ to $\lambda_2$. 

Regarding (\ref{asymptotic-index}), this value is computed by 
tracking the rotation of a point $p^+ (x; \lambda)$ around 
$S^1$ as $x$ increases from $- \infty$ to $+ \infty$. Under our 
assumptions, the limits 
\begin{equation*}
    p^{+,-} (\lambda_i)
    := \lim_{x \to - \infty} p^+ (x; \lambda_i)
    \quad \textrm{and} \quad
    p^{+,+} (\lambda_i)
    := \lim_{x \to + \infty} p^+ (x; \lambda_i)
\end{equation*}
both exist. If $p^{+,\pm} (\lambda_i) \ne (-1,0)$,
then there is no asymptotic crossing point at the 
associated side, and the hyperplane index can be 
computed as usual on that side. In the event that 
$(-1,0)$ is achieved as one of these asymptotic 
limits, the situation is slightly more complicated.
As a specific case, suppose 
\begin{equation*}
     p^{+,+} (\lambda_1) = (-1,0).
\end{equation*}
It may be the case that as $x$ increases toward $+\infty$ 
the point $p^+(x; \lambda_1)$ crosses $(-1,0)$ an infinite 
number of times, so that no true crossing count is valid.
Nonetheless, since the limit 
\begin{equation*}
    \lim_{x \to + \infty} p^+ (x; \lambda_1)
\end{equation*}
is well defined, we can {\it define} 
\begin{equation*}
\ind (\mathpzc{g} (\cdot; \lambda_1), \mathpzc{h}^+ (\lambda_1); [0, +\infty])    
\end{equation*}
in the following way. (The restriction to one infinite endpoint is 
simply to allow us to focus on a single side; the case of $-\infty$ 
is treated similarly.) Given any $\epsilon > 0$, there exists some value 
$L$ sufficiently large so that
\begin{equation} \label{condition-star}
    |p^+ (x; \lambda_1) - (-1,0)| < \epsilon
\end{equation}
for all $x \ge L$. There are three possibilities for the location
of $p^+ (L; \lambda_1)$: (a) $p^+ (L; \lambda_1)$ is a small distance from 
$(-1,0)$ in the clockwise direction; or (b) $p^+ (L; \lambda_1) = (-1,0)$; 
or (c) $p^+ (L; \lambda_1)$ is a small distance from $(-1,0)$ in the 
counterclockwise direction. We define 
\begin{equation*}
    \ind (\mathpzc{g} (\cdot; \lambda_1), \mathpzc{h}^+ (\lambda_1); [0, + \infty])    
    := \ind (\mathpzc{g} (\cdot; \lambda_1), \mathpzc{h}^+ (\lambda_1); [0, L])
    + \begin{cases}
    1 & \textrm{case (a)} \\
    0 & \textrm{cases (b) and (c)}. 
    \end{cases}
\end{equation*}
Here, we emphasize that this definition does not depend on the particular 
choice of $L$, only on (\ref{condition-star}). 

We are now in a position to state the main theorem of the analysis. 

\begin{theorem} \label{main-theorem}
For (\ref{nonhammy}), let Assumptions {\bf (A)} through {\bf (D)} hold, 
and for some fixed interval $[\lambda_1, \lambda_2] \subset I$, $\lambda_1 < \lambda_2$,
suppose Assumption {\bf (E)} holds as well. If $\mathcal{N}_{\#} ([\lambda_1, \lambda_2])$
denotes the number of eigenvalues that (\ref{nonhammy}) has on the interval 
$[\lambda_1, \lambda_2]$, counted without multiplicity, then 
\begin{equation}
\begin{aligned}
\mathcal{N}_{\#} ([\lambda_1, \lambda_2])
&\ge 
\Big|\ind (\mathpzc{g} (\cdot; \lambda_2), \mathpzc{h}^+ (\lambda_2); [- \infty, + \infty])
- \ind (\mathpzc{g} (\cdot; \lambda_1), \mathpzc{h}^+ (\lambda_1); [- \infty, + \infty]) \\
& \quad \quad + \ind (\mathpzc{g}^- (\cdot), \mathpzc{h}^+ (\cdot); [\lambda_1, \lambda_2]) - \mathfrak{m} \Big|.
\end{aligned}
\end{equation}
\end{theorem}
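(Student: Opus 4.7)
The plan is to carry out a Maslov-box argument adapted to the hyperplane-index framework of \cite{BCCJM2022}, taking as the box the rectangle $R_c := [-c, c] \times [\lambda_1, \lambda_2]$ in $(x, \lambda)$-space for $c$ sufficiently large that both the boundary invariant $\mathfrak{m}$ has stabilized and Assumption {\bf (E)}(4) applies. Assumptions {\bf (E)}(1)--(4) together guarantee that the projective map $(x, \lambda) \mapsto [\tilde{\psi}_1^c(x; \lambda) : \tilde{\psi}_2^c(x; \lambda)]$ is well-defined and continuous along each of the four edges of $\partial R_c$, so that each of the four edge hyperplane-indices appearing in the definition of $\mathfrak{m}$ is a well-defined integer. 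Denote these, in order of appearance there, by $L(c), T(c), R(c), B(c)$; then by the definition of the boundary invariant,
\begin{equation*}
    R(c) = L(c) + T(c) - B(c) - \mathfrak{m}, \qquad c \ge C.
\end{equation*}

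Next I would interpret $R(c)$ as a bound on the eigenvalue count. A non-trivial intersection of $\mathpzc{g}(c; \lambda_*)$ with $\mathpzc{h}(c; \lambda_*)$ is equivalent to $\eta^-(c; \lambda_*) \wedge \mathcal{Y}^+(c; \lambda_*) = 0$, i.e.\ to $\eta^-(c; \lambda_*)$ lying in the span of $\{y_j^+(c; \lambda_*)\}_{j=1}^{n-1}$. Because both subspaces are transported along (\ref{nonhammy}) by the same linear flow, this intersection either holds at every $x \in \mathbb{R}$ or at none, so each crossing value of $R(c)$ is an eigenvalue in the paper's sense. Since the hyperplane index counts such crossings with direction but without multiplicity,
\begin{equation*}
    |R(c)| \le \mathcal{N}_{\#}([\lambda_1, \lambda_2]).
\end{equation*}

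The remaining step is to identify the limits of $L(c), T(c), B(c)$ as $c \to \infty$. Using the uniform-on-compacta convergence $\tilde{\psi}_i^c(x; \lambda) \to \tilde{\psi}_i^+(x; \lambda)$ established in (\ref{psi1-tilde-plus-defined})--(\ref{psi2-tilde-plus-defined}) via the exponential-rate estimates from Proposition \ref{ode-proposition2}, one shows that $T(c) \to \ind(\mathpzc{g}(\cdot; \lambda_2), \mathpzc{h}^+(\lambda_2); [-\infty, +\infty])$ and likewise for $B(c)$ with $\lambda_2$ replaced by $\lambda_1$; similarly, $L(c) \to \ind(\mathpzc{g}^-(\cdot), \mathpzc{h}^+(\cdot); [\lambda_1, \lambda_2])$ by additionally invoking (\ref{psi1-tilde-plus-minus-defined})--(\ref{psi2-tilde-plus-minus-defined}). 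Substituting these limits into the displayed identity for $R(c)$, combining with $|R(c)| \le \mathcal{N}_{\#}([\lambda_1, \lambda_2])$, and taking absolute values produces the stated inequality.

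The hard part will be the asymptotic crossing accounting for $T(c)$ as $c \to \infty$ (and the analogous issue on the left edge), because $\lambda = \lambda_2$ is typically an eigenvalue in applications, so that $\tilde{\psi}_1^+(x; \lambda_2) \to 0$ as $x \to +\infty$. The projective curve then accumulates at $[0:1]$, and the case (a)/(b)/(c) convention preceding the theorem statement must be shown to be precisely the one under which the moving-endpoint sequence $T(c)$ converges to the asymptotic index without spurious ghost crossings generated, or lost, at the endpoint $x = c$. Organizing this convergence carefully, alongside the verification that the boundary invariant $\mathfrak{m}$ really does stabilize for $c \ge C$ (which likely requires showing that the contributions of $L(c), T(c), B(c)$ coming from the extra strips $[\pm c, \pm(c + \delta)]$ cancel for $c$ large enough by the same asymptotic arguments), is the principal technical work.
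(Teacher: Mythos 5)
Your proposal follows the same overall strategy as the paper: set up a Maslov box, define the boundary invariant $\mathfrak{m}$, relate the finite-box shelf indices to the asymptotic indices as $c \to \infty$, and use the directed-crossing count on the top shelf as a lower bound (in absolute value) on $\mathcal{N}_\#$. Your identification of the asymptotic limits of the bottom/left/right shelf indices, your flow-invariance observation explaining why top-shelf crossings are exactly the eigenvalues, and your flagging of the asymptotic-endpoint bookkeeping when $\lambda_i$ is an eigenvalue all match the structure of the paper's Lemmas \ref{technical-lemma1} and \ref{equivalence-lemma}. One remark on phrasing: because all the quantities are $\mathbb{Z}$-valued, the ``limits'' you invoke must actually be eventual equalities, which is how the paper frames them.

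There is, however, a genuine gap in how you propose to establish that $\mathfrak{m}(c)$ stabilizes. You write $\mathfrak{m} = L(c) + T(c) - R(c) - B(c)$ and plan to verify stabilization by showing the contributions of $L, T, B$ ``from the extra strips cancel.'' But even if $L, T, B$ all stabilize, the definition of $\mathfrak{m}$ also involves $R(c)$ (the top-shelf index), and solving for $R(c) = L(c) + T(c) - B(c) - \mathfrak{m}$ only shows $R(c)$ stabilizes if you already know $\mathfrak{m}$ does — a circularity. The paper breaks this by proving the $c$-independence of $R(c)$ directly, with a separate argument: crossing locations on the top shelf are $c$-independent because they are precisely the eigenvalues, and the crossing \emph{directions} are shown to be $c$-independent using homotopy invariance of the hyperplane index on a small box $[\lambda_0 - h, \lambda_0 + h] \times [c_2, c_2 + \Delta c_2]$ around each isolated eigenvalue $\lambda_0$, where Assumption {\bf (E)}(4) supplies the invariance needed in the interior (cf.\ the paper's Figure \ref{micro-box-figure}). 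Without supplying this or an equivalent mechanism, the equation $R(c) = L(c) + T(c) - B(c) - \mathfrak{m}$ cannot be used to conclude anything about $R(c)$, and the final substitution into $|R(c)| \le \mathcal{N}_\#([\lambda_1, \lambda_2])$ is unjustified.
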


In the remainder of this introduction, we provide some background
and context for our analysis and also set out a plan for the 
paper. For the former, our analysis is motivated by oscillation 
results for linear Hamiltonian systems, which have their origins
in the classical work of Sturm and Morse, respectively 
\cite{Sturm1836} and \cite{Morse1934}. As discussed at length in 
\cite{Howard2022b}, numerous authors have contributed to 
the development and application of such results, and the theory 
for linear Hamiltonian systems has become well established
(see, for example, \cite{CH2007, HLS2018, Howard2022b} for development of
the general theory, \cite{BCJLMS2018, BJ1995, CJ2018, J1988a, J1988b, JM2012}
for applications, and \cite{BM2022, BM2013, CDB2009, CDB2011, Chardard2009}
for associated numerical calculations. 

Such results have all been limited either to linear Hamiltonian 
systems or systems with underlying Hamiltonian structure, but 
the recent result \cite{BCCJM2022} provides a tool applicable
in fully non-Hamiltonian settings such as those considered 
here. In \cite{BCCJM2022}, the authors employed their generalized
Maslov index to obtain an oscillation result for non-Hamiltonian 
systems on a bounded domain, and the current analysis seems to 
be the first effort to obtain such oscillation results for 
a class of non-Hamiltonian systems on $\mathbb{R}$. 

The paper is organized as follows. In Section \ref{maslov-section},
we review elements of the hyperplane index that will be used 
in our development, and in Section \ref{ev-problems-section} 
we summarize some general results on solutions of (\ref{nonhammy})
that will be necessary for the proof of Theorem \ref{main-theorem}.
In Section \ref{proof-section}, we prove Theorem \ref{main-theorem},
and in Section \ref{evans-section} we discuss the role of the 
Evans function in the current setting. Finally, in Section 
\ref{applications-section} we provide two illustrative 
applications, first to the generalized KdV equation, and 
second to the KdV-Burgers equation.

\section{Properties of the Hyperplane Index} 
\label{maslov-section}

In this section, we emphasize properties of the hyperplane 
index that will have a role in our analysis, leaving  
a full development of the theory to \cite{BCCJM2022}. 
In particular, a proper discussion of this object requires
some items from algebraic topology that are (1) already 
covered clearly and concisely in \cite{BCCJM2022}; 
and (2) not critical to the development of our results. 
Aside from an occasional clarifying comment for interested 
readers, these items are omitted from the current 
discussion. 

As in the introduction, we let $\mathpzc{g}: [a, b] \to Gr_n (\mathbb{R}^{2n})$ 
denote a continuous path of Grassmannian subspaces, and we let 
$\mathpzc{q} \in Gr_n (\mathbb{R}^{2n})$ denote a fixed {\it target}
subspace. We let $\omega_1$ denote a skew-symmetric $n$-linear map
such that $\ker \omega_1 = \mathpzc{q}$, and we let $\omega_2$ denote
a second skew-symmetric $n$-linear map so that the triple
$(\mathpzc{g} (\cdot), \omega_1, \omega_2)$ satisfies 
the invariance property described in Definition \ref{invariance-definition}
on the interval $[a, b]$ (i.e., $\mathpzc{g} (t) \in \mathcal{M}$
for all $t \in [a, b]$, where $\mathcal{M}$ is as in (\ref{hyperplane-ma})). 
Recalling that our notational convention is to fix a choice of 
frames $\mathbf{G} (t)$ for $\mathpzc{g} (t)$ with columns 
$\{g_i (t)\}_{i=1}^n$, we set 
\begin{equation} \label{tilde-omega-again}
    \tilde{\omega}_i (t) 
    := \omega_i (g_1 (t), g_2 (t), \dots, g_n (t)),
    \quad i = 1, 2.
\end{equation}
I.e., $\omega_i$ will consistently denote a skew-symmetric $n$-linear map,
and $\tilde{\omega}_i$ will consistently denote the evaluation of 
$\omega_i$ along a particular path mapping $[a, b]$ to $Gr_n (\mathbb{R}^{2n})$.

The hyperplane index $\ind (\mathpzc{g} (\cdot), \mathpzc{q}; [a, b])$
is then computed as described in (\ref{extended-maslov}), with appropriate
conventions for counting arrivals and departures to and from the point 
in projective space $[0:1]$ (described below). In practice, we proceed
by tracking a point $p(t) \in S^1$, which can be precisely specified as 
\begin{equation} \label{p-specified}
p(t) := 
\begin{cases}
\Big( \frac{\tilde{\omega}_2 (t)}{\sqrt{\tilde{\omega}_1 (t)^2 + \tilde{\omega}_2 (t)^2}}, 
  \frac{\tilde{\omega}_1 (t)}{\sqrt{\tilde{\omega}_1 (t)^2 + \tilde{\omega}_2 (t)^2}} \Big) & \tilde{\omega}_2 (t) \le 0 \\
- \Big( \frac{\tilde{\omega}_2 (t)}{\sqrt{\tilde{\omega}_1 (t)^2 + \tilde{\omega}_2 (t)^2}}, 
   \frac{\tilde{\omega}_1 (t)}{\sqrt{\tilde{\omega}_1 (t)^2 + \tilde{\omega}_2 (t)^2}} \Big)  & \tilde{\omega}_2 (t) > 0,
\end{cases}
\end{equation}
or equivalently with $\tilde{\omega}_1 (t)$ and $\tilde{\omega}_2 (t)$
replaced by the scaled variables 
\begin{equation} \label{scaled-variables}
    \tilde{\psi}_1 (t)
    = \frac{\tilde{\omega}_1 (t)}{|g_1 (t) \wedge g_2 (t) \wedge \dots \wedge g_n (t)|}
    \quad 
     \tilde{\psi}_2 (t)
    = \frac{\tilde{\omega}_2 (t)}{|g_1 (t) \wedge g_2 (t) \wedge \dots \wedge g_n (t)|}.
\end{equation}
In the usual way, we think of mapping $\mathbb{R}P^1$ to the left half
of the unit circle and then closing to $S^1$ by equating the points 
$(0,1)$ and $(0,-1)$. It's clear that $t_*$ is a {\it crossing point}
of the flow (i.e., a point so that $\mathpzc{g} (t_*) \cap \mathpzc{q} \ne \{0\}$)
if and only if $p (t_*) = (-1, 0)$, so the hyperplane index is computed 
as a count of the number of times the point $p(t)$ crosses $(-1, 0)$. 
We take crossings in the clockwise direction to be negative and crossings
in the counterclockwise direction to be positive. Regarding
behavior at the endpoints, if $p(t)$
rotates away from $(-1,0)$ in the clockwise direction as $t$ increases
from $0$, then the hyperplane index decrements by 1, while if $p(t)$ 
rotates away from $(-1,0)$ in the counterclockwise direction as $t$ increases
from $0$, then the hyperplane index does not change. Likewise, 
if $p(t)$ rotates into $(-1,0)$ in the 
counterclockwise direction as $t$ increases
to $1$, then the hyperplane index increments by 1, while if 
$p(t)$ rotates into $(-1,0)$ in the clockwise direction as $t$ increases
to $1$, then the hyperplane index does not change. Finally, 
it's possible that $p(t)$ will arrive 
at $(-1,0)$ for $t = t_*$ and remain at $(-1,0)$ as $t$ traverses
an interval. In these cases, the hyperplane index only increments/decrements upon arrival or 
departure, and the increments/decrements are determined 
as for the endpoints (departures determined as with $t=0$,
arrivals determined as with $t = 1$). 

\begin{remark} \label{bccjm-p}
In \cite{BCCJM2022}, the authors view $S^1$ as a circle in $\mathbb{C}$, 
and make the specification
\begin{equation*}
    \tilde{p}(t) 
    := \Big(\frac{\tilde{\omega}_1 (t) - i \tilde{\omega}_2 (t)}{|\tilde{\omega}_1 (t) - i \tilde{\omega}_2 (t)|}\Big)^2.
\end{equation*}
This choice leads to precisely the same dynamics as those described above, and 
in particular to the same values of the hyperplane index. 
\end{remark}

We emphasize, as in the introduction, that in contrast with the
Maslov index in the setting of Lagrangian flow, 
the hyperplane index does not keep track of the dimensions 
of the intersections. 

To set some notation, we let $\omega_1$ and $\omega_2$ be as 
above, and denote 
by $\mathcal{P}_{\omega_1, \omega_2} ([a, b])$ 
the collection of all continuous paths $\mathpzc{g}: [a, b] \to \mathcal{M}$, 
with $\mathcal{M}$ as in (\ref{hyperplane-ma}). (I.e., 
$\mathcal{P}_{\omega_1, \omega_2} ([a, b])$ comprises 
the collection of all continuous paths $\mathpzc{g}: [a, b] \to Gr_n (\mathbb{R}^{2n})$ 
that are invariant with respect to the skew-symmetric $n$-linear maps
$\omega_1$ and $\omega_2$.) The hyperplane index of \cite{BCCJM2022} has the following
properties (see Proposition 3.8 in \cite{BCCJM2022}). 

\medskip
\noindent
{\bf (P1)} (Path Additivity) If $\mathpzc{g} \in \mathcal{P}_{\omega_1, \omega_2} ([a, b])$
and $\mathpzc{q} = \ker \omega_1$, then for any $\tilde{a}, \tilde{b}, \tilde{c} \in [a, b]$, 
with $\tilde{a} < \tilde{b} < \tilde{c}$, we have
\begin{equation*}
\ind (\mathpzc{g} (\cdot), \mathpzc{q}; [\tilde{a}, \tilde{c}]) 
= \ind (\mathpzc{g} (\cdot), \mathpzc{q};[\tilde{a}, \tilde{b}]) 
+ \ind (\mathpzc{g} (\cdot), \mathpzc{q}; [\tilde{b}, \tilde{c}]).
\end{equation*}

\medskip
\noindent
{\bf (P2)} (Homotopy Invariance) If 
$\mathpzc{g}, \mathpzc{h} \in \mathcal{P}_{\omega_1, \omega_2} ([a, b])$ 
are homotopic in $\mathcal{M}$ with $\mathpzc{g} (a) = \mathpzc{h} (a)$ and  
$\mathpzc{g} (b) = \mathpzc{h} (b)$ (i.e., if $\mathpzc{g}, \mathpzc{h}$
are homotopic with fixed endpoints) then 
\begin{equation*}
\ind (\mathpzc{g} (\cdot), \mathpzc{q}; [a, b]) = \ind (\mathpzc{h} (\cdot), \mathpzc{q}; [a, b]).
\end{equation*} 

\subsection{Grassmannian Pairs}
\label{grassmanian-pairs-section}

In this section, we clarify both the approach and 
notation from the introduction by providing a general 
development for computing the hyperplane index for evolving
pairs of Grassmannian spaces 
$\mathpzc{g}: [a, b] \to Gr_m (\mathbb{R}^{n})$ and 
$\mathpzc{h}: [a, b] \to Gr_{n-m} (\mathbb{R}^{n})$, where
$m \in \{1, 2, \dots, n-1\}$. In order to facilitate 
such calculations, we can think of letting 
$\mathbf{F} (t)$ denote the matrix function specified 
in (\ref{F-defined}), and taking $\omega_1$ and $\omega_2$
respectively 
as in (\ref{omega1specified-intro}) and (\ref{omega2specified-intro}).
With these choices in place, we obtain the relation  
\begin{equation*}
\begin{aligned}
    \omega_1 (f_1, f_2, \dots, f_n)
    &= \begin{pmatrix} g_1 \\ 0 \end{pmatrix}
    \wedge \begin{pmatrix} 0 \\ h_1 \end{pmatrix}
    \wedge \dots \wedge \begin{pmatrix} 0 \\ h_{n-1} \end{pmatrix}
    \wedge \tilde{\delta}_1 \wedge \dots \wedge \tilde{\delta}_n \\
    &= g_1 \wedge h_1 \wedge \dots \wedge h_{n-1} \wedge e_{n+1} \wedge \dots \wedge e_{2n},
\end{aligned}    
\end{equation*}
and this prompts us to set 
\begin{equation} \label{omega1-tilde-defined-wedge-formulation}
    \tilde{\omega}_1 (t)
    := g_1 (t) \wedge h_1 (t) \wedge \dots \wedge h_{n-1} (t).
\end{equation}
Likewise, we'll set 
\begin{equation*}
\begin{aligned}
     \omega_2 (f_1, \dots, f_n) 
     &= \begin{pmatrix} g_1 \\ 0 \end{pmatrix}
    \wedge \begin{pmatrix} 0 \\ h_1 \end{pmatrix}
    \wedge \dots \wedge \begin{pmatrix} 0 \\ h_{n-1} \end{pmatrix}
    \wedge \tilde{\sigma}_1 \wedge \dots \wedge \tilde{\sigma}_n \\ 
    &= g_1 \wedge (M h_1) \wedge \dots \wedge (Mh_{n-1}) e_{n+1} \dots \wedge e_{2n},
\end{aligned}
\end{equation*}
and also 
\begin{equation*}
    \tilde{\omega}_2 (t) 
    := g_1 (t) \wedge (M h_1 (t)) \wedge \dots \wedge (Mh_{n-1} (t)).  
\end{equation*}

Having specified $\omega_1$ and $\omega_2$, it's now also convenient to 
introduce normalized variables 
\begin{equation} \label{psi-i-defined}
\psi_i (f_1, \dots, f_n) := \frac{\omega_i (f_1, \dots, f_n)}{|f_1 \wedge \dots \wedge f_n|},
\quad i = 1, 2, 
\end{equation}
and likewise 
\begin{equation} \label{tilde-psi-i-defined}
\begin{aligned}
    \tilde{\psi}_i (t) &:= \frac{\tilde{\omega}_i (t)}
{\Big| \begin{pmatrix} g_1 (t) \\ 0 \end{pmatrix} \wedge \begin{pmatrix} 0 \\ h_1 (t) \end{pmatrix}
\wedge \dots \wedge \begin{pmatrix} 0 \\ h_{n-1} (t) \end{pmatrix} \Big|} \\
&= \frac{\tilde{\omega}_i (t)}
{|g_1 (t)| |h_1 (t) \wedge \dots \wedge h_{n-1} (t)|},
\quad i = 1, 2.
\end{aligned}
\end{equation}
It's clear from the relationship between $\{\tilde{\omega}_i\}_{i=1}^2$ 
and $\{\tilde{\psi}_i\}_{i=1}^2$ that if we replace $\{\tilde{\omega}_i\}_{i=1}^2$ 
with $\{\tilde{\psi}_i\}_{i=1}^2$ in our expression (\ref{p-specified}) for the tracking 
point $p(t)$, the value of $p(t)$ isn't changed.

\subsection{Direction of Rotation}
\label{direction-section}

We can employ the approach of Section 4 in \cite{BCCJM2022} to locally analyze
the direction associated with a given crossing point. For this, 
our starting point is the observation that for $p(t)$ near $(-1,0)$,
the location of $p(t)$ can be tracked via the angle 
\begin{equation} \label{p-angle}
    \theta(t) = \pi + \tan^{-1} \frac{\tilde{\omega}_1 (t)}{\tilde{\omega}_2 (t)},
\end{equation}
with $\pi$ arising from our convention of placing crossings at $(-1,0)$.
By the monotonicity of $\tan^{-1} x$, the direction of $\theta(t)$ near
a value $t=t_*$ for which $\theta (t_*) = \pi$ is determined by 
the derivative of the ratio $r(t) = \frac{\tilde{\omega}_1 (t)}{\tilde{\omega}_2 (t)}$,
for which $r(t_*) = 0$. Precisely, if 
$r'(t_*) = \frac{\tilde{\omega}'_1 (t_*)}{\tilde{\omega}_2 (t_*)} < 0$
then the rotation of $p(t)$ is clockwise at $t_*$, corresponding with a decrement 
of the hyperplane index, while if $r'(t_*) > 0$ then 
the rotation is counterclockwise, corresponding with an increment of 
the hyperplane index.

\subsection{The Boundary Invariant $\mathfrak{m}$}
\label{boundary-invariant-section}

One of the most challenging aspects of working with the hyperplane 
index is evaluating the boundary invariant $\mathfrak{m}$. One 
strategy, introduced in \cite{BCCJM2022}, is to set 
\begin{equation}
    \rho (t) = \frac{1}{2} (\tilde{\psi}_1 (t)^2 + \tilde{\psi}_2 (t)^2), 
\end{equation}
and show directly that $\rho (t) \ne 0$ for all $t \in [a, b]$. This 
has been shown to work in certain cases in both \cite{BCCJM2022} 
and \cite{Howard2022a}, but in both of those analyses critical use 
was made of boundedness of the domain of the independent variable.

More generally, a consequence of Lemmas 4.9 and 4.10 in \cite{BCCJM2022} 
is that under certain fairly general conditions, $\mathfrak{m}$ must
be an even integer. Our goal in this section is to slightly relax 
the assumptions from these lemmas. We begin with the following 
{\it exchange principle}, addressing what happens if the skew-symmetric
$n$-linear form $\omega_2$ is exchanged for an alternative choice
$\omega_3$.

\begin{lemma}[Exchange Principle] \label{exchange-lemma}
Suppose that for some interval $[a, b]$, $a < b$, we have 
$\mathpzc{g} \in C([a, b], Gr_n (\mathbb{R}^{2n}))$, 
and that $\{\omega_i\}_{i=1}^3$ are three skew-symmetric 
$n$-linear forms on $\mathbb{R}^{2n}$, with 
$\ker \omega_1 = \mathpzc{q}$. If the triples 
$(\mathpzc{g}, \omega_1, \omega_2)$ and 
$(\mathpzc{g}, \omega_1, \omega_3)$ are both invariant 
on $[a, b]$, and neither of the endpoints $t = a$ and $t = b$
is a crossing point, then the hyperplane indices computed
for $(\mathpzc{g}, \omega_1, \omega_2)$ and 
$(\mathpzc{g}, \omega_1, \omega_3)$ on $[a, b]$ can differ 
only by an even integer (if at all). 
\end{lemma}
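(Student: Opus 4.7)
The plan is to show that the two hyperplane indices are congruent modulo $2$, which immediately yields the even-difference conclusion. The starting observation is that $t_{*} \in [a,b]$ is a crossing point for either triple (i.e.\ $\mathpzc{g}(t_{*}) \cap \mathpzc{q} \ne \{0\}$) if and only if $\tilde{\omega}_1(t_{*}) = 0$, since $\mathpzc{q} = \ker \omega_1$. In particular, the crossing set depends only on $\mathpzc{g}$ and $\omega_1$, and by hypothesis it is contained in the open interval $(a, b)$. At each such crossing, invariance of both triples forces $\tilde{\omega}_2(t_{*}) \ne 0$ and $\tilde{\omega}_3(t_{*}) \ne 0$.

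The next step is to reduce to a generic situation in which $\tilde{\omega}_1$ has only finitely many simple zeros on $[a, b]$. The invariance conditions $\tilde{\omega}_1(t)^2 + \tilde{\omega}_j(t)^2 > 0$ for $j = 2, 3$ are open on the compact interval, so any sufficiently $C^1$-small perturbation of a frame for $\mathpzc{g}(t)$ preserves both invariance conditions and keeps the endpoints non-crossings. A standard transversality argument then produces such a perturbation $\tilde{\mathpzc{g}}$, agreeing with $\mathpzc{g}$ at the endpoints, whose associated $\tilde{\omega}_1$ has only simple zeros in $(a,b)$. By property {\bf (P2)} (homotopy invariance, applied to the straight-line homotopy of frames, which stays inside $\mathcal{M}$ for each of the two choices), the hyperplane indices of $\mathpzc{g}$ and $\tilde{\mathpzc{g}}$ are identical for both $\omega_2$ and $\omega_3$.

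For $\tilde{\mathpzc{g}}$, at each simple zero $t_{*}$ of $\tilde{\omega}_1$ the contribution to the hyperplane index is computed as in Section \ref{direction-section} by the sign of $r'(t_{*}) = \tilde{\omega}_1'(t_{*})/\tilde{\omega}_j(t_{*})$, so each crossing contributes $\sgn\bigl(\tilde{\omega}_1'(t_{*})\tilde{\omega}_j(t_{*})\bigr) = \pm 1$ to $\ind_{\omega_j}$. Reducing modulo $2$, both $\ind_{\omega_2}(\tilde{\mathpzc{g}}, \mathpzc{q}; [a,b])$ and $\ind_{\omega_3}(\tilde{\mathpzc{g}}, \mathpzc{q}; [a,b])$ equal the cardinality of the zero set of $\tilde{\omega}_1$ in $(a,b)$. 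Hence the difference of the original indices is even.

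The main obstacle is the perturbation step: we must produce one frame perturbation that is simultaneously compatible with \emph{both} invariance conditions and achieves simple zeros for $\tilde{\omega}_1$. Since invariance is open on a compact set, any perturbation smaller than some uniform $\varepsilon > 0$ respects both conditions; and since $\tilde{\omega}_1$ depends multilinearly on the columns of the frame, the requirement that $0$ be a regular value of $\tilde{\omega}_1$ is generic within the $\varepsilon$-ball of allowable perturbations by Sard's theorem. This genericity construction, while standard, is the one place where care is needed and is modeled on analogous arguments in \cite{BCCJM2022}.
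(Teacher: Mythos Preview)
Your argument is correct, but it is considerably more elaborate than the paper's. The paper dispenses entirely with perturbation: it simply observes that a value $t_*$ with $\tilde\omega_1(t_*)=0$ contributes $\pm 1$ to either index precisely when $\tilde\omega_1$ changes sign there, and contributes $0$ otherwise. Since this dichotomy depends only on $\tilde\omega_1$, the total number of nonzero contributions is the same for $\omega_2$ and $\omega_3$; writing $P_j,N_j$ for the positive and negative crossings one gets $P_2+N_2=P_3+N_3$, whence $(P_3-N_3)-(P_2-N_2)=2(N_2-N_3)$. Equivalently, the parity of $\ind_{\omega_j}$ is simply $[\sgn\tilde\omega_1(a)\ne\sgn\tilde\omega_1(b)]$, which is manifestly independent of $j$. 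Your transversality route reaches the same ``index $\bmod\ 2$ equals number of sign changes of $\tilde\omega_1$'' endpoint, but via Sard's theorem and a homotopy, which is heavier than needed; on the other hand, your version is fully explicit about reducing to finitely many simple zeros, whereas the paper's phrasing tacitly assumes one can sum local contributions.

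One small technical point: the hypothesis is only $\mathpzc{g}\in C([a,b],Gr_n(\mathbb{R}^{2n}))$, so a ``$C^1$-small perturbation'' is not quite well-posed as stated. You should first $C^0$-approximate the continuous frame by a smooth one (homotopy invariance applies since invariance is $C^0$-open on the compact interval) and then apply transversality to that smooth frame.
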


\begin{proof}
Starting with the triple $(\mathpzc{g}, \omega_1, \omega_2)$, a value 
$t_* \in [a, b]$ is a crossing point if and only if $\tilde{\omega}_1 (t_*) = 0$,
and by our assumption of invariance, we must correspondingly 
have $\tilde{\omega}_2 (t_*) \ne 0$. As $t$ increases through $t_*$, 
$\tilde{\omega}_1 (t)$ might change signs, but by continuity 
$\tilde{\omega}_2 (t)$ will not. If $\tilde{\omega}_1 (t)$ changes
signs, the contribution to the 
hyperplane index is either $+1$ or $-1$. On the other hand, if 
$\tilde{\omega}_1 (t)$ fails to change signs, then 
there is no contribution to the hyperplane index. 

Turning to the triple $(\mathpzc{g}, \omega_1, \omega_3)$, precisely
the same statements above are true, and in particular we see that 
a crossing point $t_*$ gives no contribution to the hyperplane 
index if and only
if $\tilde{\omega}_1 (t)$ fails to change signs as $t$ increases 
through $t_*$. This means that there will be a non-zero 
contribution to the hyperplane index at $t_*$ for the
triple $(\mathpzc{g}, \omega_1, \omega_2)$ if and only if 
there is a non-zero contribution to the hyperplane 
index at $t_*$ for the triple $(\mathpzc{g}, \omega_1, \omega_3)$. 

According to these considerations, the hyperplane index for 
the triple $(\mathpzc{g}, \omega_1, \omega_2)$ on $[a, b]$ will 
have precisely the same number of non-zero crossings as the hyperplane 
index for the triple $(\mathpzc{g}, \omega_1, \omega_3)$ on $[a, b]$. 
If $P_2$ and $N_2$ respectively denote the number of positive and 
negative crossings for $(\mathpzc{g}, \omega_1, \omega_2)$, and 
$P_3$ and $N_3$ respectively denote the number of positive and 
negative crossings for $(\mathpzc{g}, \omega_1, \omega_3)$, then 
we must have $P_2 + N_2 = P_3 + N_3$. If follows that the difference
between the hyperplane indices computed
for $(\mathpzc{g}, \omega_1, \omega_2)$ and 
$(\mathpzc{g}, \omega_1, \omega_3)$ on $[a, b]$ is 
\begin{equation*}
    P_3 - N_3 - (P_2 - N_2)
    = (P_2 + N_2 - N_3) - N_3 - P_2 + N_2
    = 2N_2 - 2N_3,
\end{equation*}
an even number. 
\end{proof}

\begin{lemma} \label{invariance-lemma}
Suppose that for some intervals $[a, b]$, $a < b$ and $[c, d]$, $c < d$,
$\mathpzc{g} \in C([a, b] \times [c, d], Gr_n (\mathbb{R}^{2n}))$, and 
for some fixed $\mathpzc{q} \in Gr_n (\mathbb{R}^{2n})$ let $\omega_1$ 
denote a skew-symmetric $n$-linear form on $\mathbb{R}^{2n}$ so that 
$\ker \omega_1 = \mathpzc{q}$. Let $\omega_2$ denote a second 
skew-symmetric $n$-linear form on $\mathbb{R}^{2n}$, and suppose 
that for some point $(s_*, t_*)$ in the interior of $[a, b] \times [c, d]$
we have 
\begin{equation*}
    \omega_i (g_1 (s_*, t_*), g_2 (s_*, t_*), \dots, g_n (s_*, t_*))
    = 0, \quad i = 1, 2,
\end{equation*}
but that there exists a sufficiently small ball $B \subset \mathbb{R}^2$ 
centered at $(s_*, t_*)$
so that for all $(s, t) \in B \backslash \{(s_*, t_*)\}$ the values 
\begin{equation*}
    \omega_i (g_1 (s, t), g_2 (s, t), \dots, g_n (s, t)), \quad i = 1, 2,
\end{equation*}
are not both $0$. In short, the triple $(\mathpzc{g}, \omega_1, \omega_2)$
loses invariance at an isolated point $(s_*, t_*)$. Then there exists 
some $\epsilon > 0$ sufficiently small so that for any ball 
$\mathcal{B} \subset \mathbb{R}^2$ centered at $(s_*, t_*)$
with radius less than $\epsilon$
\begin{equation*}
    \ind (\mathpzc{g}, \mathpzc{q}; \partial \mathcal{B})
    \in 2 \mathbb{Z},
\end{equation*}
where as with all boundary indices we take $\partial \mathcal{B}$ to 
be traversed in the counterclockwise direction (though 
the direction doesn't strictly matter for the result).
\end{lemma}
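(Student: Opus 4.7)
The plan is to exhibit the hyperplane index along $\partial \mathcal{B}$ as $2w$ for an integer $w \in \mathbb{Z}$, by exploiting the double-cover representation of the tracking point recorded in Remark \ref{bccjm-p}.

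First, I would use the isolated simultaneous-zero assumption to choose $\epsilon_0 > 0$ so small that for every $\epsilon \in (0, \epsilon_0]$ the closed ball $\overline{\mathcal{B}}$ of radius $\epsilon$ centered at $(s_*, t_*)$ contains no simultaneous zero of $\tilde{\omega}_i (s, t) := \omega_i (g_1 (s, t), \dots, g_n (s, t))$, $i = 1, 2$, other than $(s_*, t_*)$ itself. This will ensure that the triple $(\mathpzc{g}, \omega_1, \omega_2)$ is invariant on $\partial \mathcal{B}$ and that $z(\theta) := \tilde{\omega}_1 (\theta) - i \tilde{\omega}_2 (\theta)$ defines a continuous loop in $\mathbb{C}^*$, where $\theta \in [0, 2\pi]$ parameterizes $\partial \mathcal{B}$ counterclockwise with endpoints identified. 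I would then let $w \in \mathbb{Z}$ denote the winding number of this loop about the origin.

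Next, I would invoke Remark \ref{bccjm-p} to write the tracking point as $\tilde{p}(\theta) = (z(\theta)/|z(\theta)|)^2$, noting that crossings occur exactly at $\tilde{p}(\theta) = -1$. Choosing the basepoint of $\partial \mathcal{B}$ to be a non-crossing point (so that no endpoint correction arises), the hyperplane index on the closed loop equals the degree of $\tilde{p} : S^1 \to S^1$; since squaring is a degree-two self-map of $S^1$, this gives $\deg \tilde{p} = 2w$ and hence $\ind (\mathpzc{g}, \mathpzc{q}; \partial \mathcal{B}) = 2w \in 2 \mathbb{Z}$.

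The main technical obstacle will be rigorously identifying the hyperplane index on a closed loop with the degree of $\tilde{p}$, since the former is defined on paths using careful arrival/departure conventions at $(-1,0)$. My fallback would be to perturb $\omega_2$ to some $\omega_3$ producing only transverse zeros of $\tilde{\omega}_1$ on $\partial \mathcal{B}$: the sign changes of the continuous real-valued function $\tilde{\omega}_1$ on $S^1$ must come in matched pairs (any continuous function on $S^1$ has an even number of sign-change zeros), so if $P$ and $N$ denote the numbers of counterclockwise and clockwise crossings for $(\mathpzc{g}, \omega_1, \omega_3)$, then $P + N$ is even and hence so is $P - N$; Lemma \ref{exchange-lemma} then transfers the parity back to $(\mathpzc{g}, \omega_1, \omega_2)$. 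The orientation comment in the statement is consistent with this, because reversing the traversal of $\partial \mathcal{B}$ only negates $w$, preserving its parity.
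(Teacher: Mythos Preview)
Your main argument is correct and takes a genuinely different route from the paper. You exploit the double-cover structure built into $\tilde{p} = (z/|z|)^2$: since $z: \partial\mathcal{B} \to \mathbb{C}^*$ has some integer winding number $w$, the squaring map forces $\deg \tilde{p} = 2w$, and for a closed loop based at a non-crossing point the hyperplane index coincides with this degree. The paper instead introduces an auxiliary form $\omega_3$ chosen so that $\tilde{\omega}_3$ is nonzero on the entire closed ball $\overline{\mathcal{B}}$; homotopy invariance then forces $\ind_{\omega_3}(\mathpzc{g}, \mathpzc{q}; \partial\mathcal{B}) = 0$, and Lemma \ref{exchange-lemma} transfers parity to $\omega_2$. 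Your approach is more direct and conceptual, but requires you to justify the index-equals-degree identification (which you correctly flag); the paper's approach stays entirely within the machinery already established and avoids that issue. One small edge case you should mention explicitly: if $\tilde{\omega}_1$ vanishes identically on $\partial\mathcal{B}$ you cannot pick a non-crossing basepoint, but then the index is trivially $0$.

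Your fallback contains a correct idea but is phrased confusingly. Perturbing $\omega_2$ to $\omega_3$ cannot affect the zeros of $\tilde{\omega}_1$, so the phrase ``producing only transverse zeros of $\tilde{\omega}_1$'' does not make sense as written. What actually works is simpler and needs no perturbation at all: a continuous real-valued function on $S^1$ that is not identically zero has an even number of sign changes, and (as in the proof of Lemma \ref{exchange-lemma}) only sign changes of $\tilde{\omega}_1$ contribute $\pm 1$ to the index, so $P+N$ is even and hence $P-N$ is even. This is essentially the paper's argument collapsed to the boundary circle, without the intermediate $\omega_3$ and homotopy step.
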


\begin{proof} First, let $\omega_3$ denote any third skew-symmetric 
$n$-linear form on $\mathbb{R}^{2n}$ so that 
\begin{equation*}
    \omega_3 (g_1 (s_*, t_*), g_2 (s_*, t_*), \dots, g_n (s_*, t_*))
    \ne 0.
\end{equation*}
This is always possible by choosing vectors $\{g_i\}_{i = n+1}^{2n}$
so that the collection $\{g_i (s_*, t_*)\}_{i=1}^n \cup \{g_i\}_{i = n+1}^{2n}$
comprises a basis for $\mathbb{R}^{2n}$ and taking the kernel of $\omega_3$ to be
the space spanned by the collection $\{g_i\}_{i = n+1}^{2n}$ (ensuring 
that none of the vectors  $\{g_i (s_*, t_*)\}_{i=1}^n$ is contained in 
the kernel of $\omega_3$). By continuity
of $\mathpzc{g}$, we can take $\epsilon > 0$ sufficiently small so that 
for any ball $\mathcal{B} \subset \mathbb{R}^2$ centered at $(s_*, t_*)$
with radius smaller than $\epsilon$
\begin{equation*}
    \omega_3 (g_1 (s, t), g_2 (s, t), \dots, g_n (s, t))
    \ne 0, \quad \forall\, (s, t) \in \overline{\mathcal{B}}, 
\end{equation*}
where the overbar denotes closure. 
I.e., the triple $(\mathpzc{g}, \omega_1, \omega_3)$ is invariant 
on $\overline{\mathcal{B}}$, and so by homotopy invariance
\begin{equation*}
    \ind_{\omega_3} (\mathpzc{g}, \mathpzc{q}, \partial \mathcal{B}) = 0.
\end{equation*}
If $\omega_1 (g_1 (s, t), \dots, g_n (s, t))$ is identically 0 for 
$(s, t) \in \partial \mathcal{B}$, then 
$ \ind_{\omega_i} (\mathpzc{g}, \mathpzc{q}, \partial \mathcal{B}) = 0$
for $i = 2, 3$ and the claim holds trivially. Otherwise, we can select
any $(s_0, t_0) \in \partial \mathcal{B}$ so that 
$\omega_1 (g_1 (s_0, t_0), \dots, g_n (s_0, t_0)) \ne 0$
and compute $\ind_{\omega_i} (\mathpzc{g}, \mathpzc{q}, \partial \mathcal{B})$,
$i = 2, 3$, along $\partial \mathcal{B}$, starting and ending at 
$(s_0, t_0)$. It follows immediately from the exchange principle that 
the difference 
\begin{equation*}
\ind_{\omega_2} (\mathpzc{g}, \mathpzc{q}, \partial \mathcal{B})
    - \ind_{\omega_3} (\mathpzc{g}, \mathpzc{q}, \partial \mathcal{B})
\end{equation*}
is an even number. Since the subtracted index is 0, this gives the claim. 
\end{proof}

We can now use Lemma \ref{invariance-lemma} to show that under circumstances 
that hold quite generally the boundary invariant is an even number. 

\begin{proposition} \label{boundary-index-proposition} 
Suppose that for some intervals $[a, b]$, $a < b$ and $[c, d]$, $c < d$,
$\mathpzc{g} \in C([a, b] \times [c, d], Gr_n (\mathbb{R}^{2n}))$, and 
for some fixed $\mathpzc{q} \in Gr_n (\mathbb{R}^{2n})$ let $\omega_1$ 
denote a skew-symmetric $n$-linear form on $\mathbb{R}^{2n}$ so that 
$\ker \omega_1 = \mathpzc{q}$. Let $\omega_2$ denote a second 
skew-symmetric $n$-linear form on $\mathbb{R}^{2n}$, and suppose the 
triple $(\mathpzc{g}, \omega_1, \omega_2)$ is invariant on the boundary 
of the rectangle $\mathcal{R} := [a, b] \times [c, d]$ and also invariant 
at all except possibly a finite number of points in the interior of
$\mathcal{R}$. Then 
\begin{equation*}
 \ind (\mathpzc{g}, \mathpzc{q}; \partial \mathcal{R})
    \in 2 \mathbb{Z}.    
\end{equation*}
\end{proposition}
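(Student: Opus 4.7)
The plan is to reduce the boundary-index computation on $\partial \mathcal{R}$ to a finite sum of small-loop indices, each of which is already known to be even by Lemma \ref{invariance-lemma}. Let $\{(s_j^*, t_j^*)\}_{j=1}^k$ denote the finite set of interior points where $(\mathpzc{g}, \omega_1, \omega_2)$ fails to be invariant. For each such point, Lemma \ref{invariance-lemma} supplies a sufficiently small closed ball $\overline{\mathcal{B}_j}$, centered at $(s_j^*, t_j^*)$, on which invariance holds away from the center and for which $\ind(\mathpzc{g}, \mathpzc{q}; \partial \mathcal{B}_j) \in 2\mathbb{Z}$. Shrinking the radii further if necessary, I may arrange the balls to be pairwise disjoint and strictly contained in the interior of $\mathcal{R}$.

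The key identity to establish is
\[
\ind(\mathpzc{g}, \mathpzc{q}; \partial \mathcal{R}) = \sum_{j=1}^k \ind(\mathpzc{g}, \mathpzc{q}; \partial \mathcal{B}_j),
\]
with every boundary traversed counterclockwise. To prove it, I consider the punctured region $\mathcal{R}' := \mathcal{R} \setminus \bigcup_{j=1}^k \mathrm{int}(\mathcal{B}_j)$, on which the triple is invariant by construction. I introduce $k$ thin non-self-intersecting corridor arcs, each joining $\partial \mathcal{R}$ to some $\partial \mathcal{B}_j$ and avoiding all other balls, so that cutting along these arcs turns $\mathcal{R}'$ into a region $\widetilde{\mathcal{R}}$ homeomorphic to a closed disk. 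The boundary $\partial \widetilde{\mathcal{R}}$ consists of $\partial \mathcal{R}$ (counterclockwise), each $\partial \mathcal{B}_j$ (clockwise, by the standard orientation on a domain with holes), and each corridor traversed once in each direction. Since the triple is invariant on the simply connected $\widetilde{\mathcal{R}}$, the boundary loop is null-homotopic in $\mathcal{M}$, and homotopy invariance \textbf{(P2)} forces its hyperplane index to vanish. Path additivity \textbf{(P1)} then yields that the corridor contributions cancel in pairs, and rearranging the remaining terms produces the claimed identity. Combining it with Lemma \ref{invariance-lemma} applied to each $\partial \mathcal{B}_j$ gives the desired parity.

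The main obstacle I expect is making the corridor decomposition rigorous within the hyperplane-index framework. The hyperplane index is defined as a winding number of $[\tilde{\omega}_1 : \tilde{\omega}_2]$ in $\mathbb{R}P^1$ with specific crossing conventions at endpoints, so the argument really needs \textbf{(P1)}--\textbf{(P2)} applied to concatenated arcs rather than a picture-level deformation: one must confirm that the two traversals of each corridor contribute opposite signed counts and that the glued loop around $\partial \widetilde{\mathcal{R}}$ is honestly homotopic, with fixed basepoint, to a constant loop inside $\mathcal{M}$. An alternative formulation, which may streamline the write-up, is to post-compose $\mathpzc{g}|_{\mathcal{R}'}$ with the standard map to $\mathbb{R}P^1 \simeq S^1$ via $(s,t) \mapsto [\tilde{\omega}_1(s,t) : \tilde{\omega}_2(s,t)]$ and invoke the elementary degree-theoretic fact that the degree of this map on $\partial \mathcal{R}$ equals the sum of its local degrees at the isolated singularities; evenness of each local degree is then exactly Lemma \ref{invariance-lemma}.
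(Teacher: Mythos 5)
Your argument follows the same route as the paper: isolate the finitely many non-invariance points, surround each by a small ball where Lemma~\ref{invariance-lemma} applies, reduce $\ind(\mathpzc{g},\mathpzc{q};\partial\mathcal{R})$ to the sum of the small-loop indices via homotopy invariance, and conclude evenness termwise. The paper states the decomposition step in one sentence; your corridor/degree-theoretic justification is simply a more explicit spelling-out of the same homotopy-invariance reduction, and it is correct.
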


\begin{proof}
Let $N$ denote the number of points of invariance in the interior 
of $\mathcal{R}$, and denote these points $\{(s_i, t_i)\}_{i = 1}^N$. 
Using homotopy invariance, we can compute $\ind (\mathpzc{g}, \mathpzc{q}; \partial \mathcal{R})$
by summing the individual hyperplane indices 
$\ind (\mathpzc{g}, \mathpzc{q}; \partial \mathcal{B}_i)$, where $\mathcal{B}_i$ denotes
a ball centered at $(s_i, t_i)$ with radius sufficiently small so that $\mathcal{B}_i \subset \mathcal{R}$
and the triple $(\mathpzc{g}, \omega_1, \omega_2)$ is invariant on $\partial \mathcal{B}_i$.
According to Lemma \ref{invariance-lemma}, each such index must be an even number, and so 
the sum must be an even number as well. 
\end{proof}

\begin{remark} \label{invariance-remark}
In order to understand why we expect the number of points at which 
invariance is lost to be finite, we observe that generally
the sets 
\begin{equation*}
    \mathcal{C}_i := \{(s, t): \tilde{\omega}_i (s, t) = 0 \}, 
    \quad i = 1, 2,
\end{equation*}
comprise one-dimensional curves in $[a, b] \times [c, d]$, and 
points at which invariance is lost are precisely the points
at which these curves intersect. In principle, such intersections
certainly need not be isolated, but in practice we generally find
that they are. For now, the theory is missing a sufficiently 
general result along these lines, and the boundary invariant must
be computed in applications on a case-by-case basis (see Section
\ref{applications-section}). While a precise value of $\mathfrak{m}$
is certainly optimal, we emphasize that for instability arguments
it's often sufficient to identify its parity, since this allows us 
to determine whether the number of unstable eigenvalues is even 
or odd. 
\end{remark}

\section{ODE Preliminaries} 
\label{ev-problems-section}

In this section, we collect several straightforward results 
associated with solutions to (\ref{nonhammy}) and (\ref{mathcal-Y-system}). 
As a starting point, the following proposition is adapted from Proposition 1.2 of 
\cite{PW1992}. 

\begin{proposition} \label{ode-proposition1}
Let Assumptions {\bf (A)} through {\bf (D)} hold. Then the following statements
are true. 

\medskip
(i) There exists a unique
solution $\eta^- (x; \lambda)$ to (\ref{nonhammy}) for which the limit 
\begin{equation*}
    \lim_{x \to - \infty} e^{- \mu_- (\lambda) x} \eta^- (x; \lambda) = v^- (\lambda) 
\end{equation*}
holds, where $v^- (\lambda)$ is the right eigenvector of $A_- (\lambda)$ described 
in Assumption {\bf (C)}. Moreover, the convergence is uniform on compact 
subsets of $\Omega$.

\medskip
(ii) There exists a (non-unique) solution $\zeta^+ (x; \lambda)$ to (\ref{nonhammy})
for which the limit 
\begin{equation*}
    \lim_{x \to + \infty} e^{- \mu_+ (\lambda) x} \zeta^+ (x; \lambda) 
    = v^+ (\lambda) 
\end{equation*}
holds, where $v^+ (\lambda)$ is the right eigenvector of $A_+ (\lambda)$ described 
in Assumption {\bf (C)}. Moreover, the convergence is uniform on compact 
subsets of $\Omega$.
\end{proposition}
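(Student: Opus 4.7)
The plan is to reduce each claim to a contraction-mapping argument after removing the dominant exponential factor. For part (i), introduce $z^-(x;\lambda) := e^{-\mu_-(\lambda)x}\eta^-(x;\lambda)$, which transforms (\ref{nonhammy}) into
\begin{equation*}
z^{-\prime} = B_-(\lambda) z^- + E_-(x;\lambda) z^-, \qquad B_-(\lambda) := A_-(\lambda) - \mu_-(\lambda) I,
\end{equation*}
with $E_-(x;\lambda) := A(x;\lambda) - A_-(\lambda)$ for $x < 0$. By Assumption {\bf (C)}, $B_-(\lambda)$ has $0$ as a simple eigenvalue (with eigenvector $v^-(\lambda)$) while every other eigenvalue has real part at most $\mu_-^*(\lambda) - \mu_-(\lambda) < 0$; in particular $\|e^{B_-(\lambda) t}\| \le C(\lambda)$ uniformly for $t \ge 0$.

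I would then produce $\eta^-$ by seeking $z^-$ as the fixed point of
\begin{equation*}
T[z](x) := v^-(\lambda) + \int_{-\infty}^{x} e^{B_-(\lambda)(x - s)} E_-(s;\lambda) z(s) \, ds.
\end{equation*}
Differentiating recovers the ODE (using $B_- v^- = 0$), and sending $x \to -\infty$ inside the integral gives the prescribed limit via the bound above together with Assumption {\bf (D)}(i). For $M$ sufficiently negative we have $C(\lambda)\int_{-\infty}^M \|E_-(s;\lambda)\|\,ds < 1/2$, and then $T$ is a strict contraction on a closed ball of the Banach space $C_b((-\infty, M],\mathbb{C}^n)$; the unique fixed point extends to all of $\mathbb{R}$ by forward continuation in the ODE. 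Local uniformity in $\lambda$ follows because Assumption {\bf (C)} provides an analytic (hence locally uniform) choice of $v^-(\lambda)$ together with a uniform spectral gap, while Assumption {\bf (D)}(ii) yields locally uniform smallness of the tail integral; the fixed-point construction is then uniform on compact subsets of $\Omega$ and the estimate transfers back to $\eta^-$.

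Uniqueness follows from the spectral gap. If $\tilde{\eta}$ is a second solution with the same asymptotic, then $e^{-\mu_-(\lambda)x}(\tilde{\eta} - \eta^-) \to 0$, so the difference is $o(e^{\mu_-(\lambda)x})$ at $-\infty$. Since no nonzero mode of (\ref{nonhammy}) can decay strictly faster than the one associated with the rightmost eigenvalue of $A_-(\lambda)$ (by a standard Levinson-type argument using the gap between $\mu_-(\lambda)$ and $\mu_-^*(\lambda)$), the difference must vanish identically.

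Part (ii) is symmetric. Set $w^+(x;\lambda) := e^{-\mu_+(\lambda)x}\zeta^+(x;\lambda)$, so that $w^+$ satisfies the analogous equation with $B_+(\lambda) := A_+(\lambda) - \mu_+(\lambda)I$, whose $0$-eigenvalue is again simple (eigenvector $v^+$) and whose other eigenvalues have strictly negative real parts. Let $P_c$ and $P_s$ denote the spectral projections of $B_+(\lambda)$ onto the $v^+$-eigenspace and its complement; I would seek $w^+$ as a fixed point of
\begin{equation*}
T^+[w](x) := v^+(\lambda) - \int_x^{\infty} P_c E_+(s;\lambda) w(s)\, ds + \int_N^{x} P_s e^{B_+(\lambda)(x-s)} E_+(s;\lambda) w(s)\, ds
\end{equation*}
on $[N,\infty)$, for $N$ sufficiently large. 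The $P_c$-piece is integrable because $P_c e^{B_+ t} = P_c$, while the $P_s$-propagator decays on $x-s > 0$; Assumption {\bf (D)}(i) then makes $T^+$ a contraction on $C_b([N,\infty),\mathbb{C}^n)$, and uniformity in $\lambda$ follows as before. Non-uniqueness is visibly built into the construction: one may add to $\zeta^+$ any element of the $(n-1)$-dimensional family of solutions whose leading behavior at $+\infty$ comes from eigenvalues of $A_+(\lambda)$ strictly to the left of $\mu_+(\lambda)$, since their contribution to $w^+$ lies in $\mathrm{range}(P_s)$ and decays as $x \to +\infty$. The main technical subtlety throughout is arranging via Assumption {\bf (D)}(ii) that the truncation points $M$ and $N$ can be chosen uniformly on compact subsets of $\Omega$; once this uniform tail smallness is secured, the remainder of the argument is bookkeeping.
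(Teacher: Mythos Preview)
The paper does not actually provide a proof of this proposition: it is stated as being ``adapted from Proposition~1.2 of \cite{PW1992}'' and then used without further justification. Your contraction-mapping argument via the variation-of-constants integral equation is the standard route to such results and is essentially what underlies the Pego--Weinstein proof; in that sense there is nothing to compare, and your outline is correct.

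One small point worth tightening: in your uniqueness argument for (i) you write that ``no nonzero mode \dots\ can decay strictly faster than the one associated with the rightmost eigenvalue.'' As phrased this is slightly ambiguous; what you need is that, as $x \to -\infty$, every nonzero solution of (\ref{nonhammy}) satisfies $\liminf_{x\to -\infty} |y(x)|\, e^{-\mu_-(\lambda)x} > 0$. This follows from the spectral gap in {\bf (C)} together with the integrability in {\bf (D)} via a Levinson-type argument, exactly as you indicate, but the direction of the inequality (modes associated with eigenvalues to the \emph{left} of $\mu_-$ are \emph{larger} at $-\infty$, not smaller) is worth stating carefully. In part (ii) your dichotomy-style splitting with $P_c$ and $P_s$ is the right mechanism, and the non-uniqueness observation at the end is exactly the point the paper exploits when it later builds the frame $\mathbf{H}(x;\lambda)$ from $\{y_j^+\}_{j=1}^{n-1}$.
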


For labeling purposes, we will let $\{y_j^- (x; \lambda)\}_{j=1}^n$ denote 
a linearly independent collection of solutions to (\ref{nonhammy}), 
indexed so that $\eta^- (x; \lambda) = y_n^- (x; \lambda)$, and 
we will let $\{y_j^+ (x; \lambda)\}_{j=1}^n$ denote 
a linearly independent collection of solutions to (\ref{nonhammy}),
indexed so that $\zeta^+ (x; \lambda) = y_n^+ (x; \lambda)$. 
In some places, it will be useful to express coordinates of the elements 
$\{y_j^{\pm} (x; \lambda)\}_{j=1}^n$ by writing 
\begin{equation*}
    y_j^{\pm} (x; \lambda) 
    = \begin{pmatrix}
    y_{1j}^{\pm} (x; \lambda) & y_{2j}^{\pm} (x; \lambda) & \cdots & y_{nj}^{\pm} (x; \lambda)      
    \end{pmatrix}^T,
    \quad j = 1, 2, \dots, n,
\end{equation*}
and we also introduce the matrices
\begin{equation*}
    Y^{\pm} (x; \lambda) 
    := \begin{pmatrix}
    y_1^{\pm} (x; \lambda) & y_2^{\pm} (x; \lambda) & \cdots & y_{n-1}^{\pm} (x; \lambda)  
    \end{pmatrix}.
\end{equation*}

Recalling the specification of $\mathcal{Y}^+ (x; \lambda)$ in (\ref{mathcal-Y}),
it's straightforward to show that $\mathcal{Y}^+ (x; \lambda)$ can be 
expressed as 
\begin{equation} \label{mathcal-A-coordinates}
\begin{aligned}
    \mathcal{Y}^+ (x; \lambda) 
    &= d_n^+ (x; \lambda) e_1 \wedge \dots \wedge e_{n-1} 
    + d_{n-1}^+ (x; \lambda) e_1 \wedge \dots \wedge e_{n-2} \wedge e_n \\
    &+ \dots + d_1^+ (x; \lambda) e_2 \wedge \dots \wedge e_n,
\end{aligned}    
\end{equation}
where $d_i^+ (x; \lambda)$ denotes the determinant of the $(n-1) \times (n-1)$ matrix
obtained by eliminating the $i^{th}$ row of $Y^+$. In this way, we associate 
$\mathcal{Y}^+ (x; \lambda)$ with the vector 
\begin{equation*}
\mathcal{Y}^+ (x; \lambda)
= \begin{pmatrix}
d_n^+ (x; \lambda) & d_{n-1}^+ (x; \lambda) & \dots & d_1^+ (x; \lambda)
\end{pmatrix}^T,
\end{equation*}
with the convention of Remark \ref{wedge-not-wedge}.

\begin{proposition}\label{tilde-A-proposition}
With $\mathcal{Y}^+$ specified as in (\ref{mathcal-Y}), 
(\ref{wedge-ode}) holds.
\end{proposition}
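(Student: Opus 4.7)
The plan is to interpret the right-hand side of (\ref{wedge-ode}) as the matrix representation, in the basis used in (\ref{mathcal-A-coordinates}), of the derivation on $\bigwedge^{n-1}(\mathbb{R}^n)$ induced by $A(x;\lambda)$. Since each column $y_j^+$ of $Y^+$ solves $y' = A y$, the Leibniz rule for the wedge product gives
\begin{equation*}
\mathcal{Y}^{+\,\prime}
= \sum_{j=1}^{n-1} y_1^+ \wedge \cdots \wedge (A y_j^+) \wedge \cdots \wedge y_{n-1}^+
=: D_A (y_1^+ \wedge \cdots \wedge y_{n-1}^+),
\end{equation*}
where $D_A$ is a linear endomorphism of $\bigwedge^{n-1}(\mathbb{R}^n)$. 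The entire content of the proposition is then a computation of the matrix of $D_A$ in the basis $\{\epsilon_k\}_{k=1}^n$ defined by $\epsilon_k := e_1 \wedge \cdots \wedge \hat{e}_k \wedge \cdots \wedge e_n$, in terms of which (\ref{mathcal-A-coordinates}) reads $\mathcal{Y}^+ = \sum_k d_k^+ \epsilon_k$.

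Next, I would compute $D_A \epsilon_k$ directly by expanding $A e_i = \sum_j a_{ji} e_j$ in each of the $n-1$ factors of $\epsilon_k$ in turn. By antisymmetry only two kinds of terms survive: the diagonal term $j = i$, which contributes $a_{ii}\,\epsilon_k$, and the "fill-in" term $j = k$, which produces $\pm a_{ki}\,\epsilon_i$ after reordering. The sole nontrivial bookkeeping step is to verify that replacing the factor $e_i$ in $\epsilon_k$ with $e_k$ and returning to standard form yields $(-1)^{|i-k|-1}\epsilon_i$, because $e_k$ must be transposed past $|i-k|-1$ of its neighbors. Summing over $i \ne k$ gives
\begin{equation*}
D_A \epsilon_k = (\tr A - a_{kk})\,\epsilon_k + \sum_{i \ne k} (-1)^{|i-k|-1} a_{ki}\,\epsilon_i,
\end{equation*}
so after expanding $\mathcal{Y}^{+\,\prime} = \sum_k d_k^+ D_A \epsilon_k$ and exchanging the order of summation, the coefficient of $\epsilon_i$ on the right reads
\begin{equation*}
(\tr A)\, d_i^+ + \Big( -a_{ii} d_i^+ + \sum_{k \ne i} (-1)^{|i-k|-1} a_{ki}\, d_k^+ \Big).
\end{equation*}

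Finally, I would translate this into the column-vector ordering specified in the statement, $\mathcal{Y}^+ = (d_n^+, d_{n-1}^+, \dots, d_1^+)^T$, by setting $u_l := d_{n+1-l}^+$ and substituting $i = n+1-l$, $k = n+1-m$. The resulting system is $u' = (\tr A)\,u + C u$ with $C_{ll} = -a_{n+1-l,n+1-l}$ and $C_{lm} = (-1)^{|m-l|-1} a_{n+1-m, n+1-l}$ for $l \ne m$, and a direct comparison with (\ref{AtildeA-relation}) identifies $C$ with $\tilde{A}$: the diagonal match is immediate, and the off-diagonal signs agree via the parity identity $|m-l| - 1 \equiv l+m+1 \pmod 2$.

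The main obstacle is purely bookkeeping: keeping straight the sign arising from moving $e_k$ past its neighbors in the wedge, and then reconciling the natural index $k$ of $\epsilon_k$ (the omitted basis vector) with the reversed coordinate ordering $(d_n^+, d_{n-1}^+, \dots, d_1^+)^T$ prescribed in (\ref{mathcal-A-coordinates}). An equivalent approach, avoiding the wedge formalism entirely, would be to apply Jacobi's formula to each minor $d_k^+$ directly; the sign accounting is essentially the same, and the wedge derivation is cleaner.
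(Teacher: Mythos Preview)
Your argument is correct. Both proofs start from the Leibniz rule $\mathcal{Y}^{+\,\prime} = \sum_j y_1^+ \wedge \cdots \wedge (A y_j^+) \wedge \cdots \wedge y_{n-1}^+$ and then identify the matrix of the induced derivation on $\bigwedge^{n-1}(\mathbb{R}^n)$, so the underlying idea is the same. The organization differs: the paper tracks the contribution of an individual matrix entry (worked out for $a_{11}$ as a representative case) through a sum of cofactor-type determinant expansions of the matrices $Y^+_{Aj}$, and then appeals to ``proceeding similarly'' for the remaining entries and components; you instead compute $D_A \epsilon_k$ on the basis vectors $\epsilon_k = e_1 \wedge \cdots \wedge \hat{e}_k \wedge \cdots \wedge e_n$ once and for all, which makes the sign $(-1)^{|i-k|-1}$ and the reindexing to (\ref{AtildeA-relation}) completely explicit. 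Your route is a bit more systematic and leaves nothing to the reader, while the paper's determinant picture keeps the argument closer to the coordinates $d_k^+$ that are used elsewhere in the analysis; either way the bookkeeping is the whole content.
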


\begin{proof}
Upon differentiation of (\ref{mathcal-Y}), we obtain the relation 
\begin{equation}\label{A-summands}
    \mathcal{Y}^{+\,\prime} (x; \lambda)
    = \sum_{j=1}^{n-1} y_1^+ \wedge \cdots \wedge A(x; \lambda) y_j^+ \wedge \cdots \wedge y_{n-1}^+,
\end{equation}
for which each summand can be understood similarly as in (\ref{mathcal-A-coordinates}). Focusing
on the first summand 
\begin{equation*}
    (A(x; \lambda) y_1^+) \wedge y_2^+ \wedge \cdots \wedge y_{n-1}^+,
\end{equation*}
we can write 
\begin{equation*}
    A(x; \lambda) y_1^+ 
    = \begin{pmatrix}
        a_{1j} y_{j1}^+ & a_{2j} y_{j1}^+ & \dots & a_{nj} y_{j1}^+
    \end{pmatrix}^T,
\end{equation*}
where for notational brevity summation is assumed over repeated indices. In addition, we 
introduce the matrix
\begin{equation*}
    Y_{A1}^+ (x; \lambda) 
    := \begin{pmatrix}
    A(x; \lambda) y_1^+ (x; \lambda) & y_2^+ (x; \lambda) & \cdots & y_{n-1}^+ (x; \lambda)  
    \end{pmatrix},
\end{equation*}
and for each $k \in \{1, 2, \dots, n\}$ we let $d_{A1,k}^+$ denote the determinant of the 
$(n-1) \times (n-1)$ matrix obtained by eliminating the $k^{th}$ row of $Y_{A1}^+$. 
Then, as in (\ref{mathcal-A-coordinates}),
\begin{equation} \label{mathcal-A-first}
\begin{aligned}
   (A(x; \lambda) y_1^+) &\wedge \cdots \wedge  y_j^+ \wedge \cdots \wedge y_n^+
    = d_{A1,n}^+ (x; \lambda) e_1 \wedge \dots \wedge e_{n-1} \\
    &+ d_{A1,n-1}^+ (x; \lambda) e_1 \wedge \dots \wedge e_{n-2} \wedge e_n 
    + \dots + d_{A1,1}^+ (x; \lambda) e_2 \wedge \dots \wedge e_n.
\end{aligned}    
\end{equation}
In this way, we have associated $\mathcal{Y}^+$ with the vector 
$(d_n^+ \,\, d_{n-1}^+ \,\, \dots, \,\, d_1^+)$ and 
$(A y_1^+) \wedge y_2^+ \wedge \cdots \wedge y_{n-1}^+$
with the vector $(d_{A1,n}^+ \,\, d_{A1,n-1}^+ \,\, \dots, \,\, d_{A1,1}^+)$,
and likewise we can associate the $k^{th}$ summand from 
(\ref{A-summands}) with a vector $(d_{Ak,n}^+ \,\, d_{Ak,n-1}^+ \,\, \dots, \,\, d_{Ak,1}^+)$.
Using these associations, we can write the derivative of the $j^{\textrm{th}}$ component
of $\mathcal{Y}^+$ as 
\begin{equation*}
    \mathcal{Y}_j^{+ \, \prime} = \sum_{k=1}^{n-1} d^+_{Ak,n-(j-1)}.
\end{equation*}
Focusing for specificity on the first component $j=1$, we see that 
$\mathcal{Y}_1^{+ \, \prime}$ is a sum of $n$ determinants. If we focus 
still further on terms in this sum associated with a specific entry
of the matrix $A$, then we can readily identify the appearance of 
that component in our final relation. Using $a_{11}$ as an example
case, we can schematically view the terms in $\mathcal{Y}_1^{+ \, \prime}$
associated with $a_{11}$ as arising from the sum 
\begin{equation*}
\begin{aligned}
    \det &\begin{pmatrix}
        a_{11} y_{11}^+ & y_{12}^+ & \cdots & y_{1(n-1)}^+ \\
        * & y_{22}^+ & \cdots & y_{2(n-1)}^+ \\
        \vdots & \vdots & \cdots & \vdots \\
        * & y_{(n-1)2}^+ & \cdots & y_{(n-1)(n-1)}^+ \\
    \end{pmatrix}
    +  \det \begin{pmatrix}
        y_{11}^+ & a_{11} y_{12}^+ & \cdots & y_{1(n-1)}^+ \\
        y_{21}^+ & * & \cdots & y_{2(n-1)}^+ \\
        \vdots & \vdots & \cdots & \vdots \\
        y_{(n-1)1}^+ & * & \cdots & y_{(n-1)(n-1)}^+ \\
    \end{pmatrix}
    + \dots \\
    &+ \det \begin{pmatrix}
        y_{11}^+ & y_{12}^+ & \cdots & a_{11} y_{1(n-1)}^+ \\
        y_{21}^+ & y_{22}^+ & \cdots & * \\
        \vdots & \vdots & \cdots & \vdots \\
        y_{(n-1)1}^+ & y_{(n-1)2}^+ & \cdots & * \\
    \end{pmatrix},
    \end{aligned}
\end{equation*}
where the asterisks indicate terms irrelevant to the calculation
(because they don't contain $a_{11}$). 
If we now think of expanding each determinant along the column 
with $a_{11}$, the combinations of terms including $a_{11}$ 
are precisely the same as $a_{11}$ multiplied by a determinant 
expansion along the first row of the matrix 
\begin{equation*}
    \begin{pmatrix}
        y_{11}^+ & y_{12}^+ & \cdots & y_{1(n-1)}^+ \\
        y_{21}^+ & y_{22}^+ & \cdots & y_{2(n-1)}^+ \\
        \vdots & \vdots & \cdots & \vdots \\
        y_{(n-1)1}^+ & y_{(n-1)2}^+ & \cdots & y_{(n-1)(n-1)}^+ \\
    \end{pmatrix}.
\end{equation*}
In summary, the sole multiplier of $a_{11}$ in the expression 
for $\mathcal{Y}_1^{+ \, \prime}$ is the quantity labeled
above as $d_n^+$, which is precisely the first component of 
$\mathcal{Y}^+$ (i.e., the component $\mathcal{Y}_1^+$). Proceeding similarly 
for each element of $A$ and each component of $\mathcal{Y}^+$,
we obtain the claim. 
\end{proof}

\begin{proposition} \label{eigenvalues-proposition}
For any matrix $A \in \mathbb{C}^{n \times n}$, let $\tilde{A}$ 
be the associated matrix specified in (\ref{tildeA}). 
Then $\mu \in \sigma (A)$ if and only if $- \mu \in \sigma(\tilde{A})$.
\end{proposition}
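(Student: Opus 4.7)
My plan is to exhibit $\tilde A$ as similar to $-A^T$, whence the claim is immediate because $\sigma(-A^T) = -\sigma(A^T) = -\sigma(A)$.

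To set this up, I would introduce two auxiliary matrices: the reversal matrix $J\in\mathbb{R}^{n\times n}$ defined by $J_{ij} = \delta_{i,\,n+1-j}$, and the diagonal sign matrix $D = \operatorname{diag}((-1)^1, (-1)^2, \ldots, (-1)^n)$. Both satisfy $J^2 = I$ and $D^2 = I$. A one-line entry-wise computation gives $(J A^T J)_{ij} = a_{(n+1-j)(n+1-i)}$ and $(D M D)_{ij} = (-1)^{i+j} M_{ij}$ for any matrix $M$. Combining these with the defining relation (\ref{AtildeA-relation}), $\tilde a_{ij} = (-1)^{i+j+1} a_{(n+1-j)(n+1-i)}$, yields the compact identity
$$\tilde A \;=\; -\,D\,J\,A^T\,J\,D \;=\; -(DJ)\,A^T\,(JD).$$

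Next I would observe that $(DJ)(JD) = D\,J^2\,D = D^2 = I$, so $DJ$ is invertible with $(DJ)^{-1} = JD$. Substituting,
$$(DJ)^{-1}\,\tilde A\,(DJ) \;=\; JD\bigl(-(DJ)A^T(JD)\bigr)(DJ) \;=\; -A^T\,(JD)(DJ) \;=\; -A^T,$$
so $\tilde A$ is similar to $-A^T$. Since similarity preserves spectrum, $\sigma(\tilde A) = \sigma(-A^T) = -\sigma(A^T) = -\sigma(A)$, which is exactly the biconditional $\mu \in \sigma(A) \Longleftrightarrow -\mu \in \sigma(\tilde A)$.

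The only mildly delicate step is verifying the closed form $\tilde A = -(DJ)A^T(JD)$ directly from (\ref{AtildeA-relation}), which amounts to careful bookkeeping with the two index reversals and the sign factor; everything else is purely formal. There is no real obstacle. As a conceptual alternative, one could instead apply Proposition \ref{tilde-A-proposition} with $A$ constant to identify $(\operatorname{tr} A)I + \tilde A$ as the matrix (in the basis of Remark \ref{wedge-not-wedge}) of the derivation action of $A$ on $\bigwedge^{n-1}(\mathbb{R}^n)$; this action has spectrum $\{\operatorname{tr}(A) - \mu : \mu \in \sigma(A)\}$, so subtracting $\operatorname{tr}(A)$ again yields $\sigma(\tilde A) = -\sigma(A)$. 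The similarity approach is slightly shorter and self-contained, so I would present that.
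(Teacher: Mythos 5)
Your proof is correct, and it reorganizes the same underlying bookkeeping into a more structural form. The paper's proof takes a left eigenvector $w$ of $A$ (so $wA = \mu w$), defines $\tilde{\mathcal{V}}_j = (-1)^{n-j} w_{n+1-j}$, and verifies directly that $\tilde A\tilde{\mathcal{V}} = -\mu\tilde{\mathcal{V}}$, then appeals to reversibility for the converse. Your argument packages that same index-and-sign transposition as an explicit conjugation, $\tilde A = -(DJ)A^T(JD)$ with $(DJ)^{-1} = JD$, and concludes $\sigma(\tilde A) = -\sigma(A)$ from similarity. The two are really the same calculation in different clothing --- the map $w \mapsto (DJ)w^T$ is, up to an overall factor of $(-1)^n$, exactly the correspondence (\ref{corresponding-eigenvector1}) used in the paper --- but your version buys a little more: the biconditional is automatic (no ``reversing the argument'' step), and the similarity preserves algebraic and geometric multiplicities, which is slightly more information than the bare spectral statement. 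Your computations $\bigl(JA^TJ\bigr)_{ij} = a_{(n+1-j)(n+1-i)}$, $\bigl(DMD\bigr)_{ij} = (-1)^{i+j}M_{ij}$, and $(DJ)(JD) = I$ are all correct, and substituting into (\ref{AtildeA-relation}) does give $\tilde A = -DJA^TJD$ as claimed. The exterior-power alternative you sketch at the end is also valid and arguably the most conceptual route, though, as you say, the similarity argument is more self-contained. One formal nit: the defining relation for $\tilde A$ you should cite is (\ref{AtildeA-relation}), which you do, but note the paper's numbered display for $\tilde A$ is (\ref{tildeA}); your identity should be checked against the latter if one distrusts (\ref{AtildeA-relation}) as a restatement, but they agree.
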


\begin{proof}
If $\mu \in \sigma (A)$ then there exists a left eigenvector 
of $A$, which we denote $w \in \mathbb{C}^n$, so that 
$wA = \mu wA$. If we then specify a new (column) vector
$\tilde{\mathcal{V}} \in \mathbb{C}^n$ with components
\begin{equation} \label{corresponding-eigenvector1}
\tilde{\mathcal{V}}_j = (-1)^{n-j} w_{n+1-j},
\quad j = 1, 2, \dots, n,    
\end{equation}
then we find by direct calculation that 
$\tilde{A} \tilde{\mathcal{V}} = - \mu \tilde{\mathcal{V}}$. 
Reversing the argument gives the converse direction. 
\end{proof}

\begin{remark} \label{eigenvector-remark}
    We see from the proof of Proposition \ref{eigenvalues-proposition}
    that if $w^{\pm} (\lambda)$ is a left eigenvector of $A_{\pm} (\lambda)$
    associated with the simple eigenvalue $\mu_{\pm} (\lambda)$, with 
    $w^{\pm} (\lambda)$ chosen to be analytic in $\lambda$
    as in Assumption {\bf (C)}, then the corresponding right eigenvector 
    of $\tilde{A}_{\pm} (\lambda)$ specified via (\ref{corresponding-eigenvector1})
    will also be analytic. If (\ref{corresponding-eigenvector1})
    holds, then we correspondingly have 
    \begin{equation} \label{corresponding-eigenvector2}
    w_k^+ = (-1)^{k-1} \tilde{\mathcal{V}}_{n+1-k}^+.   
    \end{equation}
    In addition, if $v^{\pm} (\lambda)$ denotes 
    the analytic right eigenvector of $A_{\pm} (\lambda)$ normalized so 
    that $w^{\pm} (\lambda) v^{\pm}(\lambda) = 1$, then  
    we can use (\ref{the-wedge-product}) along with 
    (\ref{corresponding-eigenvector2}) to compute  
    \begin{equation*}
    \begin{aligned}
    v^{\pm} (\lambda) \wedge \tilde{\mathcal{V}}^{\pm} (\lambda)
    &= \sum_{i=1}^n (-1)^{i+1} v_i^{\pm} (\lambda) \tilde{\mathcal{V}}_{n+1-i}^{\pm} (\lambda)
    = \sum_{i=1}^n (-1)^{i+1} v_i^{\pm} (\lambda) (-1)^{i-1} w_i^{\pm} (\lambda) \\
    &= w^{\pm} (\lambda) v^{\pm} (\lambda) = 1.
    \end{aligned}
    \end{equation*}   
\end{remark}

\begin{proposition} \label{conjugation-proposition}
For any matrix $A \in \mathbb{C}^{n \times n}$, let $\tilde{A}$ 
be the associated matrix specified in (\ref{tildeA}), and for 
$u, \mathcal{U} \in \mathbb{R}^n$ interpret $u$ and $Au$ as 
elements of $\bigwedge^1 (\mathbb{R})$ and $\mathcal{U}$ 
and $\tilde{A} \mathcal{U}$ as elements of $\bigwedge^{n-1} (\mathbb{R})$, 
as in Remark \ref{wedge-not-wedge}. Then 
\begin{equation*}
    (Au) \wedge \mathcal{U} + u \wedge (\tilde{A} \mathcal{U}) = 0.
\end{equation*}
\end{proposition}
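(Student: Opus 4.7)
The plan is to reduce the identity to a pointwise check using the explicit formula (\ref{the-wedge-product}) for the wedge product $v \wedge \mathcal{V}$ (1-form against $(n-1)$-form) together with the explicit correspondence (\ref{AtildeA-relation}) between the entries of $A$ and $\tilde A$. Because both sides of the stated identity are bilinear in $u$ and $\mathcal{U}$ and linear in the entries of $A$, the whole assertion reduces to verifying the coefficient of each monomial $a_{ij} u_j \mathcal{U}_{n+1-i}$ in the sum $(Au) \wedge \mathcal{U} + u \wedge (\tilde A \mathcal{U})$ vanishes.

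First, I would apply (\ref{the-wedge-product}) to the first term with $v = Au$ and $\mathcal{V} = \mathcal{U}$. Since $(Au)_i = \sum_j a_{ij} u_j$, this yields
\begin{equation*}
    (Au) \wedge \mathcal{U}
    = \Big(\sum_{i,j=1}^n (-1)^{i+1} a_{ij} u_j \mathcal{U}_{n+1-i}\Big)\, e_1 \wedge \dots \wedge e_n.
\end{equation*}
Next, I would apply (\ref{the-wedge-product}) to $u \wedge (\tilde A \mathcal{U})$ with $v = u$ and $\mathcal{V} = \tilde A \mathcal{U}$, so that $(\tilde A \mathcal{U})_{n+1-k} = \sum_l \tilde a_{(n+1-k)\,l} \mathcal{U}_l$. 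Substituting (\ref{AtildeA-relation}) into $\tilde a_{(n+1-k)\,l}$ gives the sign $(-1)^{(n+1-k)+l+1}$ and reindexes $A$ as $a_{(n+1-l)\,k}$. Changing summation variables $i = n+1-l$, $j = k$ converts this to a sum over the same monomials $a_{ij} u_j \mathcal{U}_{n+1-i}$, and after collecting signs the overall factor for the second term becomes $(-1)^{i}$.

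Finally, adding the two expressions monomial-by-monomial produces the factor $(-1)^{i+1} + (-1)^{i} = 0$ in front of every $a_{ij} u_j \mathcal{U}_{n+1-i}$, so the sum vanishes identically, giving the claim.

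There is no real conceptual obstacle: the only thing to watch carefully is the sign bookkeeping from (\ref{AtildeA-relation}), specifically the combination $(n+1-k)+l+1$ together with the reindexing $i = n+1-l$, which cleanly reduces to an overall factor of $(-1)^{2n+2-i} = (-1)^i$. Once this parity is confirmed, the cancellation is immediate.
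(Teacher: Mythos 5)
Your proof is correct, and it follows essentially the same route as the paper: expand both wedge products via (\ref{the-wedge-product}), substitute (\ref{AtildeA-relation}) for the entries of $\tilde A$, reindex, and observe that the resulting coefficients $(-1)^{i+1}$ and $(-1)^{i}$ on each monomial $a_{ij} u_j \mathcal{U}_{n+1-i}$ cancel. The sign bookkeeping you flagged is indeed the only delicate point, and you have carried it out correctly.
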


\begin{proof}
First, since $Au$ is interpreted as a 1-form and $\mathcal{U}$ is 
interpreted as an $(n-1)$-form, the wedge product $(Au) \wedge \mathcal{U}$ is 
an $n$-form, which we see from (\ref{the-wedge-product}) is  
\begin{equation*}
    (Au) \wedge \mathcal{U}
    = \sum_{i, j = 1}^n (-1)^{i+1} a_{ij} u_j \mathcal{U}_{n+1-i}.
\end{equation*}
We can compare this with 
\begin{equation*}
    \begin{aligned}
    u \wedge (\tilde{A} \mathcal{U})
    &= u_1 \sum_{j=1}^n \tilde{a}_{nj} \mathcal{U}_j - u_2 \sum_{j=1}^n \tilde{a}_{(n-1)j} \mathcal{U}_j
    + \dots + (-1)^{n+1} u_n \sum_{j=1}^n \tilde{a}_{1j} \mathcal{U}_j \\
    &= \sum_{i, j = 1}^n (-1)^{i+1} \tilde{a}_{(n+1-i)j} u_i \mathcal{U}_j 
    = \sum_{i, j = 1}^n (-1)^{i+1} (-1)^{n-i+j} a_{(n+1-j)i} u_i \mathcal{U}_j \\
    &= \sum_{i, j = 1}^n (-1)^{n+1+j} a_{(n+1-j)i} u_i \mathcal{U}_j
    = \sum_{i, k = 1}^n (-1)^{k} a_{k i} u_i \mathcal{U}_{n+1-k},
    \end{aligned}
\end{equation*}
where in obtaining the final equality we changed indices from $j$ to 
$k = n+1-j$. Comparing our expression for $ (Au) \wedge \mathcal{U}$ with 
our expression for $u \wedge (\tilde{A} \mathcal{U})$, we see that the claim 
is proved. 
\end{proof}

With $\eta^- (x; \lambda)$ as specified in (\ref{eta-minus}) and 
$\tilde{\mathcal{Y}}^+ (x; \lambda)$ as specified in 
(\ref{tilde-mathcal-Y-plus-defined}), we 
detect intersections between $\Span \{\eta^- (x; \lambda)\}$
(a one-dimensional subspace of $\mathbb{R}^n$) and $\Span \{y_j^+ (x; \lambda)\}_{j=1}^{n-1}$
(an $(n-1)$-dimensional subspace of $\mathbb{R}^n$), with 
the wedge product 
\begin{equation*}
    \eta^- (x; \lambda) \wedge \tilde{\mathcal{Y}}^+ (x; \lambda).
\end{equation*}

The following proposition serves as a direct connection between the 
current analysis and that of \cite{PW1992}. 

\begin{proposition} \label{connection-to-PW}
Suppose $\tilde{\mathcal{Y}} (x; \lambda)$ solves the ODE system 
$\tilde{\mathcal{Y}}' = \tilde{A} (x; \lambda) \tilde{\mathcal{Y}}$,
and let the row vector $z = (z_1 \,\, z_2 \,\, \dots \,\, z_n)$ 
be specified with components
\begin{equation} \label{proposition5relation}
    z_j (x; \lambda)
    := (-1)^{j-1} \tilde{\mathcal{Y}}_{n+1-j} (x; \lambda),
    \quad j = 1, 2, \dots, n.
\end{equation}
Then $z$ satisfies the ODE system $z' = -z A(x; \lambda)$.  
Moreover, if $\tilde{\mathcal{Y}}$ is interpreted as 
an $(n-1)$-form as in Remark \ref{wedge-not-wedge}, and
we introduce a column vector $y$ interpreted as a 1-form 
as in Remark \ref{wedge-not-wedge}, 
then 
\begin{equation*}
    y \wedge \tilde{\mathcal{Y}} (x; \lambda)
    = z(x;\lambda) y.
\end{equation*}
\end{proposition}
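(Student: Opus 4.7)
The proposition splits cleanly into two independent claims, and I would handle them in sequence since the second is almost immediate from the wedge-product identity (\ref{the-wedge-product}) stated in Remark \ref{wedge-not-wedge}.

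The plan is to first verify the ODE identity $z' = -zA$ by direct computation in components. Starting from the $i$-th scalar equation of $\tilde{\mathcal{Y}}' = \tilde{A}\tilde{\mathcal{Y}}$, namely $\tilde{\mathcal{Y}}_i' = \sum_{k=1}^n \tilde{a}_{ik}\tilde{\mathcal{Y}}_k$, I would specialize to $i = n+1-j$ and then multiply both sides by $(-1)^{j-1}$. By the definition of $z_j$ in (\ref{proposition5relation}), the left side is precisely $z_j'$. To the right side I would apply the formula $\tilde{a}_{ij} = (-1)^{i+j+1} a_{(n+1-j)(n+1-i)}$ from (\ref{AtildeA-relation}), which gives $\tilde{a}_{(n+1-j)k} = (-1)^{n+k-j+2} a_{(n+1-k)j}$. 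After collecting signs, the expression becomes $z_j' = \sum_k (-1)^{n+k+1} a_{(n+1-k)j}\tilde{\mathcal{Y}}_k$. A change of summation index $\ell = n+1-k$ then converts this to $\sum_\ell (-1)^\ell a_{\ell j}\tilde{\mathcal{Y}}_{n+1-\ell}$, and substituting $\tilde{\mathcal{Y}}_{n+1-\ell} = (-1)^{\ell-1}z_\ell$ produces $-\sum_\ell z_\ell a_{\ell j} = -(zA)_j$, which is the $j$-th component of $-zA$.

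For the wedge-product identity, I would simply invoke (\ref{the-wedge-product}) from Remark \ref{wedge-not-wedge} with $v = y$ and $\mathcal{V} = \tilde{\mathcal{Y}}$, which yields
\begin{equation*}
    y \wedge \tilde{\mathcal{Y}}(x;\lambda) = \sum_{i=1}^n (-1)^{i+1} y_i \tilde{\mathcal{Y}}_{n+1-i}(x;\lambda),
\end{equation*}
(suppressing the basis $n$-form). Recognizing that $(-1)^{i+1} = (-1)^{i-1}$ and inserting the defining relation $z_i(x;\lambda) = (-1)^{i-1}\tilde{\mathcal{Y}}_{n+1-i}(x;\lambda)$, the sum collapses to $\sum_{i=1}^n z_i(x;\lambda)\, y_i = z(x;\lambda)\, y$.

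The only real obstacle is bookkeeping of signs in the ODE calculation, where two index reversals and the factors $(-1)^{j-1}$, $(-1)^{i+j+1}$, and the one arising from the substitution $\ell = n+1-k$ must all be tracked and combined consistently; I expect all other parity factors to telescope cleanly since $(-1)^{2\ell-1} = -1$ independently of $\ell$. No analytic subtleties or uniformity issues enter, since the identity is purely algebraic and pointwise in $x$.
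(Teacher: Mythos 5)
Your proof is correct and follows essentially the same direct component-wise computation as the paper: specialize the $i$-th row of $\tilde{\mathcal{Y}}' = \tilde{A}\tilde{\mathcal{Y}}$ to $i = n+1-j$, apply (\ref{AtildeA-relation}), reindex, and substitute back in terms of $z$, with the wedge identity following immediately from (\ref{the-wedge-product}). The only cosmetic difference is the order in which you reindex the sum and substitute $\tilde{\mathcal{Y}}_{n+1-\ell} = (-1)^{\ell-1}z_\ell$; the paper substitutes $\tilde{\mathcal{Y}}_k = (-1)^{n-k}z_{n+1-k}$ first and then collects signs, but this is the same calculation.
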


\begin{proof}
For the first part of the statement, we can compute directly, writing 
\begin{equation*}
    \begin{aligned}
    z_j' &= (-1)^{j-1} \tilde{\mathcal{Y}}^{\prime}_n
    = (-1)^{j-1} \sum_{k=1}^n \tilde{a}_{(n+1-j)k} \tilde{\mathcal{Y}}_k \\
    &= (-1)^{j-1} \sum_{k=1}^n (-1)^{n+k-j} a_{(n+1-k)j} (-1)^{n-k} z_{n+1-k}
    = - \sum_{k=1}^n a_{(n+1-k)j} z_{n+1-k},   
    \end{aligned}
\end{equation*}
for $j = 1, 2, \dots, n$, which is precisely $z' = -z A(x; \lambda)$
in component form. 

For the second claim, we compute 
\begin{equation*}
    y \wedge \tilde{\mathcal{Y}}
    = \sum_{i=1}^n (-1)^{i+1} y_i \tilde{\mathcal{Y}}_{n+1-i}
    = \sum_{i=1}^n (-1)^{i+1} y_i (-1)^{i-1} z_i 
    = \sum_{i=1}^n \eta_i^- z_i = zy. 
\end{equation*}
\end{proof}

\begin{remark} \label{PW1992-remark} We see from 
Proposition \ref{connection-to-PW} that as in \cite{PW1992},
we could carry out our analysis entirely with appropriate 
inner products rather than wedge products. Our convention 
of working with wedge products is motivated by the prospect
of extending our analysis to more general settings in which 
the inner-product formulation isn't viable. 
\end{remark}

Proposition \ref{connection-to-PW} allows us to adopt Proposition 1.2 
from \cite{PW1992} (addressing the variable denoted $z$ here) to a statement 
about $\tilde{\mathcal{Y}}^+ (x; \lambda)$. Precisely, we have the following.

\begin{proposition} \label{ode-proposition2}
Let Assumptions {\bf (A)} through {\bf (D)} hold, and let 
$\tilde{\mathcal{V}}^+ (\lambda)$ denote the right eigenvector of 
$\tilde{A}_+ (\lambda)$ associated with the eigenvalue 
$- \mu_+ (\lambda)$, constructed via (\ref{corresponding-eigenvector2})
from $w^+ (\lambda)$ as in Assumption {\bf (C)}. Then  
there exists a unique
solution $\tilde{\mathcal{Y}}^+ (x; \lambda)$ to
(\ref{mathcal-Y-system}) for which the limit 
\begin{equation*}
    \lim_{x \to + \infty} e^{\mu_+ (\lambda) x} \tilde{\mathcal{Y}}^+ (x; \lambda) 
    = \tilde{\mathcal{V}}^+ (\lambda) 
\end{equation*}
holds, and moreover, the convergence is uniform on compact subsets of $\Omega$.
\end{proposition}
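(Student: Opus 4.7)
The plan is to obtain this result by reducing to a statement about solutions of the adjoint system $z' = -z A(x;\lambda)$, for which we can apply (essentially verbatim) Proposition 1.2 of \cite{PW1992}. Proposition \ref{connection-to-PW} provides exactly the required bridge: solutions of $\tilde{\mathcal{Y}}' = \tilde{A}(x;\lambda)\tilde{\mathcal{Y}}$ are in bijective correspondence with solutions of $z' = -zA(x;\lambda)$ via the relation $z_j = (-1)^{j-1}\tilde{\mathcal{Y}}_{n+1-j}$ (or equivalently $\tilde{\mathcal{Y}}_k = (-1)^{n-k} z_{n+1-k}$), so it suffices to construct a suitable $z^+(x;\lambda)$ and transfer the asymptotics.

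To apply \cite{PW1992} to the adjoint, first I would check that Assumptions {\bf (A)} through {\bf (D)} transfer: by (\ref{AtildeA-relation}) the matrix $\tilde{A}(x;\lambda)$ is obtained from $A(x;\lambda)$ by permutation and sign changes of the entries, so continuity, analyticity in $\lambda$, the existence of $\tilde{A}_\pm(\lambda) = \lim_{x \to \pm\infty}\tilde{A}(x;\lambda)$ uniformly on compacts, and integrability of the error $\tilde{A}(x;\lambda) - \tilde{A}_\pm(\lambda)$ all follow at once. Moreover, Proposition \ref{eigenvalues-proposition} and Assumption {\bf (C)} imply that $-\mu_+(\lambda)$ is the unique eigenvalue of $\tilde{A}_+(\lambda)$ of \emph{smallest} real part, which is simple, with an analytic choice of corresponding eigenvector $\tilde{\mathcal{V}}^+(\lambda)$ given explicitly by (\ref{corresponding-eigenvector1}). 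This is the precise spectral-gap configuration to which Proposition 1.2 of \cite{PW1992} (in its $(-\infty)$-style uniqueness form, analogous to part (i) of Proposition \ref{ode-proposition1}) applies for the adjoint system.

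Applying that result to $z' = -zA(x;\lambda)$ on $x \to +\infty$ yields a unique row-vector solution $z^+(x;\lambda)$ satisfying
\begin{equation*}
\lim_{x \to +\infty} e^{\mu_+(\lambda)x} z^+(x;\lambda) = w^+(\lambda),
\end{equation*}
uniformly on compact subsets of $\Omega$, where $w^+(\lambda)$ is the analytic left eigenvector of $A_+(\lambda)$ from Assumption {\bf (C)}. I would then define $\tilde{\mathcal{Y}}^+(x;\lambda)$ componentwise by $\tilde{\mathcal{Y}}_k^+ := (-1)^{n-k} z_{n+1-k}^+$; by Proposition \ref{connection-to-PW} this is a solution of (\ref{mathcal-Y-system}), and it inherits the uniform convergence from $z^+$. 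Passing the limit through this linear relation and comparing with (\ref{corresponding-eigenvector1}) (see Remark \ref{eigenvector-remark}) yields exactly $\lim_{x \to +\infty} e^{\mu_+(\lambda)x}\tilde{\mathcal{Y}}^+(x;\lambda) = \tilde{\mathcal{V}}^+(\lambda)$. Uniqueness of $\tilde{\mathcal{Y}}^+$ is transferred from uniqueness of $z^+$ through the same bijection.

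The main (mild) obstacle is a bookkeeping one: confirming that the adjoint system and the eigenvector $w^+(\lambda)$ fall into the uniqueness branch of \cite{PW1992}'s Proposition 1.2, rather than the existence-only branch. This is the content of the spectral reversal provided by Proposition \ref{eigenvalues-proposition}: although $\mu_+(\lambda)$ is the \emph{largest} real-part eigenvalue of $A_+$ (which in the original system (\ref{nonhammy}) would only yield non-unique solutions decaying like $e^{\mu_+ x}v^+$ at $+\infty$), after passing to $\tilde{A}_+$ the eigenvalue $-\mu_+(\lambda)$ becomes the \emph{smallest} real-part eigenvalue, so the corresponding $e^{-\mu_+ x}$-decaying solution at $+\infty$ is the unambiguously distinguished one and is uniquely characterized by its asymptotic profile. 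Once this is pointed out, no further work is required beyond invoking \cite{PW1992}.
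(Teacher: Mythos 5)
Your proposal matches the paper's own (very brief) argument exactly: the paper proves Proposition~\ref{ode-proposition2} by the single observation that Proposition~\ref{connection-to-PW} transports Proposition 1.2 of \cite{PW1992} (in the uniqueness branch for the adjoint $z' = -zA$) to a statement about $\tilde{\mathcal{Y}}^+$. Your write-up simply spells out the bookkeeping — transfer of Assumptions {\bf (A)}--{\bf (D)} to $\tilde A$, the spectral reversal from Proposition~\ref{eigenvalues-proposition} that places $-\mu_+(\lambda)$ as the smallest real-part eigenvalue of $\tilde A_+$, and the eigenvector identification via (\ref{corresponding-eigenvector1})--(\ref{corresponding-eigenvector2}) — all of which the paper leaves implicit.
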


\section{Proof of Theorem \ref{main-theorem}}
\label{proof-section}

With $\eta^- (x; \lambda)$ and $\tilde{\mathcal{Y}}^+ (x; \lambda)$ as in 
Propositions \ref{ode-proposition1} and \ref{ode-proposition2}, 
we let $\mathpzc{g} (x; \lambda) \in Gr_1 (\mathbb{R}^{n})$ 
denote the path of Grassmannian subspaces with 
frame $\mathbf{G} (x; \lambda) = \eta^- (x; \lambda)$, 
and we let $\mathpzc{h} (x; \lambda) \in Gr_{n-1} (\mathbb{R}^{n})$
denote the path of Grassmannian subspaces with frame
\begin{equation} \label{H-frame}
    \mathbf{H} (x; \lambda) 
    = \begin{pmatrix}
        y_1^+ (x; \lambda) & y_2^+ (x; \lambda) & \dots & y_{n-1}^+ (x; \lambda)
    \end{pmatrix}.
\end{equation}
We prove Theorem \ref{main-theorem} by fixing $\lambda_1, \lambda_2 \in I$, 
$\lambda_1 < \lambda_2$, along with values $c_1 \ll 0$ and $c_2 \gg 0$,
and computing the hyperplane index for the pair $\mathpzc{g} (x; \lambda)$
and $\mathpzc{h} (c_2; \lambda)$ along the following sequence of lines
often referred to as the {\it Maslov box}: (1) fix $x = c_1$ and let $\lambda$ 
increase from $\lambda_1$ to $\lambda_2$ (the {\it bottom shelf}); 
(2) fix $\lambda = \lambda_2$ and let $x$ increase from $c_1$ to $c_2$ 
(the {\it right shelf}); (3) fix $x = c_2$ and let $\lambda$
decrease from $\lambda_2$ to $\lambda_1$ (the {\it top shelf}); and (4) fix
$\lambda = \lambda_1$ and let $x$ decrease from $c_2$ to $c_1$ (the 
{\it left shelf}). See Figure \ref{box-figure}. 
 
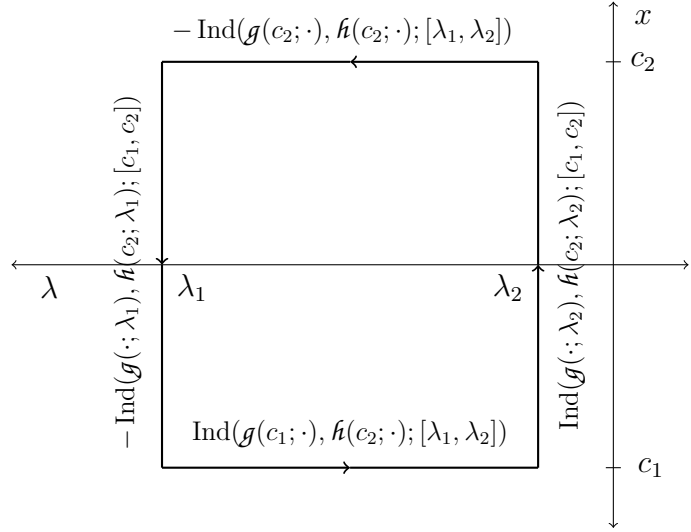
\begin{figure}[ht]
\begin{center}
\begin{tikzpicture}
\draw[<->] (-8,1.5) -- (1,1.5);	
\draw[<->] (0,-2) -- (0,5);	
\node at (.4,4.8) {$x$};
\node at (-7.5,1.2) {$\lambda$};
\node at (-5.6,1.2) {$\lambda_1$};
\node at (-1.4,1.2) {$\lambda_2$};
\node at (.4,4.2) {$c_2$};
\draw[-] (-.1,4.2) -- (.1,4.2);
\node at (.5,-1.2) {$c_1$};
\draw[-] (-.1,-1.2) -- (.1,-1.2);
%
\draw[thick, ->] (-6,-1.2) -- (-3.5,-1.2);
\draw[thick] (-3.5,-1.2) -- (-1,-1.2);	
\draw[thick, ->] (-1,-1.2) -- (-1,1.5);
\draw[thick] (-1,1.5) -- (-1, 4.2);
\draw[thick,->] (-1,4.2) -- (-3.5, 4.2);
\draw[thick] (-3.5,4.2) -- (-6, 4.2);
\draw[thick,->] (-6,4.2) -- (-6, 1.5);
\draw[thick] (-6,1.5) -- (-6, -1.2);
%
\node[scale = .85] at (-3.5, -.75) {$\ind (\mathpzc{g} (c_1; \cdot), \mathpzc{h} (c_2; \cdot); [\lambda_1, \lambda_2])$};
\node[scale = .85, rotate=90] at (-.55, 1.5) {$\ind (\mathpzc{g} (\cdot; \lambda_2), \mathpzc{h} (c_2; \lambda_2); [c_1, c_2])$};
\node[scale = .85] at (-3.6, 4.6) {$- \ind (\mathpzc{g} (c_2; \cdot), \mathpzc{h} (c_2; \cdot); [\lambda_1, \lambda_2])$};
\node[scale = .85, rotate=90] at (-6.45, 1.4) {$- \ind (\mathpzc{g} (\cdot; \lambda_1), \mathpzc{h} (c_2; \lambda_1); [c_1, c_2])$};
\end{tikzpicture}
\end{center}
\caption{The Maslov Box.} \label{box-figure}
\end{figure}
 
{\it Overview of the Maslov box.}
Along the top shelf of the Maslov box, we have $x = c_2$, so the 
hyperplane index 
\begin{equation*}
    \ind (\mathpzc{g} (c_2; \cdot), \mathpzc{h} (c_2; \cdot); [\lambda_1, \lambda_2])
\end{equation*}
detects eigenvalues, albeit counted without multiplicity and with 
no guarantee of monotonicity. If $\mathcal{N}_{\#} ([\lambda_1, \lambda_2])$
denotes the number of eigenvalues that (\ref{nonhammy}) has on the interval 
$[\lambda_1, \lambda_2]$, counted without multiplicity, then in the 
event of monotonicity the hyperplane index on the top shelf 
would equal either $\mathcal{N}_{\#} ([\lambda_1, \lambda_2])$
or $-\mathcal{N}_{\#} ([\lambda_1, \lambda_2])$, depending on the 
direction of the crossings. In the absence of monotonicity, 
such an equality isn't achieved, and instead we have 
the inequality,
\begin{equation} \label{count-inequality}
    \mathcal{N}_{\#} ([\lambda_1, \lambda_2])
    \ge |\ind (\mathpzc{g} (c_2; \cdot), \mathpzc{h} (c_2; \cdot); [\lambda_1, \lambda_2])|.
\end{equation}

The hyperplane index along the bottom shelf detects intersections
between $\mathpzc{g} (c_1; \lambda)$ and $\mathpzc{h} (c_2; \lambda)$, and can be 
denoted 
\begin{equation*}
    \ind (\mathpzc{g} (c_1; \cdot), \mathpzc{h} (c_2; \cdot); [\lambda_1, \lambda_2]).
\end{equation*}
Likewise the hyperplane indices along the left and right shelves respectively detect 
intersections between $\mathpzc{g} (x; \lambda_i)$ and $\mathpzc{h} (x; \lambda_i)$,
$i = 1, 2$, as $x$ decreases from $c_2$ to $c_1$ (left shelf) and increases from 
$c_1$ to $c_2$ (right shelf). We denote these respectively
\begin{equation*}
- \ind (\mathpzc{g} (\cdot; \lambda_1), \mathpzc{h} (c_2; \lambda_1); [c_1, c_2])    
\end{equation*}
and 
\begin{equation*}
\ind (\mathpzc{g} (\cdot; \lambda_2), \mathpzc{h} (c_2; \lambda_2); [c_1, c_2]).
\end{equation*}

From Assumption {\bf (E)}, we can conclude  that for $c_1$ sufficiently negative
and $c_2$ sufficiently positive, we have invariance along each of the four shelves.
It follows that we can compute the hyperplane index along the boundary 
of the Maslov box, and we denote this value $\mathfrak{m} (c_1, c_2)$, writing
\begin{equation} \label{mathfrak-m-defined}
\begin{aligned}
    \mathfrak{m} (c_1, c_2) 
    &:=  \ind (\mathpzc{g} (c_1; \cdot), \mathpzc{h} (c_2; \cdot); [\lambda_1, \lambda_2])
    + \ind (\mathpzc{g} (\cdot; \lambda_2), \mathpzc{h} (c_2; \lambda_2); [c_1, c_2]) \\
    &\quad - \ind (\mathpzc{g} (c_2; \cdot), \mathpzc{h} (c_2; \cdot); [\lambda_1, \lambda_2])
    - \ind (\mathpzc{g} (\cdot; \lambda_1), \mathpzc{h} (c_2; \lambda_1); [c_1, c_2]).  
\end{aligned}
\end{equation}

In order to evaluate the four hyperplane indices in (\ref{mathfrak-m-defined}), we follow the approach 
outlined in the introduction, beginning with the specification of 
a third Grassmannian subspace $\mathpzc{f}^{c_2} (x; \lambda) \in Gr_n (\mathbb{R}^{2n})$
with frame 
\begin{equation*} 
    \mathbf{F}^{c_2} (x; \lambda)
    = \begin{pmatrix}
        \mathbf{G} (x; \lambda) & 0_{n \times (n-1)} \\
        0_{n \times 1} & \mathbf{H} (c_2; \lambda)
    \end{pmatrix}.
\end{equation*}
As discussed in Section \ref{maslov-section}, we set 
\begin{equation} \label{omega1specified-proof}
    \omega_1^{c_2} (f_1, f_2, \dots, f_n)
    := f_1 (x; \lambda) \wedge \dots \wedge f_n (x; \lambda)
    \wedge \tilde{\delta}_1 \wedge \dots \wedge \tilde{\delta}_n,
\end{equation}
where the vectors $\{\tilde{\delta}_i\}_{i=1}^n$ comprise the columns
of $\tilde{\mathbf{\Delta}} = \genfrac{(}{)}{0pt}{2}{-I_n}{I_n}$. 
Likewise, we fix some invertible matrix $M \in \mathbb{R}^{n \times n}$ and set 
\begin{equation} \label{omega2specified-proof}
    \omega_2^{c_2} (f_1, f_2, \dots, f_n)
    := f_1 (x; \lambda) \wedge \dots \wedge f_n (x; \lambda)
    \wedge \tilde{\sigma}_1 \wedge \dots \wedge \tilde{\sigma}_n,
\end{equation}
where the vectors $\{\tilde{\sigma}_i\}_{i=1}^n$ comprise the columns
of $\tilde{\mathbf{\Sigma}} = \genfrac{(}{)}{0pt}{2}{-M}{I_n}$. 

For the subsequent calculations, we will evaluate $\omega_1$ and $\omega_2$
on the columns of $\mathbf{F}^{c_2} (x; \lambda)$, giving 
\begin{equation} \label{tilde-omega1-again}
    \tilde{\omega}_1^{c_2} (x; \lambda)
    := \eta^- (x; \lambda) \wedge \mathcal{Y}^+ (c_2; \lambda)
\end{equation}
and 
\begin{equation} \label{tilde-omega2-again}
    \tilde{\omega}_2^{c_2} (x; \lambda)
    := \eta^- (x; \lambda) \wedge \mathcal{Y}_M^+ (c_2; \lambda),
\end{equation}
where $\mathcal{Y}^+$ and $\mathcal{Y}_M^+$
are respectively defined in (\ref{mathcal-Y}) and (\ref{mathcal-y-m-specified}).
(See Section \ref{grassmanian-pairs-section} for additional details
about these wedge products.) 
Here, $\tilde{\omega}_1^{c_2} (x; \lambda)$ and $\tilde{\omega}_2^{c_2} (x; \lambda)$
are the same as (\ref{tilde-omega1-defined-c}) and (\ref{tilde-omega2-defined-c}), 
except with $c$ replaced by $c_2$. Likewise, we take 
$\tilde{\psi}_1^{c_2} (x; \lambda)$, $\tilde{\psi}_1^+ (x; \lambda)$, and
$\tilde{\psi}_1^{+,-} (\lambda)$ to be as respectively defined 
in (\ref{psi1-tilde-c-defined}), (\ref{psi1-tilde-plus-defined}), 
and (\ref{psi1-tilde-plus-minus-defined}), with analogous definitions
for $\tilde{\psi}_2^{c_2} (x; \lambda)$, $\tilde{\psi}_2^+ (x; \lambda)$,
and $\tilde{\psi}_2^{+,-} (\lambda)$ in (\ref{psi1-2-tilde-c-defined}), (\ref{psi2-tilde-plus-defined}), 
and (\ref{psi2-tilde-plus-minus-defined}). Finally, we let $\tilde{\psi}_1^{c_2, -} (\lambda)$
and $\tilde{\psi}_2^{c_2, -} (\lambda)$ be defined as 
\begin{equation*}
\tilde{\psi}_i^{c_2, -} (\lambda)
:= \lim_{x \to -\infty} \tilde{\psi}_i^{c_2} (x; \lambda),
\quad i = 1, 2.
\end{equation*}

In addition to the specifications above, we will denote by $p^{c_2} (x; \lambda)$
the tracking point $p$ from (\ref{p-specified}) evaluated with $\tilde{\omega}_1$
and $\tilde{\omega}_2$ replaced with $\tilde{\psi}_1^{c_2} (x; \lambda)$ and 
$\tilde{\psi}_2^{c_2} (x; \lambda)$, and we define  
$p^{+} (x; \lambda)$, $p^{+,-} (\lambda)$, and $p^{c_2, -} (\lambda)$ 
analogously. 

We are now in a position to state the following useful lemma. 

\begin{lemma} \label{technical-lemma1} Let the assumptions of Theorem 
\ref{main-theorem} hold. Given any $\epsilon > 0$, there exists a 
constant $L$ sufficiently large so that the following hold 
for all $c_2 \ge L$: 

\medskip
(1) For $i = 1, 2$,
\begin{equation*}
    |p^{c_2} (x; \lambda_i) - p^+ (x; \lambda_i)| < \epsilon
\end{equation*}
for all $x \in \mathbb{R}$. 

\medskip
(2) 
\begin{equation*}
        |p^{c_2, -} (\lambda) - p^{+,-} (\lambda)| < \epsilon
\end{equation*}
for all $\lambda \in [\lambda_1, \lambda_2]$.
\end{lemma}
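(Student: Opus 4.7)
The plan is to exploit the uniform convergence in Proposition \ref{ode-proposition2} to reduce the discrepancy between $\tilde{\psi}_j^{c_2}(x;\lambda)$ and $\tilde{\psi}_j^+(x;\lambda)$ to a quantity depending on $\lambda$ alone (not on $x$), and then to transfer this closeness to the tracking point $p$ via a uniform continuity argument valid away from the origin.

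First, introduce the rescaled vectors $V_1^c(\lambda) := e^{\mu_+(\lambda)c}\tilde{\mathcal{Y}}^+(c;\lambda)$ and $V_2^c(\lambda) := e^{\mu_+(\lambda)c}\tilde{\mathcal{Y}}_M^+(c;\lambda)$, together with their limits $V_1^\infty(\lambda) := \tilde{\mathcal{V}}^+(\lambda)$ and $V_2^\infty(\lambda) := \tilde{\mathcal{V}}_M^+(\lambda)$. Because $\tr \mathcal{A} = \tr A$, the pairs $(\mathcal{Y}^+, V_1^c)$ and $(\mathcal{Y}_M^+, V_2^c)$ each differ by the same positive scalar $e^{\mu_+(\lambda)c - \int_0^c \tr A(\xi;\lambda) d\xi}$, which cancels in the normalized ratios (\ref{psi1-2-tilde-c-defined}). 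Together with bilinearity of the wedge product, this gives
\begin{equation*}
\tilde{\psi}_j^{c}(x;\lambda) = \hat{\eta}^-(x;\lambda) \wedge \frac{V_j^{c}(\lambda)}{|V_1^{c}(\lambda)|},
\qquad
\tilde{\psi}_j^+(x;\lambda) = \hat{\eta}^-(x;\lambda) \wedge \frac{V_j^{\infty}(\lambda)}{|V_1^{\infty}(\lambda)|},
\quad j = 1, 2,
\end{equation*}
where $\hat{u}:= u/|u|$. Since $|\hat{\eta}^-(x;\lambda)| = 1$, Cauchy--Schwarz applied to (\ref{the-wedge-product}) bounds $|\tilde{\psi}_j^{c_2}(x;\lambda_i) - \tilde{\psi}_j^+(x;\lambda_i)|$ by the norm of the vector difference between the two right-hand sides, a quantity independent of $x$.

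Next, I invoke Proposition \ref{ode-proposition2} and its straightforward $M$-analogue on the compact set $[\lambda_1,\lambda_2]$, obtaining $V_j^c(\lambda) \to V_j^\infty(\lambda)$ uniformly in $\lambda$ as $c \to \infty$ for $j = 1, 2$. Since $V_1^\infty(\lambda) \ne 0$ is continuous on $[\lambda_1, \lambda_2]$, its norm is uniformly bounded below there, so the ratios $V_j^c/|V_1^c|$ converge uniformly to $V_j^\infty/|V_1^\infty|$ as well. Combined with the previous step, for any $\delta > 0$ we obtain $L$ so that for all $c_2 \ge L$,
\begin{equation*}
\sup_{x \in \mathbb{R},\ i \in \{1,2\},\ j \in \{1,2\}} |\tilde{\psi}_j^{c_2}(x;\lambda_i) - \tilde{\psi}_j^+(x;\lambda_i)| < \delta,
\end{equation*}
and by the identical argument with $\hat{\eta}^-(x;\lambda)$ replaced by $\hat{v}^-(\lambda)$ (cf.\ (\ref{psi1-tilde-plus-minus-defined})--(\ref{psi2-tilde-plus-minus-defined})), also $\sup_{\lambda \in [\lambda_1,\lambda_2],\ j \in \{1,2\}} |\tilde{\psi}_j^{c_2,-}(\lambda) - \tilde{\psi}_j^{+,-}(\lambda)| < \delta$.

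Finally, the map $(\psi_1,\psi_2) \mapsto p$ defined by (\ref{p-specified}) is uniformly continuous on any subset of $\mathbb{R}^2$ bounded away from the origin. For part (1), Assumption {\bf (E)}(1)--(2) ensures that $(\tilde{\psi}_1^+(\cdot;\lambda_i), \tilde{\psi}_2^+(\cdot;\lambda_i))$ extends continuously to the compactification $[-\infty, +\infty]$ of $\mathbb{R}$ and never simultaneously vanishes there, so by compactness its magnitude is bounded below by some $\delta_0 > 0$ uniformly in $x$; for part (2), Assumption {\bf (E)}(3) plays the same role on $[\lambda_1, \lambda_2]$. Taking $\delta < \delta_0/2$ in the preceding step keeps the $c_2$-perturbed pair in a region on which the tracking map is uniformly continuous, and the claimed bound $|p^{c_2} - p^+| < \epsilon$ follows for $L$ sufficiently large. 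The main obstacle I anticipate is precisely this uniformity in $x$ for part (1): the tracking map degenerates near the origin, so the pointwise nonvanishing in Assumption {\bf (E)}(2) alone is not enough, and one must combine it with the existence of the asymptotic limits in Assumption {\bf (E)}(1) and their nonvanishing to extract a uniform lower bound via compactness on the compactified $x$-axis.
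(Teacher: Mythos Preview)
Your proposal is correct and follows essentially the same approach as the paper: write the difference $\tilde{\psi}_j^{c_2}-\tilde{\psi}_j^+$ as $\hat{\eta}^-$ wedged with a quantity depending only on $\lambda$ and $c_2$, use Proposition \ref{ode-proposition2} to make that quantity small uniformly, and then transfer to $p$ via uniform continuity of the tracking map away from the origin. You are in fact more explicit than the paper on two points the paper glosses over---the trace cancellation $\tr\mathcal{A}=\tr A$ that justifies dropping the exponential factor in $\tilde{\psi}_2^{c_2}$, and the compactness argument on the compactified $x$-axis (using {\bf (E)}(1) for existence of endpoint limits and {\bf (E)}(2) for nonvanishing there) that yields the uniform lower bound $\delta_0$ needed for uniform continuity of $p$.
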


\begin{proof}
Beginning with (1), we observe from our definitions of $\tilde{\psi}_1^{c_2} (x; \lambda)$ and 
$\tilde{\psi}_1^+ (x; \lambda)$ the relation 
\begin{equation*}
\tilde{\psi}_1^{c_2} (x; \lambda)
- \tilde{\psi}_1^+ (x; \lambda)
= \Big( \frac{\eta^- (x; \lambda)}{|\eta^- (x; \lambda)|} \Big) \wedge
\Big(\frac{e^{\mu_+ (\lambda) c_2} \tilde{\mathcal{Y}}^+ (c_2; \lambda)}{|e^{\mu_+ (\lambda) c_2} \tilde{\mathcal{Y}}^+ (c_2; \lambda)|}
- \frac{\tilde{\mathcal{V}}^+ (\lambda)}{|\tilde{\mathcal{V}}^+ (\lambda)|}\Big).
\end{equation*}
Since $\eta^- (x; \lambda) / |\eta^- (x; \lambda)|$ is bounded and 
\begin{equation*}
\lim_{c_2 \to + \infty} 
\Big(\frac{e^{\mu_+ (\lambda) c_2} \tilde{\mathcal{Y}}^+ (c_2; \lambda)}{|e^{\mu_+ (\lambda) c_2} \tilde{\mathcal{Y}}^+ (c_2; \lambda)|}
- \frac{\tilde{\mathcal{V}}^+ (\lambda)}{|\tilde{\mathcal{V}}^+ (\lambda)|}\Big) = 0,
\end{equation*}
we can make the difference $|\tilde{\psi}_1^{c_2} (x; \lambda)
- \tilde{\psi}_1^+ (x; \lambda)|$ as small as we like by taking $c_2$ 
sufficiently large. A similar statement holds for $\tilde{\psi}_2^{c_2} (x; \lambda)$
and $\tilde{\psi}_2^+ (x; \lambda)$, and the claim about 
the difference $|p^{c_2} (x; \lambda_i) - p^+ (x; \lambda_i)|$ follows from 
the continuous dependence of the tracking point on its two inputs, bearing in 
mind that the points $(0,1)$ and $(0,-1)$ are equated for $p$. Here, 
we emphasize that the values $\tilde{\psi}_1^{c_2} (x; \lambda)^2 + \tilde{\psi}_2^{c_2} (x; \lambda)^2$
and $\tilde{\psi}_1^+ (x; \lambda)^2 + \tilde{\psi}_2^+ (x; \lambda)^2$ are bounded away from 
zero by virtue of our invariance assumption, so that the pairs
$(\tilde{\psi}_1^{c_2} (x; \lambda), \tilde{\psi}_2^{c_2} (x; \lambda))$ and 
$(\tilde{\psi}_1^{+} (x; \lambda), \tilde{\psi}_2^{+} (x; \lambda))$ are confined to compact 
subsets of $\mathbb{R}^2$ that do not contain the origin. On such sets, the tracking 
point $p$ is uniformly continuous in its arguments. 

For assertion (2), we write 
\begin{equation*}
\tilde{\psi}_1^{c_2, -} (\lambda) - \tilde{\psi}_1^{+, -} (\lambda)
= \Big(\frac{v^- (\lambda)}{|v^- (\lambda)|} \Big) 
\wedge \Big(\frac{e^{\mu_+ (\lambda) c_2} \tilde{\mathcal{Y}}^+ (c_2; \lambda)}{|e^{\mu_+ (\lambda) c_2} \tilde{\mathcal{Y}}^+ (c_2; \lambda)|}
- \frac{\tilde{\mathcal{V}}^+ (\lambda)}{|\tilde{\mathcal{V}}^+ (\lambda)|}\Big),
\end{equation*}
and the claim follows as for (1).
\end{proof}

Using Lemma \ref{technical-lemma1}, we can now establish 
the following lemma, which uses the notation of (\ref{asymptotic-index})
and (\ref{asymptotic-bottom-shelf}). 

\begin{lemma} \label{equivalence-lemma}
Under the assumptions of Theorem \ref{main-theorem}, there exist a positive constant 
$L$ sufficiently large so that for any constants $c_1$ and $c_2$ so that $c_1 \le - L$ 
and $c_2 \ge L$ there holds
\begin{equation*}
    \begin{aligned}
     &\ind (\mathpzc{g} (c_1; \cdot), \mathpzc{h} (c_2; \cdot); [\lambda_1, \lambda_2])
    + \ind (\mathpzc{g} (\cdot; \lambda_2), \mathpzc{h} (c_2; \lambda_2); [c_1, c_2]) 
    - \ind (\mathpzc{g} (\cdot; \lambda_1), \mathpzc{h} (c_2; \lambda_1); [c_1, c_2]) \\
    &\quad \quad =   \ind (\mathpzc{g}^- (\cdot), \mathpzc{h}^+ (\cdot); [\lambda_1, \lambda_2])
    + \ind (\mathpzc{g} (\cdot; \lambda_2), \mathpzc{h}^+ (\lambda_2); [-\infty, +\infty]) \\
    & \quad \quad \quad \quad  - \ind (\mathpzc{g} (\cdot; \lambda_1), \mathpzc{h}^+ (\lambda_1); [-\infty, +\infty]).
    \end{aligned}
\end{equation*}
\end{lemma}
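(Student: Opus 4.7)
The plan is to match each of the three indices on the left with its asymptotic counterpart on the right, so that the claimed identity follows by summing three term-by-term equalities. The main tool is Lemma \ref{technical-lemma1}, which provides uniform closeness of the tracking point $p^{c_2}$ to the limiting tracking point $p^+$ once $c_2$ is sufficiently large, together with the homotopy invariance property {\bf (P2)} of the hyperplane index.

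First I would handle the right shelf. Fix $\epsilon>0$ small. By Lemma \ref{technical-lemma1}(1), I can choose $c_2$ large so that $|p^{c_2}(x;\lambda_2)-p^+(x;\lambda_2)|<\epsilon$ uniformly in $x$. Under Assumption {\bf (E)}(1)--(2) the endpoint limits $p^{+,\pm}(\lambda_2)$ exist and avoid simultaneous vanishing of the $\tilde\psi_i^+$, so I can choose $L'>0$ large enough that $p^+(x;\lambda_2)$ stays within $\epsilon$ of $p^{+,\pm}(\lambda_2)$ for $\pm x\ge L'$. Taking $c_1\le -L'$ and $c_2\ge L'$, on the compact window $[-L',L']$ the straight-line homotopy in $S^1$ between $p^{c_2}(\cdot;\lambda_2)$ and $p^+(\cdot;\lambda_2)$ avoids $(-1,0)$ by choice of $\epsilon$; by {\bf (P2)} the bulk winding contributions agree. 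The two tail pieces are then reconciled with the asymptotic-endpoint convention defined via (\ref{condition-star}): once the tracking point has entered the $\epsilon$-neighbourhood of its asymptotic limit, only the side of $(-1,0)$ it lies on matters, and this side is preserved between $p^{c_2}$ and $p^+$ by the uniform $\epsilon$-bound. The left shelf is treated identically with the overall sign reversed.

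Next I would treat the bottom shelf. As $c_1\to-\infty$, Proposition \ref{ode-proposition1} gives $e^{-\mu_-(\lambda)c_1}\eta^-(c_1;\lambda)\to v^-(\lambda)$ uniformly on $[\lambda_1,\lambda_2]$, so $p^{c_2}(c_1;\cdot)\to p^{c_2,-}(\cdot)$ uniformly. Combined with Lemma \ref{technical-lemma1}(2), taking $|c_1|$ and $c_2$ sufficiently large gives that $p^{c_2}(c_1;\cdot)$ is uniformly $2\epsilon$-close to $p^{+,-}(\cdot)$ on $[\lambda_1,\lambda_2]$. By Assumption {\bf (E)}(3) the pair $(\tilde\psi_1^{+,-},\tilde\psi_2^{+,-})$ never simultaneously vanishes on $[\lambda_1,\lambda_2]$, so $p^{+,-}(\cdot)$ stays in a compact subset of $S^1$, and the straight-line homotopy between the two paths in that subset avoids $(-1,0)$ for $\epsilon$ small. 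Homotopy invariance yields the equality of the bottom-shelf index with $\ind(\mathpzc{g}^-(\cdot),\mathpzc{h}^+(\cdot);[\lambda_1,\lambda_2])$.

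The main obstacle will be the corner behaviour: when $\lambda_i$ is an eigenvalue, $p^+(x;\lambda_i)$ may tend to $(-1,0)$ as $x\to+\infty$ and may even oscillate, so the endpoint contributions on the right shelf, left shelf, and bottom shelf all share corner limits that must be consistently assigned. The resolution is again supplied by Lemma \ref{technical-lemma1} applied to the corner $(c_2,\lambda_i)$: the finite tracking point $p^{c_2}(c_2;\lambda_i)$ lies on the same side of $(-1,0)$ as the asymptotic limit $p^{+,+}(\lambda_i)$ once $c_2$ is sufficiently large, so the $0/+1$ contribution prescribed after (\ref{condition-star}) matches the actual endpoint contribution at the right-shelf corner (and analogously at the other corners). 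With these corner matches in hand, the three shelf equalities sum to the stated identity.
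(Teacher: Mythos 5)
Your plan diverges from the paper's: you try to establish a \emph{shelf-by-shelf} equality (right shelf to right shelf, left shelf to left shelf, bottom shelf to bottom shelf) and then sum, whereas the paper compares the \emph{total} winding around the lower U-shaped contour for $p^{c_2}$ against the total winding around the corresponding asymptotic contour for $p^+$, then does a separate case analysis at the corners $(c_2,\lambda_i)$. The distinction is not cosmetic: the individual shelf indices on the two sides are \emph{not} equal in general. In particular, the finite bottom-shelf index $\ind(\mathpzc{g}(c_1;\cdot),\mathpzc{h}(c_2;\cdot);[\lambda_1,\lambda_2])$ is \emph{not} equal to $\ind(\mathpzc{g}^-(\cdot),\mathpzc{h}(c_2;\cdot);[\lambda_1,\lambda_2])$; the paper's intermediate relation (obtained from homotopy invariance on the asymptotic rectangle $[-\infty,c_1]\times[\lambda_1,\lambda_2]$) shows that the difference between them is absorbed by the left- and right-shelf contributions on the tail $[-\infty,c_1]$, which need not vanish. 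Your $\epsilon$-closeness homotopy argument for the bottom shelf does not address the endpoint/corner allocations, so it does not deliver the equality you assert; the conclusion ``Homotopy invariance yields the equality of the bottom-shelf index with $\ind(\mathpzc{g}^-(\cdot),\mathpzc{h}^+(\cdot);[\lambda_1,\lambda_2])$'' is unsupported.

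The corner treatment in your last paragraph is also incorrect. When $\lambda_i$ is an eigenvalue, $p^{c_2}(c_2;\lambda_i)$ equals $(-1,0)$ \emph{exactly} (the finite-$x$ frames $\eta^-(c_2;\lambda_i)$ and $\mathbf H(c_2;\lambda_i)$ are genuinely linearly dependent), so it is not ``on the same side'' of $(-1,0)$ as anything, and the increment/decrement it produces depends on the direction of approach in $x$, not merely on which side of $(-1,0)$ the nearby asymptotic point $p^+(c_2;\lambda_i)$ lies. Moreover, Lemma \ref{technical-lemma1} compares $p^{c_2}(x;\lambda_i)$ to $p^+(x;\lambda_i)$, not to the asymptotic limit $p^{+,+}(\lambda_i)$, so it cannot be invoked in the way you use it. The paper's proof distinguishes three sub-cases for the location of $p^+(c_2;\lambda_2)$ relative to $(-1,0)$ (clockwise, at, or counterclockwise) and shows that the resulting $+1$-or-$0$ discrepancy between the finite U-contour and the asymptotic U-contour is precisely absorbed by the convention built into $\ind(\mathpzc{g}(\cdot;\lambda_2),\mathpzc{h}^+(\lambda_2);[-\infty,+\infty])$ at $x=+\infty$. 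This delicate accounting is what your argument is missing. To repair the proposal you would need to (i) abandon the shelf-by-shelf matching in favour of a joint comparison of the full lower U-contours, using Lemma \ref{technical-lemma1} to control the difference, (ii) use the boundary-index-zero relation on the asymptotic rectangle $[-\infty,c_1]\times[\lambda_1,\lambda_2]$ to convert the asymptotic bottom shelf and tails into the finite bottom shelf, and (iii) do the explicit case analysis at the corners $(c_2,\lambda_i)$ when $p^{+,+}(\lambda_i)=(-1,0)$, matching the convention for the asymptotic index at $+\infty$ against the actual arrival/departure of $p^{c_2}$ at the corner.
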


\begin{proof}
We begin by observing that each hyperplane index on the left-hand side
of the sought equality is computed by tracking the point $p^{c_2} (x; \lambda)$
around $S^1$ as the points $(x, \lambda)$ move along the Maslov box (bottom, right, and left 
shelves respectively). Likewise, the first hyperplane index on the right-hand
side is computed by tracking $p^{+,-} (\lambda)$ around $S^1$ as 
$\lambda$ increases from  $\lambda_1$ to $\lambda_2$, and the latter two hyperplane 
indices on the right-hand side are computed by tracking (for $i = 1, 2$) $p^+ (x; \lambda_i)$ 
around $S^1$ as $x$ increases from $- \infty$ to $+\infty$. For the proof 
of Lemma \ref{equivalence-lemma}, our strategy will be to take advantage of 
Lemma \ref{technical-lemma1} to show that these points can be kept close enough 
so that the indices computed must be equivalent. 

We effectively have four cases to consider, based on the 
asymptotic limits 
\begin{equation*}
    p^{+,+} (\lambda_i) 
    = \lim_{x \to + \infty} p^+ (x; \lambda_i),
    \quad i = 1, 2,
\end{equation*}
which necessarily exist under the assumptions of Theorem 
\ref{main-theorem}. Namely, we can have (1) $p^{+,+} (\lambda_i) \ne (-1,0)$,
$i = 1, 2$; (2) $p^{+,+} (\lambda_1) \ne (-1,0)$, $p^{+,+} (\lambda_2) = (-1, 0)$;
(3) $p^{+,+} (\lambda_1) = (-1,0)$, $p^{+,+} (\lambda_2) \ne (-1, 0)$;
and (4) $p^{+,+} (\lambda_i) = (-1,0)$, $i = 1, 2$.

Beginning with Case (1), we first observe that 
we can take some $L$ sufficiently large so that for all 
$x \ge L$, we have $p^+ (x; \lambda_i) \ne (-1,0)$, $i = 1, 2$.  
We can conclude that for each of $i = 1, 2$, and for any 
$c_2 \ge L$, 
\begin{equation} \label{right-asymptotic}
    \ind (\mathpzc{g} (\cdot; \lambda_i), \mathpzc{h}^+ (\lambda_i); [c_2, +\infty])
    = 0;
\end{equation}
In addition, for Case (1) neither $\lambda_1$ nor $\lambda_2$ can be an eigenvalue,
so we must have $p^{c_2} (c_2; \lambda_i) \ne (-1,0)$, $i = 1, 2$. 

We now think of starting the evolution of $p^{c_2} (x; \lambda)$ and 
$p^+ (x; \lambda)$ at the point $(c_2, \lambda_1)$ (top left corner 
of the Maslov box). According to Lemma \ref{technical-lemma1}, given 
any $\epsilon > 0$ we can choose $L$ sufficiently large (possibly 
larger than before) so that 
for all $c_2 \ge L$
\begin{equation} \label{inequality1}
    |p^{c_2} (x; \lambda_1) - p^+ (x; \lambda_1)| < \epsilon
\end{equation}
for all $x \in \mathbb{R}$. Likewise, according to Lemma \ref{technical-lemma1},
we can choose $L$ sufficiently large 
so that for all $c_2 \ge L$
\begin{equation} \label{inequality2}
    |p^{c_2, -} (\lambda) - p^{+,-} (\lambda)| < \epsilon
\end{equation}
for all $\lambda \in [\lambda_1, \lambda_2]$. Last, we can complete 
a U-shaped contour by choosing $L$ sufficiently large so that 
for all $c_2 \ge L$
\begin{equation} \label{inequality3}
    |p^{c_2} (x; \lambda_2) - p^+ (x; \lambda_2)| < \epsilon
\end{equation}
for all $x \in \mathbb{R}$. 

At this point, we can choose $\epsilon$
sufficiently small so that the winding numbers
associated with the points $p^{c_2} (x; \lambda)$
and $p^+ (x; \lambda)$ must be the same on 
the following U-shaped contour: (1) fix $\lambda = \lambda_1$,
and let $x$ decrease from $c_2$ to $-\infty$ (asymptotic limit
sense); (2) for the asymptotic limit points $p^{c_2, -} (\lambda)$
and $p^{+, -} (\lambda)$, let $\lambda$ increase from $\lambda_1$ to $\lambda_2$; 
and (3) fix $\lambda = \lambda_2$ and let $x$ increase
from $- \infty$ to $c_2$. In total, we obtain the index relation
\begin{equation} \label{intermediate-relation}
    \begin{aligned}
     &\ind (\mathpzc{g}^- (\cdot), \mathpzc{h} (c_2; \cdot); [\lambda_1, \lambda_2])
    + \ind (\mathpzc{g} (\cdot; \lambda_2), \mathpzc{h} (c_2; \lambda_2); [-\infty, c_2]) \\
    & \quad \quad \quad \quad - \ind (\mathpzc{g} (\cdot; \lambda_1), \mathpzc{h} (c_2; \lambda_1); [-\infty, c_2]) \\
    &\quad \quad =   \ind (\mathpzc{g}^- (\cdot), \mathpzc{h}^+ (\cdot); [\lambda_1, \lambda_2])
    + \ind (\mathpzc{g} (\cdot; \lambda_2), \mathpzc{h}^+ (\lambda_2); [-\infty, c_2]) \\
    & \quad \quad \quad \quad  - \ind (\mathpzc{g} (\cdot; \lambda_1), \mathpzc{h}^+ (\lambda_1); [-\infty, c_2]).
    \end{aligned}
\end{equation}
For the indices on the right-hand side of this relation, we can use 
(\ref{right-asymptotic}) to obtain precisely the expressions stated 
on the right-hand side in Lemma \ref{equivalence-lemma}. For the 
indices on the left-hand side of this relation, we observe that 
by virtue of our invariance assumption {\bf (E)}(3) for the bottom shelf we can 
take $L$ sufficiently large so that for all $c_1 \le - L$ and 
$c_2 \ge L$, the scaled variables $\tilde{\psi}_1^{c_2} (x; \lambda)$ 
and $\tilde{\psi}_2^{c_2} (x; \lambda)$ do not simultaneously vanish at 
any point of the asymptotic rectangle 
$[-\infty, c_1] \times [\lambda_1, \lambda_2]$. Accordingly, we 
can use homotopy invariance in the Maslov-Arnold space to see that 
\begin{equation*}
    \begin{aligned}
     \ind &(\mathpzc{g}^- (\cdot), \mathpzc{h} (c_2; \cdot); [\lambda_1, \lambda_2])
     + \ind (\mathpzc{g} (\cdot; \lambda_2), \mathpzc{h} (c_2; \lambda_2); [-\infty, c_1)) \\
     &\quad \quad \quad \quad - \ind (\mathpzc{g} (\cdot; \lambda_1), \mathpzc{h} (c_2; \lambda_1); [-\infty, c_1])
     = \ind (\mathpzc{g} (c_1; \cdot), \mathpzc{h} (c_2; \cdot); [\lambda_1, \lambda_2]).
    \end{aligned}
\end{equation*}
If we think of solving this last relation for 
$\ind (\mathpzc{g}^- (\cdot), \mathpzc{h} (c_2; \cdot); [\lambda_1, \lambda_2])$ and substituting 
the result into the left-hand side of (\ref{intermediate-relation}), we see 
that the left-hand side of (\ref{intermediate-relation}) becomes
\begin{equation*}
    \begin{aligned}
    \ind (\mathpzc{g} (c_1; \cdot), \mathpzc{h} (c_2; \cdot); [\lambda_1, \lambda_2])
    + \ind (\mathpzc{g} (\cdot; \lambda_2), \mathpzc{h} (c_2; \lambda_2); [c_1, c_2])
    - \ind (\mathpzc{g} (\cdot; \lambda_1), \mathpzc{h} (c_2; \lambda_1); [c_1, c_2]),
    \end{aligned}
\end{equation*}
which is precisely the left-hand side claimed in Lemma \ref{equivalence-lemma}.

Turning to Case (2), the critical difference is that we now have
$p^{c_2} (c_2; \lambda_2) = (-1,0)$, and 
additionally
\begin{equation} \label{p-plus-plus}
    p^{+,+} (\lambda_2) 
    = \lim_{x \to + \infty} p^+ (x; \lambda_2) 
    = (-1,0).
\end{equation}
Similarly as for Case (1), we would like to think of tracking relevant 
points around $S^1$ as points $(x, \lambda)$ move along the lower 
U-shaped contour, but for Case (2) we have three subcases for the 
location of $p^+(c_2; \lambda_2)$:  
(i) $p^+ (c_2; \lambda_2)$ is rotated slightly away from 
$(-1,0)$ in the clockwise direction; or (ii) $p^+ (c_2; \lambda_2) = (-1,0)$;
or $p^+ (c_2; \lambda_2)$ is rotated slightly away from 
$(-1,0)$ in the counterclockwise direction. It follows immediately
from (\ref{p-plus-plus}) and our specification of hyperplane 
indices computed on unbounded domains that 
\begin{equation} \label{discrepancies}
    \ind (\mathpzc{g} (\cdot; \lambda_2), \mathpzc{h}^+ (\lambda_2); [-\infty, +\infty])
    =  \ind (\mathpzc{g} (\cdot; \lambda_2), \mathpzc{h}^+ (\lambda_2); [-\infty, c_2])
    + \begin{cases}
    +1 & \textrm{case (i)} \\
    0 & \textrm{cases (ii) or (iii)}. 
    \end{cases}
\end{equation}
For all cases, given any $\epsilon > 0$,
we can choose $L$ large enough so that 
(\ref{inequality1}), (\ref{inequality2}), and (\ref{inequality3}) 
all hold for all $c_2 \ge L$. It follows that the quantities on the left 
and right sides of (\ref{intermediate-relation}) can differ
only by either $0$ or $1$. We claim that the discrepancies 
are precisely as follows: the quantity obtained by subtracting 
the right-hand side of (\ref{intermediate-relation}) from the left 
must be $+1$ for Case (i) and $0$ for Cases (ii) and (iii). To 
see this, we need only consider the possible ways in which 
the points $p^{c_2} (c_2; \lambda_2)$ and $p^+ (c_2; \lambda_2)$
can arrive respectively at $p^{c_2} (c_2; \lambda_2) = (-1, 0)$
and the Case (i) location of $p^+ (c_2; \lambda_2)$. Ignoring 
transient crossings (i.e., crossings along with return crossings),
either $p^{c_2} (c_2; \lambda_2)$ has arrived at $(-1,0)$ one
more time moving in the counterclockwise direction than 
$p^+ (c_2; \lambda_2)$, or $p^+ (c_2; \lambda_2)$ has crossed 
$(-1,0)$ once more in the clockwise direction than $p^+ (c_2; \lambda_2)$.
In either case, the discrepancy is $+1$. The reasoning is similar
for Cases (ii) and (iii), and we see that the discrepancies are 
precisely the values on the right-hand side of (\ref{discrepancies}). 
In this way, we see that the possible discrepancy between the 
left and right sides of (\ref{intermediate-relation}) are precisely
addressed in Case (2) by the inclusion of $+ \infty$ in the 
index 
\begin{equation*}
    \ind (\mathpzc{g} (\cdot; \lambda_2), \mathpzc{h}^+ (\lambda_2); [-\infty, +\infty]),
\end{equation*}
and the equality of Lemma \ref{equivalence-lemma} is seen to hold. 

Cases (3) and (4) can be handled similarly.
\end{proof}

One thing clear from Lemma \ref{equivalence-lemma} is that the left-hand side
of the stated relation takes the same value for all $c_1 \le -L$ and $c_2 \ge L$ (because
the right-hand side is independent of $c_1$ and $c_2$). 
In order to show from (\ref{mathfrak-m-defined}) that $\mathfrak{m} (c_1, c_2)$ 
is actually independent of $c_1$ and $c_2$, we last need to establish that 
the top-shelf index
\begin{equation*}
\ind (\mathpzc{g} (c_2; \cdot), \mathpzc{h} (c_2; \cdot); [\lambda_1, \lambda_2])    
\end{equation*}
is independent of $c_2$ (for $c_2$ taken sufficiently large). To see this, 
we first recall that crossing points along the top shelf correspond precisely
with eigenvalues of (\ref{nonhammy}) and so certainly do not depend on 
$c_2$. This means we only need to verify that the directions we associate 
with these crossings must be independent of $c_2$. 

If the eigenvalues in $[\lambda_1, \lambda_2]$ aren't discrete, we set 
$\mathcal{N}_{\#} ([\lambda_1, \lambda_2]) = \infty$, in which case 
the claim of Theorem \ref{main-theorem} holds by convention, so we can 
restrict our analysis to the case of discrete spectrum. For this, 
we suppose $\lambda_0 \in (\lambda_1, \lambda_2)$ is an isolated
eigenvalue and we consider a Maslov box sufficiently small so that $\lambda_0$
is the only eigenvalue it contains. (See Figure \ref{micro-box-figure}.)
In particular, we compute the hyperplane index detecting 
intersections between $\mathpzc{g} (x; \lambda)$ and $\mathpzc{h} (x; \lambda)$ 
as this small box is traversed. According to 
our Assumption {\bf (E)}(4), we have invariance along this Maslov box
and its interior, so we can use homotopy invariance in the associated
Maslov-Arnold space to conclude that 
\begin{equation*}
\begin{aligned}
\ind &(\mathpzc{g} (c_2; \cdot), \mathpzc{h} (c_2; \cdot); [\lambda_0 - h, \lambda_0 +h]) \\
& \quad - \ind (\mathpzc{g} (c_2 + \Delta c_2; \cdot), \mathpzc{h} (c_2 + \Delta c_2; \cdot); [\lambda_0 - h, \lambda_0 +h])
= 0.
\end{aligned}
\end{equation*}
Since only a single crossing occurs for each of these hyperplane indices, it must be the 
case that these crossings are in the same direction. This discussion has
been for interior values $\lambda_0 \in (\lambda_1, \lambda_2)$, but the 
endpoints $\lambda_0 = \lambda_1$ and $\lambda_0 = \lambda_2$ can be 
treated similarly if $\lambda_1$ or $\lambda_2$ is an eigenvalue, with 
boxes either extending to the right of $\lambda_1$ or to the left of 
$\lambda_2$. 

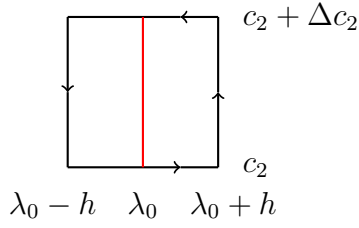
\begin{figure}[ht]
\begin{center}
\begin{tikzpicture}

\draw[thick, ->] (-1,-1) -- (.5,-1);
\draw[thick] (.5,-1) -- (1,-1);	
\draw[thick, ->] (1,-1) -- (1,0);
\draw[thick] (1,0) -- (1, 1);
\draw[thick,->] (1,1) -- (.5, 1);
\draw[thick] (.5,1) -- (-1, 1);
\draw[thick,->] (-1,1) -- (-1, 0);
\draw[thick] (-1,0) -- (-1, -1);
\draw[thick, red] (0,-1) -- (0,1);
%
\node at (0,-1.5) {$\lambda_0$};
\node at (-1.2,-1.5) {$\lambda_0 - h$};
\node at (1.2,-1.5) {$\lambda_0 + h$};
\node at (1.5,-1) {$c_2$};
\node at (2.1,1) {$c_2 + \Delta c_2$};
\end{tikzpicture}
\end{center}
\caption{The Maslov Box at $\lambda_0$.} \label{micro-box-figure}
\end{figure}

These considerations allow us to associate a value $\mathfrak{m}$ 
with any interval $[\lambda_1, \lambda_2] \subset I$ for which 
the assumptions of Theorem \ref{main-theorem} apply. In particular, 
by rearranging (\ref{mathfrak-m-defined}), we can write 
\begin{equation*} 
\begin{aligned}
\ind &(\mathpzc{g} (c_2; \cdot), \mathpzc{h} (c_2; \cdot); [\lambda_1, \lambda_2])
=  \ind (\mathpzc{g} (\cdot; \lambda_2), \mathpzc{h} (c_2; \lambda_2); [c_1, c_2]) \\
& - \ind (\mathpzc{g} (\cdot; \lambda_1), \mathpzc{h} (c_2; \lambda_1); [c_1, c_2]) 
+ \ind (\mathpzc{g} (c_1; \cdot), \mathpzc{h} (c_2; \cdot); [\lambda_1, \lambda_2])
- \mathfrak{m}.
\end{aligned}
\end{equation*}
Upon substitution of the relation from Lemma \ref{equivalence-lemma}, we obtain 
\begin{equation} \label{penultimate-relation}
\begin{aligned}
\ind &(\mathpzc{g} (c_2; \cdot), \mathpzc{h} (c_2; \cdot); [\lambda_1, \lambda_2])
=  \ind (\mathpzc{g} (\cdot; \lambda_2), \mathpzc{h}^+ (\lambda_2); [-\infty, + \infty]) \\
& - \ind (\mathpzc{g} (\cdot; \lambda_1), \mathpzc{h}^+ (\lambda_1); [-\infty, +\infty]) 
+ \ind (\mathpzc{g}^- (\cdot), \mathpzc{h}^+ (\cdot); [\lambda_1, \lambda_2])
- \mathfrak{m}.
\end{aligned}
\end{equation}
Last, using (\ref{count-inequality}), we obtain the claimed inequality 
\begin{equation*} 
\begin{aligned}
\mathcal{N}_{\#} &([\lambda_1, \lambda_2])
\ge  \Big| \ind (\mathpzc{g} (\cdot; \lambda_2), \mathpzc{h}^+ (\lambda_2); [-\infty, + \infty]) \\
& - \ind (\mathpzc{g} (\cdot; \lambda_1), \mathpzc{h}^+ (\lambda_1); [-\infty, +\infty]) 
+ \ind (\mathpzc{g}^- (\cdot), \mathpzc{h}^+ (\cdot); [\lambda_1, \lambda_2])
- \mathfrak{m} \Big|.
\end{aligned}
\end{equation*}
This completes the proof of Theorem \ref{main-theorem}.

\subsection{The Bottom and Left Shelves}
\label{bottom-and-left}

In this section, we provide additional information about 
calculations associated with the bottom and left shelves, emphasizing 
cases in which one or both of these values can be shown to be 0. 

\subsubsection{The Bottom Shelf}
\label{bottom-shelf-section}

In Theorem \ref{main-theorem}, the hyperplane 
index associated with the bottom 
shelf detects intersections between the spaces $\mathpzc{g}^- (\lambda)$ 
and $\mathpzc{h}^+ (\lambda)$ as $\lambda$ increases from $\lambda_1$
to $\lambda_2$. These intersections can be detected as zeros of the 
wedge product 
\begin{equation*}
    v^- (\lambda) \wedge \tilde{\mathcal{V}}^+ (\lambda),
\end{equation*}
where we recall from Proposition \ref{ode-proposition1} 
that $v^- (\lambda)$ denotes a right eigenvector 
of $A_- (\lambda)$ associated with the eigenvalue $\mu_- (\lambda)$, 
and we recall from Proposition \ref{ode-proposition2} 
that $\tilde{\mathcal{V}}^+ (\lambda)$ denotes a right eigenvector 
of $\tilde{A}_+ (\lambda)$ associated with the eigenvalue $-\mu_+ (\lambda)$. 
In general, this allows us to explicitly compute the hyperplane index
along the bottom shelf by working with the scaled variables 
$\tilde{\psi}_1^{+, -} (\lambda)$ and $\tilde{\psi}_2^{+, -} (\lambda)$,
defined respectively in (\ref{psi1-tilde-plus-minus-defined}) 
and (\ref{psi2-tilde-plus-minus-defined}). 
In some cases, including the applications we consider in Section 
\ref{applications-section}, we can show that for each $\lambda \in [\lambda_1, \lambda_2]$,
we have $v^- (\lambda) \wedge \tilde{\mathcal{V}}^+ (\lambda) \ne 0$, from which we can 
conclude that there are no crossing points along the bottom shelf. In such 
cases, we naturally have invariance along the bottom shelf, and additionally
\begin{equation*}
    \ind (\mathpzc{g}^- (\cdot), \mathpzc{h}^+ (\cdot); [\lambda_1, \lambda_2])
    = 0.
\end{equation*}

\subsubsection{The Left Shelf}
\label{left-shelf-section}

In most applications, we expect to take $\lambda_1$ sufficiently 
negative so that it is not an eigenvalue of (\ref{nonhammy}), and 
in such cases it's straightforward to verify that the limit 
\begin{equation*}
    \tilde{\psi}^{+, +} (\lambda_1)
    = \lim_{x \to +\infty} \tilde{\psi}^+ (x; \lambda_1)
\end{equation*}
is well defined. Precisely, we can prove the following lemma. 

\begin{lemma} \label{left-shelf-lemma1}
Let Assumptions {\bf (A)} through {\bf (D)} hold, 
and fix any $\lambda_0 \in [\lambda_1, \lambda_2]$
that is not an eigenvalue of (\ref{nonhammy}). Then 
there exists a constant $c_+ (\lambda_0) \ne 0$ so that 
\begin{equation*}
    \lim_{x \to + \infty} e^{- \mu_+ (\lambda_0) x} \eta^- (x; \lambda_0)
    = c_+ (\lambda_0) v^+ (\lambda_0),
\end{equation*}
where $v^+ (\lambda_0)$ is the eigenvector of $A_+ (\lambda_0)$ associated 
with the eigenvalue $\mu_+ (\lambda_0)$. It follows that 
\begin{equation*}
\begin{aligned}
    \tilde{\psi}_1^{+, +} (\lambda_0)
    &:= \lim_{x \to + \infty} \tilde{\psi}_1^{+} (x; \lambda_0)
    = \frac{v^+ (\lambda_0) \wedge \tilde{\mathcal{V}}^+ (\lambda_0)}{|v^+ (\lambda_0)| |\tilde{\mathcal{V}}^+ (\lambda_0)|} \\
     \tilde{\psi}_2^{+, +} (\lambda_0)
    &:= \lim_{x \to + \infty} \tilde{\psi}_2^{+} (x; \lambda_0)
    = \frac{v^+ (\lambda_0) \wedge \tilde{\mathcal{V}}_M^+ (\lambda_0)}{|v^+ (\lambda_0)| |\tilde{\mathcal{V}}^+ (\lambda_0)|}.
\end{aligned}
\end{equation*}
\end{lemma}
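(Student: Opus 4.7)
\textbf{Proof plan for Lemma \ref{left-shelf-lemma1}.} My strategy is to decompose $\eta^-(\,\cdot\,;\lambda_0)$ in a basis of solutions adapted to the asymptotic structure at $+\infty$, extract the dominant term, and then pass to the limit in the definitions of $\tilde{\psi}_1^+$ and $\tilde{\psi}_2^+$.

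First I would invoke Assumption {\bf (C)} --- which guarantees a spectral gap $\mathrm{Re}\,\mu_+(\lambda_0) > \mu_+^*(\lambda_0)$ for $A_+(\lambda_0)$ --- together with Proposition \ref{ode-proposition1}(ii) and the standard stable/unstable-manifold construction for asymptotically autonomous linear ODE systems. This produces a basis $\{y_i^+(x;\lambda_0)\}_{i=1}^n$ of solutions to (\ref{nonhammy}) in which $y_n^+$ is the distinguished solution of Proposition \ref{ode-proposition1}(ii), satisfying $e^{-\mu_+(\lambda_0)x}y_n^+(x;\lambda_0)\to v^+(\lambda_0)$, while $\{y_i^+\}_{i=1}^{n-1}$ spans the complementary $(n-1)$-dimensional invariant subspace of solutions that do not grow at the maximal rate. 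The key quantitative fact to extract is that for each $i\in\{1,\dots,n-1\}$,
\begin{equation*}
\lim_{x\to+\infty} e^{-\mu_+(\lambda_0)x}|y_i^+(x;\lambda_0)| = 0,
\end{equation*}
since the associated asymptotic growth rates are bounded by $\mu_+^*(\lambda_0) < \mu_+(\lambda_0)$.

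Next I would expand $\eta^-(x;\lambda_0)=\sum_{i=1}^n \alpha_i\,y_i^+(x;\lambda_0)$ for unique $\alpha_i\in\mathbb{R}$. The hypothesis that $\lambda_0$ is not an eigenvalue of (\ref{nonhammy}) is precisely the statement that $\eta^-(\,\cdot\,;\lambda_0)\notin\Span\{y_i^+(\,\cdot\,;\lambda_0)\}_{i=1}^{n-1}$, which forces $\alpha_n\ne 0$. Multiplying the expansion by $e^{-\mu_+(\lambda_0)x}$ and letting $x\to+\infty$, the $i=n$ term converges to $\alpha_n v^+(\lambda_0)$ and every other term vanishes by the decay estimate above. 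Setting $c_+(\lambda_0):=\alpha_n\ne 0$ gives the first assertion.

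For the second assertion, I would pull the positive scalar $e^{-\mu_+(\lambda_0)x}$ inside both the wedge in the numerator and the norm in the denominator of (\ref{psi1-tilde-plus-defined}), obtaining
\begin{equation*}
\tilde{\psi}_1^+(x;\lambda_0)
= \frac{\bigl(e^{-\mu_+(\lambda_0)x}\eta^-(x;\lambda_0)\bigr)\wedge\tilde{\mathcal{V}}^+(\lambda_0)}
{|e^{-\mu_+(\lambda_0)x}\eta^-(x;\lambda_0)|\,|\tilde{\mathcal{V}}^+(\lambda_0)|},
\end{equation*}
and similarly for $\tilde{\psi}_2^+$ with $\tilde{\mathcal{V}}_M^+(\lambda_0)$ in place of $\tilde{\mathcal{V}}^+(\lambda_0)$. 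Passing to $x\to+\infty$ and applying the first assertion yields the claimed formulas, up to the overall factor $\sgn c_+(\lambda_0)$ --- which is harmless in view of the $\mathbb{R}P^1$-valued nature of the tracking-point dynamics of Section \ref{maslov-section} and can in any event be absorbed by the choice of orientation of $v^+(\lambda_0)$. The main obstacle is really just the preparatory Step~1: producing the basis with the \emph{sharp} decay rate $e^{-\mu_+(\lambda_0)x}|y_i^+|\to 0$ for each $i<n$, rather than the weaker statement that no component grows at rate $\mu_+$. This is the positive-$x$ analogue of Proposition \ref{ode-proposition1}, applied to the $A_+(\lambda_0)$-invariant complement of $\Span\{v^+(\lambda_0)\}$, but it needs to be recorded explicitly because this sharp decay is exactly what drives the cancellation of all but the $i=n$ term in Step~2.
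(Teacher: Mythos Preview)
Your proposal is correct and follows essentially the same route as the paper's proof: expand $\eta^-$ in the basis $\{y_i^+\}_{i=1}^n$, use the non-eigenvalue hypothesis to force $\alpha_n\ne 0$, kill the $i<n$ terms via the spectral-gap decay from Assumption {\bf (C)}, and then rescale $\tilde\psi_i^+$ by $e^{-\mu_+(\lambda_0)x}$ to pass to the limit. You are in fact slightly more careful than the paper in flagging the residual factor $\sgn c_+(\lambda_0)$ in the second assertion, which the paper's argument silently absorbs; your observation that this sign is irrelevant for the $\mathbb{R}P^1$ tracking is the right resolution.
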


\begin{proof}
As in the discussion of (\ref{v-plus-specified}), we let $\{y_j^+ (x; \lambda_0)\}_{j=1}^n$
denote a basis of linearly independent solutions of (\ref{nonhammy}) indexed so that 
(\ref{v-plus-specified}) holds. Then there exist constants $\{c_j (\lambda_0)\}_{j=1}^n$
so that 
\begin{equation} \label{left-right-expansion}
    \eta^- (x; \lambda_0)
    = \sum_{j=1}^n c_j (\lambda_0) y_j^+ (x; \lambda_0),
    \quad i = 1, 2,
\end{equation}
where we must have $c_n (\lambda_0) \ne 0$ or $\lambda_0$ would be 
an eigenvalue of (\ref{nonhammy}) (in the sense described in the 
introduction). According to Assumption {\bf (C)}, along with our labeling convention, 
the solutions $\{y_j^+ (x; \lambda)\}_{j=1}^{n-1}$ are all 
$\mathbf{o} (e^{\mu_+ (\lambda_0) x})$ as $x \to + \infty$, so 
\begin{equation*}
\begin{aligned}
\lim_{x \to + \infty} e^{- \mu_+ (\lambda_0) x} \eta^- (x; \lambda_0)
& = \sum_{j=1}^n c_j (\lambda_0) \lim_{x \to + \infty} e^{- \mu_+ (\lambda_0) x}  y_j^+ (x; \lambda_0) \\
&= c_n (\lambda_0) \lim_{x \to + \infty} e^{- \mu_+ (\lambda_0) x}  y_n^+ (x; \lambda_0)
= c_n (\lambda_0) v^+ (\lambda).
\end{aligned}
\end{equation*}
The first claim of our lemma follows from denoting $c_n (\lambda_0)$ by $c_+ (\lambda_0)$. 

For the second claim, we can write 
\begin{equation*}
    \tilde{\psi}_1^+ (x; \lambda_0)
    = \frac{(e^{-\mu_+ (\lambda_0) x} \eta^- (x; \lambda_0)) \wedge \tilde{\mathcal{V}}^+ (\lambda)}
    {|e^{-\mu_+ (\lambda_0) x} \eta^- (x; \lambda_0)| |\tilde{\mathcal{V}}^+ (\lambda)|}, 
\end{equation*}
from which the claim regarding $\tilde{\psi}_1^{+,+} (\lambda_0)$ is clear upon taking $x \to + \infty$. 
The claim about $\tilde{\psi}_2^{+,+} (\lambda_0)$ follows similarly. 
\end{proof}

In some cases, including the applications we consider 
in Section \ref{applications-section}, we can take $\lambda_1$ sufficiently 
negative so that there are no crossings along the left shelf. To understand
conditions under which this occurs, we recall that in the left-shelf 
computation 
\begin{equation*}
    \ind (\mathpzc{g} (\cdot; \lambda_1), \mathpzc{h}^+ (\lambda_1); [-\infty, +\infty]), 
\end{equation*}
we detect intersections between the evolving subspaces $\mathpzc{g} (x; \lambda_1)$
and the fixed target space $\mathpzc{h}^+ (\lambda_1)$. If we can show that 
\begin{equation*}
    \eta^- (x; \lambda_1) \wedge \tilde{\mathcal{V}}^+ (\lambda_1) \ne 0
\end{equation*}
for all $x \in \mathbb{R}$, and additionally that no intersection is obtained 
in the limit as $x \to \pm \infty$, then we can conclude that there are no crossings
on the left shelf. For this discussion, we will lean heavily on the development 
of \cite{PW1992}, especially the statement and proof of Proposition 1.17 from 
that reference. 

\begin{proposition} \label{left-shelf-proposition}
Let Assumptions {\bf (A)} through {\bf (D)} hold,  
with the interval $I$ unbounded on the left.  
For the matrices $A_{\pm} (\lambda)$ specified in Assumption {\bf (B)},
suppose $A_- (\lambda) = A_+ (\lambda)$, and that 
there exists a diagonalizing matrix $V_- (\lambda)$, so that
for each $\lambda \in I$ the matrix
\begin{equation*}
    D_- (\lambda) := V_- (\lambda)^{-1} A_- (\lambda) V_- (\lambda) 
\end{equation*}
is diagonal with the eigenvalues of $A_- (\lambda)$ on its diagonal, 
and in particular with $\mu_- (\lambda)$ in the first column of the first row. 
In addition, we set 
\begin{equation*}
    F(x; \lambda) := V_- (\lambda)^{-1} (A (x; \lambda) - A_- (\lambda)) V_- (\lambda),
\end{equation*}
and assume the following three items: 

\medskip
(i) There exists a positive constant $\lambda_{\infty}$
and a corresponding constant $C_{\infty}$ so that 
for all $\lambda \le - \lambda_{\infty}$ we have
the inequality 
\begin{equation*}
    \int_{-\infty}^{+ \infty} |F (x; \lambda)| dx \le C_{\infty}.
\end{equation*}

\medskip
(ii) The limit 
\begin{equation*}
    \lim_{x_0 \to - \infty} \int_{-\infty}^{x_0} |F (x; \lambda)| dx = 0
\end{equation*}
converges uniformly for all $\lambda \le - \lambda_{\infty}$.

\medskip
(iii) If $e_1$ denotes the usual first Euclidian basis element, then
\begin{equation*}
    \lim_{\lambda \to - \infty} \int_{- \infty}^{+\infty} |F (x; \lambda) e_1| dx 
    = 0.
\end{equation*}
Under these assumptions, we can conclude 
\begin{equation*}
 V_- (\lambda)^{-1} (e^{- \mu_- (\lambda) x} \eta^- (x; \lambda)) 
 = e_1 + \mathbf{o} (1), 
 \quad \lambda \to -\infty,
\end{equation*}
and likewise if $z^+ (x; \lambda)$ is the solution to 
$z^{+\,\prime} = - z^+ A (x; \lambda)$ associated to 
$\tilde{\mathcal{Y}}^+ (x; \lambda)$ via
(\ref{proposition5relation}), then
\begin{equation*}
e^{\mu_+ (\lambda) x} z^+ (x; \lambda) V_+ (\lambda) e_1
= 1 + \mathbf{o} (1), \quad \lambda \to - \infty,
\end{equation*}
where in both cases the order term is uniform for $x \in \mathbb{R}$.
In addition, the vector $z^+ (0; \lambda) V_+ (\lambda)$ remains
bounded as $\lambda$ tends toward $-\infty$. 
\end{proposition}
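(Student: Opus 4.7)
The plan is to prove the three conclusions by passing to diagonal coordinates and then writing each of the relevant solutions as the solution of a variation of parameters integral equation with kernel bounded uniformly in $\lambda$ for $\lambda \le -\lambda_{\infty}$. In both cases the central mode (corresponding to $\mu_{\pm}(\lambda)$) is handled by a constant kernel, while the remaining modes are handled by decaying exponential kernels in the appropriate direction. Assumption (i) will provide uniform Gronwall bounds on the solutions themselves, and assumption (iii) will produce the $\mathbf{o}(1)$ smallness by isolating the $e_1$-column of the perturbation $F$.

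For the first conclusion, set $w(x;\lambda) := e^{-\mu_-(\lambda)x} V_-(\lambda)^{-1}\eta^-(x;\lambda)$. Since $v^-(\lambda)$ is the first column of $V_-(\lambda)$, Proposition \ref{ode-proposition1} yields $w(x;\lambda) \to e_1$ as $x \to -\infty$, and a direct computation gives $w' = (D_-(\lambda) - \mu_-(\lambda)I)w + F(x;\lambda)w$. Writing $\tilde D(\lambda) := D_-(\lambda) - \mu_-(\lambda)I$, the $(1,1)$ entry is $0$ and the remaining diagonal entries have strictly negative real parts by Assumption {\bf (C)}. Setting $P_1 := e_1 e_1^T$ and $P_2 := I - P_1$, the kernel $K(x,s) := P_1 + e^{(x-s)\tilde D}P_2$ is bounded uniformly for $x \ge s$, and variation of parameters gives
\[ w(x;\lambda) = e_1 + \int_{-\infty}^x K(x,s)F(s;\lambda)w(s;\lambda)\,ds. \]
Gronwall combined with assumption (i) yields $\|w(\cdot;\lambda)\|_\infty \le e^{CC_\infty}$ uniformly for $\lambda \le -\lambda_\infty$. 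Substituting $w(s) = e_1 + (w(s)-e_1)$ on the right and applying Gronwall once more produces
\[ \|w(x;\lambda) - e_1\| \le Ce^{CC_\infty}\int_{-\infty}^{+\infty}|F(s;\lambda)e_1|\,ds, \]
uniformly in $x$, which tends to $0$ as $\lambda \to -\infty$ by assumption (iii), giving the first conclusion.

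For the second and third conclusions I will run the dual argument on $z^+$. Since $A_+ = A_-$ we may take $V_+ = V_-$. Define the row vector $\tilde z(x;\lambda) := e^{\mu_+(\lambda)x} z^+(x;\lambda)V_+(\lambda)$. Combining Proposition \ref{connection-to-PW}, Proposition \ref{ode-proposition2}, and Remark \ref{eigenvector-remark}, one checks $e^{\mu_+(\lambda)x}z^+(x;\lambda) \to w^+(\lambda)$ as $x \to +\infty$; by biorthogonality of the right eigenvectors $\{v_j^+\}$ (columns of $V_+$) with the left eigenvectors $\{w_j^+\}$, normalized so that $w^+v^+ = 1$, we have $w^+V_+ = e_1^T$, so $\tilde z \to e_1^T$ at $+\infty$. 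Differentiation gives $\tilde z' = \tilde z(\mu_+ I - D_+) - \tilde z F$, in which $\mu_+ I - D_+$ is diagonal with $0$ in slot $1$ and strictly positive real parts elsewhere, reversing the stable/unstable roles from before. Integrating componentwise from $+\infty$ yields
\[ \tilde z_1(x) = 1 + \int_x^{+\infty}(\tilde z F)_1(s)\,ds, \qquad \tilde z_j(x) = \int_x^{+\infty} e^{(\mu_+ - \mu_j)(x-s)}(\tilde z F)_j(s)\,ds \quad (j \ge 2), \]
with kernels bounded uniformly for $x \le s$. Gronwall with assumption (i) gives $\|\tilde z(\cdot;\lambda)\|_\infty$ bounded uniformly for $\lambda \le -\lambda_\infty$, from which boundedness of $z^+(0;\lambda)V_+(\lambda) = \tilde z(0;\lambda)$ is immediate (third conclusion). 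For the second conclusion, bound
\[ |\tilde z_1(x) - 1| \le \|\tilde z(\cdot;\lambda)\|_\infty \sum_{k=1}^n \int_x^{+\infty}|F_{k1}(s;\lambda)|\,ds \le C\int_{-\infty}^{+\infty}|F(s;\lambda)e_1|\,ds, \]
which tends to $0$ as $\lambda \to -\infty$ by assumption (iii), uniformly in $x$.

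The principal obstacle is justifying the two improper integral equations rigorously and uniformly in $\lambda$: specifically, producing $w(x;\lambda) \to e_1$ at $-\infty$ and $\tilde z(x;\lambda) \to e_1^T$ at $+\infty$ in a sense strong enough that the variation of parameters representations above are valid for every $\lambda \le -\lambda_\infty$. This is where assumption (ii) enters: it allows the tail of $F$ (and the analogous tail at $+\infty$ obtained by passing through Proposition \ref{connection-to-PW} to the $\tilde{\mathcal{Y}}^+$ side) to be made arbitrarily small in $L^1$ uniformly in $\lambda \le -\lambda_\infty$, so that a standard contraction mapping argument on that tail produces the solutions $w$ and $\tilde z$ with the required asymptotic normalizations and with error terms that feed cleanly into the Gronwall estimates above. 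Once these asymptotic initial conditions are secured, the remainder of the proof is the Gronwall bookkeeping sketched above, with assumption (i) giving uniform boundedness and assumption (iii) producing the $\mathbf{o}(1)$ smallness in the first component.
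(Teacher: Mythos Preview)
Your approach is correct and tracks the paper's proof closely: both pass to diagonal coordinates $w = e^{-\mu_-x}V_-^{-1}\eta^-$, derive the integral equation $w(x) = e_1 + \int_{-\infty}^x e^{(x-s)\tilde D}F(s)w(s)\,ds$ using $w(-\infty)=e_1$ from Proposition~\ref{ode-proposition1}, and then estimate. The difference lies in the estimation scheme. The paper first uses assumption~(ii) in a bootstrap step---choosing $x_0$ so that $\int_{-\infty}^{x_0}|F|\le\tfrac12$ uniformly in $\lambda$, whence $\sup_{\xi\le x_0}|w-e_1|\le 2\int_{-\infty}^{x_0}|Fe_1|$---and only afterward applies Gr\"onwall on the finite interval $[x_0,x]$. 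You instead apply Gr\"onwall directly on $(-\infty,x]$, first to bound $\|w\|_\infty\le e^{CC_\infty}$ via~(i), then to bound $\|w-e_1\|_\infty$ by $Ce^{CC_\infty}\int|Fe_1|$ via~(iii). This is cleaner, and in fact your route does not use assumption~(ii) at all: since $\eta^-$ is already furnished by Proposition~\ref{ode-proposition1} with the correct normalization $w(x_0)\to e_1$, the integral equation follows for each fixed $\lambda$ from the ODE and integrability of $|F|$ alone, without any contraction-mapping reconstruction. Your final paragraph invoking~(ii) to justify the integral equation is therefore unnecessary (though not incorrect as an alternative route); the paper's genuine use of~(ii) is an artifact of its particular two-step estimate, which your direct Gr\"onwall bypasses. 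The dual argument for $\tilde z$ is likewise correct and parallel to the paper's sketch.
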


\begin{proof}
We begin by looking for solutions to (\ref{nonhammy}) of the form 
\begin{equation*}
    y(x; \lambda) = e^{\mu_- (\lambda) x} V_- (\lambda) (e_1 + v (x; \lambda)) 
\end{equation*}
for which we find by direct calculation that 
\begin{equation*}
    v' (x; \lambda) 
    = V_- (\lambda)^{-1} (A_- (\lambda) - \mu_- (\lambda)I) V_- (\lambda) (e_1 + v(x; \lambda))
    + F(x; \lambda) (e_1 + v(x; \lambda)).
\end{equation*}
For notational convenience, we set 
\begin{equation*}
    B_- (\lambda)
    := V_- (\lambda)^{-1} (A_- (\lambda) - \mu_- (\lambda) I) V_- (\lambda),
\end{equation*}
and we observe that according to our convention with $V_- (\lambda)$ 
we have $B_- (\lambda) e_1 = 0$. This allows us to express $v'(x; \lambda)$
in the more compact form 
\begin{equation} \label{v-ode}
    v' 
    = B_- (\lambda) v + F (x; \lambda) (v + e_1).
\end{equation}
Our immediate goal is to use this last relation to obtain 
a convenient expression for $v (x; \lambda)$, but first 
we determine the asymptotic behavior of $v (x; \lambda)$ 
as $x$ tends toward $-\infty$. Our particular 
interest is the function $v$ associated with $\eta^- (x; \lambda)$,
which we take to be defined by the relation
\begin{equation} \label{eta-minus-defined}
    \eta^- (x; \lambda) 
    = e^{\mu_- (\lambda) x} V_- (\lambda) (v (x; \lambda) + e_1).
\end{equation}
Solving for $v(x; \lambda)$, we find 
\begin{equation*}
    v(x; \lambda)
    = - e_1 + e^{- \mu_- (\lambda) x} V_- (\lambda)^{-1} \eta^- (x; \lambda).
\end{equation*}
According to Proposition \ref{ode-proposition1}, we have the convergence
\begin{equation*}
    \lim_{x \to - \infty} e^{- \mu_- (\lambda) x} \eta^- (x; \lambda)
    = v^- (\lambda),
\end{equation*}
where $v^- (\lambda)$ is the first column of $V_- (\lambda)$
(the eigenvector of $A_- (\lambda)$ corresponding with eigenvalue $\mu_- (\lambda)$). 
It follows that $V_- (\lambda)^{-1} v^- (\lambda) = e_1$, and consequently
$v(x; \lambda) \to 0$ as $x \to -\infty$. 

Returning now to (\ref{v-ode}), we note that we can express the equation as 
\begin{equation*}
    (e^{- B_- (\lambda) x}v)'
    = e^{- B_- (\lambda) x} F (x; \lambda) (v + e_1). 
\end{equation*}
Since $\mu_- (\lambda)$ is the largest eigenvalue of $A_- (\lambda)$,
the eigenvalues of $B_- (\lambda)$ must all be non-positive. Observing that $v$
is bounded on any interval $[x_0, x]$, $x_0 < x$, we see that we can integrate on 
an arbitrary such interval to obtain the relation 
\begin{equation} \label{return}
    e^{- B_- (\lambda) x} v(x; \lambda) - e^{- B_- (\lambda) x_0} v (x_0; \lambda)
    = \int_{x_0}^x e^{- B_- (\lambda) \xi} F(\xi; \lambda) (v(\xi; \lambda) + e_1) d \xi.
\end{equation}
The exponential $e^{- B_- (\lambda) x_0}$ remains bounded as $x_0 \to -\infty$, and 
$v (x_0) \to 0$ as $x_0 \to -\infty$. Additionally since $F(\xi; \lambda)$
is assumed to be integrable on $\mathbb{R}$ (Assumption (i)), we can take a limit with 
$x_0 \to - \infty$ on both sides of this last expression to see that 
\begin{equation*}
    e^{- B_- (\lambda) x} v(x; \lambda)
    = \int_{-\infty}^x e^{- B_- (\lambda) \xi} F(\xi; \lambda) (v(\xi; \lambda) + e_1) d \xi,
\end{equation*}
or equivalently 
\begin{equation*}
    v(x; \lambda)
    = \int_{-\infty}^x e^{B_- (\lambda) (x-\xi)} F(\xi; \lambda) (v(\xi; \lambda) + e_1) d \xi.
\end{equation*} 
By construction, $B_- (\lambda)$ is a diagonal matrix, and it is easily seen that 
the matrix norm of $\operatorname{exp} (B_- (\lambda) (x - \xi))$ is 1.  This 
allows us to write 
\begin{equation*}
    \begin{aligned}
    |v (x; \lambda)| 
    &\le \int_{-\infty}^x |e^{B_- (\lambda) (x-\xi)}| |F(\xi; \lambda) (v(\xi; \lambda) + e_1)| d \xi \\
    &= \int_{-\infty}^x |F(\xi; \lambda) (v(\xi; \lambda) + e_1)| d \xi \\
    &\le \int_{-\infty}^x |F(\xi; \lambda) v(\xi; \lambda)| d \xi
    + \int_{-\infty}^x |F(\xi; \lambda) e_1| d \xi \\
    &\le \sup_{\xi \le x} |v(\xi; \lambda)| \int_{-\infty}^x |F(\xi; \lambda)| d \xi
    + \int_{-\infty}^x |F(\xi; \lambda) e_1| d \xi.
    \end{aligned}
\end{equation*}
Using Assumption (ii), we can take $x_0$ sufficiently negative so that for all 
$x \le x_0$, 
\begin{equation*}
 |v (x; \lambda)| 
 \le \frac{1}{2} \sup_{\xi \le x_0} |v(\xi; \lambda)|
 + \int_{-\infty}^{x_0} |F(\xi; \lambda) e_1| d \xi.
\end{equation*}
We can now take a supremum on both sides over $x \le x_0$ 
to see that 
\begin{equation*}
    \frac{1}{2} \sup_{\xi \le x_0} |v(\xi; \lambda)|
    \le \int_{-\infty}^{x_0} |F(\xi; \lambda) e_1| d \xi. 
\end{equation*}
Using Assumption (iii), we see that the right-hand side 
of this last expression tends to 0 as $\lambda \to -\infty$,
so for some fixed $x_0 \ll 0$, we can assert that 
\begin{equation*}
    \lim_{\lambda \to -\infty} \sup_{\xi \le x_0} |v(\xi; \lambda)|
    = 0.
\end{equation*}
We now fix $x_0$ as such a value and return to (\ref{return})
to see that 
\begin{equation*}
    |v (x; \lambda)|
    \le \Big( |v(x_0; \lambda)| + \int_{x_0}^x |F (\xi; \lambda) e_1| d \xi \Big)
    + \int_{x_0}^x |F(\xi; \lambda)||v(\xi; \lambda)| d\xi.
\end{equation*}
If we apply Gr\"onwall's inequality to this integral relation, 
we obtain the inequality 
\begin{equation} \label{theplace}
\begin{aligned}
 |v (x; \lambda)|
 &\le \Big( |v(x_0; \lambda)| + \int_{x_0}^x |F (\xi; \lambda) e_1| d \xi \Big) e^{\int_{x_0}^x |F (\xi; \lambda)| d \xi} \\
 &\le \Big( |v(x_0; \lambda)| + \int_{x_0}^{\infty} |F (\xi; \lambda) e_1| d \xi \Big) e^{\int_{x_0}^{\infty} |F (\xi; \lambda)| d \xi}. 
 \end{aligned}
\end{equation}
Here, the quantity in large parentheses tends to 0 as $\lambda \to -\infty$,
so we can choose $\lambda_{\infty} \gg 0$
sufficiently large so that $v(x; \lambda)$ is small for all $x \in \mathbb{R}$, 
$\lambda \le -\lambda_{\infty}$. 

Last, upon multiplication of (\ref{eta-minus-defined}) on the left 
by $V_- (\lambda)^{-1} e^{- \mu_- (\lambda) x}$, we obtain the relation 
\begin{equation*}
    V_- (\lambda)^{-1} (e^{- \mu_- (\lambda) x} \eta^- (x; \lambda)) 
    = e_1 +  v (x; \lambda).
\end{equation*}
We have seen that $v (x; \lambda) = \mathbf{o} (1)$, $\lambda \to - \infty$
uniformly for $x \in \mathbb{R}$, giving the first claim.

The second claim is proven similarly, combining the second part of 
the proof of Proposition 1.17 from \cite{PW1992} with the first part 
of the current proof. Here, we primarily just indicate how the 
final observation on boundedness of $z^+ (0; \lambda) V_+ (\lambda)$
is established. The proof in this case begins by setting 
\begin{equation*}
    w (x; \lambda) := - e_1^T + e^{\mu_+ (\lambda) x} z^+ (x; \lambda) V_+ (\lambda).
\end{equation*}
Then, proceeding as in the first part of this proof, we can establish 
that there exists a constant $C$ sufficiently large so that for all 
$\lambda$ sufficiently negative we have the bound 
\begin{equation*}
    |w (x; \lambda)| \le C, \quad \forall \,\, x \in \mathbb{R}. 
\end{equation*}
In particular, $w (0; \lambda)$ satisfies this estimate, and we 
have the relation  
\begin{equation*}
z^+ (0; \lambda) V_+ (\lambda) 
= e_1^T + w (0; \lambda),
\end{equation*}
verifying that the left-hand side is bounded as $\lambda \to - \infty$.
\end{proof}

\begin{remark} \label{vminus-vplus-remark}
The only place in the proof of Proposition \ref{left-shelf-proposition}
in which we absolutely require the assumption that $A_- = A_+$ is 
in obtaining the second inequality in (\ref{theplace}). Nonetheless, 
it is critical at that point, and for the general case of $A_- \ne A_+$,
verifying the absence of crossings on the left shelf is more 
delicate. (See the appendix of this paper for one example.)
\end{remark}

We can now use Proposition \ref{left-shelf-proposition} to 
establish the following result on crossings along the left
shelf. 

\begin{proposition} \label{left-shelf-crossings}
Suppose the assumptions of Proposition \ref{left-shelf-proposition}
hold. Then there exists a value $\lambda_{\infty} > 0$ sufficiently 
large so that for all $\lambda_1 \le - \lambda_{\infty}$ there holds
\begin{equation*}
\mathpzc{g} (x; \lambda_1) \cap \mathpzc{h}^+ (\lambda_1) = \{0\} 
\end{equation*}
for all $x \in \mathbb{R}$, and in particular, 
\begin{equation*}
    \ind (\mathpzc{g} (\cdot; \lambda_1), \mathpzc{h}^+ (\lambda_1), [- \infty, + \infty]) = 0.
\end{equation*}
\end{proposition}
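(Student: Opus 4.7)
The plan is to convert the wedge product that detects intersections along the left shelf into a simple inner product, and then combine this with the asymptotic estimate of Proposition \ref{left-shelf-proposition}. First, observe that the derivation in Remark \ref{eigenvector-remark} leading to $v^\pm \wedge \tilde{\mathcal{V}}^\pm = w^\pm v^\pm$ uses only the coordinate relation (\ref{corresponding-eigenvector2}) and the wedge formula (\ref{the-wedge-product}), and so extends verbatim to an arbitrary vector:
\begin{equation*}
    y \wedge \tilde{\mathcal{V}}^+(\lambda) = w^+(\lambda)\, y, \qquad y \in \mathbb{R}^n.
\end{equation*}
The intersection condition $\mathpzc{g}(x;\lambda_1) \cap \mathpzc{h}^+(\lambda_1) \ne \{0\}$ is therefore equivalent to $w^+(\lambda_1)\eta^-(x;\lambda_1) = 0$.

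Next I would apply Proposition \ref{left-shelf-proposition}. Since $A_-(\lambda) = A_+(\lambda)$, the analytic eigenvectors may be chosen so that $v^+(\lambda) = v^-(\lambda)$ and $w^+(\lambda) = w^-(\lambda)$, consistent with $w^\pm v^\pm = 1$. The latter identifies $w^-(\lambda) = e_1^T V_-(\lambda)^{-1}$, since the first row of $V_-(\lambda)^{-1}$ is the left eigenvector of $A_-(\lambda)$ for $\mu_-(\lambda)$ normalized against the first column $v^-(\lambda)$ of $V_-(\lambda)$. Multiplying the first conclusion of Proposition \ref{left-shelf-proposition} on the left by $e_1^T$ yields
\begin{equation*}
    e^{-\mu_-(\lambda)x}\, w^+(\lambda)\, \eta^-(x;\lambda) = 1 + \mathbf{o}(1), \qquad \lambda \to -\infty,
\end{equation*}
with the error uniform in $x \in \mathbb{R}$. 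Choosing $\lambda_\infty > 0$ large enough that this error has magnitude below $1/2$ for all $\lambda_1 \le -\lambda_\infty$ gives $|w^+(\lambda_1)\eta^-(x;\lambda_1)| \ge \tfrac12 e^{\mu_-(\lambda_1)x} > 0$ for every $x \in \mathbb{R}$, which is the first claim.

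For the index statement, note that $\tilde{\psi}_1^+(x;\lambda_1)$ is a positive scalar multiple of $w^+(\lambda_1)\eta^-(x;\lambda_1)$ and hence never vanishes on $\mathbb{R}$, so the tracking point $p^+(x;\lambda_1)$ avoids $(-1,0)$ for all finite $x$. At $x \to -\infty$ we have $\tilde{\psi}_1^{+,-}(\lambda_1) = w^+(\lambda_1)v^-(\lambda_1)/(|v^-||\tilde{\mathcal{V}}^+|) = 1/(|v^-||\tilde{\mathcal{V}}^+|) \ne 0$. For $x \to +\infty$, expanding $\eta^-(x;\lambda_1) = \sum_{j=1}^n c_j(\lambda_1) y_j^+(x;\lambda_1)$ and using the limits $w^+ \cdot (e^{-\mu_+(\lambda_1)x} y_j^+) \to 0$ for $j < n$ and $\to 1$ for $j = n$, the lower bound above forces $|c_n(\lambda_1)| \ge 1/2$, so $\lambda_1$ is not an eigenvalue of (\ref{nonhammy}); Lemma \ref{left-shelf-lemma1} then gives $\tilde{\psi}_1^{+,+}(\lambda_1) = 1/(|v^+||\tilde{\mathcal{V}}^+|) \ne 0$. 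Since $p^+(x;\lambda_1)$ together with both asymptotic endpoints lies in the simply connected arc $S^1 \setminus \{(-1,0)\}$, the winding number is $0$.

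The main bookkeeping subtlety is identifying $w^+(\lambda)$ with $e_1^T V_-(\lambda)^{-1}$ under the hypothesis $A_-(\lambda) = A_+(\lambda)$ so that the estimate of Proposition \ref{left-shelf-proposition} translates directly into a statement about $w^+\eta^-$; the analytic content of the argument is then entirely supplied by the uniform-in-$x$ convergence already established there.
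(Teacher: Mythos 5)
Your proof is correct and follows essentially the same route as the paper's: converting the wedge product $\eta^- \wedge \tilde{\mathcal{V}}^+$ to the inner product $w^+ \cdot \eta^-$ (via Proposition \ref{connection-to-PW} and Remark \ref{eigenvector-remark}), identifying $w^+(\lambda) = e_1^T V_-(\lambda)^{-1}$ from the normalization $w^+ v^+ = 1$ together with $V_- = V_+$, and then applying the uniform-in-$x$ estimate of Proposition \ref{left-shelf-proposition} to conclude $w^+(\lambda_1)\,\eta^-(x;\lambda_1) = e^{\mu_-(\lambda_1)x}(1 + \mathbf{o}(1))$ is sign-definite for $\lambda_1$ sufficiently negative. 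The paper stops at this sign-definiteness (the uniform estimate already controls both asymptotic endpoints); your additional argument that $|c_n(\lambda_1)| \ge 1/2$ so that $\lambda_1$ is not an eigenvalue, invoking Lemma \ref{left-shelf-lemma1}, is a correct but not strictly necessary elaboration.
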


\begin{proof}
First, crossings along the left shelf correspond precisely with zeros of 
\begin{equation*}
(e^{- \mu_- (\lambda) x} \eta^- (x; \lambda)) \wedge \tilde{\mathcal{V}}^+ (\lambda),  
\end{equation*}
which by virtue of Proposition \ref{connection-to-PW} can be 
expressed as 
\begin{equation*}
    w^+ (\lambda) \cdot (e^{- \mu_- (\lambda) x} \eta^- (x; \lambda)),
\end{equation*}
where $w^+ (\lambda)$ is related to $\tilde{\mathcal{V}}^+ (\lambda)$
as in \ref{corresponding-eigenvector2}. 

According to Proposition \ref{left-shelf-crossings}, we
have the asymptotic relation
\begin{equation*}
V_- (\lambda)^{-1} (e^{- \mu_- (\lambda) x} \eta^- (x; \lambda)) 
 = e_1 + \mathbf{o} (1), 
 \quad \lambda \to -\infty,
\end{equation*}
uniformly for $x \in \mathbb{R}$, and according to 
the normalization in Assumption {\bf (C)}, along with our 
assumption that $A_+ (\lambda)$ is diagonalizable, we 
can write 
\begin{equation*}
    w^+ (\lambda) V_+ (\lambda) = e_1^T.   
\end{equation*}
Under the assumptions of Proposition \ref{left-shelf-proposition}, 
we have $V_- (\lambda) = V_+ (\lambda)$, allowing 
us to compute 
\begin{equation*}
\begin{aligned}
    w^+ (\lambda) \cdot (e^{- \mu_- (\lambda) x} \eta^- (x; \lambda))
    &= e_1^T V_- (\lambda)^{-1} (e^{- \mu_- (\lambda) x} \eta^- (x; \lambda)) \\
    &= e_1^T (e_1 + \mathbf{o} (1)) 
    = 1 + \mathbf{o} (1).
\end{aligned}
\end{equation*}
We see immediately that for $\lambda$ sufficiently negative we must have 
\begin{equation*}
(e^{- \mu_- (\lambda) x} \eta^- (x; \lambda)) \wedge \tilde{\mathcal{V}}^+ (\lambda)
> 0,  
\end{equation*}
for all $x \in \mathbb{R}$, indicating that there are no crossings 
along the left shelf. 
\end{proof}

\section{The Evans Function}
\label{evans-section}

One of our goals is to place information gained from the Evans 
function into the broader geometrical framework of the current 
analysis. The Evans function in this setting has already 
been elegantly developed in \cite{PW1992}, so we proceed primarily 
by translating the results obtained there into the current setting. 

Generally, the Evans function serves as a characteristic function
for eigenvalue problems such as (\ref{nonhammy}) (standard 
references include \cite{AGJ1990, Evans1972a,
Evans1972b, Evans1972c, Evans1975, GZ1998}, along with 
\cite{PW1992}). For (\ref{nonhammy})
under Assumptions {\bf (A)} through {\bf (D)}, 
it's natural to specify the Evans function as the wedge product
\begin{equation} \label{evans-function-defined}
    D(\lambda) := \eta^- (x; \lambda) \wedge \tilde{\mathcal{Y}}^+ (x;\lambda),
\end{equation}
where Proposition \ref{conjugation-proposition} allows us to verify 
that the right-hand side is independent of $x$ (i.e., its 
derivative with respect to $x$ is 0).  

\begin{remark} \label{equivalence-pw1992}
According to Proposition \ref{connection-to-PW}, our specification 
(\ref{evans-function-defined}) is equivalent to the specification 
from \cite{PW1992}, which 
with $z$ as in Proposition \ref{connection-to-PW} can be expressed 
here as 
\begin{equation*}
    D_{\textrm{PW}} (\lambda) = z(0; \lambda) \cdot \eta^- (0; \lambda).
\end{equation*}
This correspondence between $D (\lambda)$ and $ D_{\textrm{PW}} (\lambda)$
allows us to adapt results
from \cite{PW1992} directly to the current setting, though we include some 
details of the proofs for completeness. 
\end{remark}

\begin{proposition} \label{evans-proposition}
Let Assumptions {\bf (A)} through {\bf (D)} hold, and let the 
Evans function $D (\lambda)$ be specified as in (\ref{evans-function-defined}). 
Then for any $\lambda \in I$, 
\begin{equation*}
    \begin{aligned}
    D'(\lambda) &= \Big(\frac{v^{-\,\prime} (\lambda) \wedge \tilde{\mathcal{V}}^- (\lambda)}
    {v^- (\lambda) \wedge \tilde{\mathcal{V}}^- (\lambda)} 
    + \frac{v^+ (\lambda) \wedge \tilde{\mathcal{V}}^{+\,\prime} (\lambda)}
    {v^+ (\lambda) \wedge \tilde{\mathcal{V}}^+ (\lambda)} \Big) D(\lambda)  \\
    &\quad + \int_{-\infty}^0 \Big((A_{\lambda} (x; \lambda) - \mu_-'(\lambda) I) \eta^- (x; \lambda)\Big) 
    \wedge \tilde{\mathcal{Y}}^+ (x; \lambda) dx  \\
    &\quad \quad + \int_0^{+\infty} \Big((A_{\lambda} (x; \lambda) - \mu_+'(\lambda) I) \eta^- (x; \lambda)\Big) 
    \wedge \tilde{\mathcal{Y}}^+ (x; \lambda) dx.
    \end{aligned}
\end{equation*}
Here $v^- (\lambda)$ is an eigenvector of $A_- (\lambda)$ corresponding to the eigenvalue 
$\mu_- (\lambda)$; $\tilde{\mathcal{V}}^- (\lambda)$ is an eigenvector of $\tilde{A}_- (\lambda)$
corresponding to the eigenvalue $- \mu_- (\lambda)$; $v^+ (\lambda)$ is an eigenvector 
of $A_+ (\lambda)$ corresponding to the eigenvalue $\mu_+ (\lambda)$; and $\tilde{\mathcal{V}}^+ (\lambda)$
is an eigenvector of $\tilde{A}_+ (\lambda)$ corresponding to the eigenvalue $- \mu_+ (\lambda)$. 
\end{proposition}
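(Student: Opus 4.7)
The plan is to exploit the fact that $D(\lambda) = \eta^-(x;\lambda) \wedge \tilde{\mathcal{Y}}^+(x;\lambda)$ is independent of $x$, evaluate at $x=0$, split $D'(\lambda)$ into its two natural pieces, and convert each piece into a boundary contribution plus an integral by introducing a suitably ``corrected'' variational quantity whose asymptotic behavior can be read off cleanly. The underlying identities are the variational equations obtained by $\lambda$-differentiating the defining ODEs, namely $(\eta^-_\lambda)' = A\eta^-_\lambda + A_\lambda \eta^-$ and $(\tilde{\mathcal{Y}}^+_\lambda)' = \tilde{A}\tilde{\mathcal{Y}}^+_\lambda + \tilde{A}_\lambda \tilde{\mathcal{Y}}^+$, combined repeatedly with the conjugation identity of Proposition \ref{conjugation-proposition}.

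A naive differentiation of $\eta^-_\lambda \wedge \tilde{\mathcal{Y}}^+$ in $x$ reduces via Proposition \ref{conjugation-proposition} to $(A_\lambda \eta^-)\wedge \tilde{\mathcal{Y}}^+$, but $\eta^-_\lambda$ contains a term of size $\mu_-'(\lambda) x \, \eta^-$ arising from differentiating the exponential prefactor in $\eta^-(x;\lambda) \sim e^{\mu_-(\lambda) x} v^-(\lambda)$, which prevents integration down to $-\infty$. To cure this, I would define the corrected quantity
\begin{equation*}
    \xi^-(x;\lambda) := \eta^-_\lambda(x;\lambda) - \mu_-'(\lambda)\, x\, \eta^-(x;\lambda),
\end{equation*}
so that $\xi^-(x;\lambda) = e^{\mu_-(\lambda)x}(v^{-\prime}(\lambda) + o(1))$ as $x \to -\infty$. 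A direct computation, again using Proposition \ref{conjugation-proposition}, gives $\frac{d}{dx}(\xi^-\wedge \tilde{\mathcal{Y}}^+) = ((A_\lambda - \mu_-'(\lambda)I)\eta^-) \wedge \tilde{\mathcal{Y}}^+$. Symmetrically, setting $\tilde{\xi}^+(x;\lambda) := \tilde{\mathcal{Y}}^+_\lambda(x;\lambda) + \mu_+'(\lambda)x\, \tilde{\mathcal{Y}}^+(x;\lambda)$ and using Proposition \ref{conjugation-proposition} with $A_\lambda$ produces $\frac{d}{dx}(\eta^- \wedge \tilde{\xi}^+) = -((A_\lambda - \mu_+'(\lambda)I)\eta^-) \wedge \tilde{\mathcal{Y}}^+$.

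Next I would integrate the left identity from $-\infty$ to $0$ and the right identity from $0$ to $+\infty$. Since both correction terms vanish at $x = 0$, we have $\xi^-(0;\lambda) = \eta^-_\lambda(0;\lambda)$ and $\tilde{\xi}^+(0;\lambda) = \tilde{\mathcal{Y}}^+_\lambda(0;\lambda)$, so their sum reassembles $D'(\lambda)$. The boundary contributions are computed from the asymptotic structure of the ``other'' solution: as $x \to -\infty$, $\tilde{\mathcal{Y}}^+$ is dominated by the mode associated with the leftmost eigenvalue $-\mu_-(\lambda)$ of $\tilde{A}_-(\lambda)$, so $\tilde{\mathcal{Y}}^+(x;\lambda) \sim e^{-\mu_-(\lambda)x}(\gamma_-(\lambda)\tilde{\mathcal{V}}^-(\lambda) + o(1))$; matching against $D(\lambda) = \lim_{x\to -\infty} \eta^-\wedge \tilde{\mathcal{Y}}^+$ pins down $\gamma_-(\lambda) = D(\lambda)/[v^-(\lambda)\wedge \tilde{\mathcal{V}}^-(\lambda)]$, and the wedge against $\xi^-$ produces the claimed coefficient $\tfrac{v^{-\prime}\wedge \tilde{\mathcal{V}}^-}{v^-\wedge \tilde{\mathcal{V}}^-}D(\lambda)$. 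The analogous analysis at $+\infty$, writing $\eta^-(x;\lambda) \sim e^{\mu_+(\lambda)x}(\gamma_+(\lambda)v^+(\lambda) + o(1))$ with $\gamma_+(\lambda) = D(\lambda)/[v^+(\lambda)\wedge \tilde{\mathcal{V}}^+(\lambda)]$, yields the second boundary coefficient $\tfrac{v^+\wedge \tilde{\mathcal{V}}^{+\prime}}{v^+\wedge \tilde{\mathcal{V}}^+}D(\lambda)$ (the sign flip from the right-shelf identity exactly cancels the sign from reversing the direction of integration).

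The main obstacle is a careful justification of the asymptotic expansions, specifically that $\tilde{\mathcal{Y}}^+$ really admits a $\gamma_-$-mode expansion at $-\infty$ (and similarly for $\eta^-$ at $+\infty$) with sub-leading terms small enough that $\xi^-\wedge \tilde{\mathcal{Y}}^+$ admits a genuine limit, uniformly on compacta in $\lambda$; this requires combining Propositions \ref{ode-proposition1} and \ref{ode-proposition2} with a standard variation-of-parameters argument to obtain uniform $o(1)$ control on the sub-leading contributions, so that the formal computation of the limit survives. Once this is in place, adding the two boundary-plus-integral identities yields precisely the stated formula for $D'(\lambda)$.
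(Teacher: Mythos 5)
Your argument is correct and is essentially the paper's proof, repackaged: the corrected variable $\xi^-(x;\lambda) = \eta_\lambda^-(x;\lambda) - \mu_-'(\lambda)x\,\eta^-(x;\lambda)$ is precisely $e^{\mu_-(\lambda)x}\,\partial_\lambda\bigl(e^{-\mu_-(\lambda)x}\eta^-(x;\lambda)\bigr)$, which is the quantity the paper works with after introducing the exponentially conjugated pair $u^-(x;\lambda) := e^{-\mu_-(\lambda)x}\eta^-(x;\lambda)$ and $\tilde{\mathcal{U}}^-(x;\lambda) := e^{\mu_-(\lambda)x}\tilde{\mathcal{Y}}^+(x;\lambda)$, and similarly $\tilde{\xi}^+ = e^{\mu_+x}\,\tilde{\mathcal{U}}^+_\lambda$; both proofs then split the half-lines at $x=0$, invoke Proposition \ref{conjugation-proposition} to collapse the $x$-derivative, integrate, and read off boundary terms from the eigenmode expansions at $\mp\infty$. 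Your formulation is a sound reparametrization that avoids carrying the exponential prefactors through the bookkeeping, but it is not a different route.
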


\begin{proof}
Our Proposition \ref{evans-proposition} is effectively a restatement of Theorem 1.11 from 
\cite{PW1992} in the current setting, and we only briefly sketch the proof, also adapted 
to our setting. First, for $x < 0$, it's useful to write 
\begin{equation*}
    D (\lambda) 
    = u^- (x; \lambda) \wedge \tilde{\mathcal{U}}^- (x; \lambda),
\end{equation*}
where we're introducing the notation 
\begin{equation*}
    \begin{aligned}
    u^- (x; \lambda) &:= e^{- \mu_- (\lambda) x} \eta^- (x; \lambda) \\
    \tilde{\mathcal{U}}^- (x; \lambda) &= e^{+ \mu_- (\lambda) x} \tilde{\mathcal{Y}}^+ (x; \lambda).
    \end{aligned}
\end{equation*}
Recalling that $\eta^{-\,\prime} = A \eta^-$ and 
$\tilde{\mathcal{Y}}^{+\,\prime} = \tilde{A} \tilde{\mathcal{Y}}^+$,
we can write 
\begin{equation*}
    \begin{aligned}
    u^{- \,\prime} &= -\mu_- u^- + Au \\
    \tilde{\mathcal{U}}^{-\,\prime} &= \mu_- \tilde{\mathcal{U}}^- + \tilde{A}\tilde{\mathcal{U}}^-.
    \end{aligned}
\end{equation*}
Taking a $\lambda$-derivative of these expressions, we find 
\begin{equation*}
    \begin{aligned}
    u_{\lambda}^{- \,\prime} 
    &= -\mu_-' u^- - \mu_- u_{\lambda}^- + A_{\lambda} u^- + Au_{\lambda}^- \\ 
    \tilde{\mathcal{U}}_{\lambda}^{- \,\prime} 
    &= \mu_-' \tilde{\mathcal{U}}^- + \mu_- \tilde{\mathcal{U}}_{\lambda}^- 
    + \tilde{A}_{\lambda} \tilde{\mathcal{U}}^- + \tilde{A} \tilde{\mathcal{U}}_{\lambda}^-. 
    \end{aligned}
\end{equation*}
We now differentiate $u_{\lambda}^- \wedge \tilde{\mathcal{U}}^-$ in $x$, 
\begin{equation*}
\begin{aligned}
    \frac{d}{dx} (u_{\lambda}^- \wedge \tilde{\mathcal{U}}^-)
     &= \Big((A_{\lambda} - \mu_-' I)u^-\Big) \wedge \tilde{\mathcal{U}}^-
     + (A u_{\lambda}^-) \wedge \tilde{\mathcal{U}}^- + u_{\lambda}^- \wedge (\tilde{A} \tilde{\mathcal{U}}^-) \\
     &= \Big((A_{\lambda} - \mu_-' I)u^-\Big) \wedge \tilde{\mathcal{U}}^-,
\end{aligned}
\end{equation*}
where the second equality follows from Proposition \ref{conjugation-proposition}. 
For $R > 0$, we can now integrate on $(-R,0)$ to obtain the 
relation
\begin{equation} \label{uminus-relation}
\begin{aligned}
    u_{\lambda}^-& (0; \lambda) \wedge \tilde{\mathcal{U}}^- (0; \lambda)
    - u_{\lambda}^- (-R; \lambda) \wedge \tilde{\mathcal{U}}^- (-R; \lambda) \\
    &= \int_{-R}^0 \Big((A_{\lambda} (x; \lambda) - \mu_-' (\lambda) I)u^- (x; \lambda)\Big) 
    \wedge \tilde{\mathcal{U}}^- (x; \lambda) dx. 
\end{aligned}    
\end{equation}

Our next goal will be to take a limit of this expression as $R$ tends to 
$+ \infty$, and for this we first need to look carefully at the 
wedge product $u_{\lambda}^- (-R; \lambda) \wedge \tilde{\mathcal{U}}^- (-R; \lambda)$. 
Beginning with $u_{\lambda}^- (-R; \lambda)$, we recall from Proposition
\ref{ode-proposition1} (and the definition of $u^- (-R; \lambda)$) that
\begin{equation*}
    \lim_{R \to + \infty} u^- (-R; \lambda)
    = v^- (\lambda),
\end{equation*}
where $v^- (\lambda)$ is analytic on $\Omega$ and the convergence is 
uniform on compact subsets of $\Omega$. It follows that the limit 
of the derivatives with respect to $\lambda$ converges to 
derivatives of the limits,
\begin{equation*}
    \lim_{R \to + \infty} u_{\lambda}^- (-R; \lambda)
    = v^{-\,\prime} (\lambda).
\end{equation*}
Next, for $\tilde{\mathcal{U}}^- (-R; \lambda)$ we have the complication
that we don't have a convenient expression for $\tilde{\mathcal{Y}}^+ (-R; \lambda)$
for large values of $R$. Nonetheless, recalling that 
$\tilde{\mathcal{Y}}^+ (x; \lambda)$ can be viewed as a solution 
to the ODE $\tilde{\mathcal{Y}}^{+\,\prime} = \tilde{A} (x; \lambda) \tilde{\mathcal{Y}}^+$, we 
can characterize $\tilde{\mathcal{Y}}^+ (-R; \lambda)$ via a basis 
of solutions to this ODE constructed for $x \ll 0$. Precisely, 
under the assumptions of Proposition \ref{ode-proposition2} 
we can construct a basis $\{\tilde{\mathcal{Y}}_j^- (x; \lambda)\}_{j=1}^n$
for the solutions of $\tilde{\mathcal{Y}}' = \tilde{A} (x; \lambda) \tilde{\mathcal{Y}}$,
indexed so that 
\begin{equation*}
    \lim_{R \to +\infty} e^{- \mu_- (\lambda) R} \tilde{\mathcal{Y}}_1^- (-R; \lambda)
    = \tilde{\mathcal{V}}^- (\lambda),
\end{equation*}
where $\tilde{\mathcal{V}}^- (\lambda)$ is an eigenvector of $\tilde{A}_- (\lambda)$
associated with the eigenvalue $- \mu_- (\lambda)$ (which is the most 
negative eigenvalue of $\tilde{A}_- (\lambda)$), and additionally 
\begin{equation*}
    \lim_{R \to +\infty} e^{- \mu_- (\lambda) R} \tilde{\mathcal{Y}}_j^- (-R; \lambda)
    = 0, \quad j = 2, 3, \dots, n.
\end{equation*}
If we now write 
\begin{equation*}
    \tilde{\mathcal{Y}}^+ (-R; \lambda)
    = \sum_{j=1}^n \tilde{c}_j (\lambda) \tilde{\mathcal{Y}}_j^- (-R; \lambda),
\end{equation*}
for some expansion coefficients $\{\tilde{c}_j (\lambda)\}_{j=1}^n$  then 
\begin{equation*}
    \tilde{\mathcal{U}}^- (-R; \lambda)
    = \sum_{j=1}^n \tilde{c}_j (\lambda) e^{- \mu_- (\lambda) R} \tilde{\mathcal{Y}}_j^- (-R; \lambda),
\end{equation*}
and 
\begin{equation*}
    \lim_{R \to + \infty} \tilde{\mathcal{U}}^- (-R; \lambda)
    = \tilde{c}_1 (\lambda) \tilde{\mathcal{V}}^- (\lambda).
\end{equation*}

In order to better understand the nature of the expansion coefficient 
$\tilde{c}_1 (\lambda)$, we recall that $D (\lambda)$ is independent 
of $x$, allowing us to write 
\begin{equation*}
    D (\lambda) = \lim_{R \to + \infty} u^- (-R; \lambda) \wedge \tilde{\mathcal{U}}^- (-R; \lambda)
    = \tilde{c}_1 (\lambda) v^- (\lambda) \wedge \tilde{\mathcal{V}}^{-} (\lambda).
\end{equation*}
We see that 
\begin{equation*}
\tilde{c}_1 (\lambda) = \frac{D (\lambda)}{v^- (\lambda) \wedge \tilde{\mathcal{V}}^{-} (\lambda)},    
\end{equation*}
and consequently 
\begin{equation*}
    \lim_{R \to + \infty} u_{\lambda}^- (-R; \lambda) \wedge \tilde{\mathcal{U}}^- (-R; \lambda)
    = \frac{v^{-\,\prime} (\lambda) \wedge \tilde{\mathcal{V}}^{-} (\lambda)}
    {v^- (\lambda) \wedge \tilde{\mathcal{V}}^{-} (\lambda)} D(\lambda).
\end{equation*}
Combining this last relation with (\ref{uminus-relation}), in which we take the 
limit as $R \to - \infty$, we obtain the relation
\begin{equation}\label{uminus-relation-again}
\begin{aligned}
u_{\lambda}^-& (0; \lambda) \wedge \tilde{\mathcal{U}}^- (0; \lambda)
= \frac{v^{-\,\prime} (\lambda) \wedge \tilde{\mathcal{V}}^{-} (\lambda)}
    {v^- (\lambda) \wedge \tilde{\mathcal{V}}^{-} (\lambda)} D(\lambda) \\
    &+ \int_{-\infty}^0 \Big((A_{\lambda} (x; \lambda) - \mu_-' (\lambda) I)u^- (x; \lambda)\Big) 
    \wedge \tilde{\mathcal{U}}^- (x; \lambda) dx. 
\end{aligned}
\end{equation}

By a similar calculation, if we start with 
\begin{equation*}
    D (\lambda) 
    = u^+ (x; \lambda) \wedge \tilde{\mathcal{U}}^+ (x; \lambda),
\end{equation*}
where
\begin{equation*}
    \begin{aligned}
    u^+ (x; \lambda) &:= e^{- \mu_+ (\lambda) x} \eta^- (x; \lambda) \\
    \tilde{\mathcal{U}}^+ (x; \lambda) &= e^{+ \mu_+ (\lambda) x} \tilde{\mathcal{Y}}^+ (x; \lambda).
    \end{aligned}
\end{equation*}
we find the relation
\begin{equation*}
    \frac{d}{dx} (u^+ (x; \lambda) \wedge \tilde{\mathcal{U}}_{\lambda}^+ (x; \lambda))
    = u^+ (x; \lambda) \wedge ((\tilde{A}_{\lambda} (x; \lambda) + \mu_+' (\lambda) I) \tilde{\mathcal{U}}^+ (x; \lambda)). 
\end{equation*}
Upon integrating on $(0, R)$ and using Proposition \ref{conjugation-proposition},
we obtain the relation 
\begin{equation} \label{uplus-relation}
\begin{aligned}
    u_{\lambda}^+ &(R; \lambda) \wedge \tilde{\mathcal{U}}^+ (R; \lambda)
    - u_{\lambda}^+ (0; \lambda) \wedge \tilde{\mathcal{U}}^+ (0; \lambda) \\
    &= \int_0^R \Big((- A_{\lambda} (x; \lambda) + \mu_+' (\lambda) I)u^+ (x; \lambda)\Big) 
    \wedge \tilde{\mathcal{U}}^+ (x; \lambda) dx. 
\end{aligned}
\end{equation}
Proceeding similarly as before, we obtain the relation 
\begin{equation*}
    \lim_{R \to + \infty} 
    u_{\lambda}^+ (R; \lambda) \wedge \tilde{\mathcal{U}}^+ (R; \lambda)
    = \frac{v^{+} (\lambda) \wedge \tilde{\mathcal{V}}^{+\,\prime} (\lambda)}
    {v^+ (\lambda) \wedge \tilde{\mathcal{V}}^{+} (\lambda)} D(\lambda), 
\end{equation*}
and from (\ref{uplus-relation}) (by taking a limit as $R$ tends to $+ \infty$)
\begin{equation} \label{uplus-relation-again}
\begin{aligned}
    u_{\lambda}^+ (0; \lambda) &\wedge \tilde{\mathcal{U}}^+ (0; \lambda)
    = \frac{v^{+} (\lambda) \wedge \tilde{\mathcal{V}}^{+\,\prime} (\lambda)}
    {v^+ (\lambda) \wedge \tilde{\mathcal{V}}^{+} (\lambda)} D(\lambda) \\
    &\quad + \int_0^{+\infty} \Big((A_{\lambda} (x; \lambda) - \mu_+' (\lambda) I)u^+ (x; \lambda)\Big) 
    \wedge \tilde{\mathcal{U}}^+ (x; \lambda) dx. 
\end{aligned}
\end{equation}

At this point, let's note the relation 
\begin{equation*}
    \begin{aligned}
    u^- (0; \lambda) 
    &= u^+ (0; \lambda) = \eta^- (0;\lambda) \\
    \tilde{\mathcal{U}}^- (0; \lambda) 
    &= \tilde{\mathcal{U}}^+ (0; \lambda) = \tilde{\mathcal{Y}}^+ (0;\lambda).
    \end{aligned}
\end{equation*}
Starting with 
\begin{equation*}
    D (\lambda) = u^- (0; \lambda) \wedge \tilde{\mathcal{U}}^- (0; \lambda), 
\end{equation*}
these relations allow us to write 
\begin{equation*}
    \begin{aligned}
    D' (\lambda) &= u_{\lambda}^- (0; \lambda) \wedge \tilde{\mathcal{U}}^- (0; \lambda)
    + u^- (0; \lambda) \wedge \tilde{\mathcal{U}}_{\lambda}^- (0; \lambda) \\
    &= u_{\lambda}^- (0; \lambda) \wedge \tilde{\mathcal{U}}^- (0; \lambda)
    + u^+ (0; \lambda) \wedge \tilde{\mathcal{U}}_{\lambda}^+ (0; \lambda).  
    \end{aligned}
\end{equation*}
Combining this final relation with (\ref{uminus-relation-again}) 
and (\ref{uplus-relation-again}), we obtain the assertion 
of Proposition \ref{evans-proposition}.
\end{proof}

In the following proposition, we address a special case that will be 
important for our applications.

\begin{proposition} \label{evans-proposition-derivatives}
Let Assumptions {\bf (A)} through {\bf (D)} hold, with 
additionally $A_{\lambda \lambda} (x; 0) = 0$ (trivially true if
$A(x;\lambda)$ depends linearly on $\lambda$), and let the 
Evans function $D (\lambda)$ be specified as in (\ref{evans-function-defined}). 
If $D (0) = 0$, then 
\begin{equation*}
    D'(0) = \int_{-\infty}^{+\infty} \Big(A_{\lambda} (x; 0) \eta^- (x; 0)\Big) 
    \wedge \tilde{\mathcal{Y}}^+ (x; 0) dx. 
\end{equation*}
Moreover, if additionally $D'(0) = 0$, then 
\begin{equation*}
    D'' (0)
    =  \int_{-\infty}^{+\infty} \Big(A_{\lambda} (x; 0) \eta_{\lambda}^- (x; 0)\Big) 
    \wedge \tilde{\mathcal{Y}}^+ (x; 0) dx
    + \int_{-\infty}^{+\infty} \Big(A_{\lambda} (x; 0) \eta^- (x; 0)\Big) 
    \wedge \tilde{\mathcal{Y}}_{\lambda}^+ (x; 0) dx. 
\end{equation*}
\end{proposition}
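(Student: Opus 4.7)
The plan is to derive both assertions as direct consequences of Proposition \ref{evans-proposition}, by exploiting the identity $\eta^-(x;\lambda)\wedge\tilde{\mathcal{Y}}^+(x;\lambda)=D(\lambda)$ (independent of $x$) together with the vanishing hypotheses to collapse the terms absent from the stated formulas. For the first claim, I apply Proposition \ref{evans-proposition} at $\lambda=0$. Since $D(0)=0$, the entire coefficient-times-$D(\lambda)$ piece vanishes. Moreover, the relation $\eta^-(x;0)\wedge\tilde{\mathcal{Y}}^+(x;0)=D(0)=0$ holds pointwise in $x$, so the $\mu_-'(0)I$ contribution to the $(-\infty,0)$ integrand and the $\mu_+'(0)I$ contribution to the $(0,+\infty)$ integrand are each identically zero. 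The remaining two integrals combine into the stated expression for $D'(0)$.

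For the second derivative, I differentiate the formula for $D'(\lambda)$ from Proposition \ref{evans-proposition} and evaluate at $\lambda=0$. Writing that formula as $C(\lambda)D(\lambda)+I_-(\lambda)+I_+(\lambda)$ with $I_\pm$ denoting the two integrals, the product-rule contribution $C'(0)D(0)+C(0)D'(0)$ vanishes by the hypotheses $D(0)=D'(0)=0$, so $D''(0)=I_-'(0)+I_+'(0)$. Differentiating under the integral sign, the integrand of $I_\pm'(0)$ splits into three groups. The group involving $A_{\lambda\lambda}(x;0)-\mu_\pm''(0)I$ vanishes because $A_{\lambda\lambda}(x;0)=0$ by hypothesis and $\eta^-(x;0)\wedge\tilde{\mathcal{Y}}^+(x;0)=D(0)=0$. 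The two $\mu_\pm'(0)I$ terms combine into $-\mu_\pm'(0)[\eta_\lambda^-(x;0)\wedge\tilde{\mathcal{Y}}^+(x;0)+\eta^-(x;0)\wedge\tilde{\mathcal{Y}}_\lambda^+(x;0)]$; differentiating the identity $\eta^-\wedge\tilde{\mathcal{Y}}^+=D(\lambda)$ in $\lambda$ shows the bracket equals $D'(\lambda)$, which is $0$ at $\lambda=0$. The surviving terms are $(A_\lambda\eta_\lambda^-)\wedge\tilde{\mathcal{Y}}^++(A_\lambda\eta^-)\wedge\tilde{\mathcal{Y}}_\lambda^+$, and summing $I_-'(0)+I_+'(0)$ assembles the $(-\infty,0)$ and $(0,+\infty)$ contributions into integrals over $\mathbb{R}$, yielding the claimed formula.

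The main technical obstacle is justifying the interchange of $\partial_\lambda$ with integration in $I_\pm'(\lambda)$ near $\lambda=0$. Convergence of the integrals in Proposition \ref{evans-proposition} itself relies on cancellations at $\pm\infty$ arising from the eigenvalue relation (for instance, differentiating $A_\pm(\lambda)v^\pm(\lambda)=\mu_\pm(\lambda)v^\pm(\lambda)$ in $\lambda$ and applying Proposition \ref{conjugation-proposition} yields the identity $[(\partial_\lambda A_-(\lambda)-\mu_-'(\lambda)I)v^-]\wedge\tilde{\mathcal{V}}^-=0$), and analogous cancellations together with uniform exponential bounds on $\eta_\lambda^-(x;\lambda)$ and $\tilde{\mathcal{Y}}_\lambda^+(x;\lambda)$ must be verified for the differentiated integrands. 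Such bounds follow by differentiating the variation-of-parameters representations underlying Propositions \ref{ode-proposition1} and \ref{ode-proposition2} and invoking Assumption {\bf (D)} together with the $\lambda$-analyticity furnished by Assumptions {\bf (A)} and {\bf (C)}; dominated convergence then legitimizes the interchange, and the algebraic cancellations above complete the argument.
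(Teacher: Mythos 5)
Your argument is correct and follows essentially the same route as the paper's proof: apply Proposition \ref{evans-proposition} at $\lambda=0$, use $D(0)=0$ to kill the coefficient-times-$D(\lambda)$ term, and use the pointwise identity $\eta^-(x;0)\wedge\tilde{\mathcal{Y}}^+(x;0)=D(0)=0$ to eliminate the $\mu_\pm'(0)I$ contributions, giving $D'(0)$; then differentiate once more, invoking $A_{\lambda\lambda}(x;0)=0$, $D(0)=0$, and $D'(0)=0$ (via the identity $\eta_\lambda^-\wedge\tilde{\mathcal{Y}}^++\eta^-\wedge\tilde{\mathcal{Y}}_\lambda^+=D'(\lambda)$) to discard the unwanted terms. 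The only cosmetic difference is that the paper rewrites the integrand as $(A_\lambda\eta^-)\wedge\tilde{\mathcal{Y}}^+-\mu_\pm'(\lambda)D(\lambda)$ before differentiating, whereas you differentiate term by term and then identify the bracket as $D'(\lambda)$; your closing paragraph on justifying differentiation under the integral sign is a reasonable acknowledgment of a point the paper passes over silently.
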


\begin{proof}
First, from Proposition \ref{evans-proposition}, we have the relation
\begin{equation*}
    \begin{aligned}
    D'(0) &= \int_{-\infty}^0 \Big((A_{\lambda} (x; 0) - \mu_-'(0) I) \eta^- (x; 0)\Big) 
    \wedge \tilde{\mathcal{Y}}^+ (x; 0) dx  \\
    &\quad \quad + \int_0^{+\infty} \Big((A_{\lambda} (x; 0) - \mu_+'(0) I) \eta^- (x; 0)\Big) 
    \wedge \tilde{\mathcal{Y}}^+ (x; 0) dx.
    \end{aligned}
\end{equation*}
The wedge product $\eta^- (x; 0) \wedge \tilde{\mathcal{Y}}^+ (x; 0)$ is just $D (0)$,
and so is identically 0, allowing us to reduce this to 
\begin{equation*}
    D' (0) = \int_{-\infty}^{+\infty} \Big(A_{\lambda} (x; 0) \eta^- (x; 0)\Big) 
    \wedge \tilde{\mathcal{Y}}^+ (x; 0) dx,
\end{equation*}
as claimed. 

Next, we assume $D(0) = 0$ and $D'(0) = 0$, and we compute $D''(0)$.
We proceed by differentiating the 
expression for $D' (\lambda)$ in Proposition \ref{evans-proposition}. 
First, since $D(0) = 0$ and $D' (0) = 0$, we see that the $\lambda$-derivative
of 
\begin{equation*}
    \Big(\frac{v^{-\,\prime} (\lambda) \wedge \tilde{\mathcal{V}}^- (\lambda)}
    {v^- (\lambda) \wedge \tilde{\mathcal{V}}^- (\lambda)} 
    + \frac{v^+ (\lambda) \wedge \tilde{\mathcal{V}}^{+\,\prime} (\lambda)}
    {v^+ (\lambda) \wedge \tilde{\mathcal{V}}^+ (\lambda)} \Big) D(\lambda),
\end{equation*}
evaluated at $\lambda = 0$ must be 0. The remaining terms in $D' (\lambda)$ can be 
expressed as 
\begin{equation*}
    \begin{aligned}
    \int_{-\infty}^{0} &\Big(A_{\lambda} (x; \lambda) \eta^- (x; \lambda)\Big) 
    \wedge \tilde{\mathcal{Y}}^+ (x; \lambda) - \mu_-' (\lambda) D(\lambda) dx \\ 
    & \quad + \int_0^{+\infty} \Big(A_{\lambda} (x; \lambda) \eta^- (x; \lambda)\Big) 
    \wedge \tilde{\mathcal{Y}}^+ (x; 0) - \mu_+'(\lambda) D(\lambda) dx.
    \end{aligned}
\end{equation*}
Upon differentiation in $\lambda$ and evaluation at $\lambda = 0$ (and using 
the assumptions $A_{\lambda \lambda} (x; 0) = 0$, $D(0) = 0$ and $D' (0) = 0$), we are left with 
\begin{equation*}
    D'' (0)
    =  \int_{-\infty}^{+\infty} \Big(A_{\lambda} (x; 0) \eta_{\lambda}^- (x; 0)\Big) 
    \wedge \tilde{\mathcal{Y}}^+ (x; 0) dx
    + \int_{-\infty}^{+\infty} \Big(A_{\lambda} (x; 0) \eta^- (x; 0)\Big) 
    \wedge \tilde{\mathcal{Y}}_{\lambda}^+ (x; 0) dx,
\end{equation*}
as claimed. 
\end{proof}

In order to take full advantage of the relations for $D'(0)$ and $D''(0)$
from Proposition \ref{evans-proposition-derivatives}, it's useful to be clear
about the connection between $\tilde{\omega}_1 (x; \lambda)$ and $D (\lambda)$. 
For this, we note from Propositions \ref{ode-proposition1} and \ref{ode-proposition2} 
and the specifications
\begin{equation*}
    D(\lambda) 
    = \eta^- (x; \lambda) \wedge \tilde{\mathcal{Y}}^+ (x; \lambda)
\end{equation*}
and 
\begin{equation*}
    \tilde{\omega}_1^+ (x; \lambda)
    = \eta^- (x; \lambda) \wedge \tilde{\mathcal{V}}^+ (\lambda), 
\end{equation*}
that given any closed interval $K \subset I$, and any $\epsilon > 0$, 
we can take $x > 0$ sufficiently large so that 
\begin{equation} \label{evans-omega1-close}
    |D(\lambda) - e^{- \mu_+ (\lambda) x} \tilde{\omega}_1^+ (x; \lambda)| < \epsilon
\end{equation}
for all $\lambda \in K$. Since $D (\lambda)$ is analytic at $\lambda = 0$,
its sign for $\lambda$ sufficiently close to $0$ is determined by 
the first non-zero value $D(0)$, $D'(0)$, $D''(0)$ etc. via the relation
\begin{equation*}
    D(\lambda) = D(0) + D' (0) \lambda + \frac{1}{2} D'' (0) \lambda^2 
    + \dots.
\end{equation*}

In order to be clear about this process, let's take the specific 
case $D(0) = 0$ and $D'(0) > 0$, and let's suppose (as will be the 
case in our applications) that by taking
$x$ sufficiently large we can fix the sign of $\tilde{\psi}_2^+ (x; 0)$,
say $\tilde{\psi}_2^+ (x; 0) > 0$.  
Under these conditions, there exists a value $\delta > 0$ sufficiently small so that 
$D(\lambda) < 0$ for all $\lambda \in (-\delta, 0)$,
and in particular $D (-\delta/2) < 0$. It follows that we can take $x$ sufficiently 
large so that $\tilde{\psi}_1 (x; -\delta/2) < 0$ (because we must 
have $\tilde{\omega}_1^+ (x; \lambda) < 0$ by (\ref{evans-omega1-close})). 
Since $\tilde{\psi}_2 (x; \lambda)$
depends smoothly on $\lambda$, we can additionally choose $\delta$ small enough 
so that $\tilde{\psi}_2 (x; -\delta/2) > 0$, which places $p^+ (x; -\delta/2)$
in the second quadrant. We can conclude that 
\begin{equation*}
    \ind (\mathpzc{g} (x; \cdot), \mathpzc{h} (x; \cdot); [-\delta/2, 0])
    = +1.
\end{equation*}
Returning to (\ref{penultimate-relation}) from the proof of Theorem \ref{main-theorem},
we can write (with $\lambda_2 = 0$)
\begin{equation} \label{penultimate-relation-again}
\begin{aligned}
\ind &(\mathpzc{g} (c_2; \cdot), \mathpzc{h} (c_2; \cdot); [\lambda_1, -\delta/2])
+   \ind (\mathpzc{g} (x; \cdot), \mathpzc{h} (x; \cdot); [-\delta/2, 0]) \\
&=  \ind (\mathpzc{g} (\cdot; \lambda_2), \mathpzc{h}^+ (\lambda_2); [-\infty, + \infty]) 
 - \ind (\mathpzc{g} (\cdot; \lambda_1), \mathpzc{h}^+ (\lambda_1); [-\infty, +\infty]) \\
&+ \ind (\mathpzc{g}^- (\cdot), \mathpzc{h}^+ (\cdot); [\lambda_1, \lambda_2])
- \mathfrak{m},
\end{aligned}
\end{equation}
from which we now obtain a lower bound on $\mathcal{N}_{\#} ([\lambda_1, \lambda_2))$
rather than on $\mathcal{N}_{\#} ([\lambda_1, \lambda_2])$. In particular, 
in this case, we obtain the relation
\begin{equation} \label{detailed1}
\begin{aligned}
\mathcal{N}_{\#} ([\lambda_1, \lambda_2)) + 1
&\ge \Big|\ind (\mathpzc{g} (\cdot; \lambda_2), \mathpzc{h}^+ (\lambda_2); [-\infty, + \infty]) 
 - \ind (\mathpzc{g} (\cdot; \lambda_1), \mathpzc{h}^+ (\lambda_1); [-\infty, +\infty]) \\
&+ \ind (\mathpzc{g}^- (\cdot), \mathpzc{h}^+ (\cdot); [\lambda_1, \lambda_2])
- \mathfrak{m} \Big|.
\end{aligned}
\end{equation}

Additional information about the spectrum of (\ref{nonhammy}) can 
be found by also computing the limit of $D(\lambda)$ as 
$\lambda$ tends toward $- \infty$. For this, we will make
use of Proposition \ref{left-shelf-proposition}, so we make
the same assumptions as stated there. 

\begin{proposition} \label{evans-large-lambda}
Let the Assumptions of Proposition \ref{left-shelf-proposition} hold,
and let $D(\lambda)$ be specified as in (\ref{evans-function-defined}).
Then 
\begin{equation*}
    \lim_{\lambda \to - \infty} D(\lambda) = 1. 
\end{equation*}
\end{proposition}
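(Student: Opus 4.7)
The plan is to localize the identity $D(\lambda) = \eta^-(x;\lambda) \wedge \tilde{\mathcal{Y}}^+(x;\lambda)$ at $x=0$ and then invoke the two asymptotic estimates already furnished by Proposition~\ref{left-shelf-proposition}. First, by Proposition~\ref{connection-to-PW} we have
\begin{equation*}
D(\lambda) \;=\; z^+(0;\lambda)\,\eta^-(0;\lambda),
\end{equation*}
where $z^+$ is the row-vector solution of $z^{+\,\prime} = -z^+ A$ associated with $\tilde{\mathcal{Y}}^+$ via (\ref{proposition5relation}). Since $A_-(\lambda) = A_+(\lambda)$ under the hypotheses, I may select a common diagonalizing matrix $V(\lambda) := V_-(\lambda) = V_+(\lambda)$ and insert $V(\lambda)V(\lambda)^{-1}$ between the two factors to obtain
\begin{equation*}
D(\lambda) \;=\; \bigl(z^+(0;\lambda)\,V(\lambda)\bigr)\bigl(V(\lambda)^{-1}\,\eta^-(0;\lambda)\bigr).
\end{equation*}

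Next, specializing the two conclusions of Proposition~\ref{left-shelf-proposition} to $x = 0$ gives
\begin{equation*}
V(\lambda)^{-1}\,\eta^-(0;\lambda) \;=\; e_1 + \mathbf{o}(1), \qquad z^+(0;\lambda)\,V(\lambda)\,e_1 \;=\; 1 + \mathbf{o}(1),
\end{equation*}
as $\lambda \to -\infty$, together with the assertion that the full row vector $z^+(0;\lambda)\,V(\lambda)$ remains uniformly bounded. Writing $z^+(0;\lambda)\,V(\lambda) = e_1^T + r(\lambda)$, where $r_1(\lambda) = \mathbf{o}(1)$ and $|r(\lambda)|$ is bounded, and $V(\lambda)^{-1}\,\eta^-(0;\lambda) = e_1 + s(\lambda)$ with $|s(\lambda)| = \mathbf{o}(1)$, the product expands as
\begin{equation*}
D(\lambda) \;=\; 1 + r_1(\lambda) + e_1^T s(\lambda) + r(\lambda)\cdot s(\lambda),
\end{equation*}
and each of the three error terms is $\mathbf{o}(1)$ as $\lambda \to -\infty$, yielding the claim.

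The only subtle point is the last error term $r(\lambda)\cdot s(\lambda)$: Proposition~\ref{left-shelf-proposition} controls the non-first entries of $z^+(0;\lambda)\,V(\lambda)$ only in norm rather than as $\mathbf{o}(1)$, which is precisely the asymmetry flagged in Remark~\ref{vminus-vplus-remark}. The resolution is that these merely bounded entries are paired with the $\mathbf{o}(1)$ components of $V(\lambda)^{-1}\,\eta^-(0;\lambda)$, so the inner product bounded $\times\,\mathbf{o}(1)$ is still $\mathbf{o}(1)$. This pairing is the sole obstacle to a one-line argument, and once it is noted the proof closes immediately.
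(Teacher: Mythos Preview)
Your proof is correct and follows essentially the same approach as the paper: both convert the wedge product to the inner-product form $z^+(0;\lambda)\,\eta^-(0;\lambda)$, insert $V(\lambda)V(\lambda)^{-1}$ using $V_- = V_+$, and then apply the asymptotic estimates and boundedness conclusion of Proposition~\ref{left-shelf-proposition} at $x=0$. Your explicit decomposition into $r(\lambda)$ and $s(\lambda)$ and the closing remark about the bounded-times-$\mathbf{o}(1)$ pairing make transparent exactly the step the paper handles in the single line $z^+(0;\lambda)V_+(\lambda)\,\mathbf{o}(1) = \mathbf{o}(1)$.
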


\begin{proof}
First, we recall that $v^- (\lambda)$ denotes an eigenvector
of $A_- (\lambda)$ associated with the eigenvalue $\mu_- (\lambda)$,
and $\tilde{\mathcal{V}}^+ (\lambda)$ denotes an eigenvector of 
$\tilde{A}_+ (\lambda)$ associated with the eigenvalue $- \mu_+ (\lambda)$.
The normalization in place via Assumption {\bf (C)} and the 
relation between $\tilde{\mathcal{V}}^+ (\lambda)$ and 
$w^+ (\lambda)$ (described in Remark \ref{eigenvector-remark}) 
is 
\begin{equation} \label{normalization1}
    v^- (\lambda) \wedge \tilde{\mathcal{V}}^+ (\lambda)
     = 1.
\end{equation}

According to Proposition \ref{left-shelf-proposition}, we have the relations
\begin{equation*}
    V_- (\lambda)^{-1} \eta^- (0; \lambda) = e_1 + \mathbf{o} (1),
    \quad \lambda \to - \infty,
\end{equation*}
and 
\begin{equation*}
    z^+ (0; \lambda) V_+ (\lambda) e_1 = 1 + \mathbf{o} (1),
    \quad \lambda \to - \infty,
\end{equation*}
where under the Assumptions of Proposition \ref{left-shelf-proposition}
we have $V_- (\lambda) = V_+ (\lambda)$. 
Using Remark \ref{equivalence-pw1992}, we can now compute 
\begin{equation*}
\begin{aligned}
    D(\lambda)
    &= \eta^- (0; \lambda) \wedge \tilde{\mathcal{Y}}^+ (0; \lambda)
    = z^+ (0; \lambda) \cdot \eta^- (0; \lambda) 
    = z^+ (0; \lambda) V_+ (\lambda) V_- (\lambda)^{-1} \eta^- (0; \lambda) \\
    &= z^+ (0; \lambda) V_+ (\lambda) (e_1 + \mathbf{o} (1))
    =  z^+ (0; \lambda) V_+ (\lambda) e_1 
    + z^+ (0; \lambda) V_+ (\lambda) \mathbf{o} (1)
    = 1 + \mathbf{o} (1),
\end{aligned}
\end{equation*}
where we have used the boundedness of $z^+ (0; \lambda) V_+ (\lambda)$
(from the final assertion of Proposition \ref{left-shelf-proposition}). 
\end{proof}

\section{Applications}
\label{applications-section}

In this section, we illustrate the implementation of Theorem 
\ref{main-theorem} with two applications. 

\subsection{The Generalized KdV Equation}
\label{gkdv-section}

As our first application, we consider traveling waves $\bar{u} (x - st)$
occurring as solutions to the generalized Korteweg-De Vries 
equation
\begin{equation} \label{gKdV} 
    u_t + f(u)_x = - u_{xxx},
    \quad f(u) = \frac{u^{p+1}}{p+1}, 
    \quad p \ge 1.
\end{equation}
For these calculations, it will be convenient to shift 
to a moving coordinate system for which $\bar{u} (x)$ 
is a stationary solution to 
\begin{equation} \label{gKdV-moving} 
    u_t + (f(u) - su)_x = - u_{xxx}.
\end{equation}
It follows that $\bar{u} (x)$ is a solution to the 
ODE 
\begin{equation} \label{gkdv-wave-equation}
\bar{u}^p \bar{u}' - s \bar{u}' = - \bar{u}''',
\end{equation}
for which we can readily check that for $s > 0$
\begin{equation} \label{gkdv-wave}
    \bar{u} (x) = \alpha \sech^{2/p}(\gamma x),
    \quad \alpha = (\frac{1}{2} s (p+1) (p+2))^{1/p},
    \quad \gamma = \frac{p \sqrt{s}}{2}
\end{equation}
is an exact solution. (These solutions are taken 
from \cite{PW1992}.) Our goal is to use the 
preceding analysis to analyze the spectral 
stability/instability of the wave (\ref{gkdv-wave})
as a solution to (\ref{gKdV-moving}). 

If we linearize (\ref{gKdV-moving}) about the wave 
$\bar{u} (x)$, we obtain the associated eigenvalue
problem 
\begin{equation} \label{gkdv-evalue-problem}
    L \phi = - \phi''' - (a(x) \phi)' = \lambda \phi,
    \quad a(x) = \bar{u}(x)^p - s.
\end{equation}
As a starting point for the analysis, we recall from \cite{He1981, KP2013}
that the essential spectrum of $L$ can be determined from 
the asymptotic equation 
\begin{equation} \label{gkdv-asymptotic}
    - \phi''' - a_{\pm} \phi' = \lambda \phi,
\end{equation}
where
\begin{equation*}
    a_{\pm} = \lim_{x \to \pm \infty} a(x) 
    = - s = - 4 \frac{\gamma^2}{p^2}.
\end{equation*}
(For this application, $a_- = a_+$, so the subscript 
$\pm$ is intended to signify that both sides 
are analyzed at once.)
Precisely, the essential spectrum of $L$ comprises 
the values $\lambda$ for which $\phi (x; k) = e^{i k x}$
is a solution to (\ref{gkdv-asymptotic}) for some 
$k \in \mathbb{R}$, namely 
\begin{equation*}
    \sigma_{\ess} (L)
    = \{\lambda \in \mathbb{C}: \lambda = i k^3 - i a_{\pm} k, \,\, k \in \mathbb{R}\},
\end{equation*}
which is the imaginary axis. 

In the usual way, we express (\ref{gkdv-evalue-problem}) 
as a first-order system by introducing a vector function 
$y = (y_1 \,\, y_2 \,\, y_3)^T$
with $y_1 = \phi$, $y_2 = \phi'$, and $y_3 = \phi''$, yielding 
(\ref{nonhammy}) with 
\begin{equation} \label{gkdv-A}
    A (x; \lambda)
    = \begin{pmatrix}
        0 & 1 & 0 \\
        0 & 0 & 1 \\
        -\lambda - a'(x) & -a(x) & 0 
    \end{pmatrix},
\end{equation}
and correspondingly 
\begin{equation*}
    A_{\pm} (\lambda)
    := \lim_{x \to \pm \infty} A(x; \lambda)
    = \begin{pmatrix}
        0 & 1 & 0 \\
        0 & 0 & 1 \\
        -\lambda & -a_{\pm} & 0 
    \end{pmatrix}.
\end{equation*}
For each fixed $\lambda \in \mathbb{C}$, the eigenvalues of 
$A_{\pm} (\lambda)$ are roots $\mu$ of the cubic polynomial 
\begin{equation} \label{gkdv-h}
h (\mu; \lambda)
= \mu^3 + a_{\pm} \mu + \lambda.
\end{equation}
For $\lambda = 0$, the roots are easily found to be
\begin{equation*}
    \mu_1 (0) = - 2 \frac{\gamma}{p},
    \quad \mu_2 (0) = 0,
    \quad \mu_3 (0) = 2 \frac{\gamma}{p},
\end{equation*}
where we've recalled that $a_{\pm} = -4 \frac{\gamma^2}{p^2}$. 
As $\lambda$ decreases from 0, the graph of $h(\mu; \lambda)$
will keep the same form but decrease so that the roots $\mu_1 (\lambda)$
and $\mu_2 (\lambda)$ will initially move toward one another while the root 
$\mu_3 (\lambda)$ increases. As $\lambda$ continues to 
decrease, the roots $\mu_1 (\lambda)$ and $\mu_2 (\lambda)$ will
coalesce at some value $\lambda = \lambda_c$ into a complex conjugate pair.  
The precise value of $\lambda_c$ isn't critical to our analysis, but 
one easily finds it to be 
\begin{equation} \label{gkdv-coalescence}
    \lambda_c = - 2 (s/3)^{3/2}.
\end{equation}
For $\lambda$ above this coalescence value (i.e., for $\lambda \in (\lambda_c, 0]$) 
we can associate an eigenvector $v_j (\lambda)$ with each $\mu_j (\lambda)$, $j \in \{1, 2, 3\}$,
\begin{equation} \label{gkdv-asymptotic-eigenvectors}
v_j (\lambda) = (1 \,\, \mu_j (\lambda) \,\, \mu_j (\lambda)^2)^T,
\quad j = 1, 2, 3.
\end{equation}
Correspondingly, it's straightforward to identify three 
linearly independent solutions of (\ref{nonhammy}) with 
$A(x; \lambda)$ as in (\ref{gkdv-A}),
\begin{equation*}
    y_j^{\pm} (x; \lambda) 
    = e^{\mu_j (\lambda) x} (v_j (\lambda) + E_j^{\pm} (x; \lambda)),
\end{equation*}
where $E_j^{\pm} (x; \lambda) = {\mathbf{O} (e^{- \alpha |x|})}$
uniformly in $\lambda$ on compact subsets of $I$ 
for some fixed constant $\alpha > 0$. 
(See, e.g., \cite{ZH1998}.) The eigenvector $v_3 (\lambda)$ 
has the same form for all $\lambda \le 0$, and up to a choice of scaling 
is $v^- (\lambda)$ from Proposition \ref{ode-proposition1}.
Likewise, up to a choice of scaling, $\eta^- (x; \lambda)$ 
is $y_3^- (x; \lambda)$.
On the other hand, once $\lambda$ decreases to the 
coalescence threshold, it becomes problematic to separate 
the solutions associated with $\mu_1 (\lambda)$ and $\mu_2 (\lambda)$,
and one of the advantages of the wedge-product formulation is 
that no such separation is necessary. 

Turning to consideration of $\tilde{A} (x; \lambda)$, we see from 
(\ref{tildeA}) that for this application 
\begin{equation*}
    \tilde{A} (x; \lambda)
    = \begin{pmatrix}
        0 & 1 & 0 \\
        -a(x) & 0 & 1 \\
        \lambda + a'(x) & 0 & 0
    \end{pmatrix},
\end{equation*}
and correspondingly 
\begin{equation} \label{gkdv-Aplus}
    \tilde{A}_{\pm} (\lambda)
    = \lim_{x \to \pm \infty} \tilde{A} (x; \lambda)
    = \begin{pmatrix}
        0 & 1 & 0 \\
        -a_{\pm} & 0 & 1 \\
        \lambda & 0 & 0
    \end{pmatrix}.
\end{equation}
According to Proposition \ref{eigenvalues-proposition}, 
$ \tilde{A}_{\pm} (\lambda)$
has eigenvalues $\tilde{\mu}_j (\lambda) = - \mu_{3-j+1} (\lambda)$, 
$j = 1, 2, 3$, and we  
readily find that the associated eigenvectors have 
the form $\tilde{\mathcal{V}}_j = (1 \,\, \tilde{\mu}_j \,\, \lambda/\tilde{\mu}_j)^T$.
In particular, $\tilde{\mu}_1 (\lambda) = - \mu_3 (\lambda)$,
and up to a choice of scaling $\tilde{\mathcal{V}}_1 (\lambda)$ 
is $\tilde{\mathcal{V}}^+ (\lambda)$ from Proposition \ref{ode-proposition2}. 

Since this application is in the setting of Proposition \ref{left-shelf-crossings}, 
we can conclude immediately that we can take $\lambda_1$ sufficiently negative so 
that the hyperplane index associated with the left shelf gives no contribution.
In addition, we see from (\ref{normalization1}) in the proof of 
Proposition \ref{evans-large-lambda} that $v^- (\lambda) \wedge \tilde{\mathcal{V}}^+ (\lambda) = 1$
for all $\lambda \le 0$, ensuring that there are no crossing points
along the bottom shelf (as discussed in Section \ref{bottom-shelf-section}). We 
note that in order to obtain this normalization, we can use 
$v^- (\lambda) = \kappa (\lambda) v_3 (\lambda)$ 
and $\tilde{\mathcal{V}}^+ (\lambda) = \kappa (\lambda) \tilde{\mathcal{V}}_1 (\lambda)$,
where 
\begin{equation} \label{gkdv-scaling}
    \kappa (\lambda)
    = 1/\sqrt{- \lambda/\mu_3 (\lambda) + 2 \mu_3 (\lambda)^2}.
\end{equation}

Next, we turn to the evaluation of $D(0)$, $D' (0)$, and $D'' (0)$, 
which we adapt from \cite{PW1992} with only 
minor changes required for the current wedge-based 
formulation. To start, we observe that since $y_3^- (x; 0)$ 
is the only solution from our basis $\{y_j^- (x; 0)\}_{j=1}^3$
that decays as $x$ tends to $-\infty$ it must be the case 
that there exists some constant $k_-$ so that 
(keeping in mind that $\eta^- (x; \lambda)$ is just a rescaling 
of $y_3^- (x; \lambda)$)
\begin{equation} \label{gkdv-etamat0}
\eta^- (x; 0) = k_-    
\begin{pmatrix}
\bar{u}' (x) \\
\bar{u}'' (x) \\
\bar{u}''' (x)
\end{pmatrix}.
\end{equation}
According to Proposition \ref{ode-proposition1}, we can write 
\begin{equation*}
    \lim_{x \to -\infty} e^{- \mu_3 (0) x} 
    k_-    
\begin{pmatrix}
\bar{u}' (x) \\
\bar{u}'' (x) \\
\bar{u}''' (x)
\end{pmatrix}
= v^- (0).
\end{equation*}
Recalling that the first component of $v^- (\lambda)$ has been 
chosen by convention to be positive and that 
$\bar{u}' (x) > 0$ for all $x < 0$, we conclude that 
$k_- > 0$. 

In addition, we need to understand the nature of $\tilde{\mathcal{Y}}^+ (x; 0)$,
which solves the ODE 
\begin{equation*}
\tilde{\mathcal{Y}}^{+\,\prime} 
= \tilde{A} (x; 0) \tilde{\mathcal{Y}}^+.
\end{equation*}
In particular, we see that if we write 
$\tilde{\mathcal{Y}}^+ = (\tilde{\mathcal{Y}}_1^+ \,\, \tilde{\mathcal{Y}}_2^+ \,\, \tilde{\mathcal{Y}}_3^+)^T$,
then 
$\tilde{\mathcal{Y}}_1^{+\,\prime} = \tilde{\mathcal{Y}}_2^{+}$,
so that 
\begin{equation} \label{y1tilde00}
\tilde{\mathcal{Y}}_1^{+\,\prime \prime} 
=  \tilde{\mathcal{Y}}_2^{+\,\prime}
= - a(x) \tilde{\mathcal{Y}}_1^+ 
+ \tilde{\mathcal{Y}}_3^+.  
\end{equation}
Differentiating once more, we see that 
\begin{equation} \label{y1tilde0}
\tilde{\mathcal{Y}}_1^{+\,\prime \prime \prime}   
= - (a(x) \tilde{\mathcal{Y}}_1^+)' 
+ \tilde{\mathcal{Y}}_3^{+\,\prime} 
= - (a(x) \tilde{\mathcal{Y}}_1^+)' 
+ a'(x) \tilde{\mathcal{Y}}_1^+ 
= - a(x) \tilde{\mathcal{Y}}_1^{+\,\prime}. 
\end{equation}
Comparing this equation with (\ref{gkdv-wave-equation}),
and recalling the definition of $a(x)$ in 
(\ref{gkdv-evalue-problem}), we see that 
$\tilde{\mathcal{Y}}_1^{+} (x; 0)$ solves 
the same equation as $\bar{u} (x)$, which is 
the only solution of this equation that lies 
left in $\mathbb{R}$. It follows that there 
exists a constant $k_+$ so that 
\begin{equation} \label{tilde-y-zero}
\tilde{\mathcal{Y}}^{+} (x; 0)
= \begin{pmatrix}
\tilde{\mathcal{Y}}_1^{+} (x; 0) \\
\tilde{\mathcal{Y}}_2^{+} (x; 0) \\
\tilde{\mathcal{Y}}_3^{+} (x; 0) 
\end{pmatrix}
=
\begin{pmatrix}
\tilde{\mathcal{Y}}_1^{+} (x; 0) \\
\tilde{\mathcal{Y}}_1^{+\,\prime} (x; 0) \\
\tilde{\mathcal{Y}}_1^{+\,\prime \prime} (x; 0) + a(x) \tilde{\mathcal{Y}}_1^{+} (x; 0) 
\end{pmatrix}
= k_+ \begin{pmatrix}
\bar{u} (x) \\
\bar{u}'(x) \\
\bar{u}''(x) + a(x) \bar{u} (x)
\end{pmatrix}.
\end{equation}
Recalling the asymptotic relation 
\begin{equation*}
    \lim_{x \to + \infty} e^{\mu_+ (0) x} \tilde{\mathcal{Y}}^{+} (x; 0)
    = \tilde{\mathcal{V}}^+ (0),
\end{equation*}
and noting that $\bar{u} (x) > 0$ for all $x > 0$, we see that 
$k_+ > 0$.

We can now directly compute 
\begin{equation*}
    \begin{aligned}
    D(0)
    &= \eta^- (x;0) \wedge \tilde{\mathcal{Y}}^+ (x; 0)
    = k_- k_+ \begin{pmatrix}
\bar{u}' (x) \\
\bar{u}'' (x) \\
\bar{u}''' (x)
\end{pmatrix}
\wedge
\begin{pmatrix}
\bar{u} (x) \\
\bar{u}'(x) \\
\bar{u}''(x) + a(x) \bar{u} (x)
\end{pmatrix} \\
&=  k_- k_+ \bar{u} (x) (\bar{u}'''(x) + a(x) \bar{u}' (x)) = 0.
    \end{aligned}
\end{equation*}

Using 
\begin{equation*}
    A_{\lambda} (x; 0)
    = \begin{pmatrix}
    0 & 0 & 0 \\
    0 & 0 & 0 \\
    -1 & 0 & 0
    \end{pmatrix},
\end{equation*}
along with the relations above for $\eta^- (x; 0)$ and 
$\tilde{\mathcal{Y}}^+ (x; 0)$, we find that 
\begin{equation*}
\Big(A_{\lambda} (x; 0) \eta^- (x; 0)\Big) \wedge \tilde{\mathcal{Y}}^+ (x; 0)
= - k_- k_+ \bar{u}'(x) \bar{u} (x),
\end{equation*}
from which we see that (using the expression for $D'(0)$ in Proposition \ref{evans-proposition-derivatives})
\begin{equation*}
    D' (0) = - k_- k_+ \int_{-\infty}^{+\infty} \frac{d}{dx} \frac{\bar{u}(x)^2}{2} dx
    = 0. 
\end{equation*}

In order to evaluate the integrals in the expression for 
$D''(0)$ in Proposition \ref{evans-proposition-derivatives}, 
we need to understand the functions $\eta_{\lambda}^- (x; 0)$
and $\tilde{\mathcal{Y}}_{\lambda}^+ (x; 0)$. Starting with 
$\eta_{\lambda}^- (x; 0)$, we recall that the first component 
of $\eta^- (x; \lambda)$, 
$\phi (x; \lambda) := \eta_1^- (x; \lambda)$,
solves the eigenvalue problem 
\begin{equation*}
- \phi^{\prime \prime \prime}    
- (a(x) \phi)' = \lambda \phi. 
\end{equation*}
Upon differentiation in $\lambda$, we see that
$\phi_{\lambda}$ solves the equation 
\begin{equation*}
- \phi_{\lambda}^{\prime \prime \prime}    
- (a(x) \phi_\lambda)' = \phi + \lambda \phi_\lambda.     
\end{equation*}
From (\ref{gkdv-etamat0}) we can write $\phi (x; 0) = k_- \bar{u}'(x)$,
so that with $\lambda = 0$ we obtain the equation 
\begin{equation} \label{gkdv-inhomogeneous1}
- \phi_{\lambda}^{\prime \prime \prime} (x; 0)   
- (a(x) \phi_\lambda (x; 0))' = k_- \bar{u}' (x).     
\end{equation}
For comparison, we note that by 
differentiating (\ref{gkdv-wave-equation}) 
in $s$ we obtain the relation 
\begin{equation} \label{gkdv-inhomogeneous}
    p \bar{u}^{p-1} \bar{u}_s + \bar{u}^p \bar{u}_s' - \bar{u}' - s \bar{u}_s' 
    = - \bar{u}_s''',
\end{equation}
which we can re-write as 
\begin{equation} \label{gkdv-baru-s}
    - \bar{u}_s''' - (a(x) \bar{u}_s)' 
    = - \bar{u}'.
\end{equation}
We see that the function $- k_- \bar{u}_s (x; s)$ solves
the inhomogeneous ODE (\ref{gkdv-inhomogeneous1}). We've already 
observed that the only left-decaying solution to the associated homogeneous 
equation is $\bar{u}' (x)$, so we must have 
\begin{equation*}
    \phi_{\lambda} (x; 0) = \beta_- \bar{u}' (x; s) - k_- \bar{u}_s (x; s)
\end{equation*}
for some constant $\beta_- \in \mathbb{R}$. 

Next, if we repeat the calculations leading to (\ref{y1tilde0})
with $\lambda \ne 0$, we obtain the relation 
\begin{equation*} 
\tilde{\mathcal{Y}}_1^{+\,\prime \prime \prime}   
+ a(x) \tilde{\mathcal{Y}}_1^{+\,\prime} 
= \lambda \tilde{\mathcal{Y}}_1^+. 
\end{equation*}
Upon differentiation in $\lambda$, we find that 
$\varphi (x; \lambda) := \partial_{\lambda} \tilde{\mathcal{Y}}_1^+ (x; \lambda)$
satisfies the equation 
\begin{equation*} 
\varphi^{\prime \prime \prime}   
+ a(x) \varphi^{\prime} 
= \lambda \varphi + \tilde{\mathcal{Y}}_1^+. 
\end{equation*}
Evaluating now at $\lambda = 0$ and recalling (\ref{tilde-y-zero}),
we see that 
\begin{equation} \label{tilde-y-lambda-derivative}
\varphi^{\prime \prime \prime} (x; 0)   
+ a(x) \varphi^{\prime} (x; 0)
= k_+ \bar{u} (x; s). 
\end{equation}
For comparison, we integrate (\ref{gkdv-baru-s}) (and 
change signs) to see that 
\begin{equation} \label{gkdv-baru-s-integrated}
   \bar{u}_s'' + a(x) \bar{u}_s 
    = \bar{u},
\end{equation}
for which the constant of integration is seen to be $0$ 
since $\bar{u} (x; s)$ tends to $0$ along with its 
derivatives as $x$ tends to $\pm \infty$.
If we now introduce an integrated variable 
\begin{equation*}
    \mathcal{U} (x; s) 
    := \int_{-\infty}^x \bar{u}_s (\xi; s) d \xi,
\end{equation*}
we can express (\ref{gkdv-baru-s-integrated}) as 
\begin{equation*}
    \mathcal{U}''' + a(x) \mathcal{U}'
    = \bar{u}.
\end{equation*}
In this way, we see that a particular solution of (\ref{tilde-y-lambda-derivative})
is $k_+ \mathcal{U} (x; s)$, and since $\bar{u} (x; s)$ is the 
only solution (up to constant multiplication) of the associated 
homogeneous equation, we can write 
\begin{equation*}
(\partial_{\lambda} \tilde{\mathcal{Y}}_1^+) (x; 0)
= \varphi (x; 0) = \beta_+ \bar{u} (x; s) + k_+ \mathcal{U} (x;s)
\end{equation*}
for some constant $\beta_+$. 

We are now in a position to readily evaluate the expression 
for $D''(0)$ in Proposition \ref{evans-proposition-derivatives}. 
For the first summand, we compute
\begin{equation*}
\begin{aligned}
    - k_+ &\int_{-\infty}^{+\infty}
    \begin{pmatrix}
    0 \\ 0 \\ \beta_- \bar{u}' (x;s) - k_- \bar{u}_s (x;s)
    \end{pmatrix}
    \wedge 
    \begin{pmatrix}
    \bar{u} (x;s) \\
    \bar{u}'(x;s) \\
    \bar{u}''(x;s) + a(x) \bar{u} (x;s)
    \end{pmatrix}
    dx \\
    &= - k_+ \int_{-\infty}^{+\infty} (\beta_- \bar{u}' (x;s) - k_- \bar{u}_s (x;s)) \bar{u} (x;s) dx
    = k_- k_+ \int_{-\infty}^{+\infty} \bar{u} (x;s) \bar{u}_s (x;s) dx,
\end{aligned}
\end{equation*}
where we've observed that $\bar{u} (x;s) \bar{u}' (x;s)$ integrates to 0. Turning to the 
second summand in the expression for $D''(0)$ in Proposition \ref{evans-proposition-derivatives},
we compute
\begin{equation*}
\begin{aligned}
    - k_- &\int_{-\infty}^{+\infty}
    \begin{pmatrix}
    0 \\ 0 \\ \bar{u}'(x;s)
    \end{pmatrix}
    \wedge 
    \begin{pmatrix}
    \beta_+ \bar{u} (x;s) + k_+ \mathcal{U} (x;s) \\ * \\ *
    \end{pmatrix}
    dx \\
    &= - k_- \int_{-\infty}^{+\infty} (\beta_+ \bar{u} (x;s) + k_+ \mathcal{U} (x;s)) \bar{u}'(x;s)  dx
    = - k_- k_+ \int_{-\infty}^{+\infty} \mathcal{U} (x;s) \bar{u}'(x;s) dx,
\end{aligned}
\end{equation*}
where the asterisks indicate terms that don't have a role in the 
calculation. 
If we integrate this last integral by parts and observe that there is no 
contribution from the boundary,
we see that it becomes precisely the above expression
\begin{equation*}
+ k_- k_+ \int_{-\infty}^{+\infty} \bar{u} (x) \bar{u}_s (x) dx.     
\end{equation*}
In total, we can write 
\begin{equation*}
    D'' (0) = k_- k_+ \frac{d}{ds} \int_{-\infty}^{+\infty} \bar{u} (x; s)^2 dx. 
\end{equation*}

Using the specification in (\ref{gkdv-wave}), we see that  
\begin{equation*}
\int_{-\infty}^{+\infty} \bar{u} (x; s)^2 dx
= \alpha (s)^2 \int_{-\infty}^{+\infty} \sech^{4/p} (\gamma (s)x) dx
= c_p \frac{\alpha (s)^2}{\gamma (s)},
\end{equation*}
where $c_p := \int_{-\infty}^{+\infty} \sech^{4/p} x dx$, and in obtaining 
this expression we've used a change of variables $y = \gamma (s) x$. 
We've seen above that $k_- > 0$ and $k_+ > 0$, and it's clear that 
$c_p > 0$, so 
\begin{equation*}
    \sgn D''(0) = \sgn \frac{d}{ds} \frac{\alpha (s)^2}{\gamma (s)}. 
\end{equation*}
In order to determine the sign on the right-hand side, it's convenient to 
write 
\begin{equation*}
    \ln \frac{\alpha (s)^2}{\gamma (s)}
    = \frac{2}{p} \ln (\frac{1}{2} s (p+1) (p+2)) - \ln (\frac{1}{2}p\sqrt{s})
    = \frac{2}{p} \ln s - \ln \sqrt{s} + C, 
\end{equation*}
where $C$ simply notes additional terms constant in $s$. Differentiating 
in $s$, we now see that 
\begin{equation*}
(\frac{\alpha (s)^2}{\gamma (s)})^{-1} \frac{d}{ds} \frac{\alpha (s)^2}{\gamma (s)}
= \frac{4-p}{2ps}.
\end{equation*}
We conclude that 
\begin{equation} \label{gkdv-evans-sign}
    \sgn D''(0) 
    = \begin{cases}
    +1 & 1 \le p < 4 \\
    -1 & 4 < p.
    \end{cases}
\end{equation}

We are now in a position to compute the hyperplane index along the right shelf. 
Our starting point for this calculation is the map 
\begin{equation*}
\begin{aligned}
    \tilde{\omega}_1^+ (x; 0)
    &= \eta^- (x; 0) \wedge \tilde{\mathcal{V}}^+ (0)
    = k_- \kappa(0) \begin{pmatrix}
    \bar{u}'(x) \\ \bar{u}''(x) \\ \bar{u}'''(x)
    \end{pmatrix}
    \wedge 
    \begin{pmatrix}
        1 \\ - \mu_3 (0) \\ 0
    \end{pmatrix} \\
    &= \frac{k_- p}{2\sqrt{2} \gamma}
    \Big{\{}2 \frac{\gamma}{p} \bar{u}''(x) + \bar{u}'''(x) \Big{\}},
\end{aligned}
\end{equation*}
where we've used the relations $\mu_3 (0) = 2 \gamma/p$ 
and $\kappa (0) = p/(2\sqrt{2}\gamma)$.

We see from this that in order to identify crossing points along the 
right shelf, we need to look for roots of the relation 
$\frac{2 \gamma}{p} \bar{u}''(x)+\bar{u}'''(x)$. Using 
(\ref{gkdv-wave}), we can compute 
\begin{equation} \label{gkdv-ubar-prime}
    \bar{u}'(x)
    = \frac{2 \alpha \gamma}{p} \sech^{\frac{2}{p}-1} (\gamma x)
    (- \sech (\gamma x) \tanh (\gamma x))
    = - \frac{2 \gamma}{p} \bar{u} (x) \tanh (\gamma x)
\end{equation}
In addition, by integrating (\ref{gkdv-wave-equation}), we obtain 
the relation 
\begin{equation*}
    \bar{u}'' = s \bar{u} - \frac{\bar{u}^{p+1}}{p+1},
\end{equation*}
and from (\ref{gkdv-wave-equation}) itself we can write 
\begin{equation*}
    \bar{u}'''
    = (s - \bar{u}^p) \bar{u}'
    = - \frac{2 \gamma}{p} \bar{u} (s - \bar{u}^p) \tanh (\gamma x).
\end{equation*}
Upon combining these relations, we see that 
\begin{equation} \label{gkdv-relation}
\frac{2 \gamma}{p} \bar{u}''(x)+\bar{u}'''(x)
= \frac{2 \gamma}{p} \bar{u} (x)
\Big(s - s \tanh (\gamma x) + \bar{u}^p \tanh (\gamma x) - \frac{\bar{u}^p}{p+1} \Big).
\end{equation}
Using now the identity 
\begin{equation*}
    \bar{u}(x)^p = \alpha^p \sech^2 (\gamma x) 
    = \alpha^p (1 - \tanh^2 (\gamma x)),
\end{equation*}
we can express the quantity in large parentheses on the 
right-hand side of (\ref{gkdv-relation}) in terms of the variable 
$z = \tanh (\gamma x) \in (-1, 1)$. Precisely, we can write this 
expression as 
\begin{equation*} \label{quantity-in-curved-brackets}
    \Psi (z) = (1-z) \Big{\{} s + \alpha^p z (1+z) - \frac{\alpha^p (1+z)}{p+1} \Big{\}}. 
\end{equation*}
As expected, since $\lambda = 0$ is an eigenvalue, we have a crossing point
in the limit as $x \to + \infty$ (corresponding with $z \to 1$), and otherwise
there is a one-to-one relationship between zeros of the polynomial 
$\Psi (z)$ and crossing points for $\tilde{\omega}_1^+ (x; 0)$ for $x \in \mathbb{R}$. 
The quantity in curved brackets in (\ref{quantity-in-curved-brackets}) 
is a quadratic in $z$, which can be expressed as 
\begin{equation} \label{gkdv-quadratic}
    \Psi_2 (z) = \alpha^p z^2 + \frac{1}{2} ps (p+2) z - \frac{ps}{2}.
\end{equation}
Writing out $\alpha^p$, we see that we're looking for roots of 
the quadratic expression 
\begin{equation*}
    \frac{1}{2} s (p+1) (p+2) z^2 + \frac{1}{2} ps (p+2) z - \frac{ps}{2}=0
    \implies (p+1) (p+2) z^2 + p (p+2) z - p = 0.
\end{equation*}
Checking the right-hand side at the values $z = -1, 0, 1$, we respectively 
obtain the values $2$, $-p$ and $2(p+1)^2$, from which we see that 
$\Psi_2 (z)$ has two real roots $z_1$ and $z_2$ on the interval $(-1,1)$, 
and moreover that $z_1 < 0 < z_2$. Correspondingly, $\tilde{\omega}_1^+ (x; 0)$ has two real roots 
$x_1$ and $x_2$ on $\mathbb{R}$, with $x_1 < 0 < x_2$. 

In order to understand possible contributions to the hyperplane index
from the limits $x \to \pm \infty$, we work with the scaled map
\begin{equation*}
    \tilde{\psi}_1^+ (x; 0)
    = \frac{\tilde{\omega}_1^+ (x; 0)}{|\eta^- (x; 0)| |\tilde{\mathcal{V}}^+ (0)|},
\end{equation*}
for which we've seen in (\ref{psi1-tilde-plus-minus-defined}) has the 
well-defined asymptotic limit
\begin{equation*}
    \tilde{\psi}_1^{+,-} (0) 
    = \lim_{x \to -\infty} \tilde{\psi}_1^+ (x; 0)
    = \frac{v^- (0) \wedge \tilde{\mathcal{V}}^+ (0)}{|v^- (0)| |\tilde{\mathcal{V}}^+ (0)|}
    =: c_0,
\end{equation*}
where according to our scaling convention $c_0$ is the positive constant 
\begin{equation*}
    c_0 = \frac{1}{|v^- (0)| |\tilde{\mathcal{V}}^+ (0)|}.
\end{equation*}
In this case, the wave $\bar{u} (x)$ is symmetric about $x = 0$, 
allowing us to see that (due to (\ref{gkdv-etamat0})) 
\begin{equation*}
    \lim_{x \to + \infty} \frac{\eta^- (x; 0)}{|\eta^- (x; 0)|}
    = - \frac{v^+ (0)}{|v^+ (0)|},
\end{equation*}
where 
\begin{equation*}
    v^+ (0) = \kappa (0)
    \begin{pmatrix}
        1 \\ - \mu_3 (0) \\ \mu_3 (0)^2
    \end{pmatrix}
    = \frac{p}{2\sqrt{2} \gamma} \begin{pmatrix}
        1 \\ - 2\gamma/p \\ 4 \gamma^2/p^2
    \end{pmatrix}.
\end{equation*}
We see that 
\begin{equation*}
\begin{aligned}
    \tilde{\psi}_1^{+,+} (0) 
    &= \lim_{x \to +\infty} \tilde{\psi}_1^+ (x; 0)
    = - \frac{v^+ (0) \wedge \tilde{\mathcal{V}}^+ (0)}{|v^+ (0)| |\tilde{\mathcal{V}}^+ (0)|} \\
    &= - \frac{ \kappa (0)^2 }{|v^+ (0)| |\tilde{\mathcal{V}}^+ (0)|}   
    \begin{pmatrix}
        1 \\ - 2\gamma/p \\ 4 \gamma^2/p^2
    \end{pmatrix}
    \wedge 
    \begin{pmatrix}
        1 \\ -2 \gamma/p \\ 0
    \end{pmatrix}
    = 0.
\end{aligned}
\end{equation*}
The graph of the function $\tilde{\psi}_1^+ (x; 0)$ is depicted 
in Figure \ref{gkdv-figure1}. 

\begin{figure}[ht]
\begin{center}
\begin{tikzpicture}
\draw[thick, <->] (-5,0) -- (5,0);
\node at (4.5,-.4) {$x$};
\draw[thick,<->] (0,-2) -- (0,2);
\node at (.15,2.5) {$\tilde{\psi}_1^+ (x; 0)$};
\draw[thick] (-3,1) .. controls (0,-4) and (1,3) .. (3,.75);
\draw[thick,->] (-3,1) .. controls (-3.5,1.6) and (-3.7,1.5) .. (-5,1.5); 
\draw[thick,->] (3,.75) .. controls (3.6,.1) .. (5,.1);
\draw[dashed,<->] (-5,1.6) -- (5,1.6);
\node at (-2.3,-.4) {$x_1$};
\node at (.6,-.4) {$x_2$};
\node at (.3,1.3) {$c_0$};
\end{tikzpicture}
\end{center}
\vspace{-.8in}
\caption{The function $\tilde{\psi}_1^+ (x; 0)$ in the case of the generalized KdV equation.} 
\label{gkdv-figure1}
\end{figure}

In order to compute the hyperplane index along the right shelf, 
we also need to specify $\tilde{\omega}_2^+ (x; 0)$, and for this 
we have considerable flexibility, particularly in our choice of 
the constant matrix $M$. For reasons that will become clear 
just below, we will find it convenient to take 
\begin{equation} \label{gkdv-M}
    M = \begin{pmatrix}
0 & 1 & 1/\sqrt{s} \\
1 & 0 & 0 \\
1 & 0 & - 1/s
\end{pmatrix}.
\end{equation}
We now set 
\begin{equation*}
    \tilde{\omega}_2^+ (x; 0)
     = \eta_- (x; 0) \wedge \tilde{\mathcal{V}}_M (0),
\end{equation*}
where $\tilde{\mathcal{V}}_M (0)$ is an eigenvector
of the asymptotic matrix $\tilde{\mathcal{A}}^+ (0)$
(from (\ref{tilde-mathcal-A-asymptotic})) associated
with the eigenvalue $-\mu_3 (0)$, namely
\begin{equation*}
    \tilde{\mathcal{V}}_M^+ (0)
    = \begin{pmatrix}
        -s/\mu_3 (0) + \sqrt{s} \\ -\mu_3 (0) + \sqrt{s} \\ 1
    \end{pmatrix}
    = \begin{pmatrix}
        0 \\ 0 \\ 1
    \end{pmatrix}.
\end{equation*}
It follows that 
\begin{equation*}
    \tilde{\omega}_2^+ (x; 0)
    = k_- \begin{pmatrix}
        \bar{u}' (x) \\ \bar{u}'' (x) \\ \bar{u}'''(x)
    \end{pmatrix}
    \wedge 
    \begin{pmatrix}
        0 \\ 0 \\ 1
    \end{pmatrix}
    = k_- \bar{u}'(x)
    = - \frac{2k_- \gamma}{p} \bar{u} (x) \tanh (\gamma x).
\end{equation*}

As with $\tilde{\omega}_1^+ (x; 0)$, in order to accommodate the limits 
$x \to \pm \infty$, we will work with the scaled map
\begin{equation}
    \tilde{\psi}_2^+ (x; 0)
    = \frac{\tilde{\omega}_2^+ (x; 0)}{|\eta^- (x; 0)| |\tilde{\mathcal{V}}^+ (0)|},
\end{equation}
for which we have the limits 
\begin{equation*}
\begin{aligned}
    \tilde{\psi}_2^{+,-} (0)
    &= \lim_{x \to -\infty} \tilde{\psi}_2^+ (x; 0)
    = \frac{v^- (0) \wedge \tilde{\mathcal{V}}_M^+ (0)}{|v^- (0)| |\tilde{\mathcal{V}}^+ (0)|} \\
    &= \frac{p}{2\sqrt{2}\gamma} 
    \begin{pmatrix}
        1 \\ 2 \gamma/p \\ 4 \gamma^2/p^2
    \end{pmatrix}
    \wedge 
     \begin{pmatrix}
        0 \\ 0 \\ 1
    \end{pmatrix}
    = \frac{p}{2\sqrt{2}\gamma} > 0,
\end{aligned}
\end{equation*}
and likewise 
\begin{equation*}
\begin{aligned}
    \tilde{\psi}_2^{+,+} (0)
    &= \lim_{x \to +\infty} \tilde{\psi}_2^+ (x; 0)
    = - \frac{v^+ (0) \wedge \tilde{\mathcal{V}}_M^+ (0)}{|v^+ (0)| |\tilde{\mathcal{V}}^+ (0)|} \\
    &= - \frac{p}{2\sqrt{2}\gamma} 
    \begin{pmatrix}
        1 \\ - 2 \gamma/p \\ 4 \gamma^2/p^2
    \end{pmatrix}
    \wedge 
     \begin{pmatrix}
        0 \\ 0 \\ 1
    \end{pmatrix}
    = - \frac{p}{2\sqrt{2}\gamma} < 0.
\end{aligned}
\end{equation*}
In total, we see that $\tilde{\psi}_2^+ (x; 0)$ is as depicted in Figure 
\ref{gkdv-figure2}.

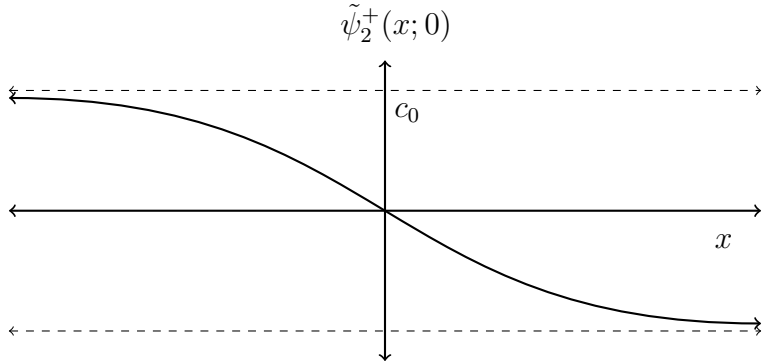
\begin{figure}[ht]
\begin{center}
\begin{tikzpicture}
\draw[thick, <->] (-5,0) -- (5,0);
\node at (4.5,-.4) {$x$};
\draw[thick,<->] (0,-2) -- (0,2);
\node at (.15,2.5) {$\tilde{\psi}_2^+ (x; 0)$};
\draw[thick,<->] (-5,1.5) .. controls (0,1.5) and (0,-1.5) .. (5,-1.5); 
\draw[dashed,<->] (-5,1.6) -- (5,1.6);
\draw[dashed,<->] (-5,-1.6) -- (5,-1.6);
\node at (.3,1.3) {$c_0$};
\end{tikzpicture}
\end{center}
\caption{The function $\tilde{\psi}_2^+ (x; 0)$ in the case of the generalized KdV equation.} 
\label{gkdv-figure2}
\end{figure}

Before computing the hyperplane index on the right shelf, we verify that we have 
invariance. First, the only roots of $\tilde{\psi}_1^+ (x; 0)$ are the values
designated as $x_1$ and $x_2$ above, and these satisfy $x_1 < 0 < x_2$. By 
contrast, the only real root of $\tilde{\psi}_2^+ (x; 0)$ is $x = 0$, so 
there is no value $x \in \mathbb{R}$ for which both $\tilde{\psi}_1^+ (x; 0)$
and $\tilde{\psi}_2^+ (x; 0)$ vanishe. In addition, since $\tilde{\psi}_2^{+, \pm} (0) \ne 0$,
invariance is not lost at the asymptotic endstates. 

In order to compute the hyperplane index along the right 
shelf, we need only identify all conjugate points on the interval 
$[-\infty,+\infty]$ (allowing $\pm \infty$ to serve asymptotically as
crossing points, though $-\infty$ has already been ruled out 
by our analysis of the bottom shelf) and assign a direction to each. 
For any 
finite crossing point $x_*$, this direction is determined 
by the ratio 
\begin{equation*}
    \frac{\tilde{\psi}_1^{+\,\prime} (x_*; 0)}{\tilde{\psi}_2^+ (x_*; 0)},
\end{equation*}
while for the crossing point at $+ \infty$ we will work directly 
with the rotation described by $p^+ (x; 0)$ as $x$ tends toward $+\infty$. 
For the former, we see immediately from Figure \ref{gkdv-figure1} that 
$\tilde{\psi}_1^{+ \, \prime} (x_1; 0) < 0$ 
and $\tilde{\psi}_1^{+\,\prime} (x_2; 0) > 0$, 
while from Figure \ref{gkdv-figure2} we see that 
$\tilde{\psi}_2^+ (x_1; 0) > 0$ and $\tilde{\psi}_2^+ (x_2; 0) < 0$. 
We can conclude that a direction of $-1$ should be assigned to each 
of these crossings. 

For the asymptotic crossing as $x \to + \infty$, we observe from 
Figure \ref{gkdv-figure1} that for all $x > 0$ sufficiently large 
we have $\tilde{\psi}_1^+ (x; 0) > 0$, while from 
Figure \ref{gkdv-figure2} we see that for all $x > 0$, 
$\tilde{\psi}_2^+ (x; 0) < 0$. This pairing
places the point $p^+ (x; 0)$ in the second quadrant, from which 
we see that the approach of $p^+ (x; 0)$ to the point $(-1,0)$
will be in the positive (i.e., in the counterclockwise) direction,
giving a contribution of $+1$ to the hyperplane index. Combining 
this with the negative crossings at $x_1$ and $x_2$, 
we conclude that in this case, the hyperplane index along the right shelf
satisfies the relation 
\begin{equation*}
    \ind (\mathpzc{g} (\cdot; 0), \mathpzc{h}^+ (0); [-\infty, + \infty])
    = - 1.
\end{equation*}

According to Theorem \ref{main-theorem}, we can conclude that {\it if} we 
have invariance along the top shelf then the  
count $\mathcal{N}_{\#} ((-\infty, 0])$ of non-positive eigenvalues
of (\ref{gkdv-evalue-problem}) satisfies the relation 
\begin{equation} \label{gkdv-preliminary-conclusion}
\mathcal{N}_{\#} ((-\infty, 0])
\ge \Big| \ind (\mathpzc{g} (\cdot; 0), \mathpzc{h}^+ (0); [-\infty, + \infty]) - \mathfrak{m} \Big|
= |-1 - \mathfrak{m}|. 
\end{equation}

In order to count the number of strictly negative eigenvalues, 
we would like to replace $\mathcal{N}_{\#} ((-\infty, 0])$ with 
$\mathcal{N}_{\#} ((-\infty, 0))$, and we have seen in Section \ref{evans-section}
how this can be accomplished via the Evans function. For the current
example, we have the two cases specified in (\ref{gkdv-evans-sign}). 
First, for $1\le p < 4$, we have $D'' (0) > 0$, from which we see that 
for $\lambda < 0$ sufficiently close to $0$ we must have 
$D (\lambda) > 0$ and consequently (via (\ref{evans-omega1-close})) 
$\tilde{\psi}_1^+ (x; \lambda) > 0$ for $x$ sufficiently large. 
In addition, we've seen that for the 
current application $\tilde{\psi}_2^+ (x; 0) < 0$ for $x > 0$. 
This pairing $\tilde{\psi}_1^+ (x; \lambda) > 0$ and 
$\tilde{\psi}_2^+ (x; 0) < 0$ places $p^+ (x; \lambda)$ in the second quadrant,
indicating that as $\lambda$ increases to $0$ $p (x; \lambda)$
rotates into $(-1,0)$ in the positive (i.e., the counterclockwise) 
direction, thus incrementing the hyperplane index by $+1$. 
As in the discussion of (\ref{detailed1}), this allows us to refine 
(\ref{gkdv-preliminary-conclusion}) to the statement 
\begin{equation*} 
\mathcal{N}_{\#} ((-\infty, 0)) + 1
\ge |1 + \mathfrak{m}|. 
\end{equation*}
Since we may have $\mathfrak{m} = 0$, this relation is consistent with 
spectral stability (i.e., consistent with the count $\mathcal{N}_{\#} ((-\infty, 0)) = 0$). 
Although we cannot conclude spectral stability from 
this calculation, we note that spectral stability is known to hold
in this case. (See p. 50 of \cite{PW1992} for discussion and 
references.)

On the other hand, for $p > 4$, we  have $D''(0) < 0$, with 
everything else as before, and the same
considerations described just above determine that in this case 
the relation (\ref{gkdv-preliminary-conclusion}) can be
refined to 
\begin{equation} \label{gkdv-conclusion2}
\mathcal{N}_{\#} ((-\infty, 0)) 
\ge |1 + \mathfrak{m}|. 
\end{equation}
If the boundary invariant $\mathfrak{m}$ is an even number, as suggested
by Lemma \ref{invariance-lemma} (see also Remark \ref{mathfrak-m-remark}
just below), then we can conclude that there is an unstable eigenvalue 
in this case, and so $\bar{u} (x)$ must be spectrally unstable. 

In summary, we expect $\bar{u} (x)$ to be stable for $1 \le p < 4$ and 
unstable for $p > 4$. This is precisely the conclusion of \cite{PW1992},
obtained there in the following way. For $1 \le p < 4$, we've seen 
that $D (\lambda) > 0$ for $\lambda < 0$ sufficiently close to $0$,
and we also know from Proposition \ref{evans-large-lambda} that 
$D(\lambda) \to +1$ as $\lambda \to - \infty$. This arrangement
is consistent with an absence of real roots of $D(\lambda)$ on 
$(-\infty, 0)$, and so consistent with the case of spectral stability.
(Of course, an even number of eigenvalues is possible, so no positive
conclusion can be reached based on this calculation.)
Likewise, if $p > 4$, then $D''(0) < 0$ and so $D (\lambda) < 0$
for $\lambda < 0$ sufficiently close to $0$. Since we still have 
the limit
$D(\lambda) \to 1$ as $\lambda \to \infty$, this arrangement 
guarantees that $D(\lambda)$ has a least one real root on the interval 
$(-\infty, 0)$, so we certainly have spectral instability. 

\begin{remark} \label{mathfrak-m-remark}
As noted in Lemma \ref{invariance-lemma}, under fairly 
general conditions we have that $\mathfrak{m}$ is an 
even number. For the current application, we can 
verify this rigorously by combining the analysis 
of \cite{PW1992} described just above with our 
analysis of the remaining shelves of the Maslov box. 
\end{remark}

At this point, we have carried out the full analysis required to reach 
our conclusions, but in order to illustrate how the method is working, 
we provide numerically generated depictions of the Maslov box 
for two cases, one stable and one unstable. For these calculations,
we will work with 
\begin{equation*}
    \tilde{\omega}_1^+ (x; \lambda)
    = \eta^- (x; \lambda) \wedge \tilde{\mathcal{V}}^+ (\lambda),
\end{equation*}
where we recall that 
\begin{equation*}
\tilde{\mathcal{V}}^+ (\lambda)
= \kappa(\lambda) \begin{pmatrix}
    1 \\ - \mu_3 (\lambda) \\ - \lambda/\mu_3 (\lambda) 
\end{pmatrix},
\end{equation*}
with $\kappa (\lambda)$ serving as the scaling constant specified
in (\ref{gkdv-scaling}). We have, then,  
\begin{equation} \label{gkdv-tilde-omega1}
    \tilde{\omega}_1^+ (x; \lambda) 
    = \kappa (\lambda) 
    \Big{\{} - \frac{\lambda}{\mu_3 (\lambda)} \eta_1^- (x; \lambda)
    + \mu_3 (\lambda) \eta_2^- (x; \lambda) + \eta_3^- (x; \lambda) 
    \Big{\}}.
\end{equation}

Likewise, we set 
\begin{equation*}
    \tilde{\omega}_2^+ (x; 0)
     = \eta^- (x; \lambda) \wedge \tilde{\mathcal{V}}_M (\lambda),
\end{equation*}
where $\tilde{\mathcal{V}}_M (\lambda)$ is an eigenvector
of the asymptotic matrix $\tilde{\mathcal{A}}_+ (\lambda)$
(from (\ref{tilde-mathcal-A-asymptotic})) associated
with the eigenvalue $-\mu_3 (\lambda)$, namely
\begin{equation*}
    \tilde{\mathcal{V}}_M^+ (\lambda)
    = \begin{pmatrix}
        -s/\mu_3 (\lambda) + \sqrt{s} \\ -\mu_3 (\lambda) + \sqrt{s} \\ 1
    \end{pmatrix},  
\end{equation*}
where no specific normalization is required. It follows that 
\begin{equation} \label{gkdv-tilde-omega2}
    \tilde{\omega}_2^+ (x; \lambda)
    = k_- \Big{\{} \eta_1^- (x; \lambda) 
    - \eta_2^- (x; \lambda) (-\mu_3 (\lambda) + \sqrt{s})
    + \eta_3^- (x; \lambda) (-s/\mu_3 (\lambda) + \sqrt{s})\Big{\}}.
\end{equation}
 
Using (\ref{gkdv-tilde-omega1}) and (\ref{gkdv-tilde-omega2}), we can now
generate spectral curves throughout a fixed Maslov box
by numerically generating $\eta^- (x; \lambda)$ throughout the 
box. As an example of the stable case, we will carry this out for 
$p = 7/2$ and $s = 1/2$, using $[-7, 0] \times [-5, 5]$ as the 
(truncated) Maslov box. (See Figure \ref{kdvbox3-figure}.) We've seen in our
analytic calculation that each crossing on the right shelf 
gives a contribution of $-1$ to the hyperplane index. In 
addition, we've seen that there is an additional contribution 
of $+1$ obtained in the limit as $x \to + \infty$, but this is 
never picked up on any box truncated in the $x$-direction. Finally,
by using the Evans function, we were able to show that for $c$ 
sufficiently large the tracking point $p^+ (c; \lambda)$
rotates in the clockwise direction as $\lambda$ decreases from 
$0$. We see that as $x$ increases toward $c$, $p^+ (x; 0)$
rotates toward $(-1,0)$ in the counterclockwise direction without
ever arriving at $(-1,0)$, and then when $x = c$ and $\lambda$ 
is decreased from $0$, the point $p^+ (c; \lambda)$ 
rotates back in the clockwise direction so that the corner point
at $x = c$ and $\lambda = 0$ does not increment the hyperplane
index. In total, we can conclude that in this case 
$\mathfrak{m} = -2$. In order to have such a value for 
$\mathfrak{m}$, there must be at least one point at which 
invariance is lost in the open box $(-7,0) \times (-5,5)$, 
and since one condition for loss of invariance is 
$\tilde{\omega}_1^+ (x; \lambda) = 0$,
this point must occur along the spectral curve (depicted 
in red in Figure \ref{kdvbox3-figure}). Numerically 
searching along this curve for zeros of $\tilde{\omega}_2^+ (x; \lambda)$,
we find that invariance seems to be lost at about $x = .348$
and $\lambda = -1.706$ (working with increments $.001$ in both 
$x$ and $\lambda$). At first glance, it may seem that the turnaround
point of the spectral curve at $(\lambda, x) \cong (-3.4,0)$
is a likely candidate for the point at which invariance is lost,
but this depends entirely on the choice of 
$\tilde{\omega}_2^+ (x; \lambda)$. In particular, as we've 
seen in the proof of Lemma \ref{invariance-lemma}, the 
point of invariance can always be changed by changing 
the choice of $\tilde{\omega}_2^+ (x; \lambda)$. 

\begin{figure}[ht] 
\begin{center}\includegraphics[%
  width=11cm,
  height=8.2cm]{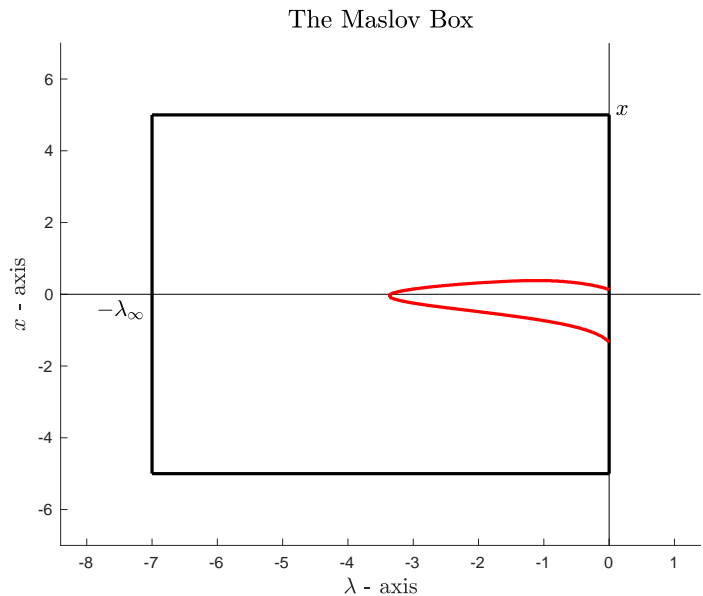}\end{center}
\caption{Maslov Box associated with (\ref{gkdv-wave-equation}) for 
$s = 1/2$ and $p = 7/2$. \label{kdvbox3-figure}}
\end{figure}

For the unstable case, we'll again take $s = 1/2$, this time 
with $p = 9/2$. Proceeding similarly as in the previous case, 
we numerically generate the spectral curves for the Maslov
box $[-7, 0] \times [-5, 5]$. (See Figure \ref{kdvbox2b-figure}.)  
In this case, we see a compressed spectral curve in the upper 
right corner of the Maslov box, and in order to clarify the behavior 
there, we include a second Maslov box on $[-1, 0] \times [-5, 5]$.
(See Figure \ref{kdvbox2a-figure}.) As in the previous case with $p = 7/2$,
the crossings along the right shelf each contribute $-1$ to the 
hyperplane index. As $x$ increases to a sufficiently large value, 
the tracking point $p^+ (x; 0)$ moves toward 
$(-1,0)$ in the counterclockwise direction. In addition, we have 
seen from our analysis of the Evans function that for $c$ sufficiently 
large, $p^+ (c; \lambda)$ rotates away from $(-1, 0)$ in the 
counterclockwise direction as $\lambda$ decreases from 0. 
In this way, we see that for $c$ sufficiently large, 
the hyperplane index increments by $+1$ 
as $p^+ (x; \lambda)$ traverses the corner at $x = c$ 
and $\lambda = 0$. Finally, there is a contribution of $-1$ 
from the eigenvalue at $\lambda = - .0959$ (with an increment
in the calculation of $.0001$). As with the case with $p = 7/2$,
we can conclude that $\mathfrak{m} = -2$. Once again, we see 
that invariance is lost for at least one point, and computing
numerically we approximate this point as $x = - .286$ and 
$\lambda = - 4.563$ (with a step size of $.001$ in both 
$x$ and $\lambda$). 

\begin{figure}[ht]  
\begin{center}\includegraphics[%
  width=11cm,
  height=8.2cm]{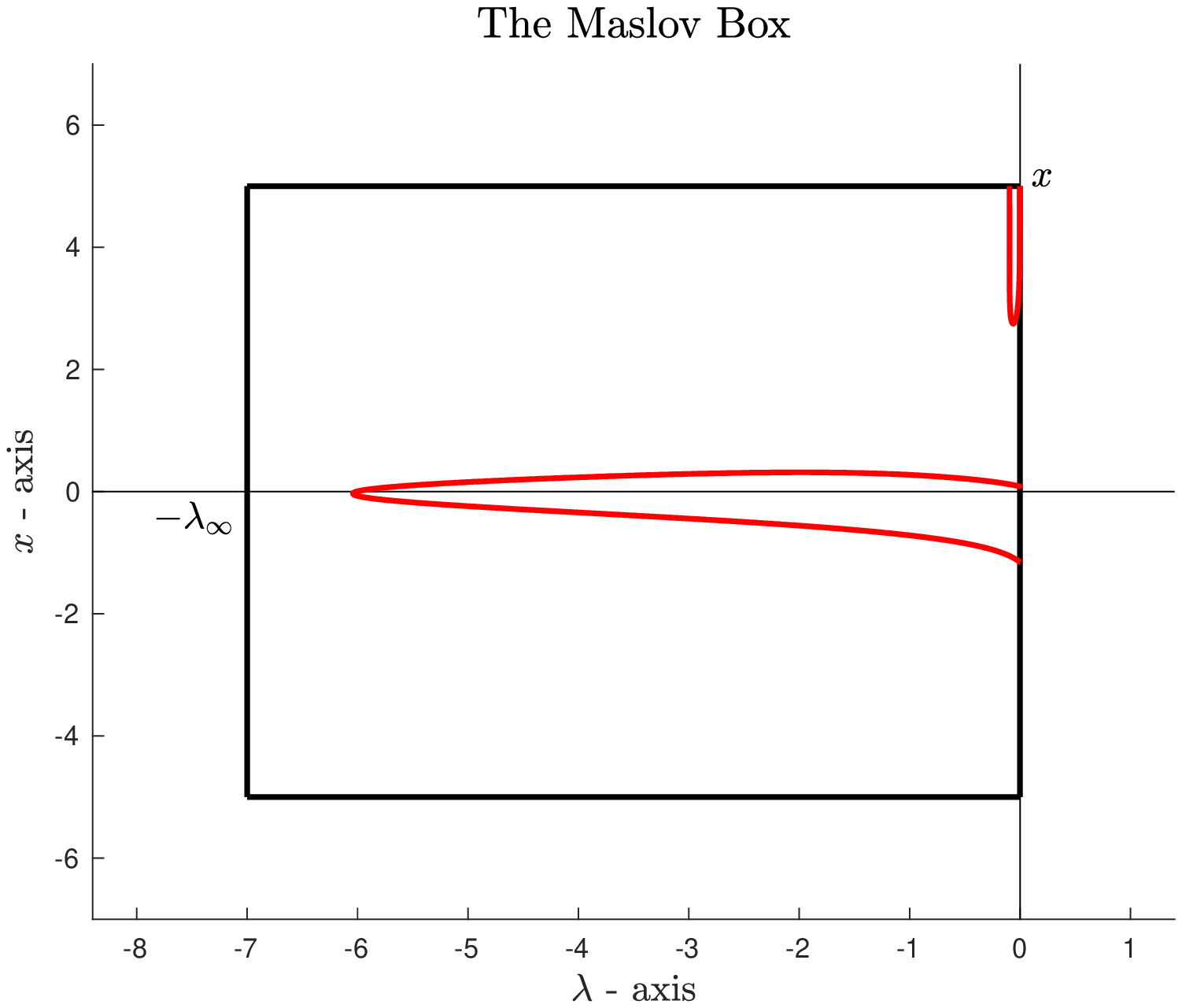}\end{center}
\caption{Maslov Box associated with (\ref{gkdv-wave-equation}) for 
$s = 1/2$ and $p = 9/2$. \label{kdvbox2b-figure}}
\end{figure}

\begin{figure}[ht]  
\begin{center}\includegraphics[%
  width=12cm,
  height=8.2cm]{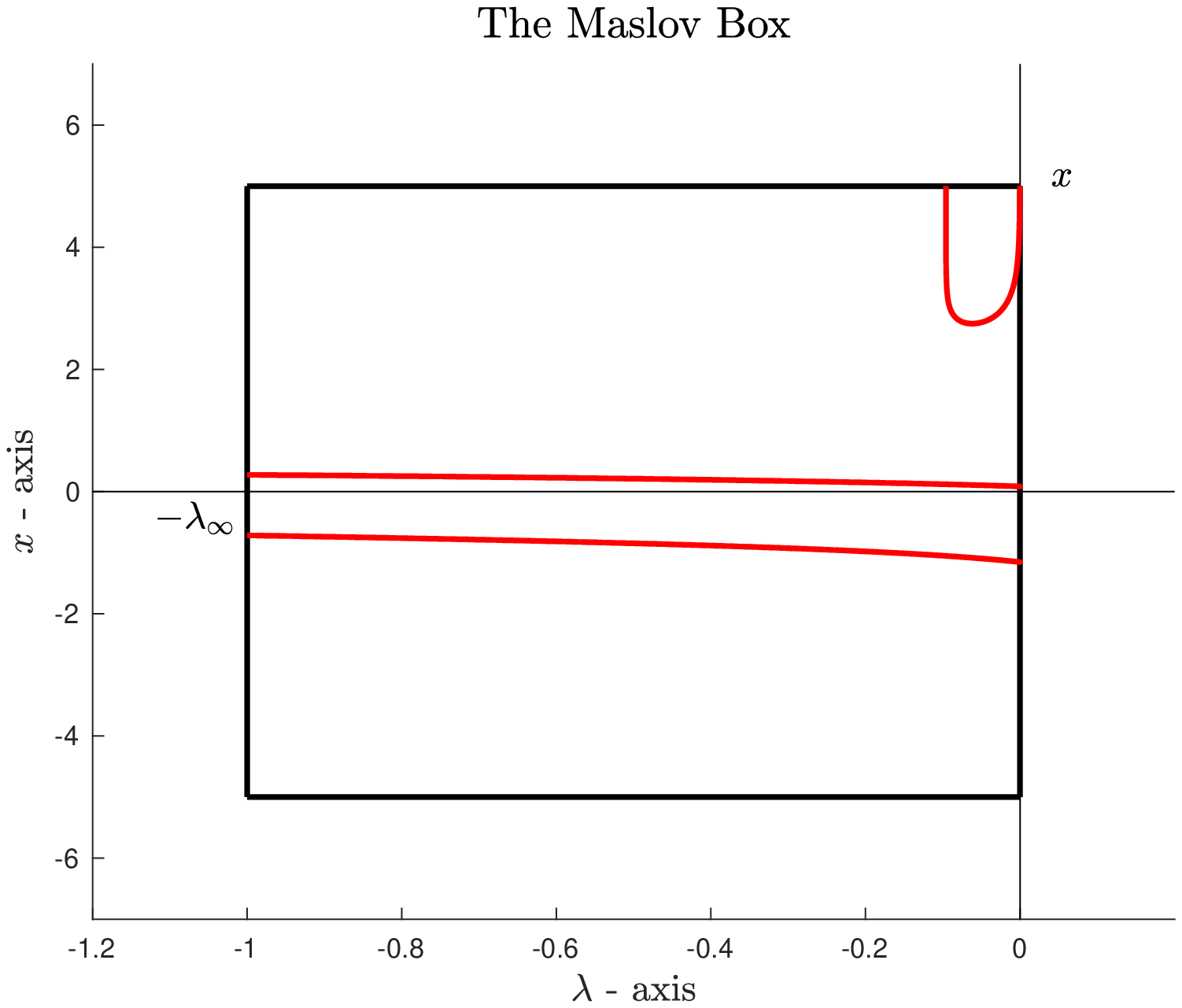}\end{center}
\caption{Additional detail for the Maslov Box associated with (\ref{gkdv-wave-equation}) for 
$s = 1/2$ and $p = 9/2$. \label{kdvbox2a-figure}}
\end{figure}

\subsection{The Korteweg-de Vries-Burgers Equation}
\label{kdvb-section}

For our second application, we consider the KdV-Burgers equation
\begin{equation} \label{kdvb-equation}
u_t + u u_x = u_{xx} + \nu u_{xxx},    
\end{equation}
where $\nu \in \mathbb{R}$ is fixed. It's known
(see, e.g., \cite{PSW1993})
that there exist stationary solutions 
$\bar{u} (x)$ for (\ref{kdvb-equation}), which 
satisfy the asymptotic conditions 
\begin{equation} \label{kdvp-endstates}
    \lim_{x \to \pm \infty} \bar{u} (x) 
    = \mp 1.
\end{equation}
Such solutions satisfy the ODE
\begin{equation} \label{kdvb-wave-equation}
    \nu \bar{u}''' + \bar{u}''
    - \bar{u} \bar{u}' = 0,
\end{equation}
which we can integrate to 
\begin{equation} \label{kdvb-wave-equation-integrated}
    \nu \bar{u}'' + \bar{u}'
    - \frac{1}{2} (\bar{u}^2 - 1) = 0.
\end{equation}
For $|\nu| \le 1/4$ these solutions 
are known to be monotonic, while for $|\nu| > 1/4$ they 
are known to oscillate as $x$ tends to 
$+ \infty$. Such a wave, generated numerically, 
is depicted in Figure \ref{kdvb-wave-figure} for $\nu = 10$. For 
specificity, we will take $\nu > 0$ throughout 
our calculations, noting that the case $\nu < 0$
can be addressed similarly. 

\begin{figure}[ht] 
\begin{center}\includegraphics[%
  width=11cm,
  height=8.2cm]{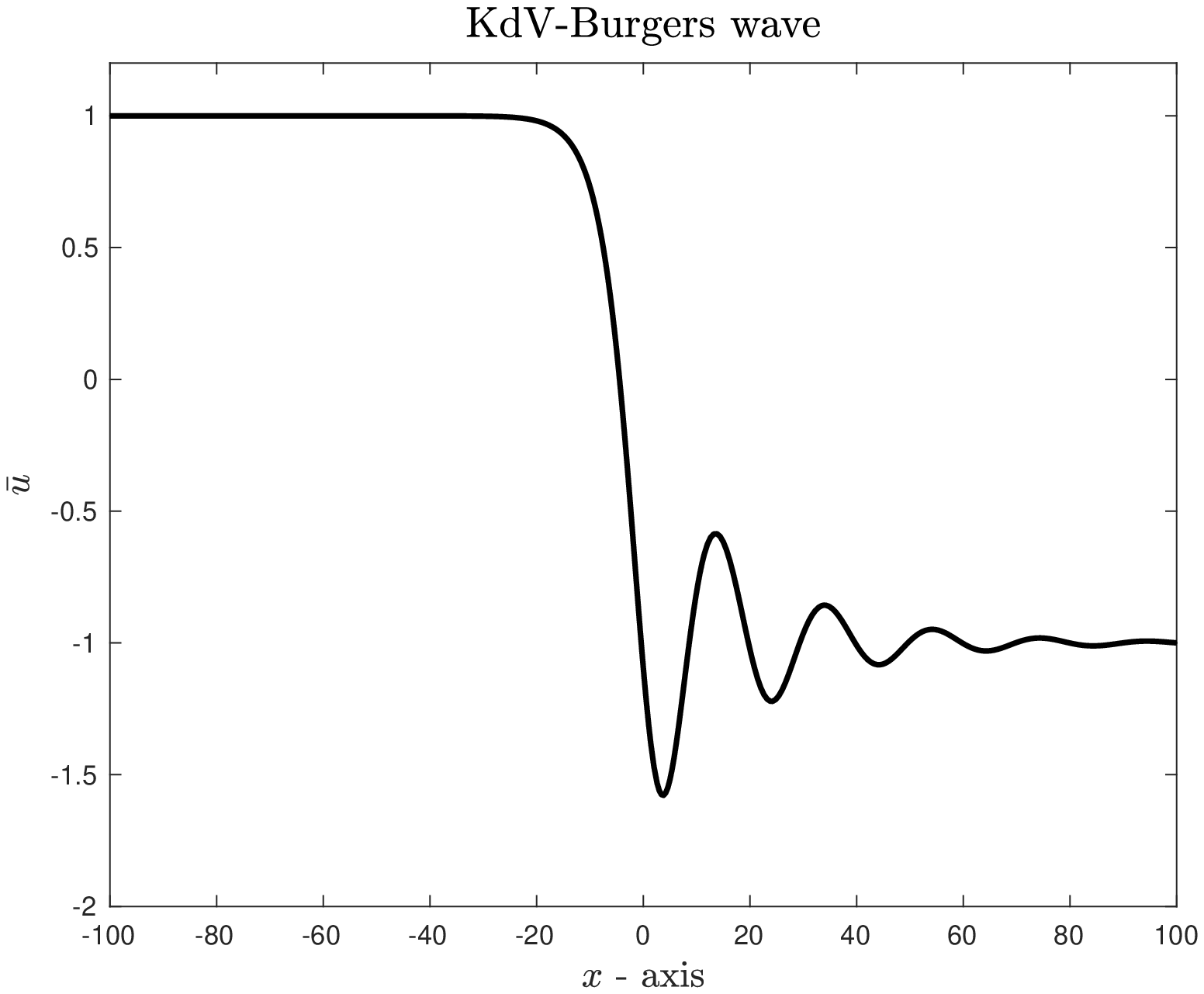}\end{center}
\caption{Stationary solution $\bar{u} (x)$ for (\ref{kdvb-equation}) with $\nu = 10$. \label{kdvb-wave-figure}}
\end{figure}

\begin{remark} \label{PSW1993-remark}
    In Theorem 1.1 of \cite{PSW1993}, the authors consider a more general 
    equation than (\ref{kdvb-wave-equation-integrated}), and with a different
    scaling, namely 
    \begin{equation} \label{PSW1993-equation}
        - c \phi + \frac{1}{p+1} \phi^{p+1} + \phi'' = \alpha \phi',
    \end{equation}
    where $p \ge 1$ and $c$ and $\alpha$ are taken to be positive 
    constants. Our equation (\ref{kdvb-wave-equation-integrated}) can 
    be obtained from (\ref{PSW1993-equation}) in the case $\nu > 0$
    by setting $c = 1$, $\alpha = 1/\sqrt{\nu}$, and 
    \begin{equation*}
        \bar{u} (x) = - \phi (-x/\sqrt{\nu}) + 1,
    \end{equation*}
    and for $\nu < 0$ by setting $c = 1$, $\alpha = 1/\sqrt{-\nu}$, and 
     \begin{equation*}
        \bar{u} (x) = \phi (x/\sqrt{-\nu}) - 1.
    \end{equation*}
\end{remark}

If we linearize (\ref{kdvb-equation}) about the wave 
$\bar{u} (x)$, we obtain the eigenvalue problem 
\begin{equation} \label{kdvb-evp}
    L \phi
    = - \nu \phi''' - \phi'' + (\bar{u} (x) \phi)'
    = \lambda \phi,
\end{equation}
where the sign has again been chosen so that eigenvalues 
with a negative real part signify spectral instability. 
According to (\ref{kdvp-endstates}),
the associated asymptotic problems are 
\begin{equation} \label{kdvb-asymptotic-problem}
- \nu \phi''' - \phi'' \pm \phi' = \lambda \phi.    
\end{equation}
According to \cite{He1981, KP2013}, we can understand the 
essential spectrum of $L$ by looking for solutions 
to (\ref{kdvb-asymptotic-problem}) of the form
$\phi (x; k) = e^{i k x}$. We find 
\begin{equation*}
    \sigma_{\ess} (L)
    = \{\lambda \in \mathbb{C}: \lambda = i \nu k^3 + k^2 \pm ik, \, k \in \mathbb{R}\},
\end{equation*}
from which it's clear that the essential spectrum of $L$ is confined
to the right complex half-plane along with $\lambda = 0$.

In order to express (\ref{kdvb-evp}) in our standard form 
(\ref{nonhammy}), we set $y_1 = \phi$, $y_2 = \phi'$, and 
$y_3 = \nu \phi''$, so that 
\begin{equation} \label{kdvb-system}
    y' = A (x; \lambda) y,
    \quad
    A (x; \lambda) = 
    \begin{pmatrix}
    0 & 1 & 0 \\
    0 & 0 & 1/\nu \\
    \bar{u}'(x) - \lambda & \bar{u} (x) & - 1/\nu
    \end{pmatrix},
\end{equation}
with the corresponding asymptotic matrices
\begin{equation*}
    A_{\pm} (\lambda)
    := \lim_{x \to \pm \infty} A (x; \lambda)
    = \begin{pmatrix}
    0 & 1 & 0 \\
    0 & 0 & 1/\nu \\
    -\lambda & \mp 1 & - 1/\nu
    \end{pmatrix}.
\end{equation*}
The eigenvalues of $A_{\pm} (\lambda)$ are easily seen to be roots
$\mu$ of the function
\begin{equation} \label{kdvb-h}
    h_{\pm} (\mu; \lambda)
    = \mu^3 + \frac{1}{\nu} \mu^2 \pm \frac{1}{\nu} \mu + \frac{\lambda}{\nu}.
\end{equation}
For $\lambda = 0$, the roots of $h_{\pm} (\mu; 0)$ are readily computed,
\begin{equation*}
    \mu = 0, \quad
    \frac{1}{2 \nu} (-1 -  \sqrt{1 \mp 4 \nu}), \quad
    \frac{1}{2 \nu} (-1 +  \sqrt{1 \mp 4 \nu}).
\end{equation*}
Recalling that we're taking $\nu > 0$, we see that for $h_- (\mu; 0)$
these roots are naturally ordered as 
\begin{equation*}
\mu_1^- (0) = \frac{1}{2 \nu} (-1 -  \sqrt{1 + 4 \nu}) < 0, \quad
\mu_2^- (0) = 0, \quad 
\mu_3^- (0) = \frac{1}{2 \nu} (-1 +  \sqrt{1 + 4 \nu}) > 0.
\end{equation*}
As $\lambda$ decreases from $0$, the graph of the cubic function
$h_- (\mu; 0)$  will lower so that the values $\mu_1^- (\lambda)$
and $\mu_2^- (\lambda)$ will approach one another, while $\mu_3^- (\lambda)$
will increase. As $\lambda$ continues to 
decrease, the roots $\mu_1^- (\lambda)$ and $\mu_2^- (\lambda)$ will
coalesce at some value $\lambda = \lambda_c$ into a complex conjugate pair.  
For $\lambda$ above this coalescence value (i.e., for $\lambda \in (\lambda_c, 0]$) 
we can associate an eigenvector $v_j^- (\lambda)$ with each $\mu_j^- (\lambda)$, $j \in \{1, 2, 3\}$,
\begin{equation} \label{kdvb-asymptotic-eigenvectors-left}
v_j^- (\lambda) = (1 \,\, \mu_j^- (\lambda) \,\, \nu \mu_j^- (\lambda)^2)^T,
\quad j = 1, 2, 3.
\end{equation}
The eigenvector $v_3^- (\lambda)$ has the same form 
for all $\lambda \le 0$, and up to a scaling factor is 
the eigenvector $v^- (\lambda)$ from Proposition
\ref{ode-proposition1}. 

Proceeding similarly for $h_+ (\mu; \lambda)$, we first observe that in 
this case $\re \sqrt{1-4\nu} < 1$, so we have the ordering 
\begin{equation} \label{kdv-right-eigenvalues}
\mu_1^+ (0) = \frac{1}{2 \nu} (-1 -  \sqrt{1 - 4 \nu}), \quad
\mu_2^+ (0) = \frac{1}{2 \nu} (-1 +  \sqrt{1 - 4 \nu}), \quad
\mu_3^+ (0) = 0,
\end{equation}
where $\mu_1^+ (0)$ and $\mu_2^+ (0)$ both have negative real part. 
If $\nu > 1/4$, then $\mu_1^+ (0)$ and $\mu_2^+ (0)$ will comprise
a complex conjugate pair, while if $0 < \nu < 1/4$, then 
$\mu_1^+ (\lambda)$ and $\mu_2^+ (\lambda)$
will coalesce into such a pair as $\lambda$ decreases. In either 
case, the value $\mu_3^+ (\lambda)$ will remain real and increasing
as $\lambda$ decreases from 0. (The borderline case $\nu = 1/4$
requires additional work and won't be considered in the current 
analysis.)   

We next consider $\tilde{A} (x; \lambda)$ (from (\ref{tildeA})), 
which in this case is 
\begin{equation} \label{kdvb-tildeA}
    \tilde{A} (x; \lambda)
    = \begin{pmatrix}
    1/\nu & 1/\nu & 0 \\
    \bar{u} (x) & 0 & 1 \\
    \lambda - \bar{u}' (x) & 0 & 0
    \end{pmatrix},
\end{equation}
with the corresponding asymptotic matrix 
\begin{equation} \label{kdvb-Atilde-asymptotic}
    \tilde{A}_+ (\lambda)
    = \lim_{x \to + \infty} \tilde{A} (x; \lambda)
    = \begin{pmatrix}
    1/\nu & 1/\nu & 0 \\
    -1 & 0 & 1 \\
    \lambda & 0 & 0
    \end{pmatrix}.
\end{equation}
The unique left-most eigenvalue of $\tilde{A}_+ (\lambda)$
is $\tilde{\mu}_1^+ (\lambda) = - \mu_3^+ (\lambda)$, with 
associated eigenvector 
\begin{equation} \label{kdvb-tildev-alternative}
\tilde{\mathcal{V}}_1^+ (\lambda)
= \begin{pmatrix}
1 \\ - \nu \mu_3^+ (\lambda)-1 \\ \nu \mu_3^+ (\lambda)^2 + \mu_3^+ (\lambda) + 1
\end{pmatrix},
\end{equation}
which up to a scaling factor is $\tilde{\mathcal{V}}^+ (\lambda)$ from 
Remark \ref{eigenvector-remark}. 

At this point, we have the pieces in place to verify that for any 
choice of $\lambda_{\infty} > 0$ the hyperplane index along the bottom shelf for the interval 
$[- \lambda_{\infty}, 0]$ gives no contribution. Namely, using the development
of Section \ref{bottom-and-left}, we can verify this
assertion if we can show that for all $\lambda \le 0$
\begin{equation} \label{kvdb-sought}
    v^- (\lambda) \wedge \tilde{\mathcal{V}}^+ (\lambda) 
    \ne 0.
\end{equation}
(In contrast with the previous case, since $A_- (\lambda) \ne A_+ (\lambda)$
here, relation (\ref{kvdb-sought}) isn't immediate.) 
Using our expressions just above for $v^- (\lambda)$ and 
$\tilde{\mathcal{V}}^+ (\lambda)$, with $\kappa (\lambda)$
and $\tilde{\kappa} (\lambda)$ serving respectively as scaling 
factors for $v^- (\lambda)$ and $\tilde{\mathcal{V}}^+ (\lambda)$, 
we can compute this wedge product to be 
\begin{equation*}
    \begin{aligned}
    v^- (\lambda) \wedge \tilde{\mathcal{V}}^+ (\lambda)
    &= \kappa (\lambda) \tilde{\kappa} (\lambda)
    \begin{pmatrix}
    1 \\ \mu_3^- (\lambda) \\ \nu \mu_3^- (\lambda)^2
    \end{pmatrix}
    \wedge
    \begin{pmatrix}
    1 \\ - \nu \mu_3^+ (\lambda)-1 \\ \nu \mu_3^+ (\lambda)^2 + \mu_3^+ (\lambda) + 1
    \end{pmatrix} \\
    &= \kappa (\lambda) \tilde{\kappa} (\lambda)
    \Big{\{} \nu \mu_3^+ (\lambda)^2 + \mu_3^+ (\lambda) + 1 
    - \mu_3^- (\lambda) ( - \nu \mu_3^+ (\lambda)-1) + \nu \mu_3^- (\lambda)^2 \Big{\}} \\
    &= \kappa (\lambda) \tilde{\kappa} (\lambda)
    \Big{\{} \nu \mu_3^+ (\lambda)^2 + \mu_3^+ (\lambda) + 1  
    + \nu \mu_3^- (\lambda) \mu_3^+ (\lambda) + \mu_3^- (\lambda) + \nu \mu_3^- (\lambda)^2 \Big{\}}.
    \end{aligned}
\end{equation*}
Recalling that $\mu_3^- (\lambda) > 0$ for all $\lambda \le 0$ and $\mu_3^+ (\lambda) \ge 0$
for all $\lambda \le 0$, we see that $v^- (\lambda) \wedge \tilde{\mathcal{V}}^+ (\lambda) > 0$ 
for all $\lambda \le 0$, verifying (\ref{kvdb-sought}). 

Before moving on, we observe that since $A_- (\lambda) \ne A_+ (\lambda)$
in this case, we cannot conclude immediately from Proposition \ref{left-shelf-crossings} 
that there are no crossings along the left shelf. Nonetheless, such a conclusion 
can be drawn from an argument
based on energy methods. Since that argument has a different flavor than the 
current considerations, it has been placed in an appendix. 

Next, we turn to the evaluation of $D(0)$ and $D' (0)$ 
(it will turn out that $D'' (0)$ isn't required). 
To this end, we first observe that since $A_- (\lambda)$ only 
has one positive eigenvalue, the solution $\eta^- (x; \lambda)$
described in Proposition \ref{ode-proposition1} is (up to a multiplicative
constant) the only solution of (\ref{kdvb-system})
that decays as $x$ tends to $-\infty$. In this way, we see that 
that there exists some constant $k_-$ so that 
\begin{equation} \label{etamat0}
\eta^- (x; 0) = k_-    
\begin{pmatrix}
\bar{u}' (x) \\
\bar{u}'' (x) \\
\nu \bar{u}''' (x)
\end{pmatrix}.
\end{equation}
According to Proposition \ref{ode-proposition1}, we can write 
\begin{equation*}
    \lim_{x \to -\infty} e^{- \mu_3^- (0) x} 
    k_-    
\begin{pmatrix}
\bar{u}' (x) \\
\bar{u}'' (x) \\
\nu \bar{u}''' (x)
\end{pmatrix}
=   \lim_{x \to -\infty} e^{- \mu_3^- (0) x} \eta^- (x; 0)
= v^- (\lambda).
\end{equation*}
Recalling that the first component of $v^- (\lambda)$ is positive 
and $\bar{u}' (x) < 0$ for $x \ll 0$, we conclude that 
$k_- < 0$. 

In addition, we need to understand the nature of $\tilde{\mathcal{Y}}^+ (x; 0)$,
which solves the ODE 
\begin{equation} \label{kdvb-tilde-system0}
\tilde{\mathcal{Y}}^{+\,\prime} 
= \tilde{A} (x; 0) \tilde{\mathcal{Y}}^+,
\end{equation}
where $\tilde{A} (x; \lambda)$ as as in 
(\ref{kdvb-Atilde-asymptotic}). 
Proceeding similarly as in (\ref{y1tilde00}) and (\ref{y1tilde0}),
we find that the first component of $\tilde{\mathcal{Y}}^+ (x; 0)$
satisfies the equation
\begin{equation} \label{kdvb-y1tilde}
    \nu \tilde{\mathcal{Y}}_1^{+\, \prime \prime \prime} - \tilde{\mathcal{Y}}_1^{+\,\prime \prime}
    - \bar{u} (x) \tilde{\mathcal{Y}}_1^{+\,\prime} = 0.
\end{equation}
It's clear that one solution of this equation is $\tilde{\mathcal{Y}}_1^+ (x; 0) \equiv 1$,
and upon substitution of this component into the full system (\ref{kdvb-tilde-system0})
we see that one family of solutions is 
\begin{equation*}
    \tilde{\mathcal{Y}}^+ (x; 0)
    = k_+ \begin{pmatrix}
    1 \\
    -1 \\
    - \bar{u} (x)
    \end{pmatrix},
\end{equation*}
for some constant $k_+$. 
Moreover, since all other (linearly independent) solutions to (\ref{kdvb-tilde-system0}) grow at 
exponential rate as $x \to + \infty$, this must indeed be the solution 
$\tilde{\mathcal{Y}}^+ (x; 0)$ we're seeking. Recalling from Proposition 
\ref{ode-proposition2} that 
\begin{equation*}
    \lim_{x \to \infty} e^{- \mu_3^+ (0) x} \tilde{\mathcal{Y}}^+ (x; 0)
    = \tilde{\mathcal{V}}^+ (0),
\end{equation*}
we see that we must have $k_+ > 0$. We can now directly compute 
\begin{equation*}
    \begin{aligned}
    D(0)
    &= \eta^- (x;0) \wedge \tilde{\mathcal{Y}}^+ (x; 0)
    = k_- k_+ \begin{pmatrix}
\bar{u}' (x) \\
\bar{u}'' (x) \\
\nu \bar{u}''' (x)
\end{pmatrix}
\wedge
\begin{pmatrix}
1 \\
-1 \\
- \bar{u}(x)
\end{pmatrix} \\
&=  k_- k_+ \Big{\{} \nu \bar{u}'''(x) + \bar{u}''(x) - \bar{u} (x) \bar{u}'(x) \Big{\}} = 0,
    \end{aligned}
\end{equation*}
where the final equality holds by the specification of $\bar{u} (x)$ 
as a stationary solution to (\ref{kdvb-equation}) (i.e., from 
(\ref{kdvb-wave-equation-integrated})). 

Using 
\begin{equation*}
    A_{\lambda} (x; 0)
    = \begin{pmatrix}
    0 & 0 & 0 \\
    0 & 0 & 0 \\
    -1 & 0 & 0
    \end{pmatrix},
\end{equation*}
along with the relations above for $\eta^- (x; 0)$ and 
$\tilde{\mathcal{Y}}^+ (x; 0)$, we find that 
\begin{equation*}
\Big(A_{\lambda} (x; 0) \eta^- (x; 0)\Big) \wedge \tilde{\mathcal{Y}}^+ (x; 0)
= - k_- k_+ \bar{u}'(x),
\end{equation*}
from which we conclude (using the expression for $D'(0)$ in Proposition \ref{evans-proposition-derivatives})
\begin{equation*}
    D' (0) = - k_- k_+ \int_{-\infty}^{+\infty} \bar{u}' (x) dx
    = - k_- k_+ (u_+ - u_-) = 2 k_- k_+ < 0.
\end{equation*}

We now compute the hyperplane index along the right shelf.
First, we will detect crossing points with 
the function 
\begin{equation*}
\begin{aligned}
    \tilde{\omega}_1^+ (x; 0)
    &= \eta^- (x; 0) \wedge \tilde{\mathcal{V}}^+ (0)
    = k_- \tilde{\kappa} (0)   
\begin{pmatrix}
\bar{u}' (x) \\
\bar{u}'' (x) \\
\nu \bar{u}''' (x)
\end{pmatrix}
\wedge
\begin{pmatrix}
1 \\ - \nu \mu_3^+ (0)-1 \\ \nu \mu_3^+ (0)^2 + \mu_3^+ (0) + 1
\end{pmatrix} \\
&=  k_-  \tilde{\kappa} (0)   
\begin{pmatrix}
\bar{u}' (x) \\
\bar{u}'' (x) \\
\nu \bar{u}''' (x)
\end{pmatrix}
\wedge
\begin{pmatrix}
1 \\ -1 \\ 1
\end{pmatrix} 
= k_- \tilde{\kappa} (0) \Big( \nu \bar{u}'''(x) + \bar{u}''(x) + \bar{u}'(x) \Big).
\end{aligned}
\end{equation*}
We recall that $\nu \bar{u}'''(x) + \bar{u}''(x) = \bar{u} (x) \bar{u}'(x)$,
so that 
\begin{equation} \label{kdvb-tilde-omega1}
  \tilde{\omega}_1^+ (x; 0)
  =  k_-  \tilde{\kappa} (0)   \bar{u}'(x) (\bar{u}(x) + 1). 
\end{equation}

For $0 < \nu < 1/4$, in which case $\bar{u} (x)$ decreases 
monotonically from $1$ to $-1$, there are no crossing points 
$x_* \in \mathbb{R}$ (though there is an asymptotic crossing point
at $+ \infty$). More interesting, for $\nu > 1/4$, oscillations 
lead to crossings at each critical point of $\bar{u} (x)$ and 
also at each value $x_* \in \mathbb{R}$ so that $\bar{u} (x_*) = -1$. 

In order to compute the hyperplane index along the right shelf, 
we also need to specify $\tilde{\omega}_2^+ (x; 0)$, and for this 
we have considerable flexibility, particularly in our choice of 
the constant matrix $M$. For reasons that will become clear 
just below, we will find it convenient to take 
\begin{equation} \label{kdvb-tilde-omega2-matrix}
    M = \begin{pmatrix}
    1 & 0 & 0 \\
    1 & 1 & 1 \\
    0 & 0 & 1
    \end{pmatrix}.
\end{equation}
We now set 
\begin{equation*}
    \tilde{\omega}_2^+ (x; 0)
     = \eta_- (x; 0) \wedge \tilde{\mathcal{V}}_M (0),
\end{equation*}
where $\tilde{\mathcal{V}}_M (0)$ is an eigenvector
of the asymptotic matrix $\tilde{\mathcal{A}}_+ (0)$
associated with the eigenvalue $-\mu_3 (0)$, where  
$\tilde{\mathcal{A}}_+ (0)$ is computed via 
(\ref{tilde-mathcal-A-asymptotic})) from the matrix
\begin{equation*}
    \mathcal{A}_+ (0)
    = M A_+ (0) M^{-1}
    = \begin{pmatrix}
    -1 & 1 & -1 \\
    0 & 0 & 0 \\
    1 & -1 & 1-1/\nu
    \end{pmatrix}.
\end{equation*}
We find 
\begin{equation*}
    \tilde{\mathcal{V}}_M^+ (0)
    = - \begin{pmatrix}
        - \mu_3^+ (0) \\ -\mu_3^+ (0) - 1/\nu \\ \mu_3^+ (0)^2 - (1-1/\nu) \mu_3^+ (0) 
    \end{pmatrix}
    = \begin{pmatrix}
        0 \\ 1/\nu \\ 0
    \end{pmatrix},
\end{equation*}
from which it follows that 
\begin{equation} \label{kdvb-tilde-omega20}
    \tilde{\omega}_2^+ (x; 0)
    = k_- \begin{pmatrix}
        \bar{u}' (x) \\ \bar{u}'' (x) \\ \bar{u}'''(x)
    \end{pmatrix}
    \wedge 
    \begin{pmatrix}
        0 \\ 1/\nu \\ 0
    \end{pmatrix}
    = - (k_-/\nu) \bar{u}''(x).  
\end{equation}

For the ensuing discussion, it will be necessary to understand 
$\bar{u}'' (x)$ for arbitrarily large values of $x$. For this, 
we first observe that for the eigenvalues $\mu_1^+ (0)$
and $\mu_2^+ (0)$ from (\ref{kdv-right-eigenvalues}) the associated
eigenvectors can be chosen to be
\begin{equation} \label{kdv-right-eigenvectors}
    v_i^+ (0) =
    \begin{pmatrix}
    1 \\ \mu_i^+ (0) \\ \nu \mu_i^+ (0)^2
    \end{pmatrix}, \quad i = 1, 2.
\end{equation}
For $0 < \nu < 1/4$, the eigenvalues $\mu_1^+ (0)$
and $\mu_2^+ (0)$ are real and distinct, 
satisfying $\mu_1^+ (0) < \mu_2^+ (0) < 0$, and we can readily 
construct individual solutions 
\begin{equation*}
    y_i^+ (x; 0) = e^{\mu_i^+ (0) x} (v_i^+ (0) + E_i^+ (x; 0)),
    \quad i = 1, 2,
\end{equation*}
where $E_i^+ (x; 0)$ decays at exponential rate in $x$ as $x$ tends
to $+\infty$. Since $\eta^- (x; 0)$ solves (\ref{kdvb-system})
and decays at exponential rate as $x$ tends to $+\infty$, and 
additionally since $\mu_3^+ (0) = 0$, there must exist constants
$C_1$ and $C_2$ so that 
\begin{equation} \label{kdvb-upp1}
    \eta^- (x; 0) = k_- \begin{pmatrix}
    \bar{u}' (x) \\ \bar{u}'' (x) \\ \nu \bar{u}'' (x)
    \end{pmatrix}
    = C_1 e^{\mu_1^+ (0) x} (v_1^+ (0) + E_1^+ (x; 0))
    + C_2 e^{\mu_2^+ (0) x} (v_2^+ (0) + E_2^+ (x; 0)).
\end{equation}
In addition, upon integration of the first component on $(x, \infty)$, we obtain 
the relation 
\begin{equation} \label{kdvb-ratio1}
    k_- (\bar{u} (x) + 1)
    = C_1 e^{\mu_1^+ (0) x} (\frac{1}{\mu_1^+ (0)} + \mathbf{O} (e^{-\alpha |x|}))
    + C_2 e^{\mu_2^+ (0) x} (\frac{1}{\mu_2^+ (0)} + \mathbf{O} (e^{-\alpha |x|})),
\end{equation}
for some fixed $\alpha > 0$. Combining these observations, we can 
conclude that for $C_2 = 0$ we have the limits
\begin{equation} \label{kdvb-ratio2}
    \lim_{x \to + \infty} \frac{\bar{u}''(x)}{\bar{u}' (x)}
    = \mu_1^+ (0); \quad
     \lim_{x \to + \infty} \frac{\bar{u}'(x)}{\bar{u} (x) + 1}
    = \mu_1^+ (0),
\end{equation}
while for $C_2 \ne 0$ the same relations hold with $\mu_1^+ (0)$
replaced by $\mu_2^+ (0)$. 

We are now in a position to compute the hyperplane index on the right shelf
for $0 < \nu < 1/4$. We have already seen that there are no crossing
points $x_* \in \mathbb{R}$, so we only need to understand the nature of 
the asymptotic crossing point as $x$ tends to $+ \infty$. For this, 
we work with the scaled map
\begin{equation*}
    \tilde{\psi}_1^+ (x; 0)
    = \frac{\tilde{\omega}_1^+ (x; 0)}{|\eta^- (x; 0)| |\tilde{\mathcal{V}}^+ (0)|},
\end{equation*}
which according to (\ref{psi1-tilde-plus-minus-defined}) has the 
well-defined asymptotic limit
\begin{equation*}
    \tilde{\psi}_1^{+,-} (0) 
    = \lim_{x \to -\infty} \tilde{\psi}_1^+ (x; 0)
    = \frac{v^- (0) \wedge \tilde{\mathcal{V}}^+ (0)}{|v^- (0)| |\tilde{\mathcal{V}}^+ (0)|}
    > 0,
\end{equation*}
where the final inequality follows from our verification of (\ref{kvdb-sought}). 
From (\ref{kdvb-upp1}), we see that 
\begin{equation*}
    \lim_{x \to + \infty} \frac{\eta^- (x; \lambda)}{|\eta^- (x; \lambda)|}
    = \begin{cases}
        v_2^+ (0)/|v_2^+ (0)| & C_2 \ne 0 \\
        v_1^+ (0)/|v_1^+ (0)| & C_2 = 0. 
    \end{cases}
\end{equation*}
In either case, 
\begin{equation*}
v_i^+ (0) \wedge \tilde{\mathcal{V}}^+ (0)
= \tilde{\kappa} (0) \begin{pmatrix}
    1 \\ \mu_i^+ (0) \\ \nu \mu_i^+ (0)^2
    \end{pmatrix}
\wedge
\begin{pmatrix}
    1 \\ -1 \\1
\end{pmatrix}
= \tilde{\kappa} (0) (\mu \mu_i^+ (0)^2 + \mu_i^+ (0) + 1) = 0,
\end{equation*}
where the final equality holds because $\mu_1^+ (0)$
and $\mu_2^+ (0)$ are the non-zero roots of 
$h_+ (\mu;0)$. In this way, we see that a crossing 
must be associated with the asymptotic limit as 
$x$ tends to $+ \infty$. In order to understand the 
sign associated with this crossing, we consider directly
the signs of $\tilde{\psi}_1^+ (x; 0)$ and 
$\tilde{\psi}_2^+ (x; 0)$ as $x$ tends to $+ \infty$. 

First, since $k_- < 0$ and $\bar{u}' (x) < 0$
for all $x \in \mathbb{R}$, we see from (\ref{kdvb-tilde-omega1}) that in 
this case $\tilde{\psi}_1^+ (x;0) > 0$ for all $x \in \mathbb{R}$. In 
addition, we see from (\ref{kdvb-ratio1}) and (\ref{kdvb-ratio2}) that 
$\bar{u}'' (x) > 0$ for $x$ sufficiently large, so from 
(\ref{kdvb-tilde-omega20}) we see that $\tilde{\psi}_2^+ (x; 0) > 0$ 
for all $x$ sufficiently large. With $\tilde{\psi}_1^+ (x;0)$ and 
$\tilde{\psi}_2^+ (x;0)$ both positive, $p^+(x;0)$ lies in the third 
quadrant and so approaches $(-1,0)$ in the clockwise direction 
as $x$ tends to $+\infty$. 
According to our convention, the hyperplane index does not increment 
in this case, and we can conclude that for $0 < \nu < 1/4$, we have 
\begin{equation*}
\ind (\mathpzc{g} (\cdot; 0), \mathpzc{h}^+ (0); [- \infty, + \infty])    
= 0.
\end{equation*}
According to Theorem \ref{main-theorem}, we can conclude that {\it if} we 
have invariance along the top shelf in this case then the  
count $\mathcal{N}_{\#} ((-\infty, 0])$ of non-positive eigenvalues
of (\ref{kdvb-evp}) satisfies the inequality 
\begin{equation*}
\mathcal{N}_{\#} ((-\infty,0]) \ge |\mathfrak{m}|.     
\end{equation*}

As in Section \ref{gkdv-section}, we can use information about the 
Evans function to obtain an estimate for $\mathcal{N}_{\#} ((-\infty,0))$
rather than $\mathcal{N}_{\#} ((-\infty,0])$. We've seen that 
for (\ref{kdvb-evp}) we have $D(0) = 0$ and $D' (0) < 0$, from which we can conclude that 
for $x$ sufficiently large we must have $\tilde{\psi}_1^+ (x; \lambda) > 0$
for all $\lambda < 0$ sufficiently close to $0$. In addition, we've seen that for the 
current application $\tilde{\psi}_2^+ (x; 0) > 0$ for $x \gg 0$.
This pairing $\tilde{\psi}_1^+ (x; \lambda) > 0$ and 
$\tilde{\psi}_2^+ (x; 0) > 0$ places $p^+ (x; \lambda)$ in the third 
quadrant,
signifying that as $\lambda$ increases to $0$ $p^+ (x; \lambda)$
rotates into $(-1,0)$ in the clockwise direction, and the 
hyperplane index is not incremented. This allows us to refine 
(\ref{gkdv-preliminary-conclusion}) to the statement 
\begin{equation*} 
\mathcal{N}_{\#} ((-\infty, 0))
\ge |\mathfrak{m}|. 
\end{equation*}
Since we may have $\mathfrak{m} = 0$, this relation is consistent with 
stability (though does not imply stability). 

We now turn to the interesting case $\eta > 1/4$, for which oscillations 
in the wave $\bar{u} (x)$ suggest the possible onset of instability. 
Our primary interest in this example is determining why, from the current
geometric point of view, such 
instability doesn't occur. As usual, we begin by identifying 
all conjugate points on $\mathbb{R}$. We see from (\ref{kdvb-tilde-omega1}) 
that these are values $x_* \in \mathbb{R}$
for which either $\bar{u}'(x_*) = 0$ or $\bar{u} (x_*) = -1$. The first
such point, which we will denote $x_1$, occurs when $\bar{u} (x)$
first crosses the horizontal line at $-1$. If we order further crossings
as the sequence $x_1 < x_2 < x_3 < ...$, we see from Figure \ref{gkdv-figure1}
that $\bar{u}' (x_2) = 0$, $\bar{u} (x_3) = -1$, $\bar{u}' (x_4) = 0$,
and so on, with an infinite number of crossings in total. 

\begin{remark} \label{secondPSW1993-remark}
    For $\nu > 1/4$, the wave $\bar{u} (x)$ corresponds with a connection 
    in the $(\bar{u}, \bar{u}')$ phase plane from a saddle point at 
    $(1, 0)$ to a stable spiral at $(-1, 0)$, ensuring the qualitative
    properties described here. See Theorem 1.1 in \cite{PSW1993} for 
    details. 
\end{remark}

In order to assign directions to these crossing points, we express
(\ref{kdvb-wave-equation-integrated}) as 
\begin{equation} \label{{kdvb-wave-equation-integrated}}
    \nu \bar{u}'' (x) + \bar{u}'(x) = \frac{1}{2} (\bar{u} (x)^2 - 1).
\end{equation}
For $x_1$, we see that 
\begin{equation*}
 \nu \bar{u}'' (x_1) = - \bar{u}'(x_1) < 0,     
\end{equation*}
and subsequently the sign alternates for $x_3$, $x_5$ etc. (i.e., 
$ \nu \bar{u}'' (x_3) = - \bar{u}'(x_3) > 0$, 
$ \nu \bar{u}'' (x_5) = - \bar{u}'(x_5) < 0$ and so on). 
Likewise, for $x_2$ we see that 
\begin{equation*}
 \nu \bar{u}'' (x_2) = \frac{1}{2} (\bar{u} (x_2)^2 - 1) > 0,      
\end{equation*}
with the sign again alternating for $x_2$, $x_4$, etc. Combining 
these observations, we conclude that 
\begin{equation*}
    \sgn \tilde{\omega}_2^+ (x_i;0)
    = \sgn \bar{u} '' (x)
    = \begin{cases}
    +1 & i = 2, 3, 6, 7, \dots \\
    -1 & i = 1, 4, 5, 8, 9, \dots.
    \end{cases}
\end{equation*}

Likewise, 
\begin{equation*}
    \tilde{\omega}_1^{+\,\prime} (x; 0)
    = \frac{k_-}{\sqrt{3}} \Big(\bar{u}''(x) (1+\bar{u} (x)) + \bar{u}'(x)^2 \Big),
\end{equation*}
and so
\begin{equation*}
 \tilde{\omega}_1^{+\,\prime} (x_i; 0)
 = \frac{k_-}{\sqrt{3}} \bar{u}'(x_i)^2 < 0,
 \quad i = 1, 3, 5, \dots,
\end{equation*}
with also 
\begin{equation*}
 \tilde{\omega}_1^{+\,\prime} (x_i; 0)
 = \frac{k_-}{\sqrt{3}} \bar{u}''(x_i) (1+\bar{u} (x_i)) > 0,
 \quad i = 2, 4, 6, \dots,
\end{equation*}
Putting these observations together, we see that 
\begin{equation*}
    \sgn \frac{\tilde{\omega}_1^{+\,\prime} (x_i; 0)}{\tilde{\omega}_2^+ (x_i;0)}
    = \begin{cases}
    +1 & i = 1, 2, 5, 6, \dots \\
    -1 & i = 3, 4, 7, 8, \dots.
    \end{cases}
\end{equation*}
We can conclude that as $x$ increases the value of 
\begin{equation*}
\ind (\mathpzc{g} (\cdot; 0), \mathpzc{h}^+ (0); (-\infty, x])    
\end{equation*}
cycles among the values $\{0, 1, 2\}$, starting with $0$
(for $x < x_1$). 
The full hyperplane index doesn't exist, but the cancellation 
in this calculation suggests that every spectral curve
that enters the Maslov box through the right shelf 
also exits through the right shelf. These considerations
suggest that even for $\nu > 1/4$ the wave $\bar{u} (x)$
might be spectrally stable, and indeed the additional numerical
calculations carried out below bear this out. 

As with the application considered in Section \ref{gkdv-section}, we 
finish this section with a numerical evaluation of the spectral 
curves associated with (\ref{kdvb-evp}). For this calculation,
we will work with the functions 
\begin{equation*}
\begin{aligned}
    \tilde{\omega}_1^+ (x; \lambda)
    &= \eta^- (x; \lambda) \wedge \tilde{\mathcal{V}}^+ (\lambda) \\
    &=\tilde{\kappa} (\lambda)
    \Big{\{} (\nu \mu_3^+ (\lambda)^2 + \mu_3^+ (\lambda) + 1) \eta_1^- (x; \lambda)
    + (\nu \mu_3^+ (\lambda)+1) \eta_2^- (x; \lambda) + \eta_3^- (x; \lambda)\Big{\}}
\end{aligned}
\end{equation*}
and 
\begin{equation*}
\begin{aligned}
    \tilde{\omega}_2^+ (x; \lambda)
    &= \eta^- (x; \lambda) \wedge \tilde{\mathcal{V}}_M^+ (\lambda) 
    = - \Big{\{} (\mu_3^+ (\lambda)^2 - (1-1/\nu) \mu_3^+ (\lambda)) \eta_1^- (x; \lambda) \\
    &\quad \quad + (\mu_3^+ (\lambda) + 1/\nu) \eta_2^- (x; \lambda) 
    - \mu_3^+ (\lambda) \eta_3^- (x; \lambda)\Big{\}},
\end{aligned}
\end{equation*}
where $\tilde{\mathcal{V}}^+ (\lambda)$ is as above and 
\begin{equation*}
    \tilde{\mathcal{V}}_M^+ (\lambda)
    =
    \begin{pmatrix}
    \mu_3^+ (\lambda) \\
     \mu_3^+ (\lambda) + 1/\nu \\
      - \mu_3^+ (\lambda)^2 + (1 - 1/\nu) \mu_3^+ (\lambda)
    \end{pmatrix},
\end{equation*}
is an eigenvector associated with the eigenvalue $- \mu_3^+ (\lambda)$
for the matrix $\tilde{\mathcal{A}}_+ (\lambda)$ defined via (\ref{tildeA}) 
from 
\begin{equation*}
    \mathcal{A}_+ (\lambda)
    = M A_+(\lambda) M^{-1}
    = \begin{pmatrix}
    -1 & 1 & -1 \\
    -\lambda & 0 & 0 \\
    1 - \lambda & -1 & 1 - 1/\nu
    \end{pmatrix}.
\end{equation*}

Using $\tilde{\omega}_1^+ (x; \lambda)$ and $\tilde{\omega}_2^+ (x; \lambda)$, 
we can now generate the spectral curves on any truncated Maslov box by numerically 
computing $\eta^- (x; \lambda)$. As expected, if this is done for any $\nu \in (0, 1/4)$, 
no spectral curves enter the Maslov box, and so there are no spectral curves to 
depict. In addition, as $x$ increases to some sufficiently large value $c$, 
the tracking point $p^+ (x; 0)$ rotates toward $(-1,0)$ in the clockwise 
direction (without reaching it), and as $\lambda$ decreases from $0$ $p^+ (c; \lambda)$
rotates away from $(-1,0)$ in the counterclockwise direction. In this way, we see that 
for $c$ sufficiently large, there is no contribution to the Maslov box for any $\nu \in (0,1/4)$.
As a specific case, calculations were carried out for $\nu = 1/8$, and it was confirmed
that for the Maslov box $[-5,0] \times [-20, 20]$ there are no crossings on the boundary,
so $\mathfrak{m} = 0$. This includes no crossings on the top shelf, indicating no 
unstable eigenvalues. 

The cases $\nu > 1/4$ are more interesting. We have already seen in our analytic calculations
that for all $\nu > 1/4$, there are in fact an infinite number of crossings on the right 
shelf at a sequence of values $\{x_i\}_{i=1}^{\infty}$ such that 
$\lim_{i \to \infty} x_i = \infty$. Correspondingly, we expect to find an infinite 
number of spectral curves entering and exiting the Maslov box along the right 
shelf. Since our numerical calculations will be truncated, they won't confirm this 
expectation, but they provide evidence that the spectral curves are precisely as 
expected over the window of investigation. We will carry out calculations for two 
specific cases, $\nu = 2$ and $\nu = 5$. These seem to give an indication of how 
the picture varies as $\nu$ decreases below 2 and increases above 5. 

First, for $\nu = 2$, the spectral curves entering through the right shelf 
are seen numerically to be contained in the vertical strip associated with 
the $\lambda$ interval $[-.02, 0]$. In Figure \ref{kdvb_box2-figure1}, 
spectral curves in the Maslov box $[-.02, 0] \times [-22, 22]$ are depicted.
In order to see that the crossings continue, we also provide a second figure
depicting the Maslov box $[-.0002, 0] \times [-22, 22]$ in which the upper spectral curves 
from Figure \ref{kdvb_box2-figure1} are more fully resolved, and two additional
spectral curve becomes apparent. See Figure \ref{kdvb_box2-figure2}.  

\begin{figure}[ht] 
\begin{center}\includegraphics[%
  width=11cm,
  height=8cm]{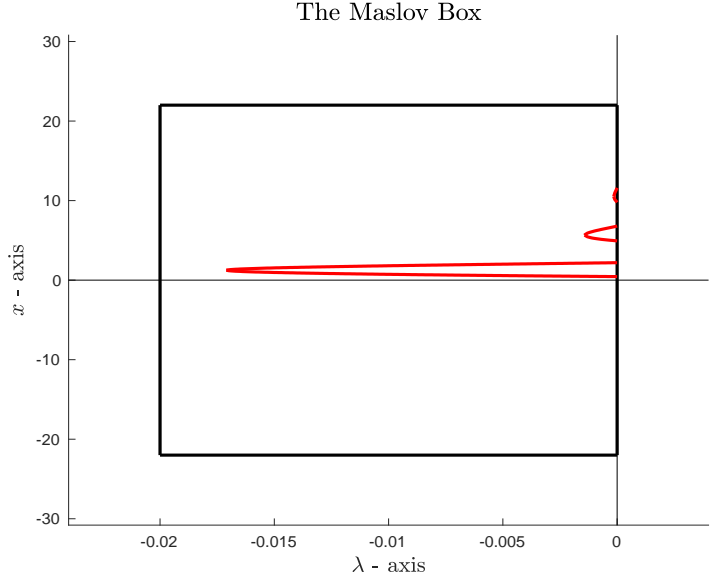}\end{center}
\caption{Maslov Box associated with (\ref{kdvb-equation}) for 
$\nu = 2$. \label{kdvb_box2-figure1}}
\end{figure}

\begin{figure}[ht] 
\begin{center}\includegraphics[%
  width=11cm,
  height=8cm]{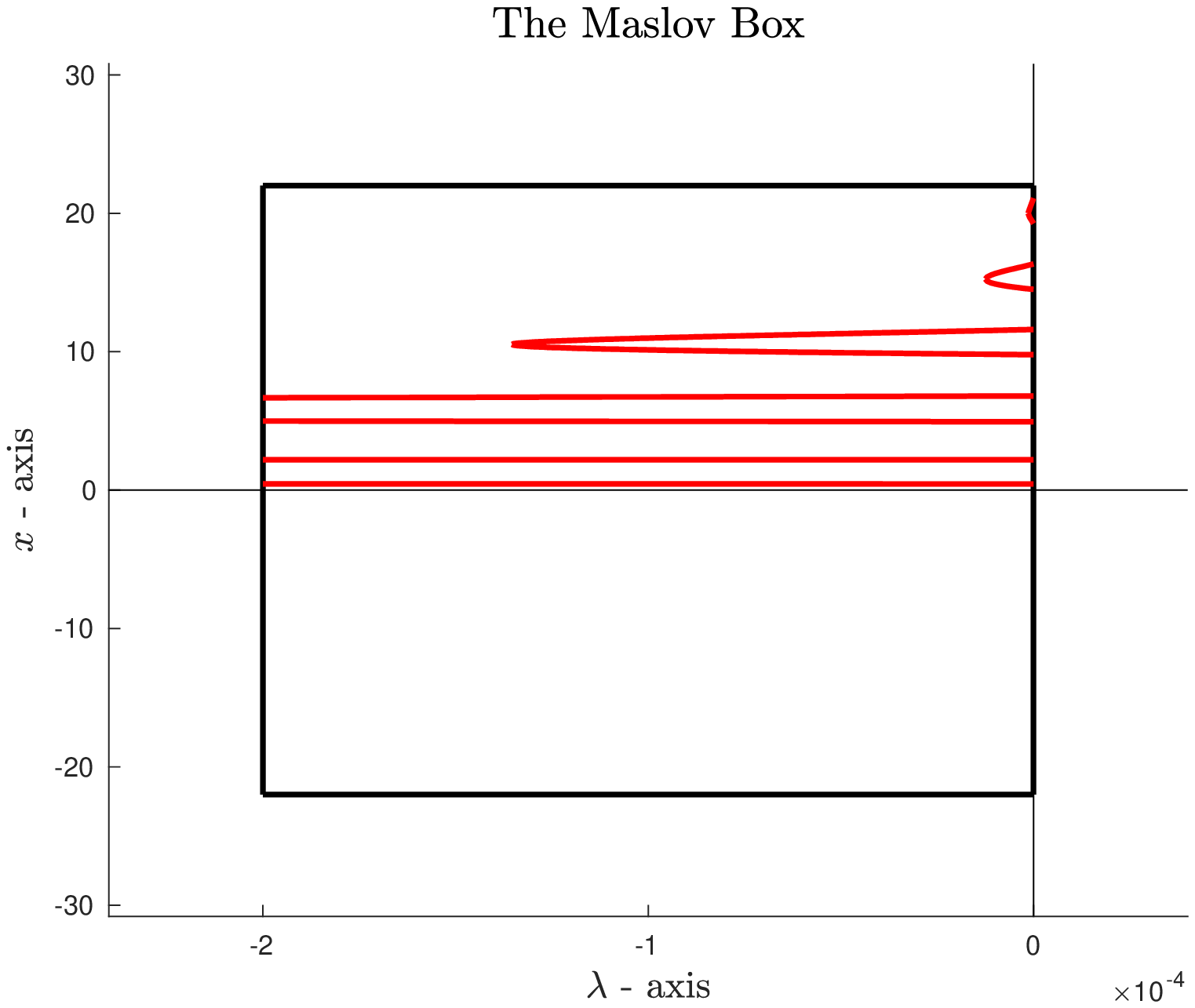}\end{center}
\caption{Maslov Box associated with (\ref{kdvb-equation}) for 
$\nu = 2$, including additional spectral curves. \label{kdvb_box2-figure2}}
\end{figure}

As a point of comparison, we also numerically generate spectral curves 
for the case $\nu = 5$. As $\nu$ increases, the oscillations of 
the stationary solution $\bar{u} (x; \nu)$ spread out, so we 
expect the spectral curves to be farther apart. This is indeed 
the case, as show in Figure \ref{kdvb_nu5-figure}. As $\nu$
continues to increase, we expect the spectral curves to be spaced
farther apart and to go farther into the Maslov box. 

\begin{figure}[ht] 
\begin{center}\includegraphics[%
  width=11cm,
  height=8cm]{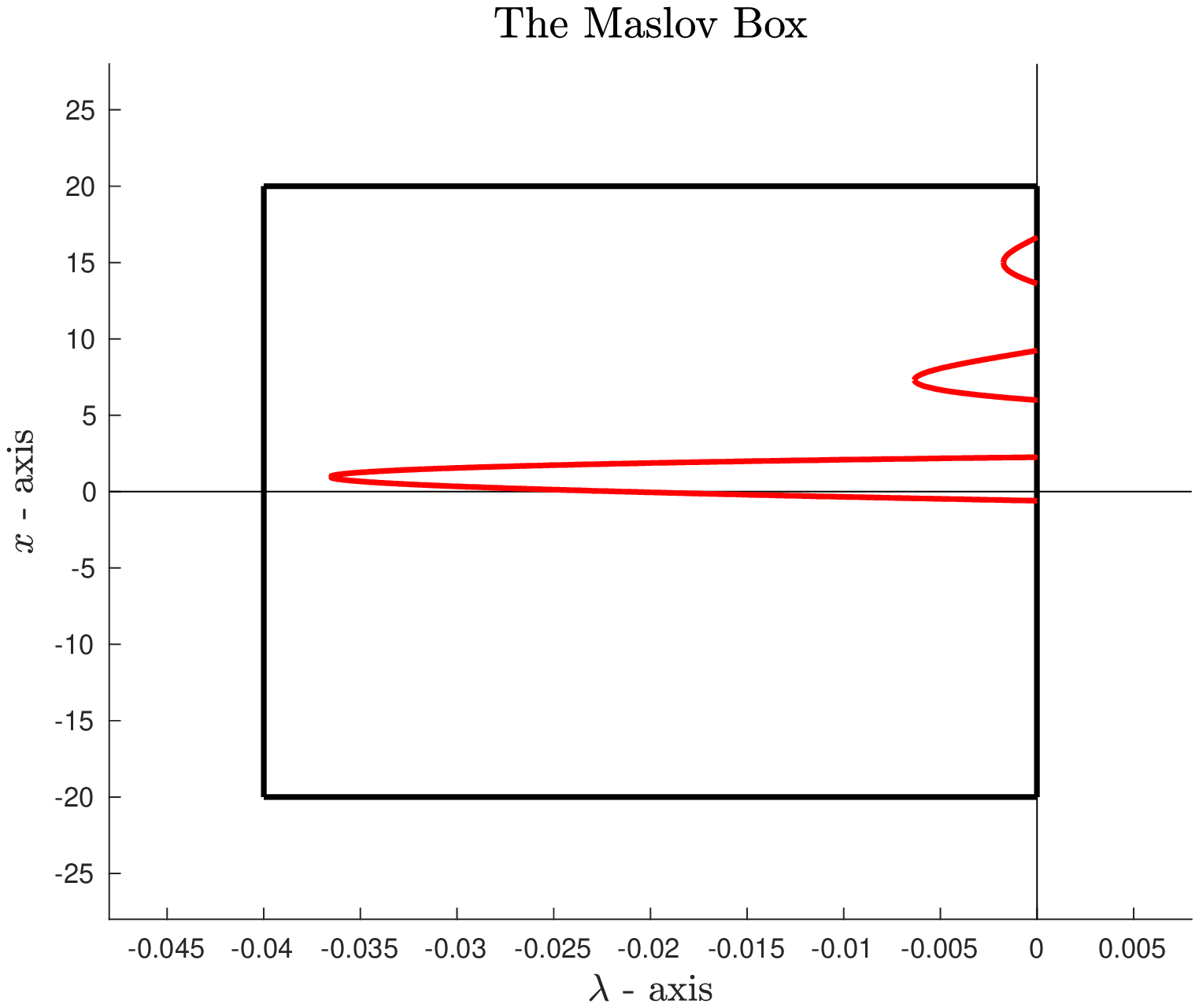}\end{center}
\caption{Maslov Box associated with (\ref{kdvb-equation}) for 
$\nu = 5$. \label{kdvb_nu5-figure}}
\end{figure}

\appendix

\section{Appendix}

In this appendix, we include a verification of the claim in 
Section \ref{kdvb-section} that in our analysis of the 
KdV-Burgers equation, there are no intersections along 
the left shelf. First, in order to have an intersection 
along the left shelf, there must be some value $s \in \mathbb{R}$
so that 
\begin{equation*}
    \eta^- (s; \lambda_1) \wedge \tilde{\mathcal{V}}^+ (\lambda_1)
    = 0.
\end{equation*}
I.e., $\lambda_1$ must be an eigenvalue for the half-line 
problem 
\begin{equation} \label{kdvb-halfline}
\begin{aligned}
    - \nu \phi''' &- \phi'' + (\bar{u} (x) \phi)' 
    = \lambda \phi, \quad x \in (- \infty, s) \\
    - \frac{\lambda}{\mu_3^+ (\lambda)} \phi (s)
    &- (\frac{\lambda}{\mu_3^+ (\lambda)^2} + \frac{1}{\mu_3^+ (\lambda)}) \phi' (s)
    + \nu \phi''(s) = 0,
\end{aligned}
\end{equation}
where in formulating this boundary condition we have 
used $\tilde{\mathcal{V}}^+ (\lambda)$
from (\ref{kdvb-tildev-alternative}) to write 
\begin{equation*}
\begin{aligned}
    \begin{pmatrix}
        \phi (s) \\ \phi' (s) \\ \nu \phi''(s)
    \end{pmatrix}
    &\wedge
    \begin{pmatrix}
        1 \\ - \nu \mu_3^+ (\lambda) - 1 \\ \nu \mu_3^+ (\lambda)^2 + \mu_3^+ (\lambda) +1
    \end{pmatrix} \\
    &= (\nu \mu_3^+ (\lambda)^2 + \mu_3^+ (\lambda) +1) \phi (s)
    + (\nu \mu_3^+ (\lambda) + 1) \phi' (s) + \nu \phi''(s) = 0,
\end{aligned}
\end{equation*}
then rearranged terms using with the eigenvalue relation 
\begin{equation*}
    \nu \mu_3^+ (\lambda)^3 + \mu_3^+ (\lambda)^2
    + \mu_3^+ (\lambda) = - \lambda
\end{equation*}
(i.e., $h_+ (\mu_3^+; \lambda) = 0$) with $h_+$ as in (\ref{kdvb-h}).

We proceed via a standard energy argument, which begins with the assumption
that for some $\lambda \le 0$ there exists a solution $\phi (x)$ to 
(\ref{kdvb-halfline}) so that $\phi (x) \to 0$ as $x \to - \infty$, 
necessarily at exponential rate under our assumptions. 
We multiply (\ref{kdvb-halfline}) by $\phi$
and integrate over $(-\infty,s)$ to obtain the relation
\begin{equation} \label{kdvb-halfline-integrated}
    - \nu \int_{-\infty}^s \phi''' \phi dx
    - \int_{-\infty}^s \phi'' \phi dx
    + \int_{-\infty}^s (\bar{u} (x) \phi)' \phi dx
    = \lambda \int_{-\infty}^s \phi^2 dx.
\end{equation}
For the first summand on the left-hand side of (\ref{kdvb-halfline-integrated}),
we can integrate by parts to write 
\begin{equation} \label{kdvb-summand1}
    \begin{aligned}
    - \nu \int_{-\infty}^s \phi''' \phi dx
    &= - \nu \phi''(s) \phi (s) + \nu \int_{-\infty}^s \phi'' \phi' dx \\
    &= - \nu \phi''(s) \phi (s) + \frac{\nu}{2} \phi' (s)^2.
    \end{aligned}
\end{equation}
Likewise, for the second summand on the left-hand side of (\ref{kdvb-halfline-integrated}),
we can integrate by parts to write 
\begin{equation} \label{kdvb-summand2}
    - \int_{-\infty}^s \phi'' \phi dx 
    = - \phi' (s) \phi (s) + \int_{- \infty}^s {\phi'}^2 dx,
\end{equation}
and for the third we similarly obtain the relation 
\begin{equation} \label{kdvb-summand3}
\int_{-\infty}^s (\bar{u} (x) \phi)' \phi dx
= \bar{u} (s) \phi (s)^2 - \int_{-\infty}^s \bar{u} (x) \phi \phi' dx.
\end{equation}

Next, we obtain lower bounds on the right-hand sides of the 
expressions (\ref{kdvb-summand1}), (\ref{kdvb-summand2}),
and (\ref{kdvb-summand3}). Starting with (\ref{kdvb-summand3}), 
we'll set 
\begin{equation*}
    C := \sup_{x \in \mathbb{R}} |\bar{u} (x)|,
\end{equation*}
which is bounded due to continuity of $\bar{u} (x)$ and the 
endstate conditions. In addition, for a value $\epsilon > 0$ to 
be chosen sufficiently small below, we note the standard inequality
\begin{equation} \label{young-inequality}
    |\phi| |\phi'| \le \frac{1}{2 \epsilon} |\phi|^2 
    + \frac{\epsilon}{2} |\phi'|^2.
\end{equation}
This allows us to express the lower bound 
\begin{equation*}
- \int_{-\infty}^s \bar{u} (x) \phi \phi' dx
\ge - \frac{C}{2 \epsilon} \int_{-\infty}^s \phi^2 dx
- \frac{C \epsilon}{2} \int_{-\infty}^s {\phi'}^2 dx. 
\end{equation*}
Next, in order to estimate $\bar{u} (s) \phi (s)^2$, we 
observe that $\phi (s)^2$ can be expressed as 
\begin{equation*}
    \phi (s)^2 = \int_{-\infty}^s \frac{d}{dx} \phi^2 dx
    =  \int_{-\infty}^s 2 \phi \phi' dx,
\end{equation*}
which we can combine with (\ref{young-inequality}) to see 
that 
\begin{equation} \label{inequality-on-phi-squared}
\phi (s)^2
\le \frac{1}{\epsilon} \int_{-\infty}^s \phi^2 dx
+ \epsilon \int_{-\infty}^s {\phi'}^2 dx.
\end{equation}
Upon combining these observations, we obtain the 
inequality 
\begin{equation*}
\int_{-\infty}^s (\bar{u} (x) \phi)' \phi dx
\ge - \frac{3 C}{2\epsilon} \int_{-\infty}^s \phi^2 dx
- \frac{3 C \epsilon}{2} \int_{-\infty}^s {\phi'}^2 dx. 
\end{equation*}

Next, for the right-hand side of (\ref{kdvb-summand2}), we 
can use (\ref{young-inequality}) once again (with a new 
constant $\delta$ in place of $\epsilon$) to write 
\begin{equation*}
     - \phi' (s) \phi (s) + \int_{-\infty}^s {\phi'}^2 dx 
     \ge - \frac{1}{2 \delta} |\phi|^2 
    - \frac{\delta}{2} |\phi'|^2 
     + \int_{- \infty}^s {\phi'}^2 dx,
\end{equation*}
and subsequently we can use 
(\ref{inequality-on-phi-squared}) to obtain the 
inequality 
\begin{equation*}
     - \int_{-\infty}^s \phi'' \phi dx 
     \ge - \frac{1}{2 \epsilon \delta} \int_{- \infty}^s \phi^2 dx
     - \frac{\epsilon}{2\delta} \int_{- \infty}^s {\phi'}^2 dx
    - \frac{\delta}{2} |\phi'|^2 
     + \int_{- \infty}^s {\phi'}^2 dx.
\end{equation*}

Last, for (\ref{kdvb-summand1}), we use the boundary condition in 
(\ref{kdvb-halfline}) to write 
\begin{equation*}
    \nu \phi'' (s) 
    =  \frac{\lambda}{\mu_3^+ (\lambda)} \phi (s)
    + (\frac{\lambda}{\mu_3^+ (\lambda)^2} + \frac{1}{\mu_3^+ (\lambda)}) \phi' (s),
\end{equation*}
from which we see that 
\begin{equation*}
- \nu \phi''(s) \phi (s) =    
- \frac{\lambda}{\mu_3^+ (\lambda)} \phi (s)^2
    - (\frac{\lambda}{\mu_3^+ (\lambda)^2} + \frac{1}{\mu_3^+ (\lambda)}) \phi' (s) \phi (s).
\end{equation*}
The sign of the first summand on the right-hand side is beneficial, but the 
second summand on the right-hand side will take some work to control. First, 
similarly as with (\ref{young-inequality}), given any $\eta > 0$, we have the 
inequality 
\begin{equation*}
|(\frac{\lambda}{\mu_3^+ (\lambda)^2} + \frac{1}{\mu_3^+ (\lambda)}) \phi' (s) \phi (s)|
\le \frac{1}{2 \eta} |(\frac{\lambda}{\mu_3^+ (\lambda)^2} + \frac{1}{\mu_3^+ (\lambda)})|^2 \phi (s)^2
+ \frac{\eta}{2} \phi'(s)^2.
\end{equation*}

Recalling that $\mu_3^+ (\lambda)$ solves the polynomial 
equation 
\begin{equation*}
    \nu \mu^3 + \mu^2 + \mu + \lambda = 0,
\end{equation*}
we see that the behavior of $\mu_3^+ (\lambda)$
for $\lambda \ll 0$ can be characterized by the limit 
\begin{equation*}
    \lim_{\lambda \to - \infty} \frac{\nu \mu_3^+ (\lambda)^3}{\lambda} = - 1. 
\end{equation*}
In particular, 
\begin{equation*}
\lim_{\lambda \to - \infty} \frac{|\lambda|^2/|\mu^+_3 (\lambda)|^4}{|\lambda|/|\mu^+_3 (\lambda)|}
= \lim_{\lambda \to - \infty} \frac{|\lambda|}{|\mu^+_3 (\lambda)|^3} = \nu.
\end{equation*}
Given any $\kappa > 0$, we can find $\Lambda \gg 0$ sufficiently large so that
\begin{equation*}
    \frac{|\lambda|^2}{|\mu^+_3 (\lambda)|^4}
    \le (\nu + \kappa) \frac{|\lambda|}{|\mu^+_3 (\lambda)|},
    \quad \forall\, \lambda < - \Lambda.
\end{equation*}
In addition, since 
$\lambda/\mu_3^+ (\lambda)^2 < - 1/\mu_3^+ (\lambda) < 0$,
we have the simple inequality 
 \begin{equation*}
|\frac{\lambda}{\mu_3^+ (\lambda)^2} + \frac{1}{\mu_3^+ (\lambda)}|
\le |\frac{\lambda}{\mu_3^+ (\lambda)^2}|,
\end{equation*}
and this allows us to express the estimate
\begin{equation*}
\frac{1}{2 \eta} |\frac{\lambda}{\mu_3^+ (\lambda)^2} + \frac{1}{\mu_3^+ (\lambda)}|^2
\le \frac{1}{2 \eta}  |\frac{\lambda}{\mu_3^+ (\lambda)^2}|^2
\le \frac{1}{2 \eta} (\nu + \kappa) \frac{|\lambda|}{|\mu^+_3 (\lambda)|}.
\end{equation*}
We now see that 
\begin{equation*}
- \nu \phi''(s) \phi (s)
\ge - \frac{\lambda}{\mu_3^+ (\lambda)} \phi (s)^2
- \frac{1}{2 \eta} (\nu + \kappa) \frac{|\lambda|}{|\mu^+_3 (\lambda)|} \phi (s)^2
- \frac{\eta}{2} \phi'(s)^2. 
\end{equation*}
At this point, we choose $\eta = \frac{3}{4} \nu$ and $\kappa = \frac{1}{4} \nu$, for 
which the inequality becomes 
\begin{equation*}
- \nu \phi''(s) \phi (s)
\ge - \frac{\lambda}{\mu_3^+ (\lambda)} \phi (s)^2
- \frac{5}{6} \frac{|\lambda|}{|\mu^+_3 (\lambda)|} \phi (s)^2
- \frac{3}{8} \nu \phi'(s)^2 \ge - \frac{3}{8} \nu \phi'(s)^2.
\end{equation*}

If we now put all of these inequalities together, we obtain the 
lower bound 
\begin{equation*}
\begin{aligned}
\lambda \int_{-\infty}^s \phi^2 dx
&\ge - \frac{3}{8} \nu \phi'(s)^2 + \frac{\nu}{2} \phi'(s)^2
- \frac{1}{2 \epsilon \delta} \int_{- \infty}^s \phi^2 dx
     - \frac{\epsilon}{2\delta} \int_{- \infty}^s {\phi'}^2 dx \\
    &- \frac{\delta}{2} \phi' (s)^2 
     + \int_{- \infty}^s {\phi'}^2 dx 
     - \frac{3 C}{2\epsilon} \int_{-\infty}^s \phi^2 dx
- \frac{3 C \epsilon}{2} \int_{-\infty}^s {\phi'}^2 dx. 
\end{aligned}
\end{equation*}
Finally, we will complete the calculation by making judicious
choices for $\epsilon$ and $\delta$ based on the values
of $\nu$ and $C$. For this we'll require 
\begin{equation*}
\begin{aligned}
    \frac{3}{8} \nu + \frac{\epsilon}{2} &\le \frac{\nu}{2} \\
    \frac{\epsilon}{2\delta} + \frac{3 C \epsilon}{2} &\le 1. 
\end{aligned}
\end{equation*}
To be concrete, we take the specific values 
\begin{equation*}
    \epsilon = \min \{\frac{1}{4}\nu, \frac{1}{3C}\}
    \quad \textrm{and} \quad 
    \delta = \epsilon.
\end{equation*}
It follows that 
\begin{equation*}
    \lambda \int_{-\infty}^s \phi^2 dx
    \ge - \Big(\frac{1}{2 \epsilon \delta} + \frac{3 C}{2\epsilon} \Big) 
    \int_{- \infty}^s \phi^2 dx,
\end{equation*}
from which we see that any eigenvalue $\lambda \in \mathbb{R}$ of 
(\ref{kdvb-halfline}) must satisfy the inequality 
\begin{equation*}
    \lambda \ge - \Big(\frac{1}{2 \epsilon \delta} + \frac{3 C}{2\epsilon} \Big),
\end{equation*}
where $\epsilon$ and $\delta$ are the fixed constants chosen above. By choosing
$\lambda_1$ below this threshold, we can ensure that there are no crossings 
on the left shelf.

{\em author email: phoward@tamu.edu}

\end{document}